\documentclass[11pt]{amsproc}
\pdfoutput=1
\usepackage{amsmath,amssymb,amsthm,eucal, eufrak,amscd,tikz,tikz-cd,mathtools}
\usepackage{xcolor}
\usepackage{enumitem}
\usepackage[shortalphabetic]{amsrefs}
\usepackage{xypic}
\usepackage{graphicx}
\definecolor{allrefcolors}{rgb}{0,0.2,0.5}
\usepackage[linktocpage=true,colorlinks=true,allcolors=allrefcolors,bookmarksopen,bookmarksdepth=3]{hyperref}\usepackage[margin=1.25in]{geometry}

\def\ob{\mathrm{ob\ }}

\def\id{\mathrm{id}}

\newtheorem{theorem}{Theorem}[section]
\newtheorem{definition}{Definition}[section]
\newtheorem{lemma}{Lemma}[section]
\newtheorem{proposition}{Proposition}[section]
\newtheorem{corollary}{Corollary}[section]
\newtheorem{remark}{Remark}[section]

\newtheorem{assumption}{Assumption}[section]

\numberwithin{equation}{section}

\title[The generalized Viterbo restriction functor]{Lagrangian correspondences and the generalized Viterbo restriction functor}

\author[Yuan Gao]{Yuan Gao\textsuperscript{1}}

\address{\textsuperscript{1}Department of Mathematics, University of Southern California, CA }

\email{gao403@usc.edu}

\begin{document}

\begin{abstract}
	We study two kinds of functors of wrapped Fukaya categories: 1) the Viterbo restriction functor for an inclusion of a Liouville sub-domain; 
2) the Lagrangian correspondence functor associated to the graph of the completion of the inclusion of the sub-domain, named the graph correspondence functor.
We prove that these two functors agree on the sub-category where the Viterbo restriction functor is defined.
We also extend the Viterbo restriction functor to the case of non-strongly-exact restrictions of Lagrangian submanifolds,
which yields a natural object-wise deformation of the wrapped Fukaya category, 
constructed using another theory - linearized Legendrian homology.
On the other hand, the graph correspondence functor is naturally defined on the whole wrapped Fukaya category, {\it a priori} taking values in a suitable enlargement of the wrapped Fukaya category having certain Lagrangian immersions as objects.
We prove that the graph correspondence functor agrees with the extension of the Viterbo restriction functor.
\end{abstract}

\maketitle

\tableofcontents

\section{Introduction}

\subsection{Functorial wrapped Floer theory}

For a Liouville manifold $M$ that is the completion of a Liouville domain $M_{0}$,
the wrapped Fukaya category $\mathcal{W}(M)$ is contravariantly functorial with respect to an inclusion of a Liouville sub-domain $U_{0} \subset M_{0}$,
 in the sense that there is the Viterbo restriction functor
\begin{equation}
R: \mathcal{B}(M) \subset \mathcal{W}(M) \to \mathcal{W}(U),
\end{equation}
on a certain full sub-category of $M$ consisting of Lagrangians satisfying a strong exactness condition (Assumption \ref{strong exactness assumption}), defined in \cite{Abouzaid-Seidel}.

In \cite{Gao2}, the theory of Lagrangian correspondences in wrapped Floer theory is developed to study more general functors between wrapped Fukaya categories of Liouville manifolds.
Given an exact Lagrangian correspondence $\mathcal{L} \subset M^{-} \times N$ that is cylindrical at infinity and satisfies a properness assumption, there is an associated $A_{\infty}$-functor
\begin{equation}
\Theta_{\mathcal{L}}: \mathcal{W}(M) \to \mathcal{W}_{im}(N)
\end{equation}
to a suitably enlarged wrapped Fukaya category, including objects being immersed exact Lagrangians cylindrical at infinity together with Maurer-Cartan elements on them.
A special case concerns the inclusion of the sub-domain $U_{0} \subset M_{0}$. 
This can be extended to a map
\begin{equation}
i:U \to M
\end{equation}
using the Liouville vector field on $M$.
The graph of this map naturally defines a Lagrangian correspondence, called the graph correspondence, which is set-theoretically
\begin{equation}
\Gamma = \{(x, i(x)) \in U \times M: x \in U\}.
\end{equation}
Regarded as a Lagrangian correspondence from $M$ to $U$ (rather than from $U$ to $M$), it satisfies the properness assumption and gives rise to an $A_{\infty}$-functor
\begin{equation}\label{graph correspondence functor}
\Theta_{\Gamma}: \mathcal{W}(M) \to \mathcal{W}_{im}(U).
\end{equation}
What is more interesting is that on the level of objects, the underlying Lagrangian submanifold of the image of any exact Lagrangian submanifold $L \subset M$ under this functor turns out to be an embedded exact cylindrical Lagrangian submanifold.
On the sub-category $\mathcal{B}(M)$, we have:

\begin{theorem}
	On the sub-category $\mathcal{B}(M)$, the graph correspondence functor $\Theta_{\Gamma}$ is $A_{\infty}$-homotopic to the Viterbo restriction functor.
\end{theorem}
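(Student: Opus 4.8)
The plan is first to match the two functors on objects, then to produce an explicit $A_{\infty}$-homotopy between them by counting points in a one-parameter family of moduli spaces of quilted disks whose two degeneration ends recover the two functors. On objects the comparison is essentially formal: for $L\in\mathcal{B}(M)$ the geometric composition defining the underlying Lagrangian of $\Theta_{\Gamma}(L)$ is $i^{-1}\!\big(L\cap i(U)\big)$, which is exactly the completed restriction that the Viterbo restriction functor $R$ assigns to $L$; moreover the strong exactness Assumption \ref{strong exactness assumption} forces the a priori Maurer--Cartan datum produced by $\Theta_{\Gamma}$ to vanish, so $\Theta_{\Gamma}(L)$ already lies in $\mathcal{W}(U)\subset\mathcal{W}_{im}(U)$ and equals $R(L)$ on the nose. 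Hence the real content is a chain-level comparison of the higher-order terms $\Theta_{\Gamma}^{d}$ and $R^{d}$.

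For this I would introduce a master moduli space of quilted disks with $d$ inputs in which the width $\rho\in[0,\infty]$ of the patch mapping to $U$ (equivalently the thickness of the strip running along the $\Gamma$-seam) is an extra parameter, with Floer and wrapping perturbation data chosen consistently in $\rho$. At $\rho=\infty$ this is, by construction in \cite{Gao2}, the moduli problem computing $\Theta_{\Gamma}^{d}$. As $\rho\to 0$ the $U$-patch collapses onto the $\Gamma$-seam and, after rescaling, the strip-shrinking mechanism identifies the limit with a (possibly broken) curve whose pieces are precisely the building blocks of the Abouzaid--Seidel construction of $R^{d}$ --- the pieces living away from $U$ being constrained by the strong exactness of the $L_{i}$ to contribute only trivially --- so that, after (if necessary) an auxiliary homotopy of choices matching those of \cite{Abouzaid-Seidel}, the $\rho=0$ stratum computes $R^{d}$. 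Defining $T^{d}$ by counting rigid points of this master moduli space, the codimension-one boundary of its one-dimensional components consists of: the $\rho=\infty$ stratum giving $\Theta_{\Gamma}^{d}$; the $\rho=0$ stratum giving $R^{d}$; and breakings at the punctures giving the terms $\mu_{\mathcal{W}(U)}(T^{\bullet},\dots,T^{\bullet})$, $T^{\bullet}(\dots,\mu_{\mathcal{W}(M)},\dots)$ of the $A_{\infty}$-functor homotopy relation. Reading off this boundary identity is exactly the assertion that $\{T^{d}\}$ is an $A_{\infty}$-homotopy from $R$ to $\Theta_{\Gamma}$.

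The main obstacle is the analysis of the $\rho\to 0$ (strip-shrinking) end: one must establish Gromov compactness for the family uniformly in $\rho$ and rule out figure-eight bubbling along the collapsing seam. Exactness provides the mechanism --- a figure-eight bubble would be a nonconstant finite-energy map with seam on the graph of an embedding, hence of zero energy by a Stokes argument, hence constant --- but implementing this in the noncompact, Hamiltonian-perturbed wrapped setting, together with choosing perturbation data that are simultaneously regular for all $\rho\in[0,\infty]$ and compatible with the contact-type behavior at infinity, is the technical heart of the argument. A secondary difficulty is matching the $\rho=0$ moduli spaces with the precise model of $R$ in \cite{Abouzaid-Seidel}, which I expect to require one intermediate comparison of choices rather than a literal identification; this lengthens but does not alter the structure of the proof. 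Finally, one checks that each $T^{d}$ takes values in $\mathcal{W}(U)$ and not merely in $\mathcal{W}_{im}(U)$ --- i.e. that no immersed curvature is created in the intermediate configurations --- which again follows from strong exactness applied along the homotopy.
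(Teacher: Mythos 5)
Your proposal is a genuinely different route from the one the paper takes, and it is worth contrasting the two. The paper does not interpolate between the two functors by degenerating a width parameter. Instead it first replaces $\Theta_{\Gamma}$ by an intermediate functor $\Pi_{\Gamma}$ defined directly by counting quilted maps with two positive quilted ends carrying the cyclic elements $e_{L}$ (homotopic to $\Theta_{\Gamma}$ via the Yoneda argument used to prove representability), and then proves Proposition \ref{bijective correspondence between the uncompactified moduli spaces}: because $\Gamma$ is the graph of $i\colon U\to M$, the seam condition $u(z)=i\circ v(z)$ allows one to fold the two patches of each rigid quilted map into a single disk in $M$ which, after an implicit-function-theorem correction, solves exactly the parametrized Cauchy--Riemann equation with moving Lagrangian boundary conditions defining $R^{k}$ (in the paper's quadratic-Hamiltonian model over Sylvan's multiplihedra, not the Abouzaid--Seidel cascade model). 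So for matched Floer data one gets $R^{k}=\Pi_{\Gamma}^{k}$ on the nose, and no one-parameter family between the two counts is needed. What your strip-shrinking approach buys is conceptual uniformity with the Wehrheim--Woodward geometric-composition machinery; what the paper's folding argument buys is that it sidesteps uniform-in-$\rho$ compactness and the figure-eight bubbling analysis entirely at this step, at the cost of the extra comparison $\Pi_{\Gamma}\simeq\Theta_{\Gamma}$.

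Two points in your write-up need repair. First, your claim that $\Theta_{\Gamma}(L)$ equals $R(L)$ \emph{on the nose} for all $L\in\mathcal{B}(M)$ is not correct: $L\circ\Gamma$ coincides with the completed restriction $L'$ only when $L$ is invariant under the Liouville flow in $M_{0}\setminus int(U_{0})$ (the subcategory $\mathcal{B}_{0}(M)$); in general they are only isomorphic via the continuation element $\lambda_{L}$ of Proposition \ref{factorization result}, and without inserting this identification your $\rho=0$ stratum does not literally compute $R^{d}$. The paper handles this by proving the statement on $\mathcal{B}_{0}(M)$ first and then extending to $\mathcal{B}(M)$ through the factorization. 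Second, your exclusion of figure-eight bubbles by ``zero energy by Stokes, hence constant'' is too quick: a figure-eight bubble has a puncture asymptotic to a generalized chord, and its energy equals the \emph{action} of that chord, which need not vanish for an exact boundary condition. The correct mechanism (which the paper uses in Lemma \ref{vanishing of zeroth term} and section \ref{section: identifying Maurer-Cartan elements}) is that any nonconstant such curve has energy bounded below by a uniform $\epsilon>0$, while Assumption \ref{strong exactness assumption} forces all candidate output chords between components of $\partial L'$ to have action below $\epsilon$; this action estimate, not exactness alone, is what kills the bubbles. With these two corrections your interpolation scheme should go through, but the compactness and regularity of the master moduli space uniformly in $\rho$ remains substantial unproved analysis that the paper's folding argument avoids.
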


The more detailed statements are given in section \ref{section: equivalence of functors}.

\subsection{The extension of the Viterbo restriction functor}

Attempting to construct an extension of the Viterbo restriction functor to the whole wrapped Fukaya category $\mathcal{W}(M)$, 
we are concerned with some kind of {\it object-wise deformations} of the wrapped Fukaya category.
These are deformations of the $A_{\infty}$-structure maps given by a collection $B$ of Maurer-Cartan elements, 
one $b \in B$ for each object.
For an object $L \in \ob \mathcal{W}(M)$, a Maurer-Cartan element is an element $b \in \hom_{\mathcal{W}(M)}(L, L)$ satisfying the Maurer-Cartan equation
\begin{equation}
\sum_{k=1}^{\infty} m^{k}(b, \cdots, b) = 0.
\end{equation}
Given a Maurer-Cartan element $b$, the $A_{\infty}$-structure maps on $\hom_{\mathcal{W}}(M)$ can be deformed by insertion of $b$. 
Moreover, this can be done to the $A_{\infty}$-structure maps among all objects with choices of Maurer-Cartan elements $b \in B$, 
which gives rise to a category $\mathcal{W}(M; B)$ having the same objects as $\mathcal{W}(M)$, 
but with $A_{\infty}$-structure maps deformed by the Maurer-Cartan elements from $B$. \par

In the case of an inclusion of a Liouville sub-domain $U_{0} \subset M_{0}$, we find that the Viterbo restriction functor can be extended to the whole $\mathcal{W}(M)$ if we allow objects in $U$ to be deformed by Maurer-Cartan elements.

\begin{theorem}\label{theorem: deformed Viterbo functor}
	Suppose $U_{0} \subset M_{0}$ is a Liouville domain.
There is a canonical choice of a collection $B$ of Maurer-Cartan elements, 
one for each Lagrangian submanifold $L' \subset U$ that is the restrictions of an exact cylindrical Lagrangian submanifold $L \subset M$, 
such that the Viterbo restriction functor extends to an $A_{\infty}$-functor
\begin{equation}\label{extended Viterbo functor}
R_{B}: \mathcal{W}(M) \to \mathcal{W}(U; B).
\end{equation}
\end{theorem}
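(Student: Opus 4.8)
The plan is to construct $R_{B}$ as a controlled deformation of the Abouzaid--Seidel model for $R$ from \cite{Abouzaid-Seidel}: I keep the same shape of moduli problem, namely perturbed holomorphic polygons in $M$ whose behaviour is governed by a wrapping Hamiltonian that acts at unit speed inside the subdomain and grows rapidly outside it, but I now (i) replace each restricted Lagrangian $L'\subset U$ by the bounding-cochain--deformed object $(L',b_{L'})$ of $\mathcal W(U;B)$, the cochain $b_{L'}$ being produced by linearized Legendrian homology, and (ii) keep track of the holomorphic curves that escape the subdomain. In more detail: given an exact cylindrical $L\subset M$ with primitive $f_{L}$ of $\lambda_{M}|_{L}$, let $L'\subset U$ be the exact cylindrical Lagrangian obtained by restricting $L$ to $U_{0}$ (after a Liouville isotopy making $L$ cylindrical near $\partial U_{0}$) and completing it by the Liouville flow, and let $\Lambda'=\partial_{\infty}L'$ be its Legendrian end in $\partial_{\infty}U$. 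The restriction $L'$ violates the strong exactness condition of Assumption~\ref{strong exactness assumption} exactly when $f_{L}$ fails to be locally constant along the part of $L$ over $\partial U_{0}$, and this action defect is the obstruction to $L'$ being a strict object of $\mathcal W(U)$. I define $b_{L'}\in\hom_{\mathcal W(U)}(L',L')$ by counting the rigid holomorphic curves in the completion of $U$ with boundary on $L'$, one output, and positive asymptotics along Reeb chords of $\Lambda'$, each weighted by the exponential of the corresponding action defect read off from $f_{L}$; this is precisely the Maurer--Cartan element furnished by the linearized Legendrian homology of $\Lambda'$. That $\sum_{k}m^{k}(b_{L'},\dots,b_{L'})=0$ follows from the standard description of the codimension-one boundary of these moduli spaces --- SFT compactness together with gluing --- which reassembles into exactly the Maurer--Cartan terms. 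The collection $B=\{b_{L'}\}$ is the asserted canonical collection, and on objects $R_{B}$ is defined by $L\mapsto(L',b_{L'})$.

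On morphisms and higher operations I define $R_{B}^{k}$ by counting the rigid perturbed holomorphic polygons in $M$ with boundary on the restrictions of $L_{0},\dots,L_{k}$, with $k$ inputs and one output, where the Abouzaid--Seidel wrapping Hamiltonian together with an integrated maximum principle confines the image to $U_{0}$ together with a collar of $\partial U_{0}$. When all $L_{i}\in\mathcal B(M)$ the strong-exactness action estimate forces every contributing curve to remain inside $U$ and makes each $b_{L'}$ vanish, so that $R_{B}$ reduces to the Abouzaid--Seidel functor $R$; this is the assertion that $R_{B}$ extends the Viterbo restriction functor.

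The $A_{\infty}$-functor equations for $R_{B}$ are read off from the codimension-one boundary of the moduli spaces defining the $R_{B}^{k}$. The domain-degeneration strata contribute, on one side, the operations $m^{j}$ of $\mathcal W(M)$ precomposed with tuples of $R_{B}$'s and, on the other, the $b$-deformed operations of $\mathcal W(U;B)$ postcomposed with a single $R_{B}$ --- these are the two sides of the $A_{\infty}$-functor relation. The only other boundary stratum consists of curves whose image runs out of $U_{0}$ through $\partial U_{0}$; upon neck-stretching along $\partial U_{0}$, SFT compactness presents such a configuration as a curve that stays inside $U$ together with one or more linearized-Legendrian-homology buildings over $\partial_{\infty}U$, and a gluing argument identifies these with insertions of the $b_{L'}$'s into the $\mathcal W(U;B)$-operations. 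It is exactly this stratum that forces the target to be the deformed category $\mathcal W(U;B)$ rather than $\mathcal W(U)$, and the $b$-deformed $A_{\infty}$-relations are exactly what absorb it. Finally, varying the auxiliary data --- the defining function for $\partial U_{0}$, the wrapping Hamiltonians, the almost complex structures, and the neck length --- produces the continuation maps showing that $R_{B}$ is well-defined up to $A_{\infty}$-homotopy and that each $b_{L'}$ is canonical up to gauge equivalence, so $\mathcal W(U;B)$ is itself well-defined up to the appropriate equivalence.

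I expect the main obstacle to lie in the analytic input behind the first and third paragraphs: one must set up a transverse, Gromov--SFT-compact moduli theory for curves stretched along $\partial U_{0}$ \emph{in the presence of the Abouzaid--Seidel Hamiltonian perturbations}, which break the $\mathbb R$-invariance of the symplectization, and then prove the gluing theorem matching the limiting buildings with the linearized Legendrian homology disks that define the $b_{L'}$. This amounts to coupling the wrapped Fukaya $A_{\infty}$-machinery with linearized Legendrian contact homology inside a single moduli framework; once that package is in place, the algebra in the remaining steps is formal. As an independent consistency check, the functor produced this way should coincide with the graph correspondence functor $\Theta_{\Gamma}$ of \eqref{graph correspondence functor} once the latter is shown to land, on objects, in the embedded deformed category $\mathcal W(U;B)$, with both functors recovering $R$ on $\mathcal B(M)$.
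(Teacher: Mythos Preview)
Your overall strategy matches the paper's: view the sequence of Viterbo maps $\{R^{k}\}_{k\ge 1}$ as the positive-order terms of a \emph{curved} $A_{\infty}$-functor once strong exactness fails, identify the curvature term $R^{0}(1)$ with a Maurer--Cartan element $b$ produced via linearized Legendrian homology, and then deform to an honest $A_{\infty}$-functor into $\mathcal{W}(U;B)$. Your recognition (paragraph 3) that the extra boundary strata arise from curves escaping through $\partial U_{0}$ and correspond, after neck-stretching, to $b$-insertions is also correct in spirit.

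However, your construction of $b_{L'}$ in the first paragraph is wrong in a way that matters. You place the defining curves ``in the completion of $U$ with boundary on $L'$, one output, and positive asymptotics along Reeb chords of $\Lambda'$''. There are no such curves: an exact $L'\subset U$ bounds no nonconstant holomorphic disk with a single puncture. In the paper the curves defining the Maurer--Cartan element live in the \emph{completed cobordism} $W$ obtained from $M_{0}\setminus \mathrm{int}(U_{0})$, with boundary on the exact Lagrangian cobordism $L^{c}$ between $l=\partial L$ and $l'=\partial L'$; they have \emph{no} positive puncture at the $\partial M$ end and one distinguished \emph{negative} puncture asymptotic to a Reeb chord of $l'$ at the $\partial U$ end (plus extra negative punctures capped in $U$). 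These only exist because the primitive $f_{L}$ takes different values on components of $l'$, which makes certain Reeb chords of $l'$ have positive action. The action defect is what \emph{permits} the curves, not a weight on them: the paper works over a field $\mathbb{K}$ with an action filtration, and there is no ``exponential of the action defect'' entering the count.

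The paper's route is then two-step rather than direct. The count above produces $b_{lin}\in LC^{*}_{lin}(l',L';\alpha')$, which is shown to satisfy the Maurer--Cartan equation by recognizing it as $\mathcal{F}^{0}(1)=\mathcal{F}_{*}0$ for the curved linearized cobordism homomorphism $\mathcal{F}$. One then \emph{pushes forward} along the $A_{\infty}$-homotopy equivalence $\mathcal{S}:LC^{*}_{lin}\to CW^{*}$ (which is filtration-nondecreasing, so the infinite sum $\sum_{k}\mathcal{S}^{k}(b_{lin},\dots,b_{lin})$ is finite) to get $b=\mathcal{S}_{*}b_{lin}\in CW^{*}(L';H_{U})$. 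Setting $R^{0}(1)=b$ makes $\{R^{k}\}_{k\ge 0}$ a curved $A_{\infty}$-functor, and the algebraic deformation formula $R_{B}=\mathcal{S}_{B}\circ R\circ \mathcal{T}_{0}=\mathcal{S}_{B}\circ R$ (section on curved auto-equivalences) produces the honest functor; since the source Maurer--Cartan element is $0$, one gets $R_{B}^{k}=R^{k}$ for $k\ge 1$ and $R_{B}^{0}=0$. This sidesteps the analytic coupling you flag in your last paragraph: one never studies a single moduli space mixing Hamiltonian perturbations with SFT neck-stretching, but rather uses the already-established equivalence $\mathcal{S}$ and the purely algebraic pushforward machinery.
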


On the other hand, the graph correspondence functor \eqref{graph correspondence functor} can be thought of as another extension of the Viterbo restriction functor.
By definition, the category $\mathcal{W}(U; B)$ can be naturally embedded $\mathcal{W}_{im}(U)$ as a full sub-category.

\begin{theorem}
	The graph correspondence functor
\[
	\Theta_{\Gamma}: \mathcal{W}(M) \to \mathcal{W}_{im}(U).
\]
has image in the full sub-category $\mathcal{W}(U; B)$. 
The $A_{\infty}$-functor
\begin{equation}
	\Theta_{\Gamma}: \mathcal{W}(M) \to \mathcal{W}(U; B)
\end{equation}
is homotopic to the extension of the Viterbo restriction functor \eqref{extended Viterbo functor}.
\end{theorem}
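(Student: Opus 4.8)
The plan is to prove the two assertions separately: first that $\Theta_{\Gamma}$ lands in the full sub-category $\mathcal{W}(U;B)$, and then that, viewed as a functor into $\mathcal{W}(U;B)$, it is $A_{\infty}$-homotopic to the extension $R_{B}$ of the Viterbo restriction functor.

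\textbf{Step 1: objects.} I would begin by computing $\Theta_{\Gamma}$ on objects. For an exact cylindrical Lagrangian $L \subset M$, after a Hamiltonian isotopy bringing $L$ into a position transverse to the collar $\partial U_{0} \times [1,\infty)$, the geometric composition $\Gamma \circ L$ is set-theoretically the restriction $L' = i^{-1}(L) \subset U$, which is an embedded exact cylindrical Lagrangian; this is the assertion already announced in the introduction, and its proof is a transversality-and-cleanness argument for the quilted Cauchy--Riemann problem defining the composition. The object $\Theta_{\Gamma}(L)$ is the pair $(L', b)$, where $b \in \hom_{\mathcal{W}_{im}(U)}(L', L')$ is the Maurer--Cartan element assembled from counts of quilted strips with seam on $\Gamma$ and small figure-eight faces concentrating near $\partial U_{0}$. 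I would then identify this count with the one defining the canonical collection $B$ in Theorem \ref{theorem: deformed Viterbo functor}: both are governed by curves in the completed exterior $M \setminus U_{0}$ with boundary on the part of $L$ exiting $U$, organized by the linearized Legendrian homology of $\partial L' \subset \partial U_{0}$. A gluing bijection between these two moduli problems gives $b = b_{L'} \in B$, which is exactly the first assertion of the theorem.

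\textbf{Step 2: homotopy of functors.} With the object identification in hand, $\Theta_{\Gamma}$ and $R_{B}$ are both $A_{\infty}$-functors $\mathcal{W}(M) \to \mathcal{W}(U;B)$ agreeing on objects. Since $\mathcal{B}(M)$ is only a full sub-category of $\mathcal{W}(M)$ and need not generate it, one cannot deduce the homotopy formally from the $\mathcal{B}(M)$-case (the earlier theorem in section \ref{section: equivalence of functors}); instead I would construct the homotopy directly via a one-parameter interpolating moduli problem. Over each $d$-leafed domain, consider a family of Floer/quilt data parametrized by $t \in [0,1]$ which equals the quilted data defining $\Theta_{\Gamma}$ at $t=0$ and the accelerated large-slope Hamiltonian data defining $R_{B}$ at $t=1$; such an interpolation exists because, in Liouville coordinates on the collar $\partial U_{0} \times [1,\infty)$, the seam $\Gamma$ and the Hamiltonian used for $R_{B}$ are two ends of a continuous deformation of a single profile function on the collar. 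The operators $T^{d}$ count the rigid solutions of this $t$-family; compactness of the one-dimensional moduli spaces together with analysis of their boundary then yields the $A_{\infty}$-homotopy relation between $\Theta_{\Gamma}$ and $R_{B}$. This is the argument of the $\mathcal{B}(M)$-case upgraded to all of $\mathcal{W}(M)$; the new input is that breaking of solutions along $\partial U_{0}$ now produces curves in the completed exterior whose contributions are, by Step 1, precisely the $b_{L'}$-insertions built into the structure maps of $\mathcal{W}(U;B)$, so the boundary terms close up on the deformed side rather than on the undeformed one.

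\textbf{Main obstacle.} The crux is the matching of degenerations and the uniform control of energy. One must show that every codimension-one boundary stratum of the interpolating moduli spaces is one of: (i) a genuine strip or disk breaking in $\mathcal{W}(M)$ or in $\mathcal{W}(U;B)$, contributing the $A_{\infty}$-homotopy terms; (ii) a figure-eight or strip-shrinking degeneration on the collar that, via Step 1, is absorbed into a $b_{L'}$-insertion; or (iii) the endpoints $t \to 0,1$, giving $\Theta_{\Gamma}$ and $R_{B}$. Ruling out escape of energy into $M \setminus U_{0}$ not captured by this Maurer--Cartan bookkeeping requires a maximum-principle and action estimate that is uniform in $t$ along the interpolation, and one must also arrange transversality coherently for the whole $t$-family and all $d$. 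This is the same analytic package as in the quilted-correspondence construction of \cite{Gao2} and the Viterbo-restriction construction of \cite{Abouzaid-Seidel}, here combined with the linearized Legendrian homology that defines $B$; assembling these compatibly is where the real work lies.
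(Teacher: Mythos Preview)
Your outline is reasonable but misses the paper's central geometric mechanism, and your Step 2 differs substantially from what the paper actually does.

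\textbf{On Step 2.} You propose a generic one-parameter interpolating moduli problem between ``quilted data'' at $t=0$ and ``Viterbo data'' at $t=1$, justified by the claim that the seam $\Gamma$ and the Hamiltonian profile for $R_{B}$ are endpoints of a single deformation on the collar. This is too vague: a quilted map is a pair $(u,v)\colon (S_{0},S_{1})\to (M,U)$ with a seam constraint, whereas the maps defining $R_{B}$ are single maps to $M$ with moving Lagrangian boundary conditions; it is not clear what PDE the intermediate $t$-family would solve. The paper does not interpolate. It instead exploits that $\Gamma$ is the \emph{graph} of $i\colon U\to M$: the seam condition $(u(z),v(z))\in\Gamma$ is literally $u(z)=i(v(z))$, so one may compose the $U$-patch with $i$ (rescaled by the Liouville flow via the function $\chi_{w,Y^{k},\rho}$) and glue it to the $M$-patch to produce a single map $w^{k}\colon Y^{k}\to M$. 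After a small implicit-function-theorem perturbation this $w^{k}$ satisfies exactly the inhomogeneous Cauchy--Riemann equation with moving boundary used in the Viterbo construction. This gives a bijection of rigid moduli spaces and hence an \emph{equality} $\Pi_{\Gamma}^{k}=R^{k}$ for a specific compatible choice of Floer data, not merely a homotopy (the homotopy arises only from the prior replacement of $\Theta_{\Gamma}$ by the auxiliary functor $\Pi_{\Gamma}$, Section~8.2). Your interpolation idea could perhaps be made rigorous, but it would require precisely this unfolding trick to write down the family, at which point the cobordism becomes unnecessary.

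\textbf{On Step 1.} Your identification of the two Maurer--Cartan elements via a ``gluing bijection'' is not quite how the paper proceeds. The paper first reinterprets the Maurer--Cartan element attached to $L\circ\Gamma$ by the correspondence functor as a count of \emph{figure-eight bubbles} (via the cyclic-element equation). It then unfolds each figure-eight bubble along the graph seam into a single disk $w_{\infty}\colon S_{1}\to M$ with one negative puncture asymptotic to a short chord $\tilde{y}$ near $\partial U$, and finally applies a \emph{neck-stretching} degeneration along $\partial U$ to convert this disk into a capped holomorphic building in the cobordism $W$, i.e.\ an element of the moduli spaces $\mathcal{N}_{1,m,l}$ defining $b_{\mathrm{lin}}$. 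So the matching is figure-eight $\leadsto$ unfolded disk in $M$ $\leadsto$ SFT building in $W$, not a direct gluing bijection; the linearized Legendrian homology enters through this SFT limit rather than through organizing the quilted counts themselves.
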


\begin{remark}
	In \cite{Ganatra-Pardon-Shende2}, another version of restriction functor is defined using the functorial theory for partially wrapped Fukaya categories,
which conjecturally also recovers the Viterbo restriction functor on the sub-category $\mathcal{B}(M)$.
We expect that our graph correspondence functor is homotopic to that as well, after passing to a suitable enlargement of the wrapped Fukaya category.
\end{remark}

\subsection{Linearized Legendrian homology}

The construction of the deformation of the wrapped Fukaya category involves another theory, {\it linearized Legendrian homology}, introduced in \cite{Eliashberg-Givental-Hofer}. 
If $L \subset M$ is an exact cylindrical Lagrangian submanifold, then its boundary at infinity is a Legendrian submanifold $l$ of the contact manifold $Y$, as the boundary at infinity of $M$.
The linearized Legendrian complex $LC_{lin}^{*}(l, L; \alpha)$ is generated as a vector space by Reeb chords of $l$ as well as critical points of a proper Morse function on $L$ (Definition \ref{def:linearized Legendrian complex}).
It is known that the filled linearized Legendrian homology is isomorphic to the wrapped Floer cohomology, \cite{Bourgeois-Ekholm-Eliashberg}.

Given an exact open embedding $i: U \to M$ of a Liouville submanifold $U$ completed from a Liouville sub-domain $U_{0}$, the difference $M \setminus int(U_{0})$ can be completed to an exact symplectic cobordism between
The complement of $L'$ in $L$ can be completed to an exact Lagrangian cobordism $L^{c}$ between the Legendrian boundaries $l'$ of $L'$ and $l$ of $L$.

\begin{theorem}
	The linearized Legendrian complex $LC_{lin}^{*}(l, L; \alpha)$ has an $A_{\infty}$-algebra structure that is homotopy equivalent to the wrapped Floer $A_{\infty}$-algebra $CW^{*}(L, L)$, via a $A_{\infty}$-homotopy equivalence
\begin{equation}
\mathcal{S}: LC_{lin}^{*}(l, L; \alpha) \to CW^{*}(L, L).
\end{equation}
	Given an open exact embedding of Liouville submanifold $i: U \to M$, there is an $A_{\infty}$-functor
\begin{equation}
\mathcal{F}: LC_{lin}^{*}(l, L; \alpha) \to LC_{lin}^{*}(l', L'; \alpha')
\end{equation}
Moreover, this is homotopic as $A_{\infty}$-homomorphisms to the Viterbo restriction homomorphism.
\end{theorem}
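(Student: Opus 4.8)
The plan is to derive all three assertions from one family of ``stretched'' moduli problems interpolating between the wrapped Floer description of $M$ (and of $U$) and the symplectic-field-theory description of its completion. First I would fix a collar $[1,\infty)\times Y$ at infinity with a cylindrical almost complex structure there, a compatible one on the filling, and a proper Morse function $f$ on $L$, and define $m^{d}$ on $LC_{lin}^{*}(l,L;\alpha)$ by counting rigid pseudo-holomorphic disks in $\mathbb{R}\times Y$ with boundary on $\mathbb{R}\times l$, carrying one distinguished puncture and $d$ further punctures, each of which is either asymptotic to a Reeb chord of $l$ or a boundary marked point mapped into a stable/unstable manifold of $f$, with the Reeb-chord ends of such a ``generalized disk'' capped by rigid disks in $M$ with boundary on $L$. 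The $A_{\infty}$-relations are then the boundary identities of the compactified one-dimensional such moduli spaces: by SFT compactness the codimension-one strata are pseudo-holomorphic buildings glued along Morse flow lines, and the hypothesis that $\alpha$ is the augmentation determined by the filling $L$ exactly cancels the strata in which a sub-building closes up against $L$. For the equivalence $\mathcal{S}$ I would insert a neck of length $T$ along a hypersurface $\{r\}\times Y$ and count hybrid disks whose ``upper half'' carries the linear-Hamiltonian Floer data computing $CW^{*}(L,L)$ and whose ``lower half'' carries the SFT data just described; these define the components $\mathcal{S}^{d}$, the $A_{\infty}$-morphism equations follow from the degeneration of one-dimensional hybrid moduli, and $\mathcal{S}^{1}$ is a quasi-isomorphism by a filtration-by-action argument together with the homology-level identification of \cite{Bourgeois-Ekholm-Eliashberg} --- and an $A_{\infty}$-homomorphism with quasi-isomorphic linear part is automatically an $A_{\infty}$-homotopy equivalence.

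For the functor $\mathcal{F}$: the complement $M\setminus\mathrm{int}(U_{0})$ completes to an exact symplectic cobordism $\widehat{W}$ with positive end $Y$ and negative end $Y'=\partial_{\infty}U$, and $L\setminus L'$ completes to an exact Lagrangian cobordism $L^{c}\subset\widehat{W}$ with top Legendrian $l$ and bottom Legendrian $l'$. I would define $\mathcal{F}$ as the linearized cobordism map: count rigid holomorphic disks in $\widehat{W}$ with boundary on $L^{c}$, one positive puncture at a Reeb chord of $l$, negative punctures at Reeb chords of $l'$, and boundary marked points carrying the Morse data of the fillings $L$ and $L'$ at the two ends. A gluing argument (disks in $M$ $=$ disks in $\widehat{W}$ glued to disks in $U$) shows the fillings are compatible, i.e. $\alpha=\alpha'\circ\Phi_{L^{c}}$ at the DGA level, which is exactly what makes $\mathcal{F}$ --- the linearization of the DGA cobordism morphism $\Phi_{L^{c}}$ with respect to $\alpha$ upstairs and $\alpha'$ downstairs --- a well-defined, unital $A_{\infty}$-homomorphism; its $A_{\infty}$-morphism equations come from the usual two-level breaking of one-dimensional disk moduli in a symplectic cobordism.

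To identify $\mathcal{F}$ with the Viterbo restriction homomorphism $R\colon CW^{*}(L,L)\to CW^{*}(L',L')$, I would compare the $A_{\infty}$-homomorphisms $\mathcal{S}'\circ\mathcal{F}$ and $R\circ\mathcal{S}$ from $LC_{lin}^{*}(l,L;\alpha)$ to $CW^{*}(L',L')$. The construction of \cite{Abouzaid-Seidel} computes $R$ by counting Floer disks in $M$ with boundary on the perturbation of $L$ whose distinguished output is constrained to $U$; after an invariance step replacing the Abouzaid--Seidel ``thin neck'' by a genuine neck of length $T$ along the contact-type hypersurface $\partial U_{0}\subset M$, neck-stretching compactness as $T\to\infty$ breaks each such disk into a building in $\widehat{W}$ with boundary on $L^{c}$, a building in $U$ with boundary on $L'$, and a building in $M$ capping the inputs along $L$. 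Reading off the algebraic count, the first factor assembles into $\mathcal{F}$, the third into $\mathcal{S}$, and the middle reduces under $\mathcal{S}'$ to the identity of $CW^{*}(L',L')$ together with lower-order terms organized into an explicit $A_{\infty}$-homotopy, which gives $\mathcal{S}'\circ\mathcal{F}\simeq R\circ\mathcal{S}$; equivalently, $\mathcal{F}$ is homotopic to $R$ under the equivalences $\mathcal{S}$ and $\mathcal{S}'$. Alternatively, once $\mathcal{S}$ and $\mathcal{S}'$ are available one may route the comparison through the graph correspondence functor of the earlier theorems, matching the cobordism disk count with the quilted-strip count for $\Gamma$ under the same degeneration.

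The technical heart, and the step I expect to be the main obstacle, is transversality and compactness for these hybrid and neck-stretched moduli spaces: in particular the Morse--Bott (``cascade'') configurations along the fillings $L,L'$, the exclusion or cancellation of Reeb-chord bubbling at the neck, and --- relatedly --- the invariance argument that must precede neck-stretching, since the Abouzaid--Seidel model of $R$ is not literally a degenerating family. Granting the relevant gluing and transversality results, the rest is bookkeeping with signs and the combinatorics of $A_{\infty}$-morphism and homotopy equations.
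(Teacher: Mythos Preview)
Your proposal is correct and follows the same overall architecture as the paper: build $\mu^{k}$ on $LC_{lin}^{*}$ from capped curves in the symplectization, build $\mathcal{S}$ from hybrid disks with an interpolating Hamiltonian $H_{\chi}(s,\cdot)=\chi(s)H$, build $\mathcal{F}$ from curves in the completed cobordism $(W,L^{c})$, and establish the homotopy by comparing $\mathcal{S}'\circ\mathcal{F}$ with $R\circ\mathcal{S}$; the quasi-isomorphism of $\mathcal{S}^{1}$ is likewise proved by an action-filtration argument (upper-triangular with $\pm 1$ on the diagonal).

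Two differences of implementation are worth noting. First, the paper stresses that the $A_{\infty}$-structure on $LC_{lin}^{*}$ (beyond the differential) is \emph{not} merely the linearization by the filling augmentation: the capping data must include disks in $M$ with \emph{several} positive Reeb-chord asymptotics (the maps $g'_{I}$), planes asymptotic to closed Reeb orbits, and Morse--Bott configurations where components are joined by gradient flow lines on $l\times\mathbb{R}$; your phrase ``Reeb-chord ends capped by rigid disks in $M$'' is compatible with this, but the bookkeeping is more involved than ``linearize by $\alpha$''. Second, for the comparison step the paper does not neck-stretch the Viterbo disks along $\partial U_{0}$. Instead it glues the broken configurations for $\mathcal{S}\circ\mathcal{F}$ (a hybrid disk in $U$ with cobordism curves attached at its inputs) and those for $R\circ\mathcal{S}$ (a Viterbo continuation disk with hybrid disks in $M$ attached at its inputs), and argues directly that both are codimension-one boundary strata of a \emph{single} parametrized moduli space of inhomogeneous curves with moving Lagrangian boundary: the Floer data agree near all punctures and are connected by a path in the moduli of domains. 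The homotopy $H^{k}$ is then the count of this interpolating moduli space. Your neck-stretching route would produce an equivalent homotopy, but the paper's direct cobordism bypasses the ``invariance step'' you flagged as a potential obstacle.
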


If $L$ is not in the sub-category $\mathcal{B}(M)$, then it implies that $l'$ is disconnected with several connected components, such that the primitive takes different values on these connected components. 
The precise geometric conditions are described in subsection \ref{boundary of L'}.
In this case, we construct a Maurer-Cartan element $b_{lin} \in LC_{lin}^{*}(l', L'; \alpha')$ by counting holomorphic disks in the cobordism asymptotic to some Reeb chord between connected components of $l'$ at the negative end,
together with extra asymptotics to Reeb chords and orbits capped in $U$,
but no asymptotics at the positive end.
In fact, in this case the functor 
\[
\mathcal{F}: LC_{lin}^{*}(l, L; \alpha) \to LC_{lin}^{*}(l', L'; \alpha'),
\]
is curved, and $b_{lin}$ is the pushforward of $0$ from $ LC_{lin}^{*}(l, L; \alpha)$,
which is identified with the $\mathcal{F}^{0}$-term.
The pushforward of $b_{lin}$ under the $A_{\infty}$-homotopy equivalence $\mathcal{S}$,
\begin{equation}
	b = \mathcal{S}_{*} b_{lin} \in CW^{*}(L', L') = \hom_{\mathcal{W}(U)}(L', L')
\end{equation}
is a Maurer-Cartan element of $L'$.
The collection of such Maurer-Cartan elements gives rise to the desired deformation $B$ in the statement of Theorem \ref{theorem: deformed Viterbo functor}.

\subsection{Structure of the paper}

In section 2, we provide some preliminaries in homological algebra.
In section 3, we review the definition of the wrapped Fukaya category and the construction of the Viterbo restriction functor.
In section 4, we construct $A_{\infty}$-structures on linearized Legendrian homology and $A_{\infty}$-homomorphisms in the presence of an exact embedding of a Liouville manifold. Then we relate these structures to wrapped Floer theory.
In section 5, we review the construction of functors associated to Lagrangian correspondences, and study the graph correspondence in details.
In section 6, we introduce deformations of the wrapped Fukaya category and the extension of the Viterbo restriction functor. 
In section 7, we construct the Maurer-Cartan element in setup of linearized Legendrian homology, and use it to obtain obtain a deformation of the wrapped Fukaya category.
In section 8, we prove that the extension of the Viterbo restriction functor introduced in section 6 is equivalent to the graph correspondence functor. \par

\paragraph{\textit{Acknowledgment.}} Part of the research started when the author was about to graduate at Stony Brook University.
The author wishes to express many thanks to my Ph.D. advisor Kenji Fukaya for his help many enlightening discussions and ideas from various aspects in Floer theory and symplectic field theory, to Mohammed Abouzaid for suggestion on the idea that the geometric composition be isomorphic to the restriction, and to Sheel Ganatra for explanation of the Viterbo restriction functor from the viewpoint of partially wrapped Floer thoery.

\section{Preliminaries in algebra}\label{section:algebraic preliminaries}

\subsection{Curved $A_{\infty}$-categories with filtration}

The main purpose of this section is to provide the minimal amount of background needed to discuss the deformation of a curved $A_{\infty}$-category using Maurer-Cartan elements. 
Most of the results are known, and others can be derived from existing results. 
We introduce the relevant definitions and basic properties here in our preferred form of presentation, in order to fix conventions and make statements later on in a convenient way. \par

The idea of using bounding cochains (elements satisfying a generalized Maurer-Cartan equation, or {\it Maurer-Cartan elements}) to deform filtered $A_{\infty}$-structures is introduced into Lagrangian Floer theory by Fukaya-Oh-Ohta-Ono \cite{FOOO1}, \cite{FOOO2}, in order to define Floer cohomology. 
This idea generalizes to deformation of a whole filtered curved $A_{\infty}$-category using bounding cochains in a straightforward way. 
When we talk about curved $A_{\infty}$-categories over $\mathbb{K}$ with some extra filtration, instead over the Novikov ring, 
we will just call bounding cochains {\it Maurer-Cartan elements}. \par

	First recall that a curved $A_{\infty}$-category is similar to an ordinary $A_{\infty}$-category, except that there is a curvature term $m^{0}$ and the $A_{\infty}$-equations include the curvature term.

\begin{definition}
	Let $\mathbb{K}$ be a ground ring (or field).
A (graded) curved $A_{\infty}$-category $\mathcal{A}$ over $\mathbb{K}$ consists of the data of:
\begin{itemize}

\item A collection of objects $X \in \ob \mathcal{A}$;

\item For each pair of objects $(X, Y)$, a morphism space $\mathcal{A}(X, Y) = \hom_{\mathcal{A}}(X, Y)$ which is a $\mathbb{K}$-module;

\item For each tuple of objects $X_{0}, \cdots, x_{k}$, a sequence of multilinear maps of degree $2-k$
\[
m^{k}: \mathcal{A}(X_{k-1}, X_{k}) \otimes \cdots \otimes \mathcal{A}(X_{0}, X_{1}) \to \mathcal{A}(X_{0}, X_{k}),
\]
for $k \ge 0$, which satisfies
\begin{equation}
\sum_{\substack{0 \le i, j \le k\\ i+j \le k}} (-1)^{*} m^{k-j+1}(x_{k}, \cdots, x_{i+j+1}, m^{j}(x_{i+j}, \cdots, x_{i+1}), x_{i}, \cdots, x_{1}) = 0,
\end{equation}
where $* = \deg(x_{1}) + \cdots + \deg(x_{i}) - i$.

\end{itemize}
\end{definition}

We can also define functors of curved $A_{\infty}$-categories.

\begin{definition}
	Let $\mathcal{A}$ and $\mathcal{B}$ be curved $A_{\infty}$-categories.
A curved $A_{\infty}$-functor
\[
\mathcal{F}: \mathcal{A} \to \mathcal{B}
\]
consists of:
\begin{itemize}

\item For each object $X \in \ob \mathcal{A}$, an assignment of an object $\mathcal{F}(X) \in \ob \mathcal{B}$;

\item For each tuple of objects $X_{0}, \cdots, X_{k}$, a sequence of multilinear maps of degree $1-k$
\[
\mathcal{F}^{k}: \mathcal{A}(X_{k-1}, X_{k}) \otimes \cdots \otimes \mathcal{A}(X_{0}, X_{1}) \to \mathcal{B}(\mathcal{F}(X_{0}), \mathcal{F}(X_{k})),
\]
satisfying
\begin{equation}\label{curved A-infinity functor equation}
\begin{split}
	& \sum_{\substack{s \ge 1\\k_{1} + \cdots + k_{s} = k}} m_{\mathcal{B}}^{s}(\mathcal{F}^{k_{s}}(x_{k}, \cdots, x_{k_{1}+\cdots+k_{s-1}+1}), \cdots, \mathcal{F}^{k_{1}}(x_{k_{1}}, \cdots, x_{1})) \\
= & \sum_{i, j} (-1)^{*} \mathcal{F}^{k-j+1}(x_{k}, \cdots, x_{i+j+1}, m^{j}_{\mathcal{A}}(x_{i+j}, \cdots, x_{i+1}), x_{i}, \cdots, x_{1}).
\end{split}
\end{equation}

\end{itemize}

\end{definition}

There is a special term 
\[
\mathcal{F}^{0}: \mathbb{K} \to \mathcal{B}(\mathcal{F}(X), \mathcal{F}(X)),
\]
for every $X \in \ob \mathcal{A}$. 
Equivalently, this can be identified with an element
\[
\mathcal{F}^{0}(1) \in \mathcal{B}(\mathcal{F}(X), \mathcal{F}(X)).
\]

The interesting curved $A_{\infty}$-categories and functors are those which admit deformation theory so that the curved $A_{\infty}$-structure can be deformed into an ordinary $A_{\infty}$-structure.
For this purpose, we need some additional structures on the curved $A_{\infty}$-categories and curved $A_{\infty}$-functors. \par

\begin{definition}
\begin{enumerate}[label=(\roman*)]

\item A decreasing, non-Archmedian $\mathbb{R}$-filtration $F$ on $\mathcal{A}$ consists of a filtration
\[
F^{\ge \lambda} \mathcal{A}(X_{0}, X_{1})
\]
by subspaces indexed by $\lambda \in \mathbb{R}$ for each pair of objects, such that
\begin{equation}
F^{\ge \lambda'} \mathcal{A}(X_{0}, X_{1}) \subset F^{\ge \lambda} \mathcal{A}(X_{0}, X_{1}), \text{ if } \lambda' \ge \lambda,
\end{equation}
and if $x_{i} \in F^{\ge \lambda_{i}} \mathcal{A}(X_{0}, X_{1})$ for $i = 1, 2$, we have
\begin{equation}
x_{1} + x_{2} \in F^{\ge \min\{\lambda_{1}, \lambda_{2}\}} \mathcal{A}(X_{0}, X_{1}).
\end{equation}
These should be compatible with the structure maps of $\mathcal{A}$ as follows:
\begin{equation}
m^{k}_{\mathcal{A}} (F^{\ge \lambda_{k}} \mathcal{A}(X_{k-1}, X_{k}) \otimes \cdots \otimes F^{\ge \lambda_{1}} \mathcal{A}(X_{0}, X_{1})) \subset F^{\ge \lambda_{1} + \cdots + \lambda_{k}} \mathcal{A}(X_{0}, X_{k}),
\end{equation}
for $k \ge 1$; and for $k = 0$, we require that
\begin{equation}
m^{0}(1) \in F^{>0} \mathcal{A}(X, X) = \bigcup_{\lambda>0} F^{\ge \lambda} \mathcal{A}(X, X),
\end{equation}
for every $X$.
For brevity, we shall call a decreasing, non-Archmedian $\mathbb{R}$-filtration simply by a filtration.

\item An $\mathbb{R}$-filtration $F$ is discrete, if there exists a sequence $\{\lambda_{n}\}_{n \in \mathbb{Z}}$ of real numbers such that
\begin{equation}
F^{\ge \lambda} \mathcal{A}(X_{0}, X_{1}) = F^{\ge \lambda_{n}} \mathcal{A}(X_{0}, X_{1}), \forall \lambda \in [\lambda_{n}, \lambda_{n+1}),
\end{equation}
and 
\[
\bigcup_{n \in \mathbb{Z}} F^{\ge \lambda_{n}} \mathcal{A}(X_{0}, X_{1}) = \mathcal{A}(X_{0}, X_{1}), 
\]
and
\[
\bigcap_{n \in \mathbb{Z}} F^{\ge \lambda_{n}} \mathcal{A}(X_{0}, X_{1})  = \varnothing.
\]

\item An $\mathbb{R}$-filtration $F$ is said to be bounded above, if there exists $\lambda_{0}$ such that
\begin{equation}
F^{\ge \lambda} \mathcal{A}(X_{0}, X_{1}) = 0, \forall \lambda \ge \lambda_{0}.
\end{equation}

\item Suppose $F_{0}$ is a filtration on $\mathcal{A}$ and $F_{1}$ is a filtration on $\mathcal{B}$.
A curved $A_{\infty}$-functor $\mathcal{F}: \mathcal{A} \to \mathcal{B}$ is said to respect the filtrations, if
\begin{equation}
\mathcal{F}^{k} (F_{0}^{\ge \lambda_{k}} \mathcal{A}(X_{k-1}, X_{k}) \otimes \cdots \otimes F_{0}^{\ge \lambda_{1}} \mathcal{A}(X_{0}, X_{1})) \subset F_{1}^{\ge \lambda_{1} + \cdots + \lambda_{k}} \mathcal{B}(\mathcal{F}(X), \mathcal{F}(X))
\end{equation}
for $k \ge 1$; and for $k = 0$, we require that
\begin{equation}
\mathcal{F}^{0}(1) \in F_{1}^{>0} \mathcal{B}(\mathcal{F}(X), \mathcal{F}(X)) = \bigcup_{\lambda>0} F^{\ge \lambda}  \mathcal{B}(\mathcal{F}(X), \mathcal{F}(X)).
\end{equation}

\end{enumerate}
\end{definition}

By convention, the zero element $0$ lives in $F^{\ge \lambda}$ for any $\lambda$.

\subsection{Maurer-Cartan elements}\label{section: Maurer-Cartan elements}

Let $\mathcal{A}$ be a curved $A_{\infty}$-category.
We wish to study deformations of the curved $A_{\infty}$-structure maps coming from degree-one elements of the endomorphism space $\mathcal{A}(X, X)$ of each object $X$.
Each element $b \in \mathcal{A}^{1}(X, X)$ is supposed to determine a deformation of the curved $A_{\infty}$-algebra structure on the endomorphism space $\mathcal{A}(X, X)$ as follows:
\begin{equation}\label{deformed A-infinity algebra structure}
m^{k, b}(a_{k}, \cdots, a_{1}) = \sum_{l_{0}, \cdots, l_{k}} 
m^{k+l_{0}+ \cdots + l_{k}}(\underbrace{b, \cdots, b}_{l_{k} \text { times }}, a_{k}, \underbrace{b, \cdots, b}_{l_{k-1} \text { times }}, a_{k-1}, \cdots, a_{1}, \underbrace{b, \cdots, b}_{_{l_{0} \text { times }}}).
\end{equation}
If $b = 0$, this is just the original curved $A_{\infty}$-structure map.
For a general element $b$, there might be some convergence issue with the formula above, as \eqref{deformed A-infinity algebra structure} is an infinite sum.
To overcome this difficulty, it is convenient to suppose there is a discrete filtration $F$ on $\mathcal{A}$ that is bounded above. 

\begin{lemma}
Suppose $\mathcal{A}$ has a filtration $F$ that is bounded above. Then for each element
\[
b \in F^{>0} \mathcal{A}^{1}(X, X)
\]
the right-hand-side of \eqref{deformed A-infinity algebra structure} is a finite sum, and in particular converges.
\end{lemma}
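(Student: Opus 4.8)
The plan is to show that for fixed $b \in F^{>0}\mathcal{A}^1(X,X)$ and fixed inputs $a_1, \dots, a_k$, only finitely many of the terms $m^{k+l_0+\cdots+l_k}(\dots, b, \dots, b, a_k, \dots, a_1, b, \dots, b)$ in \eqref{deformed A-infinity algebra structure} are nonzero. First I would record that since $b \in F^{>0}\mathcal{A}^1(X,X)$, there exists $\varepsilon > 0$ with $b \in F^{\ge \varepsilon}\mathcal{A}^1(X,X)$; this uses discreteness of the filtration, so that $F^{>0} = \bigcup_{\lambda > 0} F^{\ge \lambda}$ is realized by one of the values $\lambda_n > 0$ in the defining sequence. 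Likewise, each input $a_j$ lies in $F^{\ge \mu_j}\mathcal{A}(X,X)$ for some $\mu_j \in \mathbb{R}$ (possibly negative), simply because the filtration is exhaustive.

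Next I would invoke the compatibility of $m^N$ with the filtration: the term indexed by $(l_0, \dots, l_k)$ lands in $F^{\ge L}\mathcal{A}(X,X)$ where $L = (l_0 + \cdots + l_k)\varepsilon + \mu_1 + \cdots + \mu_k$. Setting $N = l_0 + \cdots + l_k$ for the total number of inserted copies of $b$, we get $L \ge N\varepsilon + \mu$ with $\mu := \mu_1 + \cdots + \mu_k$ a fixed constant. Now use that $F$ is bounded above: there is $\lambda_0$ with $F^{\ge \lambda}\mathcal{A}(X,X) = 0$ for all $\lambda \ge \lambda_0$. Hence whenever $N\varepsilon + \mu \ge \lambda_0$, i.e. whenever $N \ge (\lambda_0 - \mu)/\varepsilon$, the corresponding term vanishes. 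So only tuples $(l_0, \dots, l_k)$ with $l_0 + \cdots + l_k < (\lambda_0 - \mu)/\varepsilon$ contribute, and there are finitely many such tuples. Therefore the sum in \eqref{deformed A-infinity algebra structure} has finitely many nonzero summands and converges trivially (it is a finite sum in the $\mathbb{K}$-module $\mathcal{A}(X,X)$).

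The argument is essentially bookkeeping with the filtration axioms, so there is no serious obstacle; the only point requiring a little care is making sure one is allowed to choose a single positive $\varepsilon$ with $b \in F^{\ge\varepsilon}$, which is where the hypothesis that $b \in F^{>0}$ (rather than merely $b \in \bigcap_\lambda F^{\ge\lambda}$, say) is used — combined, if one wants the cleanest statement, with discreteness so that $F^{>0}$ coincides with $F^{\ge \varepsilon}$ for some term $\varepsilon$ of the discretizing sequence. One should also note that the case $k = 0$ (the deformed curvature $m^{0,b} = \sum_{N\ge 0} m^N(b,\dots,b)$) is covered by the same estimate with $\mu = 0$, so the deformed structure is well-defined on all of $\mathcal{A}(X,X)$, including its curvature term.
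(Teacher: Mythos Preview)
Your argument is correct and is essentially the same as the paper's proof, only spelled out in more detail: the paper simply notes that $b \in F^{\ge \lambda_b}$ for some $\lambda_b > 0$, that $m^k$ is filtration-nondecreasing, and hence the terms vanish once enough copies of $b$ are inserted. One small remark: you do not actually need discreteness to extract $\varepsilon$, since by definition $F^{>0} = \bigcup_{\lambda>0} F^{\ge \lambda}$, so $b \in F^{>0}$ already gives $b \in F^{\ge \varepsilon}$ for some $\varepsilon>0$; discreteness (or rather exhaustiveness) is only needed to place the $a_j$ in some $F^{\ge \mu_j}$, a point the paper glosses over.
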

\begin{proof}
	By definition, $m^{k}$  increases the filtration.
Since $b \in F^{>0}$, $b \in F^{\ge \lambda_{b}}$ for some $\lambda_{b} > 0$.
Thus the operations $m^{k+l_{0}+\cdots+l_{k}}$ in \eqref{deformed A-infinity algebra structure} vanish for sufficiently many insertions of $b$.
\end{proof}

The formula \eqref{deformed A-infinity algebra structure} can be extended to multiple objects, which gives rise to a deformation of curved $A_{\infty}$-category $\mathcal{A}$. 
Let $X_{i}$ be objects of $\mathcal{A}$, together with an element $b_{i} \in F^{>0} \mathcal{A}^{1}(X_{i}, X_{i})$. 
We define new sequences of multilinear maps
\begin{equation}\label{deformed A-infinity structure maps}
m^{k; b_{k}, \cdots, b_{0}}: \mathcal{A}(X_{k-1}, X_{k}) \otimes \cdots \otimes \mathcal{A}(X_{0}, X_{1}) \to \mathcal{A}(X_{0}, X_{k}),
\end{equation}
\begin{equation}
\begin{split}
m^{k; b_{k}, \cdots, b_{0}}(a_{k}, \cdots, a_{1}) = 
&\sum_{l_{0}, \cdots, l_{k}}
m^{k+l_{0}+\cdots+l_{k}}(\underbrace{b_{k}, \cdots, b_{k}}_{l_{k} \text { times }}, a_{k}, \underbrace{b_{k-1}, \cdots, b_{k-1}}_{l_{k-1} \text { times }} \\
&a_{k-1}, \cdots, a_{1}, \underbrace{b_{0}, \cdots, b_{0}}_{l_{0} \text { times }}),
\end{split}
\end{equation}
by inserting the element $b_{i}$ for different objects $X_{i}$. These maps vary object-wise. 
In particular, the curvature term $m^{0; b}$ for each $(X, b)$ is
\begin{equation}
m^{0; b}(1) = \sum_{k=0}^{\infty} m^{k}(b, \cdots, b).
\end{equation}
The observation is that these maps also satisfy the curved $A_{\infty}$-equations. \par
	
\begin{lemma}
For $k \ge 0$, the multilinear maps $m^{k; b_{k}, \cdots, b_{0}}$ satisfy the following equations:
\begin{equation}
\begin{split}
\sum_{\substack{k_{1}, k_{2} \ge 0\\k_{1}+k_{2}=k+1}} \sum_{0 \le j \le k_{2} + 1}
&(-1)^{*} 
m^{k_{1}; b_{k}, \cdots, b_{j+k_{2}}, b_{j}, \cdots, b_{0}}(a_{k}, \cdots, a_{j+k_{2}+1},\\
& m^{k_{2}; b_{j+k_{2}}, \cdots, b_{j}}(a_{j+k_{2}}, \cdots, a_{j+1}), a_{j}, \cdots, a_{1}).
\end{split}
\end{equation}
\end{lemma}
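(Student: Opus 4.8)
The assertion is that, for arbitrary degree-one elements $b_i \in F^{>0}\mathcal{A}^1(X_i, X_i)$ (no Maurer-Cartan condition on the $b_i$ is needed here), the deformed operations $m^{k; b_k, \dots, b_0}$ of \eqref{deformed A-infinity structure maps} again satisfy the curved $A_\infty$-equations, with sign $* = \deg(a_1) + \cdots + \deg(a_j) - j$. The plan is to unfold the two nested levels of deformation into the undeformed operations $m^k$ with insertions of the $b_i$'s, and then to recognize what remains as a sum, indexed by the ``words'' refining $(a_k, \dots, a_1)$ by interspersed strings of $b_i$'s, of the curved $A_\infty$-equations for $\mathcal{A}$ itself. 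By the preceding lemma every sum in sight is finite, so all the rearrangements are legitimate.

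Concretely, I would first substitute the definition of the inner bracket, writing $m^{k_2; b_{j+k_2}, \dots, b_j}(a_{j+k_2}, \dots, a_{j+1})$ as $\sum m^{q}(w_M)$, where $q$ ranges over $k_2$ plus the total number of inserted $b_i$'s and $w_M$ runs over the strings refining $(a_{j+k_2}, \dots, a_{j+1})$ by those insertions; then substitute the definition of the outer bracket around it, likewise replacing $m^{k_1; \dots}$ by $\sum m^{p}(\,\cdots\,)$ with further $b_i$-insertions among $(a_k, \dots, a_{j+k_2+1})$, around $m^{q}(w_M)$, and among $(a_j, \dots, a_1)$. After both substitutions every term of the left-hand side takes the form $\pm\, m^{p}\bigl(w_R,\, m^{q}(w_M),\, w_L\bigr)$, where $w_L, w_M, w_R$ are (possibly empty) strings each obtained by inserting $b_i$'s into a consecutive block of the $a_i$'s, and $p + q$ equals the length of $w_R w_M w_L$ plus one. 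The combinatorial core is then that the assignment sending a choice of $(k_1, k_2, j)$ together with all insertion multiplicities in both brackets to the pair consisting of the total word $w = w_R w_M w_L$ and the position in $w$ of the consecutive sub-block bracketed by $m^{q}$ is a bijection onto all such pairs; here one has to account for the degenerate cases, in particular $k_2 = 0$, where $m^{q}$ is a pure $b$-string and contributes curvature-type terms of the deformed structure, and the cases where $w_R$ or $w_L$ is empty.

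The final ingredient is the matching of signs, and this is the only place the degree-one hypothesis is used: since $\deg b_i - 1 = 0$, inserting any number of $b_i$'s among the $a$'s does not change the Koszul exponent $\deg(a_1) + \cdots + \deg(a_j) - j$, so the sign attached to a term $\pm\, m^{p}(w_R, m^{q}(w_M), w_L)$ of the left-hand side is exactly the sign of the corresponding term of the curved $A_\infty$-equation for $\mathcal{A}$ applied to the word $w$. Hence, fixing $w$ and summing over all bracketings of its consecutive sub-blocks reconstitutes precisely the left-hand side of the curved $A_\infty$-equation for $\mathcal{A}$ evaluated on $w$, which is zero; summing over all words $w$ yields the claim.

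The only real difficulty I anticipate is the bookkeeping: verifying that the assignment above is a genuine bijection (no term produced twice, none omitted), treating the degenerate indices carefully, and confirming once and for all that the degree count makes every sign exponent agree. None of this requires an idea beyond that degree count and the convergence from the preceding lemma; it is the bounding-cochain deformation computation of \cite{FOOO1}, \cite{FOOO2} transported to the present setting of curved $A_\infty$-categories with filtration over $\mathbb{K}$.
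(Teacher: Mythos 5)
Your proposal is correct and follows exactly the route the paper takes: the paper's proof is the one-line remark that the identity is ``just rearrangement of the curved $A_{\infty}$-equations for $m^{k}$,'' and your unfolding of the two levels of $b$-insertions into a sum of curved $A_{\infty}$-equations indexed by words, with the sign check reducing to $\deg b_i - 1 = 0$, is precisely that rearrangement made explicit. The convergence appeal to the preceding lemma and the attention to degenerate cases ($k_2=0$, empty blocks) are the right points to flag.
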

\begin{proof}
	The proof is just rearrangement of the curved $A_{\infty}$-equations for $m^{k}$.
\end{proof}

The deformation of a curved $A_{\infty}$-structure motivates the following defintion, using which we can deform a curved $A_{\infty}$-structure into one with vanishing curvature term.

\begin{definition}
Let $\mathcal{A}$ be a curved $A_{\infty}$-category over $\mathbb{K}$, and $X \in \ob \mathcal{A}$ an object. 
A Maurer-Cartan element $b \in \mathcal{A}^{1}(X, X)$ is an element of degree one in the endomorphism space of $X$, which satisfies the Maurer-Cartan equation:
\begin{equation}\label{eqn: Maurer-Cartan}
\sum_{k=0}^{\infty} m^{k}(b, \cdots, b) = 0.
\end{equation} 
\end{definition}

	Again, to overcome the difficulty of convergence, we shall only consider Maurer-Cartan elements that belong to the positive part of the filtration,
\[
b \in F^{>0} \mathcal{A}^{1}(X, X).
\]

\begin{definition}
	An object $X \in \ob \mathcal{A}$ is said to be unobstructed if it has a Maurer-Cartan element.
	An unobstructed deformation $B$ of $\mathcal{A}$ consists of a choice of a Maurer-Cartan element $b$ for every object $X \in \ob \mathcal{A}$ that has a Maurer-Cartan element.
\end{definition}

From now on, we shall only consider unobstructed deformations.
Given a deformation $B$ of $\mathcal{A}$, we can define a new $A_{\infty}$-category $\mathcal{A}(B)$ with vanishing curvature.
 The objects of $\mathcal{A}(B)$ are pairs $(X, b)$, where $X \in \ob \mathcal{A}$, 
and $b \in F^{>0} \mathcal{A}(X, X)$ is a Maurer-Cartan element for $X$ from the collection $B$.
The morphism spaces are the same as those in $\mathcal{A}$, i.e.
\begin{equation}
\hom_{\mathcal{A}(B)}((X_{0}, b_{0}), (X_{1}, b_{1})) = \mathcal{A}(X_{0}, X_{1}).
\end{equation}
The $A_{\infty}$-structure maps for $\mathcal{A}(B)$ are the maps \eqref{deformed A-infinity structure maps},
such that $m^{0; b}(1) = 0$.

\begin{definition}
	The $A_{\infty}$-category $\mathcal{A}(B)$ is called the $B$-deformation of $\mathcal{A}$.
\end{definition}

Note that $\mathcal{A}(B)$ might contain fewer underlying objects $X \in \ob A$, as some objects in $\mathcal{A}$ might not have Maurer-Cartan elements.

\subsection{Pushforward of Maurer-Cartan elements}\label{section: pushforward}

Suppose the curved $A_{\infty}$-categories $\mathcal{A}$ and $\mathcal{B}$ have discrete filtrations $F_{0}$ and $F_{1}$, respectively, which are both bounded above.
Let $\mathcal{F}: \mathcal{A} \to \mathcal{B}$ be a curved $A_{\infty}$-functor respecting the filtrations.
Suppose we have a deformation $B$ of $\mathcal{A}$, which consists of a choice of a Maurer-Cartan element $b \in F^{>0} \mathcal{A}^{1}(X, X)$ for each $X \in \ob \mathcal{A}$.
We want to study the behavior of the deformation $B$ under the functor $\mathcal{F}$.
The relevant notion is the pushforward, defined below:

\begin{definition}\label{push forward bounding cochains}
	The pushforward of an element $b \in F^{>0} \mathcal{A}^{1}(X, X)$ by $\mathcal{F}$ is defined to be
\begin{equation}\label{formula of push-forward of bounding cochain}
\mathcal{F}_{*}b = \sum_{k=0}^{\infty} \mathcal{F}^{k}(b, \cdots, b),
\end{equation}
which converges since $b \in F^{>0}$.
\end{definition}

\begin{lemma}\label{pushforward satisfies Maurer-Cartan equation}
	If $b \in F^{>0} \mathcal{A}^{1}(X, X)$ is a Maurer-Cartan element for $X \in \ob \mathcal{A}$,
then the pushforward $\mathcal{F}_{*}b \in \mathcal{B}(\mathcal{F}(X), \mathcal{F}(X))$ is a Maurer-Cartan element for $\mathcal{F}(X) \in \ob \mathcal{B}$.
\end{lemma}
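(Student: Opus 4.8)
The plan is to verify the Maurer--Cartan equation for $\mathcal{F}_*b$ directly by manipulating the curved $A_\infty$-functor equation \eqref{curved A-infinity functor equation}. First I would observe that everything converges: since $b \in F^{>0}\mathcal{A}^1(X,X)$, both $\mathcal{F}_*b = \sum_k \mathcal{F}^k(b,\dots,b)$ and the sums appearing below are finite because $\mathcal{F}$ respects the filtrations and $F_1$ is bounded above, so we may freely rearrange. The degree bookkeeping is immediate: each $\mathcal{F}^k$ has degree $1-k$ and $b$ has degree $1$, so $\mathcal{F}^k(b,\dots,b)$ has degree $1-k + k = 1$, hence $\mathcal{F}_*b \in \mathcal{B}^1(\mathcal{F}(X),\mathcal{F}(X))$ lives in the correct degree; the sign $(-1)^*$ appearing in \eqref{curved A-infinity functor equation} also trivializes, since all inputs are $b$ with $\deg b - 1 = 0$, so $* \equiv 0$ on every term.

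The core computation is to take the functor equation \eqref{curved A-infinity functor equation} with all $k$ inputs equal to $b$ and sum over $k \ge 0$. On the left-hand side one obtains
\[
\sum_{k\ge 0}\ \sum_{\substack{s\ge 1\\ k_1+\cdots+k_s=k}} m_{\mathcal{B}}^{s}\bigl(\mathcal{F}^{k_s}(b,\dots,b),\dots,\mathcal{F}^{k_1}(b,\dots,b)\bigr)
= \sum_{s\ge 1} m_{\mathcal{B}}^{s}(\mathcal{F}_*b,\dots,\mathcal{F}_*b),
\]
where the last equality is the standard resummation of the tensor coalgebra: grouping the $k$ copies of $b$ into $s$ consecutive blocks and summing over all block sizes reconstitutes $s$ independent copies of $\sum_j \mathcal{F}^j(b,\dots,b) = \mathcal{F}_*b$. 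On the right-hand side of \eqref{curved A-infinity functor equation}, with all inputs $b$, the term becomes $\sum_{k\ge 0}\sum_{i,j}\mathcal{F}^{k-j+1}(b,\dots,b,m^{j}_{\mathcal{A}}(b,\dots,b),b,\dots,b)$; collecting the $j$ copies of $b$ fed to $m^j_{\mathcal{A}}$ and summing over $j$ produces $\sum_j m^j_{\mathcal{A}}(b,\dots,b)$, which vanishes identically because $b$ is a Maurer--Cartan element for $X$ by \eqref{eqn: Maurer-Cartan}. Hence the right-hand side is $0$, and therefore $\sum_{s\ge 1} m_{\mathcal{B}}^{s}(\mathcal{F}_*b,\dots,\mathcal{F}_*b) = 0$, which is precisely the Maurer--Cartan equation \eqref{eqn: Maurer-Cartan} for $\mathcal{F}_*b$ (the $s=0$ term $m^0_{\mathcal{B}}(1)$ should be incorporated on the left to match the convention; including it amounts to absorbing the $\mathcal{F}^0$ contribution into $\mathcal{F}_*b$, which is consistent with the formula \eqref{formula of push-forward of bounding cochain} starting at $k=0$).

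The main obstacle, such as it is, is purely organizational rather than conceptual: one must be careful about the indexing in the resummation on the left-hand side — precisely, that the outer sum over $k$ combined with the ordered compositions $k_1+\cdots+k_s=k$ genuinely equals the product $\prod_{i=1}^s\bigl(\sum_{k_i\ge 0}\mathcal{F}^{k_i}(b,\dots,b)\bigr)$ — and about whether the $s=0$ and $j=0$ terms are included, which depends on whether one works with strictly curved functors or not. I would handle this by writing the computation in the bar-complex / tensor-coalgebra language, where $\mathcal{F}$ induces a coalgebra morphism $\hat{\mathcal{F}}$ and the statement reduces to $\hat{\mathcal{F}}(e^b) = e^{\mathcal{F}_*b}$ together with intertwining of the codifferentials; but for this paper a direct term-by-term argument as above suffices, and the only real check is that the filtration hypotheses guarantee finiteness of every sum so that the rearrangements are legitimate.
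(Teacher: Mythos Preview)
Your proposal is correct and is precisely the ``straightforward computation based on the curved $A_\infty$-functor equation and Maurer--Cartan equation for $b$'' that the paper invokes without writing out; you have simply supplied the details the paper omits. Your remark about the bar-complex reformulation $\hat{\mathcal{F}}(e^b)=e^{\mathcal{F}_*b}$ also matches the paper's discussion immediately following the lemma.
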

\begin{proof}
	This follows from a straightforward computation based on the curved $A_{\infty}$-functor equation and Maurer-Cartan equation for $b$.
\end{proof}

In fact, the pushforward is the unique in the following sense.
Let $\hat{\mathcal{F}}$ be the formal coalgebra map
\[
\hat{\mathcal{F}}: \widehat{B} \mathcal{A}(X, X)[1] \to \widehat{B} \mathcal{B}(\mathcal{F}(X), \mathcal{F}(X))[1]
\]
on the completed bar-complex, determined by the sequence of maps $\mathcal{F}^{k}, k \ge 0$.
Here the completions are taken with respect to the filtrations $F_{0}$ and $F_{1}$ respectively.
For each $b \in F^{>0} \mathcal{A}(X, X)$, not necessarily a Maurer-Cartan element, not necessarily of degree one,
the formal sum
\begin{equation}
e^{b} := 1 + b + b \otimes b + b \otimes b \otimes b + \cdots
\end{equation}
is a well-defined element in the completed bar-complex.
Then for each Maurer-Cartan element $b \in F^{>0} \mathcal{A}^{1}(X, X)$, the pushforward $\mathcal{F}_{*}b \in F^{>0} \mathcal{B}^{1}(\mathcal{F}(X), \mathcal{F}(X))$ is the unique element such that following equation holds:
\begin{equation}
e^{\mathcal{F}_{*}b} = \hat{\mathcal{F}}(e^{b}).
\end{equation}

	Given a deformation $B$ of $\mathcal{A}$, i.e. a choice of a Maurer-Cartan element $b \in F^{>0} \mathcal{A}^{1}(X, X)$ for each unobstructed object $X$,
we define the associated deformation of $\mathcal{B}$ under $\mathcal{F}$ to be the deformation $\mathcal{F}_{*}B$, 
consisting of pushforward Maurer-Cartan elements $\mathcal{F}_{*}b \in F^{>0} \mathcal{B}^{1}(\mathcal{F}(X), \mathcal{F}(X))$.
With these deformations, we would like to deform the curved $A_{\infty}$-functor $\mathcal{F}$ to an ordinary $A_{\infty}$-functor as well.
To be able to perform such constructions, we need to further introduce some curved auto-equivalence, to be discussed in the next subsection.

\subsection{A curved auto-equivalence}

The following algebraic framework largely follows \cite{FOOO1}, Chapter 3 and Chapter 5, and \cite{Fukaya3}
The only difference is we work over $\mathbb{K}$ instead of the Novikov ring.
The filtration $F$ in some sense plays the role of the energy filtration on the Novikov ring. \par

Suppose $F$ is a discrete filtration on $\mathcal{A}$ that is bounded above.
Suppose $B$ is an unobstructed deformation of $\mathcal{A}$. 
Consider the sub-category $\mathcal{A}_{B}$ of objects $X$ such that there exists a Maurer-Cartan element $b \in B$ for which $(X, b) \in \ob \mathcal{A}(B)$.
That is, the sub-category of $\mathcal{A}$ with objects $X$, each of which has a Maurer-Cartan element $b \in F^{>0} \mathcal{A}^{1}(X, X)$.
Here $\mathcal{A}(B)$ has curved $A_{\infty}$-structure maps $m^{k; b_{k}, \cdots, b_{0}}$,
and $\mathcal{A}_{B}$ has curved $A_{\infty}$-structure maps $m^{k}$.
The observation is that these two categories are homotopy equivalent, and in fact isomorphic as curved $A_{\infty}$-categories. \par

Define a curved equivalence
\begin{equation}\label{the curved autoequivalence tb}
\mathcal{T}_{B}: \mathcal{A}(B) \to \mathcal{A}_{B}
\end{equation}
On the level of objects, $\mathcal{T}(X, b) = X$.
The zeroth order term is given by
\[
\mathcal{T}^{0}_{B}(1) = b \in \mathcal{A}(X, X).
\]
The first order map is the identity
\[
\mathcal{T}^{1}_{B}(x) = x, \forall x \in \mathcal{A}(X, Y).
\]
The higher order terms are zero
\[
\mathcal{T}^{k}_{B} = 0, \forall k \ge 2.
\]
It is straightforward to check that $\mathcal{T}_{B}$ is a curved $A_{\infty}$-functor, 
which is equivalent to the Maurer-Cartan equation \eqref{eqn: Maurer-Cartan} for $b$.
This is indeed a curved equivalence, in the sense that there is a curved $A_{\infty}$-functor
\begin{equation}\label{the inverse autoequivalence of tb}
\mathcal{S}_{B}: \mathcal{A}_{B} \to \mathcal{A}(B),
\end{equation}
such that $\mathcal{S}_{B} \circ \mathcal{T}_{B} = \id$ and $\mathcal{T}_{B} \circ \mathcal{S}_{B} = \id$.
This inverse is defined by
\[
\mathcal{S}_{B}^{0}(1) = -b \in \mathcal{A}(X, X),
\]
\[
\mathcal{S}_{B}^{1}(x) = x, \forall x \in \mathcal{A}(X, Y),
\]
and higher order terms are zero
\[
\mathcal{S}_{B}^{k} = 0, \forall k \ge 2.
\] \par

Recall that the compositions of two curved $A_{\infty}$-homomorphism $\mathcal{F}: \mathcal{A} \to \mathcal{B}$ and $\mathcal{G}: \mathcal{B} \to \mathcal{A}$ is defined to be
\begin{equation}\label{composition of curved functors}
(\mathcal{G} \circ \mathcal{F})^{k} (x_{k}, \cdots, x_{1}) = \sum_{m \ge 0} \sum_{\substack{l_{1}, \cdots, l_{m}\\l_{1} + \cdots + l_{m} = k}} \mathcal{G}^{m} (\mathcal{F}^{l_{m}}(x_{k}, \cdots, x_{l_{1}+\cdots+l_{m-1}+1}), \cdots, \mathcal{F}^{l_{1}}(x_{l_{1}}, \cdots, x_{1})).
\end{equation}
In particular, the zeroth term of the composition is given as an infinite sum
\[
(\mathcal{G} \circ \mathcal{F})^{0}(1) = \sum_{m \ge 0} \mathcal{G}^{m}(\mathcal{F}^{0}(1), \cdots, \mathcal{F}^{0}(1)).
\]
The composition makes sense in the uncompleted $\hom$-spaces only if there are filtrations $F_{0}$ on $\mathcal{A}$ and $F_{1}$ on $\mathcal{B}$ which are bounded above,
such that both $\mathcal{F}$ and $\mathcal{G}$ respect the filtrations. 
In that case, the sum \eqref{composition of curved functors} is finite. \par

Let us check that $\mathcal{T}_{B}$ and $\mathcal{S}_{B}$ are inverse to each other. 
First we compute $\mathcal{S}_{B} \circ \mathcal{T}_{B}$. Its zeroth term is
\begin{equation}
\begin{split}
& (\mathcal{S}_{B} \circ \mathcal{T}_{B})^{0}(1)\\
 = & \sum_{m \ge 0} \mathcal{S}_{B}^{m}(\mathcal{T}_{B}^{0}(1), \cdots, \mathcal{T}_{B}^{0}(1)) \\
 = & \sum_{m \ge 0}  \mathcal{S}_{B}^{m}(b, \cdots, b) \\
 = & \mathcal{S}_{B}^{0}(1) + \mathcal{S}_{B}^{1}(b) \\
 = & -b + b = 0.
\end{split}
\end{equation}
The first order term is
\begin{equation}
\begin{split}
& (\mathcal{S}_{B} \circ \mathcal{T}_{B})^{1}(x) \\
= & \mathcal{S}_{B}^{1}(\mathcal{T}_{B}^{1}(x)) \\
= & x.
\end{split}
\end{equation}
For $k \ge 2$, the $k$-th order term is
\begin{equation}
\begin{split}
& (\mathcal{S}_{B} \circ \mathcal{T}_{B})^{k}(x_{k}, \cdots, x_{1}) \\
= & \sum_{m \ge 1}  \sum_{\substack{l_{1}, \cdots, l_{m}\\l_{1} + \cdots + l_{m} = k}} \mathcal{S}_{B}^{m}( \mathcal{T}_{B}^{l_{m}}(x_{k}, \cdots, x_{l_{1}+\cdots+l_{m-1}+1}), \cdots, \mathcal{T}_{B}^{l_{1}}(x_{l_{1}}, \cdots, x_{1})) \\
= & 0.
\end{split}
\end{equation}
This is equal to $0$ because in each term 
\[
 \mathcal{S}_{B}^{m}( \mathcal{T}_{B}^{l_{m}}(x_{k}, \cdots, x_{l_{1}+\cdots+l_{m-1}+1}), \cdots, \mathcal{T}_{B}^{l_{1}}(x_{l_{1}}, \cdots, x_{1})),
\]
either $m \ge 2$ or at least one of $l_{j} \ge 2$, so that $\mathcal{S}_{B}^{m} = 0$ or $\mathcal{T}_{B}^{l_{j}} = 0$.

\subsection{Deformations of curved $A_{\infty}$-functors}

Let $\mathcal{A}$ and $\mathcal{B}$ be equipped with filtration $F_{0}$ and $F_{1}$, respectively, both bounded above.
Suppose $\mathcal{F}: \mathcal{A} \to \mathcal{B}$ is a curved $A_{\infty}$-functor that respects the filtrations.
Suppose $B$ is an unobstructed deformation of $\mathcal{A}$, and let $\mathcal{A}(B)$ be the associated $A_{\infty}$-category.
Now let us define a deformation of the functor $\mathcal{F}$, to an ordinary $A_{\infty}$-functor
\begin{equation}
\mathcal{F}_{B}: \mathcal{A}(B) \to \mathcal{B}(\mathcal{F}_{*}B),
\end{equation}
in the following way.
Consider the following commutative diagram:
\begin{equation}
\begin{tikzcd}
\mathcal{A}(B) \arrow[r, "\mathcal{F}_{B}"] \arrow[d, "\mathcal{T}_{B}"] & \mathcal{B}(\mathcal{F}_{*}B) \\
\mathcal{A}_{B} \arrow[r, "\mathcal{F}"] & \mathcal{B}_{\mathcal{F}_{*}B} \arrow[u, "\mathcal{S}_{\mathcal{F}_{*}B}"],
\end{tikzcd}
\end{equation}
where we define
\begin{equation}\label{general formula for deformation of the curved A-infinity functor}
\mathcal{F}_{B} = \mathcal{S}_{\mathcal{F}_{*}B} \circ \mathcal{F} \circ \mathcal{T}_{B}.
\end{equation}

Concretely, this means that for each object $(X, b) \in \ob \mathcal{A}(B)$, we define
\[
\mathcal{F}_{B}(X, b) = (\mathcal{F}(X), \mathcal{F}_{*}b).
\]
For all $(X_{0}, b_{0}), \cdots, (X_{k}, b_{k}) \in \ob \mathcal{A}(B)$, define 
\[
	\mathcal{F}_{B}^{k} = \mathcal{F}^{k; b_{0}, \cdots, b_{k}}: \mathcal{A}(X_{k-1}, X_{k}) \otimes \cdots \otimes \mathcal{A}(X_{0}, X_{1}) \to \mathcal{B}(\mathcal{F}(X_{0}), \mathcal{F}(X_{k}))
\]
by 
\begin{equation}\label{deformation of the curved A-infinity functor}
\mathcal{F}^{k; b_{0}, \cdots, b_{k}}(x_{k}, \cdots, x_{1})
= \sum_{l_{0}, \cdots, l_{k}} \mathcal{F}^{k+l_{0}+\cdots+l_{k}}(\underbrace{b_{k}, \cdots, b_{k}}_{l_{k} \text{ times }}, x_{k}, \underbrace{b_{k-1}, \cdots, b_{k-1}}_{l_{k-1} \text{ times }}, \cdots, x_{1}, \underbrace{b_{0}, \cdots, b_{0}}_{l_{0} \text{ times }}).
\end{equation}
And for $k = 0$, we have  
\begin{equation}\label{zeroth term of the deformed curved functor}
\mathcal{F}^{0; b}(1) = \sum_{m} \mathcal{F}^{m} (b, \cdots, b) - \mathcal{F}_{*}b= 0
\end{equation}
This proves: \par

\begin{lemma}\label{deformed functor}
	The sequence of maps \eqref{deformation of the curved A-infinity functor} defines an ordinary $A_{\infty}$-functor
\[
	\mathcal{F}_{B}: \mathcal{A}(B) \to \mathcal{B}(F(B)).
\]
between ordinary $A_{\infty}$-categories.
\end{lemma}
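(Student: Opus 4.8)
The plan is to read the lemma off the factorization \eqref{general formula for deformation of the curved A-infinity functor}, $\mathcal{F}_{B} = \mathcal{S}_{\mathcal{F}_{*}B}\circ\mathcal{F}\circ\mathcal{T}_{B}$, by checking in turn that this composite (i) is well-defined in the uncompleted morphism spaces, (ii) is a curved $A_{\infty}$-functor $\mathcal{A}(B)\to\mathcal{B}(\mathcal{F}_{*}B)$, (iii) has vanishing curvature term, and (iv) has components given by the explicit formula \eqref{deformation of the curved A-infinity functor}. Granting these four points the lemma follows at once: $\mathcal{A}(B)$ and $\mathcal{B}(\mathcal{F}_{*}B)$ are uncurved by construction, and a curved $A_{\infty}$-functor with $\mathcal{F}_{B}^{0}(1)=0$ between uncurved categories is precisely an ordinary $A_{\infty}$-functor.

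For (i) and (ii): the functors $\mathcal{T}_{B}$ and $\mathcal{S}_{\mathcal{F}_{*}B}$ respect the filtrations $F_{0}$ and $F_{1}$, since apart from the first-order identity their only nonzero term is the zeroth-order one, $\mathcal{T}_{B}^{0}(1)=b\in F_{0}^{>0}$ and $\mathcal{S}_{\mathcal{F}_{*}B}^{0}(1)=-\mathcal{F}_{*}b\in F_{1}^{>0}$; and $\mathcal{F}$ respects the filtrations by hypothesis. As $F_{0}$ and $F_{1}$ are bounded above, for each fixed $k$ the composition formula \eqref{composition of curved functors} is a finite sum, so no completions are needed. Moreover $\mathcal{F}_{*}b$ is a Maurer-Cartan element for $\mathcal{F}(X)$ by Lemma \ref{pushforward satisfies Maurer-Cartan equation}, hence $\mathcal{F}_{*}B$ is an unobstructed deformation of $\mathcal{B}$ and $\mathcal{S}_{\mathcal{F}_{*}B}$ is a bona fide curved $A_{\infty}$-functor; since $\mathcal{T}_{B}$ is a curved $A_{\infty}$-functor (this being equivalent to the Maurer-Cartan equation \eqref{eqn: Maurer-Cartan} for $b$), $\mathcal{F}$ is one by hypothesis, and a composite of curved $A_{\infty}$-functors is again one --- most cleanly via the coalgebra description of Section \ref{section: pushforward}, where $\widehat{\mathcal{F}_{B}}=\widehat{\mathcal{S}_{\mathcal{F}_{*}B}}\circ\widehat{\mathcal{F}}\circ\widehat{\mathcal{T}_{B}}$ is a composite of maps of coaugmented tensor coalgebras intertwining the codifferentials --- we conclude that $\mathcal{F}_{B}$ is a curved $A_{\infty}$-functor.

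For (iii): unwinding \eqref{composition of curved functors} at $k=0$ gives $(\mathcal{F}\circ\mathcal{T}_{B})^{0}(1)=\sum_{m\ge0}\mathcal{F}^{m}(\mathcal{T}_{B}^{0}(1),\dots,\mathcal{T}_{B}^{0}(1))=\sum_{m\ge0}\mathcal{F}^{m}(b,\dots,b)=\mathcal{F}_{*}b$, and then, because $\mathcal{S}_{\mathcal{F}_{*}B}^{m}=0$ for $m\ge2$ while $\mathcal{S}_{\mathcal{F}_{*}B}^{1}=\mathrm{id}$, one obtains $\mathcal{F}_{B}^{0}(1)=\mathcal{S}_{\mathcal{F}_{*}B}^{0}(1)+\mathcal{S}_{\mathcal{F}_{*}B}^{1}\bigl((\mathcal{F}\circ\mathcal{T}_{B})^{0}(1)\bigr)=-\mathcal{F}_{*}b+\mathcal{F}_{*}b=0$, which is exactly \eqref{zeroth term of the deformed curved functor}. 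Thus $\mathcal{F}_{B}$ is uncurved.

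For (iv): since $\mathcal{T}_{B}^{l}$ equals the relevant $b$ for $l=0$, the identity for $l=1$, and $0$ for $l\ge2$, only partitions all of whose parts are $\le1$ contribute to $(\mathcal{F}\circ\mathcal{T}_{B})^{k}$; the inputs fed to $\mathcal{F}$ are then the $x_{i}$ in their original order with copies of $b$ --- the one attached to the correct object --- freely interspersed, which reassembles $\mathcal{F}^{k;b_{0},\dots,b_{k}}$ of \eqref{deformation of the curved A-infinity functor}, and post-composing with $\mathcal{S}_{\mathcal{F}_{*}B}$ changes nothing for $k\ge1$ because $\mathcal{S}_{\mathcal{F}_{*}B}^{1}$ is the identity and $\mathcal{S}_{\mathcal{F}_{*}B}^{m}=0$ for $m\ge2$. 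The one point demanding care --- and the main bookkeeping obstacle --- is tracking indices for several objects simultaneously: one must check that the higher composition terms genuinely vanish and that, on each side of $x_{i}$, it is the Maurer-Cartan element of the correct object that is inserted, which is forced by the object-wise definitions of $\mathcal{T}_{B}$ and $\mathcal{S}_{\mathcal{F}_{*}B}$ but is exactly where one must be attentive. As an alternative to (ii)--(iv) one may simply verify the $A_{\infty}$-functor equations for \eqref{deformation of the curved A-infinity functor} by hand, rearranging the curved $A_{\infty}$-functor equations for $\mathcal{F}$ in the same manner as was done earlier for the deformed structure maps $m^{k;b_{k},\dots,b_{0}}$.
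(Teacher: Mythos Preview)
Your proposal is correct and follows essentially the same approach as the paper: define $\mathcal{F}_{B}$ as the triple composite $\mathcal{S}_{\mathcal{F}_{*}B}\circ\mathcal{F}\circ\mathcal{T}_{B}$, expand the composition to recover the explicit formula \eqref{deformation of the curved A-infinity functor}, and observe via \eqref{zeroth term of the deformed curved functor} that the zeroth term vanishes. The paper's own argument is just the discussion immediately preceding the lemma, capped by ``This proves:''; you have simply filled in the routine verifications (filtration-preservation, finiteness of the composition sums, closure of curved $A_{\infty}$-functors under composition) that the paper leaves implicit.
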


\section{The Viterbo restriction functor}

\subsection{Geometric setup}

	Let $(M, \lambda)$ be a Liouville manifold which is the completion of a Liouville domain $M_{0}$. By abuse of notation, we write $M$ as
\begin{equation}
M_{0} \cup_{\partial M} \partial M \times [1, +\infty).
\end{equation}
Let $\psi^{t} = \psi_{M}^{t}$ be the time-$\log t$ flow of the Liouville vector field.
In order for Floer theory to carry a $\mathbb{Z}$-grading and to have integer coefficients, we assume that $c_{1}(M) = 0$. \par
	To set up wrapped Floer theory, we make a choice of a Hamiltonian $H$ quadratic at infinity, is $C^{2}$-small and Morse inside $M_{0}$. 
We ask a generic assumption that the Reeb dynamics on the boundary contact manifold induced by the Hamiltonian dynamics of the Hamiltonian vector field $X_{H}$ be non-degenerate. 
For almost complex structures, we require that they be of contact type in the cylindrical end. Choose a generic one-parameter family of such almost complex structures, $J_{t}$ parametrized by $t \in [0, 1]$. \par
	The objects of the wrapped Fukaya category are exact cylindrical Lagrangian submanifolds. We assume that each such Lagrangian submanifold comes with a choice of grading and spin structure. \par
	The wrapped Floer cochain space for a pair $(L_{0}, L_{1})$ is the free graded abelian group generated by time-one $H$-chords from $L_{0}$ to $L_{1}$,
\begin{equation}
CW^{*}(L_{0}, L_{1}) = CW^{*}(L_{0}, L_{1}; H) = \bigoplus_{x \in \mathcal{X}(L_{0}, L_{1}; H)} |o_{x}|,
\end{equation}
where $o_{x}$ is the orientation line associated to $x$ \cite{Seidel}, and $|o_{x}|$ is the free abelian group generated by the two orientations subject to the relation that their sum equals zero.
The grading $\deg(x)$ is given by the Maslov index of the Hamiltonian chord $x$.
When there is no confusion, we shall omit the Hamiltonian $H$ in the notation of wrapped Floer complexes. 

The $A_{\infty}$-structure maps
\begin{equation}
m^{k}: CW^{*}(L_{k-1}, L_{k}) \otimes \cdots \otimes CW^{*}(L_{0}, L_{1}) \to CW^{*}(L_{0}, L_{k})
\end{equation}
are defined by counting rigid inhomogeneous pseudoholomorphic disks with boundary components mapped to $(L_{0}, \cdots, L_{k})$. 
We briefly recall the definition here.
Let $S$ be a Riemann surface isomorphic to a disk with $k+1$ boundary punctures $z_{0}, \cdots, z_{k}$, where $z_{0}$ is a negative puncture, and $z_{1}, \cdots, z_{k}$ are positive punctures.
To define inhomogeneous pseudoholomorphic maps with domain $S$, we need a Floer datum, which consists of 
\begin{enumerate}[label=(\roman*)]

\item Strip-like ends
\[
\epsilon_{0}: (-\infty, 0] \times [0, 1] \to S,
\]
near $z_{0}$, and
\[
\epsilon_{i}: [0, +\infty) \times [0, 1] \to S, i = 1, \cdots, k,
\]
near $z_{i}$;

\item A function
\begin{equation}
\lambda_{S}: \partial S \to [1, +\infty) 
\end{equation}
defined on the boundary of $S$ which is constant near each end, say $w_{i, S}$ near the $i$-th end;

\item A closed-one form $\alpha_{S} \in \Omega^{1}(S)$ such that $\alpha_{S} \big\rvert_{\partial S} = 0$, and the pullback by $\epsilon_{i}$ agrees with $w_{i, S}dt$. Stokes' theorem implies
\[
w_{0, S} = \sum_{i=1}^{k} w_{i, S};
\]

\item A family of Hamiltonians $H_{S}: S \to \mathcal{H}(M)$ quadratic in the cylindrical end, with Hamiltonian flow $X_{H_{S}}$, such that the pullback under $\epsilon_{i}$ agrees with $\frac{H}{w_{i, S}^{2}} \circ \psi^{w_{i, S}}$;

\item A family of admissible almost complex structures $J_{S}: S \to \mathcal{J}(M)$ whose pullback under $\epsilon_{i}$ agrees with $(\psi^{w_{i, S}})^{*} J_{t}$.

\end{enumerate}
In the translation-invariant case $k=1$, we do not need the function $\rho_{S}$ and rescaling factors $w_{i, S}$.

For $i=0, \cdots, k-1$, let $\partial_{i} S$ denote the boundary component of $S$ between $z_{i}$ and $z_{i+1}$, 
and let $\partial_{k} S$ denote the boundary component between $z_{k}$ and $z_{0}$.
An inhomogeneous pseudoholomorphic disk is a map that satisfies the following inhomogeneous Cauchy-Riemann equation
\begin{equation}\label{generalized Floer equation}
\begin{cases}
u: S \to M, \\
u(z) \in \psi_{M}^{\lambda_{S}(z)} L_{i}, \text{ if } z \in \partial_{i} S, \\
(du -X_{H_{S}}  \otimes  \alpha_{S} ) + J_{S} \circ (du -X_{H_{S}}  \otimes  \alpha_{S} ) \circ j = 0, \\
\lim\limits_{s \to +\infty} u \circ \epsilon_{i}(s, \cdot) = \psi^{w_{i, S}} x_{i}, i = 1, \cdots, k, \\
\lim\limits_{s \to -\infty} u \circ \epsilon_{0}(s, \cdot) = \psi^{w_{0, S}} x_{0}.
\end{cases}
\end{equation}
The {\it geometric energy} of such a map is defined to be
\begin{equation}
E^{geo}(u) = \int_{S} \frac{1}{2} || du - X_{H_{S}} \otimes \alpha_{S}||^{2} = \int_{S} u^{*} \omega - d(u^{*}H_{S}) \wedge \alpha_{S},
\end{equation}
and the {\it topological energy} of such a map is defined to be
\begin{equation}
E^{top}(u) = \int_{S} u^{*} \omega - d (H_{S} \alpha_{S}) = E^{geo}(u) - \int_{S} u^{*}H d \alpha_{S}.
\end{equation}
For a sub-closed one-form $\alpha_{S}$ we always have $E^{top}(u) \ge E^{geo}(u)$, and for a closed one-form $\alpha_{S}$ we have an equality.

For each Hamiltonian chord $x \in \mathcal{X}(L_{0}, L_{1}; H)$, its action is
\begin{equation}
\mathcal{A}(x) = -\int_{0}^{1} x^{*} \lambda + \int H(x(t)) dt + f_{L_{1}}(0) - f_{L_{0}}(0).
\end{equation}
If we apply the Liouville flow $\psi^{\rho}$ for any $\rho > 0$, we we a Hamiltonian chord $\psi^{\rho} x$ for the Hamiltonian $\frac{H}{\rho} \circ \psi^{\rho}$ between the rescaled Lagrangians $\psi^{\rho} L_{0}$ and $\psi^{\rho} L_{1}$.
The observation is that
\begin{equation}\label{pullback Hamiltonian flow}
\text{The Hamiltonian flow of } \frac{H}{\rho} \circ \psi^{\rho} \text{ is the pullback by } \psi^{\rho} \text{ of the flow of } H.
\end{equation}
The action of the rescaled chord, computed with respect to the non-rescaled symplectic form $\omega$, the Hamiltonian $\frac{H}{\rho} \circ \psi^{\rho}$ and the rescaled primitives is
\begin{equation}
\mathcal{A}(\psi^{\rho} x) = \rho \mathcal{A}(x).
\end{equation}
The action and energy is related by the {\it action-energy identity},
\begin{equation}\label{action-energy identity}
E^{top}(u) = \mathcal{A}(\psi^{w_{0,S}} x_{0}) - \mathcal{A}(\psi^{w_{1, S}} x_{1}) - \cdots - \mathcal{A}(\psi^{w_{k, S}} x_{k}).
\end{equation}
which follows from Stoke's theorem.

Let 
\[
\mathcal{M}_{k+1}(x_{0}, x_{1}, \cdots, x_{k})
\]
be the moduli space of solutions $u$ with finite energy.
It has virtual dimension
\begin{equation}
\mathcal{M}_{k+1}(x_{0}, x_{1}, \cdots, x_{k}) = k-2 + \deg(x_{0}) - \deg(x_{1}) - \cdots - \deg(x_{k}).
\end{equation}
When the virtual dimension is zero, we can choose Floer datum generically such that the moduli space is a compact smooth manifold of dimension zero, so that each solution is rigid.
Each rigid solution $u$ of finite energy defines an isomorphism of orientation lines
\[
o_{u}: o_{x_{k}} \otimes \cdots \otimes o_{x_{1}} \to o_{x_{1}},
\]
and we define
\begin{equation}
m^{k}(x_{k}, \cdots, x_{1}) = \sum_{\text{ rigid } u \in \mathcal{M}_{k+1}(x_{0}, x_{1}, \cdots, x_{k})} (-1)^{\Diamond} o_{u}(x_{k}, \cdots, x_{1}),
\end{equation}
where the sign is 
\begin{equation}
\Diamond = \sum_{i=1}^{k} i \deg(x_{i})
\end{equation}

\subsection{Action-filtered sub-complexes}

The action of chords defines a $\mathbb{R}$-filtration on the wrapped Floer cochain space,
\begin{equation}
F^{\ge C} CW^{*}(L_{0}, L_{1}) = \oplus_{x: \mathcal{A}(x) \ge C} |o_{x}|.
\end{equation}
In the cylindrical end $\partial M \times [1, +\infty)$, since the Hamiltonian only depends on the radial coordinate, 
a Hamiltonian chord must be contained in some level hypersurface $\partial M \times \{R\}$, which has action 
\[
-R^{2} + c_{i} - c_{j}
\]
 (assuming the primitive is locally constant on components of $\partial L$, the contribution from primitive terms can be written as $c_{i} - c_{j}$ for some $i, j$).
Thus except for finitely many chords in a compact set, every chord has negative action, and the action can go to $-\infty$.
In particular, the filtration is bounded above, i.e. there exists $C_{0} > 0$ such that
\begin{equation}
F^{\ge C} CW^{*}(L_{0}, L_{1}) = 0, \text{ if } C \ge C_{0}.
\end{equation}
Since the differential increases action, this is a filtration on the cochain complex, so that each $F^{\ge C} CW^{*}(L_{0}, L_{1})$ is a subcomplex with respect to $m^{1}$.

\subsection{Restricting Lagrangian submanifolds}
	The Viterbo restriction functor for the wrapped Fukaya category is originally defined in \cite{Abouzaid-Seidel}, as an open-string analogue of the Viterbo restriction map for symplectic cohomology, \cite{Viterbo1}. 
As our setup of the wrapped Fukaya category uses quadratic Hamiltonians following \cite{Abouzaid1} rather than linear Hamiltonians, we shall present a slightly different construction. 
Details in action estimates in this setup is referred to \cite{Gao2}. \par
	Let $U_{0} \subset M_{0}$ be a Liouville sub-domain, and $U = U_{0} \cup (\partial U \times [1, +\infty))$ the completion with respect to the induced Liouville structure. Let $L$ be an exact cylindrical Lagrangian submanifold of $M$, which is of the form $L = L_{0} \cup (\partial L \times [1, +\infty))$ for $L_{0} \subset M_{0}$ an exact Lagrangian submanifold with Legendrian boundary $\partial L \subset \partial M$. 
Suppose $L'_{0} = L_{0} \cap U_{0}$ is an exact cylindrical Lagrangian submanifold with Legendrian boundary $\partial L' \subset \partial U$, so that $L' = L'_{0} \cup (\partial L' \times [1, +\infty))$ is a cylindrical Lagrangian submanifold of $U$. Further assume that $L$ satisfies the following strong exactness condition: \par

\begin{assumption}\label{strong exactness assumption}
	The primitive $f_{L}$ for $L$ induces a primitive $f_{L'}$ for $L'$, which can be extended to a function on $U$ which is locally constant near $\partial U$.
\end{assumption}

To interpret this assumption more explicitly, we decompose $l' = \partial L'$ into connected components,
\begin{equation}
\partial L' = \coprod_{i=1}^{N}l'_{i},
\end{equation}
where each $l'_{i} \subset \partial U$ is a connected Legendrian submanifold.
Then Assumption 3.1 says that $f_{L'}$ is not only constant on each component $l'$, but the values on all components are the same. \par

	Under this assumption, we can define an $A_{\infty}$-homomorphism from the wrapped Floer $A_{\infty}$-algebra associated to $L$ to that of $L'$:
\begin{equation}\label{Viterbo restriction map}
R: CW^{*}(L; H_{M}) \to CW^{*}(L'; H_{U}),
\end{equation}
which is called the Viterbo restriction map. The construction easily generalizes to multiple objects, and gives rise to an $A_{\infty}$-functor, called the Viterbo restriction functor
\begin{equation}\label{Viterbo restriction functor}
R: \mathcal{B}(M) \to \mathcal{W}(U),
\end{equation}
where $\mathcal{B}(M)$ is the full $A_{\infty}$-sub-category of $\mathcal{W}(M)$ consisting of Lagrangian submanifolds $L$ satisfying Assumption \ref{strong exactness assumption}. \par

\subsection{Rescaling the geometry}
	In \cite{Abouzaid-Seidel}, the wrapped Floer complex is defined as the colimit of Floer complexes for a cofinal system of Hamiltonians which are linear at infinity. 
The Viterbo restriction map \eqref{Viterbo restriction map} and the Viterbo restriction functor \eqref{Viterbo restriction functor} are defined using cascade maps with respect to a rescaled family of Hamiltonians which are linear at infinity, modified from the defining families of Hamiltonians for the wrapped Fukaya categories. 
The cascade maps are constructed by rescaling the sub-domain, and are a specific version of inhomogeneous pseudoholomorphic disks defined with respect to the associated rescaled families of Hamiltonians and almost complex structures. 
It is natural to define the Viterbo restriction functor using linear Hamiltonians, because the Hamiltonian chords for linear Hamiltonians are typically contained in the interior part $M_{0}$ of $M$ (similar for $U$), so that "restriction" makes sense. 
The restriction map, roughly speaking, is then the natural restriction plus "correction" by non-trivial linear cascades. 
In that sense, the Viterbo restriction map is a continuation-type map with respect to a degenerate family of Floer data. \par
	In order for the Viterbo restriction functor to fit into our current setup of wrapped Fukaya category by quadratic Hamiltonians,
we discuss an alternative and equivalent definition of the Viterbo restriction functor using quadratic Hamiltonians. 
In fact, the definition is almost the same as that using linear Hamiltonians as in \cite{Abouzaid-Seidel} - we also use the Liouville flow to shrink the sub-domain by a conformal factor $\rho$. 
The main difference is that we define it without using cascades (because we didn't have to use popsicles in the definition of $A_{\infty}$-structures in the quadratic setting), but instead generalizations of continuation maps. \par
	The inclusion map $U_{0} \subset M_{0}$ induces a natural map $i: U \to M$, defined as follows. 
Note that a collar neighborhood of $\partial U$ in $U_{0}$ is isomorphic to $(0, 1] \times \partial U$, which is mapped to an open neighborhood of $\partial U$ in $M_{0}$, where the latter is isomorphic to $(0, 2) \times \partial U$. 
The rest of the completion of $U_{0}$ to $U$ is $\partial U \times [1, +\infty)$, and the map $i$ is defined by
\begin{equation}
i(y, r) = \psi_{M}^{r}(y),
\end{equation}
where $\psi_{M}^{r}$ is the time-$\log{r}$ Liouville flow on $M$, and $y \in \partial U$ is regarded as a point in $M_{0} \subset M$, via the natural inclusion $\partial U \subset U_{0} \subset M_{0}$. In other words, $i$ is the map by flowing all points along the trajectories of the Liouville vector field on $M$. 
This map will play an important role in constructing the Viterbo restriction functor, as it naturally relates the completions $U$ and $M$. \par
	Defining inhomogeneous pseudoholomorphic curves for the purpose of constructing the Viterbo restriction map requires some geometric data: Hamiltonians, almost complex structures, and Lagrangian boundary conditions. 
These data come in one-parameter families, but are parametrized by $(0, 1]$ rather than $[0, 1]$ as in the standard setup of continuation maps. Also, the Lagrangian boundary condition is different: rather than being fixed, the Lagrangian boundary condition depends on the parameter $\rho$. \par
	Using the map $i: U \to M$, we introduce one-parameter families of geometric objects as follows. First, there is a one-parameter family of Liouville manifolds $M^{\rho}$, parametrized by $\rho \in (0, 1]$, defined by rescaling the sub-domain $U_{0}$ by the time-$\log(\rho)$ Liouville flow on $M$:
\begin{equation*}
\psi_{M}^{\rho}(U_{0}).
\end{equation*}
Every $M^{\rho}$ is still the same as $M$, but we use the symbol $M^{\rho}$ remember the information that the sub-domain has been shrunk by the factor $\rho$. 
Correspondingly, there is a family of exact cylindrical Lagrangian submanifolds $L^{\rho}$, defined as follows: inside the shrunk sub-domain $\psi_{M}^{\rho}(U_{0})$, $L^{\rho}$ is obtained by shrinking $L'_{0}$:
\begin{equation*}
\psi_{M}^{\rho}(L'_{0});
\end{equation*}
in the collar neighborhood $\partial U \times (\rho, 1)$, $L^{s}$ agrees with $\partial L' \times (\rho, 1)$; outside this collar neighborhood, $L^{s}$ agrees with $L$. $L^{\rho}$ is exact, as there comes with a natural primitive $f^{\rho}$:
\begin{equation}
f^{\rho} =
\begin{cases}
\rho (f_{L} \circ \psi_{M}^{\frac{1}{\rho}}), & \text{ on } \psi_{M}^{\rho}(U_{0}),\\
0, & \text{ on } \partial U \times (\rho, 1),\\
f_{L}, & \text{ outside } U_{0}.
\end{cases}
\end{equation} \par
	To set up the relevant Floer-theoretic construction, we also need suitable families of Hamiltonians and almost complex structures. We introduce a one-parameter family of Hamiltonians on $M$, parametrized by $\rho \in (0, 1]$, as follows:
\begin{equation}\label{Hamiltonian on the rescaled domain}
H^{\rho} =
\begin{cases}
\rho H_{U} \circ \psi_{U}^{\frac{1}{\rho}}, \text{ on } \psi_{M}^{\rho}(U_{0}),\\
H_{1, \rho}, \text{ on the collar neighborhood } \partial U \times (\rho, 1),\\
H_{M} + \frac{1}{\rho} - \rho, \text{ outside } U_{0}.
\end{cases}
\end{equation}
Here 
\begin{equation}\label{rescaled quadratic function}
H_{1, \rho}(y, r) = \frac{r^{2}}{\rho}, \text{ if }(y, r) \in \partial U \times (\rho, 1). 
\end{equation}
For each fixed $\rho \in (0, 1]$, this Hamiltonian $H^{\rho}$ can be obtained from $H_{M}$ by a compactly supported homotopy, and adding a constant $\frac{1}{\rho}$.
One way to understand this Hamiltonian is as follows.
For $\rho \in (0, 1]$, consider the sub-level set $U_{0} \cup \partial U \times [1, \frac{1}{\rho}] \subset U$, and rescale it to $U_{0}$ by the Liouville flow $\psi_{U}^{\rho}$; 
conversely, rescale $U_{0}$ to $U_{0} \cup \partial U \times [1, \frac{1}{\rho}] \subset U$ by $\psi_{U}^{\frac{1}{\rho}}$.
Since Hamiltonian flows preserve these sub-level sets, on $U_{0} \cup \partial U \times [1, \frac{1}{\rho}]$ there is the restriction of the Hamiltonian flow of $H_{U}$.
The pullback of this Hamiltonian flow by $\psi_{U}^{\frac{1}{\rho}}$ is a Hamiltonian flow on $U_{0}$, whose Hamiltonian function is $\rho H_{U} \circ \psi_{U}^{\frac{1}{\rho}}$.
In particular, in a collar neighborhood $\partial U \times (\rho, 1)$, this Hamiltonian is of the form \eqref{rescaled quadratic function}. \par

	The choice of families of almost complex structures is more flexible. The relevant notion is an interpolating almost complex structure, which is an almost complex structures $J^{\rho}$, with the following properties:
\begin{enumerate}[label=(\roman*)]

\item $J^{1} = J_{M}$;

\item There is a $\rho_{0} > 0$ such that for $\rho \le \rho_{0}$, $J^{\rho}|_{\psi_{M}^{\rho}(U_{0})} = (\psi_{M}^{\rho})_{*}J_{U}|_{\psi_{M}^{\rho}(U_{0})}$, and $J^{\rho}$ is of contact type in a neighborhood of $\partial U \times \{\rho\} \subset \partial U \times [\rho, 1]$.

\end{enumerate}\par

\subsection{The first order map}

	For such families of geometric objects to be Floer data for inhomogeneous pseudoholomorphic maps, we need to reparametrize them suitably. 
This requires a choice of a smooth increasing function 
\begin{equation}\label{rescaling cutoff function}
\chi_{\rho} = \chi_{\rho}(s): \mathbb{R} \to [\rho, 1],
\end{equation}
such that $\chi_{\rho}(s) = \rho$ for $s \ll 0$.
and $\chi_{\rho}(s) = 1$ for $s \gg 0$, and $\chi_{\rho}'(s) > 0$ whenever $\chi_{\rho}(s) \neq \rho, 1$. 
Using the above data $H^{\rho}, J^{\rho}, L^{\rho}$, one can introduce the following parametrized inhomogeneous pseudoholomorphic maps. 
 Consider smooth maps $u: \mathbb{R} \times [0, 1] \to M$ which satisfies the following equation:
\begin{equation}\label{degenerate continuation strip}
\begin{cases}
(du - X_{H^{\chi_{\rho}(s)}} \otimes dt) + J^{\chi_{\rho}(s)} \circ (du -X_{H^{\chi_{\rho}(s)}} \otimes dt) \circ j = 0, \\
u(s, 0), u(s, 1) \in L^{\chi_{\rho}(s)}.
\end{cases}
\end{equation}
When $s \gg 0$, we have $\chi_{\rho}(s) = 1$, so that $H^{\chi_{\rho}(s)} = H^{1} = H_{M}$, $J^{\chi_{\rho}(s)} = J^{1} = J_{M}$ and $L^{\chi_{\rho}(s)} = L^{1} = L$.
Thus it makes sense to impose the asymptotic condition as $s \to +\infty$:
\begin{equation}
\lim\limits_{s \to +\infty} u(s, \cdot) = x,
\end{equation}
for a time-one $H_{M}$-chord $x$ from $L$ to itself. 
As $s \ll 0$, $\chi_{\rho}(s) = \rho$, so that  $H^{\chi_{\rho}(s)} = H^{\rho}$, $J^{\chi_{\rho}(s)} = J^{\rho}$ and $L^{\chi_{\rho}(s)} = L^{\rho}$.
Thus we define the asymptotic condition as $s \to -\infty$ to be
\begin{equation}
\lim\limits_{s \to -\infty} u(s, \cdot) = x^{\rho},
\end{equation}
for a time-one $H^{\rho}$-chord $x^{\rho}$ from $L^{\rho}$ to itself.

The first observation is that Floer's equation for $(H^{\rho}, J^{\rho})$ contains information about wrapped Floer complex of $L'$ in $U$.
For each $\rho \in (0, 1]$, let us zoom in inside $U_{0}$ to study Floer's equation for $(H^{\rho}, J^{\rho})$.
Floer's equation in $U$ can be written as
\begin{equation}\label{Floer equation in U}
\begin{cases}
(dv - X_{H_{U}} \otimes dt) + J_{U} \circ (dv - X_{H_{U}} \otimes dt) \circ j = 0, \\
v(s, 0) \in L', v(s, 1) \in L'.
\end{cases}
\end{equation}
The asymptotic condition over the ends are
\[
\lim\limits_{s \to -\infty} v(s, \cdot) = x'_{0}, \lim\limits_{s \to +\infty} v(s, \cdot) = x'_{1}
\]
for some Hamiltonian chords $x'_{0}, x'_{1}$ of $H_{U}$ from $L'$ to itself.
Suppose we have a solution $v$ such that the image of $v$ is completely contained in the region $U_{0} \cup \partial U \times [1, \frac{1}{\rho}]$.
Then we can shrink the map $v$ by $\psi_{U}^{\rho} = (\psi_{U}^{\frac{1}{\rho}})^{-1}$ and include it in $M$ by the map $i$,
\begin{equation}\label{rescaled homomorphic strip}
u^{\rho} =  i \circ \psi_{U}^{\rho} \circ v.
\end{equation}
And the asymptotic chords define chords in $M$ by
\begin{equation}\label{rescaled asymptotic chords}
x^{\rho}_{0} = i \circ \psi_{U}^{\rho} \circ x'_{0}, \hspace{1cm} x^{\rho}_{1} = i \circ \psi_{U}^{\rho} \circ x'_{1}.
\end{equation}
Then $u^{\rho}: \mathbb{R} \times [0, 1] \to M$ is a map that satisfies the following equation, for the choice of Hamiltonian $H^{\rho}$ and almost complex structure $J^{\rho}$,
\begin{equation}\label{rescaled equation}
\begin{cases}
(d u^{\rho} - X_{H^{\rho}} \otimes dt) + J^{\rho} \circ (du^{\rho} - X_{H^{\rho}} \otimes dt) \circ j = 0, \\
u^{\rho}(s, 0) \in L^{\rho}, u(s, 1) \in L^{\rho}.
\end{cases}
\end{equation}
Consider any solution $u'$ to this equation \eqref{rescaled equation}, without requiring that it comes from rescaling of a map $v: \mathbb{R} \times [0, 1] \to U$, by \eqref{rescaled homomorphic strip}.
Suppose the asymptotic condition over the ends are given by some $H^{\rho}$-chords $x_{0}^{\rho}, x_{1}^{\rho}$ from $L^{\rho}$ to itself, which are not necessarily of the form \eqref{rescaled asymptotic chords}. \par

Let 
\begin{equation}\label{moduli space of rescaled strips}
\mathcal{M}^{\rho}(x_{0}^{\rho}, x_{1}^{\rho})
\end{equation}
be the moduli space of solutions to the equation \eqref{rescaled equation}, with asymptotic conditions given by $x^{\rho}_{0}, x^{\rho}_{1}$.
If the chords $x^{\rho}_{0}, x^{\rho}_{1}$ come from chords $x'_{0}, x'_{1}$ in $U$ by \eqref{rescaled asymptotic chords},
and if there is an inhomogeneous pseudoholomorphic strip $v$ satisfying \eqref{Floer equation in U} connecting $x'_{0}$ and $x'_{1}$,
then \eqref{rescaled homomorphic strip} is an element of \eqref{moduli space of rescaled strips}.
Thus the moduli space $\mathcal{M}(x'_{0}, x'_{1})$ is embedded into $\mathcal{M}^{\rho}(x_{0}^{\rho}, x_{1}^{\rho})$ as a subset.
We want to show that for $\rho$ sufficiently small, the moduli space $\mathcal{M}^{\rho}(x_{0}^{\rho}, x_{1}^{\rho})$ is isomorphic to $\mathcal{M}(x'_{0}, x'_{1})$.
Consider the case where the output chord $x^{\rho}_{0}$ is of the form $x^{\rho}_{0} = i \circ \psi_{U}^{\rho} \circ x'_{0}$.
Because of Assumption \ref{strong exactness assumption}, the primitive $f_{L'}$ for $L'$ is globally constant in the cylindrical end.
For any chord $x'_{0}$ in $U$ contained in the sub-level set $U_{0} \cup \partial U \times [1, \frac{1}{\rho}]$ and not a chord in $U_{0}$, its action is
\[
\mathcal{A}(x'_{0}) = - r^{2},
\]
if it is contained in the level hypersurface $\partial U \times \{r\}$, for some $r \in [1, \frac{1}{\rho}]$.
By choosing the primitive carefully, we can arrange that all the chords in $U_{0}$ have positive and small action.

\begin{lemma}\label{small parameter implies coming from U}
	For $\rho>0$ sufficiently small, if the output $H^{\rho}$-chord $x_{0}^{\rho}$ for an inhomogeneous pseudoholomorphic strip $u'$ satisfying \eqref{rescaled equation} comes from $U$, 
i.e. $x^{\rho}_{0} = i \circ \psi_{U}^{\rho} \circ x'_{0}$ for some $H_{U}$-chord $x'_{0}$ in $U$ which is contained in the sub-level set $U_{0} \cup \partial U \times [1, \frac{1}{\rho}]$, 
and if the other chord $x^{\rho}_{1}$ in $M$ is contained in the sub-level set $M_{0} \cup \partial M \times [1, r_{\rho}]$, where
\begin{equation}\label{choice of radius}
r_{\rho} = \max\{1, \sqrt{\frac{1}{\rho}-1}\}.
\end{equation}
(so $r_{\rho} = \sqrt{\frac{1}{\rho}-1}$ whenever $\rho \le \frac{1}{4}$), then $x^{\rho}_{1}$ must also come from $U$, and therefore is contained in $U_{0}$.
Moreover, $u' = u^{\rho} =  i \circ \psi_{U}^{\rho} \circ v$ for some $v: \mathbb{R} \times [0, 1] \to U$.
\end{lemma}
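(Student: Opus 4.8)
The plan is to run an energy/action argument that forces the strip $u'$ to stay inside the region where the geometry is pulled back from $U$, and then invoke removal of the degeneracy via the map $i$. The starting point is the action-energy identity: for a solution $u'$ of \eqref{rescaled equation} with asymptotics $x_0^\rho, x_1^\rho$, we have $E^{top}(u') = \mathcal{A}(x_0^\rho) - \mathcal{A}(x_1^\rho) \ge E^{geo}(u') \ge 0$, so $\mathcal{A}(x_0^\rho) \ge \mathcal{A}(x_1^\rho)$. Since $x_0^\rho = i \circ \psi_U^\rho \circ x_0'$ for a chord $x_0'$ in $U$ contained in the sub-level $U_0 \cup \partial U \times [1, \tfrac{1}{\rho}]$, its action with respect to $H^\rho$ is $\rho \mathcal{A}(x_0')$, and by the computation preceding the lemma this is either positive and small (if $x_0'$ lies in $U_0$) or equals $-r^2$ with $r \in [1,\tfrac1\rho]$ rescaled by $\rho$, i.e. the action of $x_0^\rho$ lies in a window that, after the normalization $H^\rho = H_M + \tfrac1\rho - \rho$ outside $U_0$, is bounded above by a quantity $\to 0$ as $\rho \to 0$. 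So $\mathcal{A}(x_1^\rho)$ is bounded above by the same small quantity.

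Next I would show that this action bound on $x_1^\rho$, together with the hypothesis that $x_1^\rho$ lies in the sub-level $M_0 \cup \partial M \times [1, r_\rho]$ with $r_\rho = \max\{1,\sqrt{\tfrac1\rho - 1}\}$, forces $x_1^\rho$ to actually lie inside the shrunk sub-domain $\psi_M^\rho(U_0)$, hence to come from a chord in $U_0$. The point is that a chord of $H^\rho$ outside $U_0$ but inside the level $\partial M \times \{R\}$, $R \le r_\rho$, has action of the form $-R^2 + (\text{const}) + \tfrac1\rho - \rho$; the choice $r_\rho = \sqrt{\tfrac1\rho - 1}$ is calibrated precisely so that the most positive such action is still larger than the action window available to $x_1^\rho$ — this is the arithmetic reason for the formula \eqref{choice of radius}. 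One checks the primitive contributions $c_i - c_j$ are harmless because Assumption \ref{strong exactness assumption} makes $f_{L'}$ globally constant in the cylindrical end, so no spurious positive shift appears; and chords genuinely in the collar $\partial U \times (\rho,1)$ are handled by the explicit form \eqref{rescaled quadratic function} of $H_{1,\rho}$.

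Once both asymptotics are known to lie in $\psi_M^\rho(U_0)$, I would upgrade this from the asymptotics to the whole strip: a maximum-principle / no-escape argument using that $J^\rho$ is of contact type near $\partial U \times \{\rho\}$ and $H^\rho$ is pulled back from $H_U$ there shows the image of $u'$ cannot cross the level $\partial U \times \{\rho\}$, hence is confined to $\psi_M^\rho(U_0) \cup (\text{collar})$. Pulling back by $(\psi_M^\rho)^{-1}$ and applying $i^{-1}$ then exhibits $u'$ as $i \circ \psi_U^\rho \circ v$ for a genuine Floer strip $v: \mathbb{R} \times [0,1] \to U$ satisfying \eqref{Floer equation in U}, by naturality of Floer's equation under the conformal rescaling together with the compatibility of $H^\rho, J^\rho, L^\rho$ with the pullbacks of $H_U, J_U, L'$.

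The main obstacle I expect is the second step: making the action bookkeeping tight enough that the constant $\tfrac1\rho - \rho$ added to $H_M$ outside $U_0$, the window of admissible actions for $x_0^\rho$, and the level-set actions $-R^2 + c_i - c_j$ for $R \le r_\rho$ all fit together so that escape of $x_1^\rho$ is strictly excluded for all sufficiently small $\rho$ — in particular controlling uniformly the finitely many ``genuine'' $H_M$-chords inside $M_0$ whose actions are fixed and do not scale with $\rho$, and verifying the maximum principle holds for the $s$-dependent family $H^{\chi_\rho(s)}$ rather than a fixed Hamiltonian. The uniformity over the non-compact family of possible levels for $x_1^\rho$, and the interplay with the reparametrizing function $\chi_\rho$, are where the care is needed; the rest is routine once the geometry is confined.
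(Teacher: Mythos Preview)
Your approach is essentially the paper's: both arguments pivot on the action-energy identity together with the fact that $H^\rho = H_M + \tfrac{1}{\rho} - \rho$ outside $U_0$, so that any putative external chord $x_1^\rho$ has its $H^\rho$-action shifted up by roughly $\tfrac{1}{\rho}$; combined with the sub-level bound $R \le r_\rho$ giving $\mathcal{A}_{H_M}(x_1) \ge 1-\tfrac{1}{\rho}$ and the upper bound $\rho\,\mathcal{A}_{H_U}(x_0') \le \rho\delta$, the paper computes $E^{top}(u') \le \rho\delta - 1 < 0$, a contradiction.

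Two small remarks. First, your worry about the $s$-dependent family $H^{\chi_\rho(s)}$ is misplaced for this lemma: equation \eqref{rescaled equation} is Floer's equation for the \emph{fixed} pair $(H^\rho, J^\rho)$, with no $s$-dependence; the parametrized equation \eqref{degenerate continuation strip} is a separate object used later. Second, your final no-escape step (contact-type $J^\rho$ near $\partial U \times \{\rho\}$, then pull back by $i^{-1}$) actually goes beyond what the paper writes: the paper's proof stops after excluding external $x_1^\rho$ and leaves the ``Moreover'' clause implicit, so your sketch is the right way to complete that part.
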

\begin{proof}
The proof uses the action-energy identity, which in this case says that the topological energy $E^{top}(u')$ of $u'$ is related to the action of the chords in the following way
\begin{equation}
E^{top}(u') = \mathcal{A}_{H^{\rho}}(x_{0}^{\rho}) - \mathcal{A}_{H^{\rho}}(x_{1}^{\rho}) = \rho \mathcal{A}_{H_{U}}(x'_{0}) - \mathcal{A}_{H^{\rho}}(x_{1}^{\rho}),
\end{equation}
where the action $\mathcal{A}(x'_{0})$ of $x'_{0}$ is computed with respect to the Liouville form $\lambda_{U}$ on $U$, Hamiltonian $H_{U}$ on $U$ and the primitives for $L'$,
and the action of $x_{0}^{\rho}, x_{1}^{\rho}$ is computed with respect to the Liouville form $\lambda_{M}$ on $M = M^{\rho}$, Hamiltonian $H^{\rho}$ and the primitives $f^{\rho}$. \par
	
If the chord $x^{\rho}_{1}$ is not from $U$, i.e. it lies outside $U_{0}$, then there the Hamiltonian $H^{\rho}$ is $H_{M} + \frac{1}{\rho}$.
The Hamiltonian vector field is the same as $H_{M}$, so that every $H^{\rho}$-chord $x^{\rho}_{1}$ outside $U_{0}$ is also a $H_{M}$-chord $x_{1} = x^{\rho}_{1}$,
but the action is increased by $\frac{1}{\rho}$.
In terms of invariant geometric data, the energy is
\begin{equation}
E^{top}(u') = \rho \mathcal{A}_{H_{U}}(x'_{0}) - \mathcal{A}_{H_{M}}(x_{1}) - \frac{1}{\rho}.
\end{equation}
For the chord $x'_{0}$, since it is contained in the sub-level set $U_{0} \cup \partial U \times [1, \frac{1}{\rho}]$,
its action satisfies
\begin{equation}
- \frac{1}{\rho^{2}} \le \mathcal{A}_{H_{U}}(x'_{0}) \le \delta,
\end{equation}
for some constant $\delta > 0$ independent of $\rho$ but depends only on the choice of the Hamiltonian $H_{U}$, especially its values inside $U_{0}$, and the primitive.
The number $\delta > 0$ can be made as small as we want. Thus
\[
\rho \mathcal{A}_{H_{U}}(x'_{0}) \le \rho \delta.
\]
By the assumption that $x_{1} = x^{\rho}_{1}$ is contained in the sub-level set $M_{0} \cup \partial M \times [1, R_{\rho}]$ where $R_{\rho}$ is as \eqref{choice of radius}, we have
\begin{equation}
\mathcal{A}_{H_{M}}(x_{1}) \ge 1 - \frac{1}{\rho}.
\end{equation}
Thus
\begin{equation}
E^{top}(u') \le \rho \delta - 1 + \frac{1}{\rho} - \frac{1}{\rho} = \rho \delta - 1.
\end{equation}
For sufficiently small $\rho > 0$, this will result a negative energy, which is impossible.
Therefore, the chord $x^{\rho}_{1}$ also comes from $U$.
\end{proof}

While there is no global Hamiltonian function on $M$ whose Lagrangian Floer cochain space is the wrapped Floer cochain space of $L'$ for a Hamiltonian $H_{U}$ on $U$,
by the above lemma, we can think of the limit $\lim\limits_{\rho \to 0+} H^{\rho}$ as a Hamiltonian that defines the wrapped Floer cochain space $CW^{*}(L, L; H_{U})$.
This is a corollary of the above lemma. \par

\begin{corollary}\label{isomorphism of the rescaled moduli space}
	If $\rho$ is sufficiently small, and if the chord $x^{\rho}_{0}$ comes from an $H_{U}$-chord $x'$ in $U$ contained in the sub-level set $U_{0} \cup \partial U \times [1, \frac{1}{\rho}]$, then there is an isomorphism 
\begin{equation}
\mathcal{M}^{\rho}(x^{\rho}_{0}, x^{\rho}_{1}) \cong \mathcal{M}(x'_{0}, x'_{1}).
\end{equation}
\end{corollary}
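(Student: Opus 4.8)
The plan is to exhibit the conformal symplectomorphism $i \circ \psi_{U}^{\rho}$ as the explicit map realizing the claimed isomorphism, and then to invoke Lemma \ref{small parameter implies coming from U} to see that it is surjective. First I would record precisely the sense in which the Floer data match up. The time-$\log\rho$ Liouville flow $\psi_{U}^{\rho}$ on $U$ carries the sub-level set $U_{0} \cup \partial U \times [1, \frac{1}{\rho}]$ to $U_{0} \cup \partial U \times [\rho,1]$ and rescales $\omega_{U}$ by the factor $\rho$; composing with $i$ identifies this region with the rescaled sub-domain $\psi_{M}^{\rho}(U_{0})$ together with its collar $\partial U \times (\rho,1)$ inside $M^{\rho}$. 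By the very definitions of $H^{\rho}$, $L^{\rho}$, and (using $\rho \le \rho_{0}$, so that $J^{\rho}$ is a genuine interpolating almost complex structure) $J^{\rho}$ on that region, the map $\phi := i \circ \psi_{U}^{\rho}$ pulls back the triple $(H^{\rho}, J^{\rho}, L^{\rho})$ to $(H_{U}, J_{U}, L')$, the conformal factor being exactly what produces the rescaled Hamiltonian $\rho H_{U} \circ \psi_{U}^{1/\rho}$ and the action rescaling $\mathcal{A}(\phi\circ x') = \rho\,\mathcal{A}(x')$. Consequently $\phi$ sends a solution $v$ of \eqref{Floer equation in U} whose image lies in $U_{0} \cup \partial U \times [1, \frac{1}{\rho}]$ to a solution $u' = i \circ \psi_{U}^{\rho} \circ v$ of \eqref{rescaled equation} whose image lies in $\psi_{M}^{\rho}(U_{0}) \cup \partial U \times (\rho,1)$, with asymptotics transformed by \eqref{rescaled asymptotic chords}; since $d\phi$ conjugates the linearized Cauchy--Riemann operators, this is a diffeomorphism onto its image of the moduli spaces, compatible with orientations and with Gromov breaking.

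Next I would run the two directions of the bijection. For the forward direction, given $v \in \mathcal{M}(x'_{0}, x'_{1})$, the integrated maximum principle for the contact-type pair $(H_{U}, J_{U})$ confines the image of $v$ to the sub-level set determined by the larger of the levels of $x'_{0}$ and $x'_{1}$; taking $\rho$ small enough that this level is below $\frac{1}{\rho}$, the image lies in $U_{0} \cup \partial U \times [1, \frac{1}{\rho}]$, so $\phi(v) = i\circ\psi_{U}^{\rho}\circ v$ is defined and lies in $\mathcal{M}^{\rho}(x^{\rho}_{0}, x^{\rho}_{1})$ as in \eqref{rescaled homomorphic strip}. For the reverse direction, given $u' \in \mathcal{M}^{\rho}(x^{\rho}_{0}, x^{\rho}_{1})$ with $x^{\rho}_{0} = i\circ\psi_{U}^{\rho}\circ x'$ coming from $U$, and with $x^{\rho}_{1}$ in the window $M_{0} \cup \partial M \times [1, r_{\rho}]$ of \eqref{choice of radius}, Lemma \ref{small parameter implies coming from U} gives, for $\rho$ small, that $x^{\rho}_{1}$ also comes from $U$ (so that $\mathcal{M}(x'_{0}, x'_{1})$ is defined) and that $u' = i\circ\psi_{U}^{\rho}\circ v$ for some $v\colon\mathbb{R}\times[0,1]\to U$; by the identification above, $v$ solves \eqref{Floer equation in U} with the correct asymptotics, hence $v \in \mathcal{M}(x'_{0}, x'_{1})$. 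Thus $\phi$ is a bijection, and by the first step it is an isomorphism of moduli spaces. If instead $x^{\rho}_{1}$ does not come from $U$, the energy estimate in the proof of Lemma \ref{small parameter implies coming from U} forces $\mathcal{M}^{\rho}(x^{\rho}_{0}, x^{\rho}_{1}) = \varnothing$, again consistent with the statement.

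The main obstacle is not the diffeomorphism itself — once $\phi = i\circ\psi_{U}^{\rho}$ is in hand that is largely bookkeeping, the only care needed being that the Liouville flow is merely conformally symplectic, so one must check that conjugation by $d\phi$ preserves the Fredholm and orientation data (it does, since conformal symplectomorphisms act trivially on the relevant index and determinant bundles up to canonical isomorphism). The real content sits in the surjectivity, i.e.\ in Lemma \ref{small parameter implies coming from U}, whose proof rests on Assumption \ref{strong exactness assumption}: it is the global constancy of the primitive $f_{L'}$ in the cylindrical end of $U$ that forces every $H^{\rho}$-chord outside $\psi_{M}^{\rho}(U_{0})$ to have action roughly $-\tfrac{1}{\rho}$, too negative to be joined by a finite-energy strip to the fixed, small-action output chord $x^{\rho}_{0}$. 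Making this uniform — choosing a single $\rho$ small enough to confine simultaneously all asymptotic chords in a prescribed action window and all strips between them to the correct sub-level set — is the one step that genuinely requires the action estimates, and is the reason the construction is carried out only for $\rho \le \rho_{0}$ and for chords below level $\tfrac{1}{\rho}$.
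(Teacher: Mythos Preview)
Your proof is correct and follows the same approach as the paper. The paper does not give a separate proof of this corollary: the forward direction (embedding $\mathcal{M}(x'_{0},x'_{1})$ into $\mathcal{M}^{\rho}(x^{\rho}_{0},x^{\rho}_{1})$ via $i\circ\psi_{U}^{\rho}$) is set up in the discussion preceding Lemma~\ref{small parameter implies coming from U}, and the paper simply states that the isomorphism ``is a corollary of the above lemma,'' leaving the reverse containment to that lemma exactly as you do. Your write-up supplies the details the paper omits --- the maximum-principle confinement of $v$ to the sub-level set, the verification that $\phi$ intertwines the Floer data, and the treatment of the case when $x^{\rho}_{1}$ lies outside the action window --- but the underlying argument is identical.
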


	For each $\rho \in (0, 1]$, we denote by
\begin{equation}
\mathcal{P}^{\rho}(x^{\rho}, x),
\end{equation}
the moduli space of maps satisfy the parametrized inhomogeneous Cauchy-Riemann equation \eqref{degenerate continuation strip} with moving Lagrangian boundary conditions,
where the asymptotic condition over the positive end is given by a $H_{M}$-chord $x$ with action
\[
\mathcal{A}(x) \ge 1 - \frac{1}{\rho},
\]
the asymptotic condition over the negative end is given by a $H^{\rho}$-chord $x^{\rho}$ from $L^{\rho}$ to itself, 
such that $x^{\rho} = i \circ \psi_{U}^{\rho} \circ x'$ for some $H_{U}$-chord $x'$ in $U$ which is contained in the sub-level set $U_{0} \cup \partial U \times [1, \frac{1}{\rho}]$.
The Gromov compactification of this moduli space is denoted by
\begin{equation}
\bar{\mathcal{P}}^{\rho}(x^{\rho}, x).
\end{equation}
The codimension-one boundary strata are given by the following union of products of moduli spaces:
\begin{equation}\label{boundary of moduli space of degenerate continuation maps}
\begin{split}
& \partial \bar{\mathcal{P}}^{\rho}(x^{\rho}, x) \\
= &\coprod \mathcal{M}^{\rho}(x^{\rho}, x^{\rho}_{1}) \times \mathcal{P}^{\rho}(x^{\rho}_{1}, x)\\
\cup & \coprod \mathcal{P}^{\rho}(x^{\rho}, x_{1}) \times \mathcal{M}(x_{1}, x),
\end{split}
\end{equation}
For $\rho$ sufficiently small, since the chord $x^{\rho}$ comes from a chord $x'$ in $U$ contained in the sub-level set $U_{0} \cup \partial U \times [1, \frac{1}{\rho}]$, by Lemma \ref{small parameter implies coming from U} or its Corollary \ref{isomorphism of the rescaled moduli space},
the other chord $x^{\rho}_{1}$ at which the breaking appears must also come from $U$, and we have an isomorphism,
\[
\mathcal{M}^{\rho}(x^{\rho}, x^{\rho}_{1}) \cong \mathcal{M}(x', x'_{1}),
\]
the moduli space of inhomogeneous pseudoholomorphic strips in $U$ satisfying \eqref{Floer equation in U}.
In that case, we write
\begin{equation}
\mathcal{P}^{\rho}(x', x) = \mathcal{P}^{\rho}(x^{\rho}, x),
\end{equation}
and the boundary strata as
\begin{equation}
\begin{split}
& \partial \bar{\mathcal{P}}^{\rho}(x', x) \\
\cong & \coprod \mathcal{M}(x', x'_{1}) \times \mathcal{P}^{\rho}(x'_{1}, x)\\
\cup & \coprod \mathcal{P}^{\rho}(x', x_{1}) \times \mathcal{M}(x_{1}, x),
\end{split}
\end{equation}
We remark that there could be other possible configurations in general, but those do not occur because of the strong exactness condition, Assumption \ref{strong exactness assumption}.

Standard transversality argument shows that the zero-dimensional moduli space is a compact smooth manifold, consisting of finitely many points.
For each $\rho > 0$, we define a map 
\begin{equation}
R^{1}_{\rho}: F^{\ge 1 - \frac{1}{\rho}} CW^{*}(L, L; H_{M}) \to CW^{*}(L', L'; H_{U})
\end{equation}
by counting rigid elements in the zero-dimensional moduli spaces $\mathcal{P}^{\rho}(x^{\rho}, x)$, 
for some $H_{M}$-chord $x$ in $M_{0} \cup \partial M \times [1, r_{\rho}]$ where $r_{\rho}$ is \eqref{choice of radius},
and some $H_{U}$-chord in $U_{0} \cup \partial U \times [1, \frac{1}{\rho}]$ identified with a $H^{\rho}$-chord $x^{\rho} = i \circ \psi_{U}^{\rho} \circ x'$.
By Lemma \ref{small parameter implies coming from U}, this is a chain map if $\rho$ is sufficiently small.
For $\rho' < \rho$, we have an inclusion map of sub-complexes
\begin{equation}
F^{\ge 1 - \frac{1}{\rho}} CW^{*}(L, L; H_{M}) \subset F^{\ge 1 - \frac{1}{\rho'}} CW^{*}(L, L; H_{M}), 
\end{equation}
and there is also a map
\[
R^{1}_{\rho'}: F^{\ge 1 - \frac{1}{\rho'}} CW^{*}(L, L; H_{M}) \to CW^{*}(L', L'; H_{U}),
\]
defined by counting rigid elements in the moduli spaces $\mathcal{P}^{\rho'}(x^{\rho'}, x)$.
For $\rho' < \rho$, if the chords $x^{\rho'}$ and $x^{\rho}$ come from the same chord $x'$ in $U$, 
these moduli spaces are isomorphic, which implies that
\begin{equation}
R^{1}_{\rho'} \big\rvert_{F^{\ge 1 - \frac{1}{\rho}}} = R^{1}_{\rho}.
\end{equation}
Also, the chords of $H$ from $L$ to $L$ are discrete, which implies that the action spectrum is discrete, so is the filtration $F$.
Thus we can find a sequence of numbers $\rho_{n} \in (0, 1]$, such that
\begin{enumerate}[label=(\roman*)]

\item $\rho_{n+1} < \rho_{n}$, and $\lim\limits_{n \to \infty} \rho_{n} = 0$;

\item For each $\rho \in (\rho_{n+1}, \rho_{n}]$, the sub-complex $F^{\ge 1 - \frac{1}{\rho}} CW^{*}(L, L; H_{M})$ is exact the same as $F^{\ge 1 - \frac{1}{\rho_{n}}} CW^{*}(L, L; H_{M})$;

\item For each $\rho \in (\rho_{n+1}, \rho_{n}]$,
\[
R^{1}_{\rho} = R^{1}_{\rho_{n}}
\]

\item For each $n$, we have
\[
R^{1}_{\rho_{n+1}} \big\rvert_{F^{\ge 1 - \frac{1}{\rho_{n}}}} = R^{1}_{\rho_{n}}.
\]

\end{enumerate}
Now we define
\begin{equation}\label{the Viterbo map as a limit}
R^{1} = \lim\limits_{n \to \infty} R^{1}_{\rho_{n}}: CW^{*}(L, L; H_{M}) \to CW^{*}(L', L'; H_{U}).
\end{equation}
This is the desired first order map $R^{1}$ of the Viterbo restriction map \eqref{Viterbo restriction map}.

\subsection{A model for multiplihedra}\label{section: a model for multiplihedra}
	To define higher order terms of the Viterbo restriction map, we need to construct analogous inhomogeneous pseudoholomorphic maps with moving boundary conditions. We first discuss the domains of these maps, and describe the structure of the moduli spaces of these underlying domains. The domains of these maps are still going to be punctured disks, but additional data are included. They form moduli spaces whose combinatorial types realize Stasheff's multiplihedra. There are various versions of multiplihedra frequently used in Floer theory, mainly for the purpose of defining continuation functors, see for example \cite{FOOO1}, \cite{Seidel}, \cite{Sylvan}. The version we are going to use is Sylvan's formulation \cite{Sylvan} which is easier to be adapted to the setup of quadratic Hamiltonians. \par
	Recall from \cite{Sylvan} that for each $k \ge 2$, there is a space $\mathcal{S}^{k+1} = \mathbb{R}_{+} \times \mathcal{R}^{k+1}$, where $\mathcal{R}$ is the uncompactified moduli space of disks with $k+1$ boundary punctures. Also, set $\mathcal{S}^{2}$ to be a single-point set consisting of the strip $Z = \mathbb{R} \times [0, 1]$. Elements in $\mathcal{S}^{k+1}$ are (equivalence classes of) pairs $(w, S)$, where $S \in \mathcal{R}^{k+1}$ is a $(k+1)$-punctured disk with chosen strip-like ends. There is a natural compactification $\bar{\mathcal{S}}^{k+1}$, which serves as a model for multiplihedra such that the codimension-one boundary strata consist of products of associahedra and multiplihedra of lower dimension:
\begin{equation} \label{type I boundary}
\mathcal{R}^{l+1}_{1} \times \prod_{i=1}^{k} \mathcal{S}^{m_{i}+1}, \text{ for } m_{i} \ge 1, \sum_{i=1}^{k} m_{i} = k,
\end{equation}
\begin{equation} \label{type II boundary}
\mathcal{S}^{l+2} \times \mathcal{R}^{k-l+1}_{0},
\end{equation}
where the latter appears $k-1$ times, as there can be $k-1$ ways of splitting a $(k+1)$-punctured disk into two punctured disks so that both have number of punctures $\ge 3$. Here above both $\mathcal{R}^{k+1}_{0}$ and $\mathcal{R}^{k+1}_{1}$ are identical to $\mathcal{R}^{k+1}$, but the subscript $0$ means that this copy occurs in the boundary/generalized corners when $w \to 0$ or $w$ is finite, while the subscript $1$ means that this copy occurs when $w \to +\infty$. \par
	The boundary and generalized corners of $\bar{\mathcal{S}}^{k+1}$ of higher codimension are made up of a union of products of similar kind, where either the $\mathcal{S}$-components break into a union of products as \eqref{type I boundary} and \eqref{type II boundary}, or the $\mathcal{R}$-components break into a union of products of moduli spaces $\mathcal{R}$'s as in the usual compactification of the moduli spaces of punctured disks. To simplify the notations we will not list all of these products, as they will not be used in practice when we construct $A_{\infty}$-functors and verify the $A_{\infty}$-relations.

\subsection{Inhomogeneous pseudoholomorphic disks with moving boundary conditions}
	The goal of this subsection is to describe analogues of the moduli spaces $\mathcal{P}^{\rho}(x', x)$ when there are multiple inputs,
which are used to define higher order terms of the Viterbo restriction map.
These moduli spaces are moduli spaces of parametrized inhomogeneous pseudoholomorphic disks considered in \cite{Sylvan} to define higher order maps of the continuation functor
\begin{equation}
F: \mathcal{W}(M; H_{M}, J_{M}) \to \mathcal{W}(M; H^{\rho}, J^{\rho})
\end{equation}
 \par

Let $k \ge 2$ be a positive integer. 
Consider pairs $(w, S) \in \mathcal{S}^{k+1}$. 
To write down inhomogeneous Cauchy-Riemann equations on these domains, we need to choose A Floer datum. 
\par

\begin{definition}\label{definition of interpolating Floer datum}
	Let $\rho \in (0, 1]$. 
A $\rho$-interpolating Floer datum for $(w, S) \in \mathcal{S}^{k+1}$ consists of
\begin{enumerate}[label=(\roman*)]

\item Strip-like ends 
\[
\epsilon_{0}: (-\infty, 0] \times [0, 1] \to S,
\]
and
\[
\epsilon_{j}: [0, +\infty) \times [0, 1] \to S, j = 1, \cdots, k.
\]

\item A function 
\[
\lambda_{S}: \partial S \to [1, +\infty)
\]
 defined on the boundary of $S$ which is constant near each end, say $w_{j, S}$ near the $j$-th end;

\item A sub-closed one-form $\beta_{S} \in \Omega^{1}(S)$ which vanishes along $\partial S$, and $d\beta_{S}$ vanishes near $\partial S$, and further satisfies $\epsilon_{j}^{*}\beta_{S} = w_{j}dt$;

\item  A family $H^{\rho}_{w, S}$ of Hamiltonians parametrized by points $z \in S$, such that near the $0$-th strip-like end it agrees with $H^{\rho}_{w_{0}}$, and over the $j$-th strip-like end it agrees with $H^{1}_{w_{j}} = H_{M, w_{j}}$, for $j = 1, \cdots k$;

\item  A family $J^{\rho}_{w, S}$ of admissible almost complex structures parametrized by points $z \in S$, such that near the $0$-the strip-like end it agrees with $J^{\rho}_{w_{0}}$, and over the $j$-th strip-like end it agrees with $J^{1}_{w_{j}} = J_{M, w_{j}}$, for $j = 1, \cdots k$.

\end{enumerate}
In the exceptional case $S = Z$ the strip, a Floer datum is a triple $(dt, H^{\chi_{\rho}}, J^{\chi_{\rho}})$,
where $\chi_{\rho}$ is given in \eqref{rescaling cutoff function}, and $H^{\chi_{\rho}}, J^{\chi_{\rho}}$ appear in \eqref{degenerate continuation strip}, and the function is the constant function $1$.
\end{definition}

	Recall that in defining the $A_{\infty}$-structure for the wrapped Fukaya category, it is important to introduce the notion of conformal equivalence of Floer data, in order to make sure that all $A_{\infty}$-structure maps can be defined in a consistent way. \par

\begin{definition}
	We say two Floer data $(\rho_{S}, \beta_{S}, H^{\rho}_{S}, J^{\rho}_{S})$ and $(\rho'_{S}, \beta'_{S}, H'^{\rho}_{S}, J'^{\rho}_{S})$ are conformally equivalent, if there exist constants $C, W > 0$ such that
\begin{equation}
w_{j} = W w'_{j}, \beta_{S} = W \beta'_{S}, H^{\rho}_{S} = \frac{1}{W} (H'^{\rho}_{S})_{C}, J^{\rho}_{S} = (J'^{\rho}_{S})_{C}.
\end{equation}
\end{definition}

	Suppose $(w, S)$ has a negative strip-like end $\epsilon^{-}_{0}$ and $S'$ has a positive strip-like end $\epsilon^{+}_{j}$. 
Then Floer data for $(w, S)$ and $S'$ can be glued to form a Floer datum on $(w, S \#_{a} S')$ by rescaling that for $(t, S)$ by appropriate constants $C, W$ as above,
 so that the asymptotic values over $\epsilon^{-}_{0}$ strictly agrees with those over $\epsilon^{+}_{j}$, 
 then patching together the resulting Floer data. 
 Alternatively, we may rescale the Floer datum for $S'$ to perform such gluing. 
 Also,  there are more complicated iterated gluings, which happen for corner points in \eqref{type I boundary}. 
 In this case, we cannot rescale the Floer data in an arbitrary way, because we can only rescale the Floer datum for each component once in order to obtain a Floer datum for the glued surface, but each component has several strip-like ends over which the weights $w_{j}$ differ. 
 But there is a well-defined partial ordering on the vertices of the ribbon tree modeling each element in $\bar{\mathcal{S}}^{k+1}$. 
 Using this partial ordering we may decide the way of rescaling the Floer data on various components so that we can glue the Floer data together. \par

\begin{definition}
	A Floer datum for a point in the compactification $\bar{\mathcal{S}}^{k+1}$ is a collection of Floer data on the components (either elements of $\mathcal{S}^{l+1}$ or elements of $\mathcal{R}^{m+1}$, which agree after rescaling over each pair of strip-like ends that are connected at infinity.
\end{definition}

	Let $\bar{\mathcal{S}} = \bigcup_{k=1}^{\infty} \bar{\mathcal{S}}^{k+1}$. 
As usual in Floer theory, we shall make sure the Floer data for all elements in $\bar{\mathcal{S}}$ satisfy certain consistency conditions. 
Such kinds of consistency conditions are described in \cite{Seidel}, and then adapted to wrapped Floer theory in \cite{Abouzaid-Seidel}, \cite{Abouzaid1}, and also discussed in \cite{Sylvan} concerning the multiplihedra. \par
	
	Suppose we have made a {\it universal and conformally consistent} choice of Floer data on $\bar{\mathcal{S}}$, 
i.e. a choice of a $\rho$-interpolating Floer datum for every $(w, S) \in \mathcal{S}^{k+1}$, which depends smoothly on $(w, S)$ and are compatible with gluing maps in a conformally equivalent way. \par

We also need to describe the Lagrangian boundary condition for a map from $(w, S)$.
Near the $0$-th puncture, the two boundary components of the strip-like end should be mapped to $\psi_{M}^{w_{0}} L^{\rho}$;
near the $j$-th puncture, the two boundary components of the strip-like end should be mapped to $\psi_{M}^{w_{j}} L$. 
In the intermediate part of the boundary components of $S$, we should have a Lagrangian isotopy interpolating these Lagrangians.
Concretely, such a Lagrangian boundary condition can be defined with the help of a function on $S$ similar to \eqref{rescaling cutoff function}.
Define a family of smooth functions on $S$ depending on one parameter $w \in \mathbb{R}_{+}$,
which is an interpolation between the numbers $\rho$ and $1$ in some intermediate region in $S$ depending on $w$,
\begin{equation}\label{rescaling cutoff function for general surface depending on a parameter w}
\chi_{w, S, \rho}: S \to [\rho, 1],
\end{equation}
with the following properties
\begin{enumerate}[label=(\roman*)]

\item near the $0$-th negative strip-like end of $S$, $\chi_{w, S, \rho} = \rho$;

\item near any other positive strip-like end of $S$, $\chi_{w, S, \rho} = 1$;

\item As $w \to 0$, the region where the function $\chi_{w, S, \rho}$ changes from $\rho$ to $1$ escapes to the $0$-th strip-like end of $S$.

\item As $w \to +\infty$, the region where the function $\chi_{w, S, \rho}$ changes from $\rho$ to $1$ escape to the positive strip-like ends of $S$.

\end{enumerate}
Define the Lagrangian boundary condition as follows:
\begin{equation}\label{moving Lagrangian boundary condition for the inhomogeneous pseudoholomorphic map defining the Viterbo map}
u(z) \in \psi_{M}^{\lambda_{S}(z)} L^{\chi_{w, S, \rho}(z)}, z \in \partial S.
\end{equation}
This is called a {\it moving Lagrangian boundary condition}. \par

In fact, with this function $\chi_{w, S, \rho}$, we can define the family of Hamiltonians $H^{\rho}_{S}$ to be
\begin{equation}
H^{\rho}_{w, S, z} = H^{\chi_{w, S, \rho}(z)},
\end{equation}
where the right hand side is defined in \eqref{Hamiltonian on the rescaled domain}.
This differs from the desired family of Hamiltonians in a $\rho$-interpolating Floer datum in Definition \ref{definition of interpolating Floer datum} by suitable rescaling factors near the strip-like ends,
but we may use the shifting function $\lambda_{S}$ to further change this family of Hamiltonians with the desired rescaling factors.
Similarly, we can define a family of almost complex structures using this function $\chi_{w, S, \rho}$. \par

Consider a triple $(w, S, u)$, where $(w, S) \in \mathcal{S}^{k+1}$, and $u$ is a map from a $(k+1)$-punctured disk $S$ to $M$, satisfying the following inhomogeneous Cauchy-Riemann equation
\begin{equation}\label{inhomogeneous Cauchy-Riemann equation for higher order maps of the Viterbo restriction map}
(du - \beta_{S} \otimes X_{H^{\rho}_{w, S}}(u)) + J^{\rho}_{S}(u) \circ (du - \beta \otimes X_{H^{\rho}_{w, S}}(u)) \circ j_{S} = 0.
\end{equation}
and moving Lagrangian boundary condition \eqref{moving Lagrangian boundary condition for the inhomogeneous pseudoholomorphic map defining the Viterbo map}. 
	The asymptotic convergence conditions for $u$ are as follows. 
Over the $0$-th strip-like end, $u$ asymptotically converges to a time-one chord $x^{\rho}_{w_{0}}$ for the Hamiltonian vector field $X_{\frac{H^{\rho}}{w_{0}} \circ \psi_{M}^{w_{0}}}$ from $\psi_{M}^{w_{0}} L^{\rho}$ to itself,
given as a conformal rescaling of a $H^{\rho}$-chord $x^{\rho}$ by the Liouville flow, $x^{\rho}_{w_{0}} = \psi_{M}^{w_{0}} x^{\rho}$.
Over the $j$-th strip-like end for $j = 1, \cdots, k$, 
$u$ asymptotically converges to a time-one chord $\psi_{M}^{w_{j}} x_{j}$ for the Hamiltonian vector field $X_{\frac{H_{M}}{w_{j}} \circ \psi_{M}^{w_{j}}}$ from $L$ to itself,
given as a conformal rescaling of a $H_{M}$-chord $x_{j}$ by the Liouville flow $\psi_{M}^{w_{j}}$. \par
	
	Let 
\begin{equation}\label{moduli space of continuation disks with moving boundary conditions}
\mathcal{P}^{\rho}_{k+1}(x^{\rho}; x_{1}, \cdots, x_{k})
\end{equation}
 be the moduli space of such triples $(w, S, u)$,
 where $u$ solves the equation \eqref{inhomogeneous Cauchy-Riemann equation for higher order maps of the Viterbo restriction map},
 the moving Lagrangian boundary condition \eqref{moving Lagrangian boundary condition for the inhomogeneous pseudoholomorphic map defining the Viterbo map},
 and asymptotic conditions 
 \[
 \psi_{M}^{w_{0}} x^{\rho}, \psi_{M}^{w_{1}} x_{1}, \cdots, \psi_{M}^{w_{k}} x_{k}.
 \]
 In the case where $x^{\rho}$ comes from a chord $x'$ in $U$, $x^{\rho} = i \circ \psi_{U}^{\rho} \circ x'$ as in \eqref{rescaled asymptotic chords}, 
we can also denote the moduli space by
\[
\mathcal{P}^{\rho}_{k+1}(x'; x_{1}, \cdots, x_{k}).
\]
This moduli space has virtual dimension
 \begin{equation}
 \dim \mathcal{P}^{\rho}_{k+1}(x', x_{1}, \cdots, x_{k}) = \deg(x^{\rho}) - \deg(x_{1}) - \cdots - \deg(x_{k}) + k - 1.
 \end{equation}
 A standard transversality argument shows that when the virtual dimension is zero, 
 the moduli space $\mathcal{P}^{\rho}_{k+1}(x^{\rho}, x_{1}, \cdots, x_{k})$ is a compact smooth manifold of dimension zero, and therefore consists of finitely many points.
We can define maps on Floer complexes by counting rigid elements in this moduli space. 
However, to prove that the resulting maps satisfy the $A_{\infty}$-equations for $A_{\infty}$-homomorphisms, we also need to study the boundary of the one-dimensional moduli spaces. \par
 
Consider the moduli space
 \begin{equation}
 \mathcal{M}^{\rho}_{k+1}(x^{\rho}_{0}, x^{\rho}_{1}, \cdots, x^{\rho}_{k})
 \end{equation}
 of inhomogeneous pseudoholomorphic disks in $M$ defined by an analogue of the equation \eqref{generalized Floer equation},
 by replacing $H_{M}$ by $H^{\rho}$ and $J_{M}$ by $J^{\rho}$,
 as a generalization of $\mathcal{M}^{\rho}(x^{\rho}_{0}, x^{\rho}_{1}$ \eqref{moduli space of rescaled strips} to disks with multiple punctures.
 If all the chords $x^{\rho}_{i}$ come from chords $x'_{i}$ in $U$, then this moduli space contains a copy of 
 \[
 \mathcal{M}_{k+1}(x'_{0}, x'_{1}, \cdots, x'_{k})
 \]
the moduli space of inhomogeneous pseudoholomorphic disks in $U$.
 There are similar results as Lemma \ref{small parameter implies coming from U} and Corollary \ref{isomorphism of the rescaled moduli space} for disks with multiple punctures.
 
 \begin{lemma}
Suppose we have an inhomogeneous pseudoholomorphic disk
\[
u^{\rho} \in \mathcal{M}^{\rho}_{k+1}(x^{\rho}_{0}, x^{\rho}_{1}, \cdots, x^{\rho}_{k}).
\]
 For $\rho > 0$ sufficiently small, if $x^{\rho}_{0}$ comes from $U$, by $x^{\rho}_{0} = i \circ \psi_{U}^{\rho} \circ x'_{0}$ for some $H_{U}$-chord $x'_{0}$ which is contained in the sub-level set $M_{0} \cup \partial M \times [1, \frac{1}{\rho}]$,
 and if the other chords $x^{\rho}_{1}, \cdots, x^{\rho}_{k}$ are contained in the sub-level set $M_{0} \cup \partial M \times [1, r_{\rho}]$,
 then $x^{\rho}_{1}, \cdots, x^{\rho}_{k}$ must also come from $U$.
 
 Moreover, the map $u^{\rho}$ itself comes from an inhomogeneous pseudoholomorphic disk $v$ in $U$, 
 \[
 u^{\rho} = i \circ \psi_{U}^{\rho} \circ v.
 \]
 Thus there is an isomorphism
 \begin{equation}
  \mathcal{M}^{\rho}_{k+1}(x^{\rho}_{0}, x^{\rho}_{1}, \cdots, x^{\rho}_{k}) \cong  \mathcal{M}_{k+1}(x'_{0}, x'_{1}, \cdots, x'_{k}).
 \end{equation}
 \end{lemma}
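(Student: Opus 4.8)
The plan is to run the same argument as in Lemma \ref{small parameter implies coming from U} and Corollary \ref{isomorphism of the rescaled moduli space}, the only genuinely new feature being the bookkeeping for several asymptotic chords. First I would collect the action estimates forced by the shape of $H^{\rho}$. If a chord of $H^{\rho}$ comes from $U$, say $x^{\rho}=i\circ\psi_{U}^{\rho}\circ x'$ with $x'$ an $H_{U}$-chord in $U_{0}\cup\partial U\times[1,\tfrac1\rho]$, then it lies in the fixed compact region $\Omega_{\rho}=\psi_{M}^{\rho}(U_{0})\cup\partial U\times[\rho,1]\subset M_{0}$, and exactly as in the strip case $\mathcal{A}_{H^{\rho}}(x^{\rho})=\rho\,\mathcal{A}_{H_{U}}(x')\le\rho\delta$ for the small constant $\delta>0$, since by the choice of primitive the chords inside $U_{0}$ have small positive action while a chord at $U$-level $r$ has action $-r^{2}$. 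If a chord of $H^{\rho}$ does not come from $U$ it lies in $M\setminus U_{0}$, where $H^{\rho}=H_{M}+\tfrac1\rho-\rho$ has the same Hamiltonian vector field as $H_{M}$; thus it is an $H_{M}$-chord $x$ with $\mathcal{A}_{H^{\rho}}(x^{\rho})=\mathcal{A}_{H_{M}}(x)+\tfrac1\rho-\rho$, and if in addition $x^{\rho}$ lies in $M_{0}\cup\partial M\times[1,r_{\rho}]$ with $r_{\rho}$ as in \eqref{choice of radius} then $\mathcal{A}_{H_{M}}(x)\ge -r_{\rho}^{2}+O(1)=1-\tfrac1\rho+O(1)$, hence $\mathcal{A}_{H^{\rho}}(x^{\rho})\ge 1+O(1)$.

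Next I would feed these into the action--energy identity $E^{top}(u^{\rho})=\mathcal{A}_{H^{\rho}}(x^{\rho}_{0})-\sum_{i=1}^{k}\mathcal{A}_{H^{\rho}}(x^{\rho}_{i})\ge 0$. Since the output comes from $U$, the first term is $\le\rho\delta$. If exactly one input failed to come from $U$, the estimate above gives that it contributes $-\mathcal{A}_{H^{\rho}}(x^{\rho}_{i})\le -1+O(1)$, which, as in the one-input case, already contradicts $E^{top}(u^{\rho})\ge0$; the subtlety with $k\ge2$ is that an input coming from $U$ but sitting at large $U$-level contributes a large positive amount $-\mathcal{A}_{H^{\rho}}(x^{\rho}_{i})=-\rho\,\mathcal{A}_{H_{U}}(x'_{i})$ to the right-hand side, so the crude action count alone does not immediately exclude a mixed configuration. \textbf{This is the step I expect to be the main obstacle.} To close it I would argue that $u^{\rho}$ is in fact confined to $\Omega_{\rho}$: choosing $J^{\rho}$ of contact type in a neighborhood of the separating hypersurface $\partial U\times\{1\}$ as well (permissible, since $J^{\rho}$ is only required to be of contact type near $\partial U\times\{\rho\}$) and using that $H^{\rho}$ is radial in the collar $\partial U\times(\rho,1)$, an integrated maximum principle applied on the collar region forbids $u^{\rho}$ from crossing $\partial U\times\{1\}$ unless one of its punctures is asymptotic to a chord outside $\Omega_{\rho}$; the output cannot do this by hypothesis, and a would-be "bad" input is excluded by the action--energy estimate together with the strong exactness condition (Assumption \ref{strong exactness assumption}), which is exactly what rules out the auxiliary configurations mentioned after \eqref{boundary of moduli space of degenerate continuation maps}. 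Hence every $x^{\rho}_{i}$ lies in $\Omega_{\rho}$, i.e.\ comes from $U$.

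Finally, on $\Omega_{\rho}$ the triple $(H^{\rho},J^{\rho},L^{\rho})$ is precisely the pushforward under the symplectic embedding $i\circ\psi_{U}^{\rho}$ of $(H_{U},J_{U},L')$ restricted to $U_{0}\cup\partial U\times[1,\tfrac1\rho]$, so $v:=(i\circ\psi_{U}^{\rho})^{-1}\circ u^{\rho}$ solves the inhomogeneous Cauchy--Riemann equation in $U$ for $(H_{U},J_{U},L')$ with asymptotics $x'_{0},x'_{1},\dots,x'_{k}$, i.e.\ $u^{\rho}=i\circ\psi_{U}^{\rho}\circ v$. Conversely, any $v\in\mathcal{M}_{k+1}(x'_{0},x'_{1},\dots,x'_{k})$ is confined to $U_{0}\cup\partial U\times[1,\tfrac1\rho]$ by the maximum principle in $U$, its asymptotics being at $U$-level $\le\tfrac1\rho$, so $i\circ\psi_{U}^{\rho}\circ v\in\mathcal{M}^{\rho}_{k+1}(x^{\rho}_{0},x^{\rho}_{1},\dots,x^{\rho}_{k})$. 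These two operations are mutually inverse, and since $i\circ\psi_{U}^{\rho}$ is a diffeomorphism onto $\Omega_{\rho}$ compatible with the Floer data and with the chosen gradings and spin structures, the linearized operators and orientation lines are identified, so the bijection is a diffeomorphism of oriented moduli spaces, yielding the isomorphism $\mathcal{M}^{\rho}_{k+1}(x^{\rho}_{0},x^{\rho}_{1},\dots,x^{\rho}_{k})\cong\mathcal{M}_{k+1}(x'_{0},x'_{1},\dots,x'_{k})$.
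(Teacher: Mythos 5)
The paper offers no proof of this lemma: it is stated as the multi-puncture analogue of Lemma \ref{small parameter implies coming from U} and Corollary \ref{isomorphism of the rescaled moduli space}, with the implicit claim that the same action--energy argument carries over. You have correctly spotted that it does not carry over naively: for $k\ge 2$, an input that \emph{does} come from $U$ but sits at $U$-level near $\tfrac{1}{\rho}$ has $\mathcal{A}_{H^{\rho}}(x^{\rho}_{i})=\rho\,\mathcal{A}_{H_{U}}(x'_{i})$ as negative as $-\tfrac{1}{\rho}$, so it contributes up to $+\tfrac{1}{\rho}$ to the right-hand side of $E^{top}(u^{\rho})=\mathcal{A}(x^{\rho}_{0})-\sum_{i}\mathcal{A}(x^{\rho}_{i})\ge 0$ and swamps the order-$(-1)$ deficit produced by a single bad input. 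Flagging this is genuinely valuable, since the paper glosses over it.

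Your proposed repair, however, does not close the gap. The integrated maximum principle only confines $u^{\rho}$ once one already knows that no puncture is asymptotic to a chord outside $U_{0}$; to exclude a bad input you fall back on ``the action--energy estimate together with the strong exactness condition,'' but the global action--energy estimate is precisely the step you have just shown to be insufficient, and Assumption \ref{strong exactness assumption} bears on a different issue (it forbids positive-action Reeb chords between components of $l'$, i.e.\ the curvature-type configurations discussed after \eqref{boundary of moduli space of degenerate continuation maps}; it says nothing about deep-level inputs). What is actually needed is a \emph{localized} estimate: restrict $u^{\rho}$ to $\Sigma=(u^{\rho})^{-1}\bigl(M\setminus\{r< r_{*}\}\bigr)$ for a level $r_{*}$ in the collar where $H^{\rho}=r^{2}/\rho$ is radial and $J^{\rho}$ is of contact type, and compare $0\le E^{top}(u^{\rho}|_{\Sigma})$ with the actions of the chords at the punctures contained in $\Sigma$ plus the boundary contribution $\bigl(r_{*}h'(r_{*})-h(r_{*})\bigr)\int_{\partial'\Sigma}\alpha_{S}$. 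Even this requires care: $r_{*}$ must be chosen above the collar levels of all asymptotic chords coming from $U$ (otherwise those punctures land in $\Sigma$ and contribute with the unfavourable sign), and the constant $\tfrac{1}{\rho}$ by which $H^{\rho}$ is shifted outside $U_{0}$ must be tracked on both sides of the identity. None of this appears in your write-up, so the key assertion that every $x^{\rho}_{i}$ comes from $U$ remains unproved. The final paragraph, identifying the two moduli spaces once confinement is known, is fine, except that the paper's conditions only force $J^{\rho}=(\psi^{\rho}_{M})_{*}J_{U}$ on $\psi^{\rho}_{M}(U_{0})$ and not on the collar $\partial U\times[\rho,1]$, so the matching of Floer data on all of $U_{0}$ is an additional choice you must make explicit.
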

 
For sufficiently small $\rho$, the above lemma allows us to give a description of the compactification of the moduli space $\mathcal{P}^{\rho}_{k+1}(x', x_{1}, \cdots, x_{k})$ in by adding broken inhomogeneous pseudoholomorphic disks in both $M$ and $U$.
Let
\begin{equation}
\bar{\mathcal{P}}^{\rho}_{k+1}(x', x_{1}, \cdots, x_{k}),
\end{equation}
be the Gromov compactification, which is obtained from $\mathcal{P}^{\rho}_{k+1}(x', x_{1}, \cdots, x_{k})$ 
 by adding broken inhomogeneous pseudoholomorphic maps with moving Lagrangian boundary conditions.
The codimension-one boundary strata consist of a union of the following products of moduli spaces:
\begin{equation}\label{boundary strata of the moduli space of continuation disks with moving boundary conditions}
\begin{split}
&\partial \bar{\mathcal{P}}^{\rho}_{k+1}(x'_{0}; x_{1}, \cdots, x_{k})\\
\cong & \coprod \mathcal{P}^{\rho}_{k-d+2}(x'_{0}; x_{1}, \cdots, x_{m}, x_{new}, x_{m+d+1}, \cdots, x_{k}) \times \bar{\mathcal{M}}_{d+1}(x_{new}, x_{m+1}, \cdots, x_{m+d})\\
 \cup & \coprod \mathcal{M}_{l+1}(x'_{0}, x'_{new, 1}, \cdots, x'_{new, l}) 
\times \mathcal{P}^{\rho}_{s_{1}+1}(x'_{new, 1}; x_{1}, \cdots, x_{s_{1}})\\
&\times \cdots \times 
\mathcal{P}^{\rho}_{s_{l}+1}(x'_{new, l}; x_{s_{1} + \cdots + x_{s_{l-1}}+1}, \cdots, x_{k}).
\end{split}
\end{equation}
When the virtual dimension of $\mathcal{P}^{\rho}_{k+1}(x'_{0}; x_{1}, \cdots, x_{k})$ is equal to one,
all the components on the right hand side are compact smooth manifolds of dimension zero.

For each $\rho \in (0, 1]$, we can count rigid elements in the moduli space $\mathcal{P}^{\rho}_{k+1}(x'; x_{1}, \cdots, x_{k})$ to define a map
\begin{equation}
R^{k}_{\rho}: (F^{\ge 1-\frac{1}{\rho}} CW^{*}(L, L; H_{M}))^{\otimes k} \to CW^{*}(L', L'; H_{U}).
\end{equation}
And for $\rho$ sufficiently small, these maps satisfy analogues of the $A_{\infty}$-equations for $A_{\infty}$-homomorphisms,
\begin{equation}\label{A-infinity functor equation for each rho}
\begin{split}
& \sum_{m, d} (-1)^{*_{m}} R^{k-d+1}_{\rho}(x_{k}, \cdots, x_{m+d+1}, m^{d}_{\mathcal{W}(M)}(x_{m+d}, \cdots, x_{m+1}), x_{m}, \cdots, x_{1})\\
= & \sum_{l} \sum_{s_{1} + \cdots + s_{l} = k} m^{l}_{\mathcal{W}(U)}(R^{s_{l}}_{\rho}(x_{k}, \cdots, x_{s_{1}+\cdots+s_{l-1}+1}), \cdots, R^{s_{1}}_{\rho}(x_{s_{1}}, \cdots, x_{1})) = 0,
\end{split}
\end{equation}
where 
\[
*_{m} = \deg(x_{1}) + \cdots + \deg(x_{m}) - m.
\]

Using the same argument as in the case of the first order map $R^{1}$ as the limit $\rho \to 0^{+}$ \eqref{the Viterbo map as a limit},
we can similarly take the limit by taking a sequence $\rho_{n} \to 0^{+}$, and define
\begin{equation}
R^{k} = \lim\limits_{n \to \infty} R^{k}_{\rho_{n}}: CW^{*}(L, L; H_{M})^{\otimes k} \to CW^{*}(L', L'; H_{U}).
\end{equation}
This is the $k$-th order term of the Viterbo restriction homomorphism.

\subsection{Multiple Lagrangian submanifolds as boundary conditions}
	It is straightforward to extend the previous constructions to multiple Lagrangian submanifolds, so that we get the desired Viterbo restriction functor \eqref{Viterbo restriction functor} on the whole sub-category $\mathcal{B}(M)$ of $\mathcal{W}(M)$ instead of a single Lagrangian submanifold. \par
	To formalize the construction, we shall make sure that the Lagrangian boundary conditions for the maps are well arranged. Let $L_{0}, \cdots, L_{k}$ be Lagrangian submanifolds of $M$ satisfying Assumption \ref{strong exactness assumption}. To make these into boundary conditions for an inhomogeneous pseudoholomorphic map similar to those, 
we may replace the moving Lagrangian boundary condition \eqref{moving Lagrangian boundary condition for the inhomogeneous pseudoholomorphic map defining the Viterbo map} by the following
\begin{equation*}
u(z) \in \psi_{M}^{\lambda_{S}(z)} L_{j}^{\chi_{w, S, \rho}(z)}, \text{ if } z \in \partial_{j} S.
\end{equation*}
 \par
	Before extending such boundary conditions to elements in the compactification, we introduce the following terminology. \par

\begin{definition}
	A consistent Lagrangian labeling for an element in $\bar{\mathcal{S}}^{k+1}$ is an assignment of Lagrangian boundary condition to each boundary component of every punctured disk, such that at each pair of punctures that are connected at infinity, the Lagrangian boundary conditions match for each pair of boundary components of the strip-like ends.
\end{definition}

	We have given a pretty wordy definition as above in order to avoid stating lengthy conditions which are not completely necessary. To make things more transparent,  we shall illustrate this by describing the following picture as an example. Suppose we have an element of $\bar{\mathcal{S}}^{k+1}$ consists of two irreducible components, for instance an element in \eqref{type II boundary}. Denote the element by $((w, S), S')$, so that the $0$-th puncture of $S'$ and the $j$-th puncture of $S$ are connected at infinity, for some $j > 0$. Let $\epsilon'_{0}: (-\infty, 0] \times [0, 1] \to S'$ and $\epsilon_{j}: [0, +\infty) \times [0, 1] \to S$ be the strip-like ends near the corresponding punctures. We require $\epsilon'_{0}((-\infty, 0] \times \{0\})$ and $\epsilon_{j}([0, +\infty) \times \{0\})$ to be mapped to the same Lagrangian submanifold, up to conformal rescaling. \par
	When defining the Viterbo restriction functor \eqref{Viterbo restriction functor}, each time we have fixed a cyclically ordered collection of $k+1$ Lagrangian submanifolds $L_{0}, \cdots, L_{k}$ of $M$, which determine $k+1$ Lagrangian submanifolds $L'_{0}, \cdots, L'_{k}$ of $U$ by restriction and completion. We want the Lagrangian labeling for any element in $\bar{\mathcal{S}}^{k+1}$ to respect the cyclic order. \par

\begin{definition}
	A consistent Lagrangian labeling for an element in $\bar{\mathcal{S}}^{k+1}$ is said to be admissible, if the Lagrangian labeling after gluing all the irreducible components agrees with that for a smooth puncture disk in $\mathcal{S}^{k+1}$.
\end{definition}

	By induction, admissible consistent Lagrangian labeling always exists. Having fixed such Lagrangian labelings for all elements in $\bar{\mathcal{S}}^{k+1}$ for all $k \ge 0$, we make a universal and conformally consistent choice of Floer data for $\bar{\mathcal{S}}$, 
so that the resulting moduli spaces $\bar{\mathcal{P}}_{k+1}(x', x_{1}, \cdots, x_{k})$ defined with respect to these Floer data and Lagrangian boundary conditions in dimensions zero and one are regular and have a nice structure of the boundary strata.
Thus we may count rigid elements in zero-dimensional moduli spaces to define multilinear maps
\begin{equation}\label{k-th order terms of the Viterbo restriction functor}
R^{k}_{\rho}: F^{\ge 1 - \frac{1}{\rho}} CW^{*}(L_{k-1}, L_{k}; H_{M}) \otimes \cdots \otimes F^{\ge 1 - \frac{1}{\rho}} CW^{*}(L_{0}, L_{1}; H_{M}) \to CW^{*}(L'_{0}, L'_{k}; H_{U}).
\end{equation}
For sufficiently small $\rho$, these maps satisfy an analogue of the $A_{\infty}$-functor equations \eqref{A-infinity functor equation for each rho}. \par

	To take the limit $\rho \to 0^{+}$, we need to choose a sequence $\rho_{n} \to 0^{+}$ with nice conditions on the wrapped Floer complexes for various pairs of Lagrangians.
The only condition that needs some special attention is: 
\begin{equation}\label{Floer complex unchanged under small deformation of parameter}
F^{\ge 1 - \frac{1}{\rho}} CW^{*}(L_{j-1}, L_{j}; H_{M}) = F^{\ge 1 - \frac{1}{\rho_{n}}} CW^{*}(L_{j-1}, L_{j}; H_{M}), \forall \rho \in (\rho_{n+1}, \rho_{n}],
\end{equation}
for every pair of Lagrangians $(L_{j-1}, L_{j})$.
When defining the wrapped Fukaya category $\mathcal{W}(M)$, we have fixed a countable collection of admissible Lagrangians,
\[
\mathbb{L} = \{L_{1}, L_{2}, \cdots \}.
\]
For us to be able to find such a sequence satisfying \eqref{Floer complex unchanged under small deformation of parameter},
we can consider a finite collection of Lagrangians 
\[
\mathbb{L}_{N} = \{L_{1}, \cdots, L_{N} \}.
\]
Then it is possible to find such a sequence $\rho_{n}$. 
Taking the limit of \eqref{k-th order terms of the Viterbo restriction functor} for $\rho_{n}$ as $n \to \infty$, we get
\begin{equation}
R^{k}: CW^{*}(L_{k-1}, L_{k}; H_{M}) \otimes \cdots \otimes CW^{*}(L_{0}, L_{1}; H_{M}) \to CW^{*}(L'_{0}, L'_{k}; H_{U}),
\end{equation}
where all these Lagrangians are from the finite collection $\mathbb{L}_{N}$.
Going from $\mathbb{L}_{N}$ to $\mathbb{L}_{N+1}$ by adding one extra Lagrangian might change the choice of the sequence,
but we can just use the sequence for $\mathbb{L}_{N+1}$.
After we check compatibility of the maps constructed for $\mathbb{L}_{N}$ and those constructed for $\mathbb{L}_{N+1}$ with respect to inclusion of sub-complexes, 
there is a purely set-theoretic question about going from $\mathbb{L}_{N}$ to $\mathbb{L}$, which we shall not discuss in details as that is not part of the main geometric construction. \par

Thus the construction of the Viterbo restriction functor
\begin{equation}
R: \mathcal{B}(M) \to \mathcal{W}(U)
\end{equation}
is complete.

\section{Linearized Legendrian homology}\label{section: linearized Legendrian contact homology}

\subsection{The complement of the sub-domain}
	
	Let $W_{0}$ be the closure of $M_{0} \setminus U_{0}$. Denote by $\alpha'$ the restriction of the Liouville form $\lambda$ on $W_{0}$ (or $M_{0}$) to the negative boundary $V_{-} = \partial U$, and $\alpha$ the restriction to the positive boundary $V_{+} = \partial M$. 
 $\alpha', \alpha$ are contact forms on $V_{-}, V_{+}$, and $W_{0}$ is a compact Liouville cobordism between the contact manifolds $V_{-} = \partial U$ and $V_{+} = \partial M$. Note that the Liouville structure induces embeddings of collar neighborhoods:
\begin{equation}
i_{-}: \partial U \times [-1, 1] \to W_{0},
\end{equation}
mapping $\partial U \times \{-1\}$ to the negative boundary $\partial U$, and
\begin{equation}
i_{+}: \partial M \times [-1, 1] \to W_{0},
\end{equation}
mapping $\partial M \times \{1\}$ to the positive boundary $\partial M$. Under these embeddings, the Liouville form is $e^{r}\alpha'$ and $e^{r}\alpha$ near the negative boundary and respectively the positive boundary. \par
	Let $W$ be the completion of the compact Liouville cobordism, that is,
\begin{equation}
W = E_{-} \cup W_{0} \cup E_{+},
\end{equation}
where $E_{-} = \mathbb{R}_{-} \times \partial U$ is the negative cylindrical end, and $E_{+} = \partial M \times \mathbb{R}_{+}$ is the positive cylindrical end. The Liouville form $\lambda_{W}$ on the completion is defined to be
\begin{equation}
\lambda_{W} =
\begin{cases}
\lambda_{M}, &\text{ on } W_{0} \cup (\partial M \times \mathbb{R}_{+}),\\
e^{-r}\alpha', &\text{on } \partial U \times \mathbb{R}_{-}.
\end{cases}
\end{equation}
In particular, on the positive cylindrical end $E_{+}$, $\lambda_{W} = e^{r}\lambda_{+}$.
We call $W$ a complete Liouville cobordism, or sometimes simply a Liouville cobordism for brevity. \par
	Over the cylindrical ends $E_{\pm}$, there are natural splitting of tangent bundles:
\begin{equation*}
T(V_{-} \times \mathbb{R}_{-}) = \xi_{-} \oplus \mathbb{R}(Y_{-}) \oplus \mathbb{R}(\frac{\partial}{\partial r}),
\end{equation*}
\begin{equation*}
T(V_{+} \times \mathbb{R}_{+}) = \xi_{+} \oplus \mathbb{R}(Y_{+}) \oplus \mathbb{R}(\frac{\partial}{\partial r}),
\end{equation*}
where $\xi_{\pm} \subset TV_{\pm}$ are the maximal non-integrable hyperplane distributions with respect to the contact structures determined by $\alpha_{\pm}$, $Y_{\pm}$ are the Reeb vector fields for $\alpha_{\pm}$, and $\frac{\partial}{\partial r}$ are the radial vector fields on $\mathbb{R}_{\pm}$. \par
	To study pseudoholomorphic curves in $W$, let us specify a class of almost complex structures. Let $J$ be an almost complex structure on $M$ which is compatible with $\omega$ and of contact type in the cylindrical end. Let $J_{-}$ be the compatible almost complex structure on the negative symplectization $E_{-}$ of $\partial U$, induced by the restriction of $J$ to $\partial U$ and invariant under the translation $r \mapsto r + r_{0}$ for any $r_{0} < 0$. Define an almost complex structure $J_{W}$ on $W$ to be:
\begin{equation}
J_{W} =
\begin{cases}
J, &\text{on } W_{0} \cup E_{+},\\
J_{-}, &\text{on } E_{-}.
\end{cases}
\end{equation} \par

\subsection{Lagrangian submanifolds outside the sub-domain}
	Let $L \subset M$ be an exact cylindrical Lagrangian submanifold. It is of the form $L_{0} \cup (\partial L \times \mathbb{R}_{+})$. Let $L^{c}_{0} = L_{0} \cap W_{0}$ be the restriction of $L_{0}$ to the compact Liouville cobordism $W_{0}$. $L^{c}_{0}$ is a compact exact Lagrangian cobordism between the Legendrian submanifolds $\partial L' \subset \partial U$ and $\partial L \subset \partial M$. The negative boundary $\partial L'$ is disconnected and made of several connected components,
\begin{equation}
\partial L' = \coprod_{i=1}^{N} l'_{i},
\end{equation}
where each $l'_{i} \subset \partial U$ is a Legendrian submanifold with respect to the contact form $\alpha'$. While we impose no assumption on the positive boundary $\partial L$, but in general it consists of several connected components
\begin{equation}
\partial L = \coprod_{j=1}^{K} l_{j}.
\end{equation}
By our assumption, the primitive $f_{L}$ vanishes on all these connected components. \par
	Let $L^{c}$ be the completion of $L^{c}_{0}$ in $W$ by attaching negative and positive cylindrical ends, i.e.,
\begin{equation}
L^{c} = (\partial L' \times \mathbb{R}_{-}) \cup L^{c}_{0} \cup (\partial L \times \mathbb{R}_{+}).
\end{equation}
This is a complete exact Lagrangian cobordism between the Legendrian ends $l'$ and $l$ at infinity. \par

\subsection{Linearized Legendrian homology}

	The other way of realizing the Viterbo restriction functor is via Symplectic Field Theory, to be specific, via linearized Legendrian homology. 
In this subsection, we briefly summarize the definition of linearized Legendrian cohomology, basically following \cite{Bourgeois-Ekholm-Eliashberg}.
In the next subsection, we shall construct an $A_{\infty}$-structure on linearized Legendrian complex on the chain level.

The linearized Legendrian homology can be thought of as a linearization of the full Legendrian homology with respect to the augmentation given by the filling, extended by cohomology of $L$. 
The $A_{\infty}$-structure, however, will require more than the augmentation given by the filling in the usual sense. \par

	Let $\partial M \times \mathbb{R}$ be the symplectization of the contact manifold $V = \partial M$, with symplectic form $d(e^{t}\alpha)$, where $\alpha = \lambda|_{\partial M}$ is the contact form on $\partial M$.
An admissible almost complex structure $J_{\infty}$ on $\partial M \times \mathbb{R}$ is determined by the restriction of $J$ on $M$ to the complement of a large compact set, say the restriction of $J$ to $\partial M \times [1, +\infty)$.
The space of admissible almost complex structures is denoted by $\mathcal{J}_{\infty}(\alpha)$. The cylinder $l \times \mathbb{R} \subset \partial M \times \mathbb{R}$ is a trivial exact Lagrangian cobordism between the Legendrian $l$ and itself. We make the following assumption, which can be achieve by generic perturbation of the contact form: \par

\begin{assumption}
	All closed periodic Reeb orbits are transversally non-degenerate. The starting point and the ending point of any Reeb chord of $l$ differ.
\end{assumption}

	Fix a ground field $\mathbb{K}$ of characteristic $0$. The linearized Legendrian cohomology is the homology of a chain complex over $\mathbb{K}$ associated to the Legendrian $l$ as well as the chosen filling $L$.
To define it, we shall need a Morse function $f$ on $L$ which depends only on the radial coordinate in the cylindrical end, and tends to $+\infty$. Without loss of generality, we may assume that the restriction of the Hamiltonian $H|_{L}$ is Morse-Smale.
The linearized Legendrian chain group of $l$ filled by $L$ is the $\mathbb{K}$-vector space $LC^{*}_{lin}(l, L; \alpha)$ generated by all Reeb chords from $l$ to itself for the contact form $\alpha$, as well as the critical points of $H|_{L}$. \par
	In this paper we shall fix the grading convention such that the grading on the linearized Legendrian complex is cohomological.
The grading is defined as follows. 
For a critical point, its grading is simply the Morse index. 
For a Reeb chord, the grading is $n$ minus the Conley-Zehnder index of the linearized Reeb flow, i.e.,
\begin{equation}
\deg(\gamma) = n - \mu_{CZ}(\gamma).
\end{equation}
Recall that the Conley-Zehnder index is defined by choosing a reference path in the same homotopy class as $\gamma$ as well as a trivialization of $TM$ along that path relative to the endpoints, which matches with the prescribed Lagrangian boundary conditions. The degree of a closed periodic Reeb orbit is defined in a similar way. \par

	To define the differential on $LC^{*}_{lin}(l, L; \alpha)$, we shall consider certain moduli spaces of proper pseudoholomorphic curves in $W$ capped in $M$.
Fix a basepoint on the geometric image of each closed periodic Reeb orbit, say $t = 0$.
Let $\gamma_{-}, \gamma_{+}$ be Reeb chords of $l$, and $\gamma_{1}, \cdots, \gamma_{l}$ Reeb chords of $l$ that are contractible in $M$, and $\sigma_{1}, \cdots, \sigma_{m}$ closed periodic Reeb orbits on $\partial M$ that are contractible in $M$.
A capped pseudoholomorphic strip consists of the following objects:
\begin{enumerate}

\item $f: D^{2} \setminus \{z_{-}, z_{+}, z_{1}, \cdots, z_{m}, y_{1}, \cdot, y_{l}\} \to M$ is $J_{\infty}$-holomorphic and satisfies the boundary condition
$f(\partial D^{2} \setminus \{z_{-}, z_{+}, z_{1}, \cdots, z_{m}\}) \subset l \times \mathbb{R}$,
where $z_{-}, z_{+}, z_{1}, \cdots, z_{m}$ are boundary punctures, and $y_{1}, \cdots, y_{l}$ are interior punctures,
such that $f$ asymptotically converges to the Reeb chord $\gamma_{\pm}$ at the puncture $z_{\pm}$ at $\pm \infty$, and to the Reeb chords $\gamma_{1}, \cdots, \gamma_{m}$ at the additional punctures $z_{1}, \cdots, z_{m}$ at $-\infty$,
and to the periodic Reeb orbits $\sigma_{1}, \cdots, \sigma_{l}$ at the decorated interior punctures $y_{1}, \cdots, y_{l}$ at $-\infty$, with chosen asymptotic markers so that $f$ asymptotically converges to the preferred based point $t = 0$ of each Reeb orbit $\sigma_{j}$ along the asymptotic marker;

\item $g_{i}: H_{+} \to M$ is a $J$-holomorphic half-plane with boundary condition $h_{i}(\partial H_{+}) \subset L$, such that $g_{i}$ asymptotically converges to the Reeb chord $\gamma_{i}$ at $+\infty$;

\item $h_{j}: \mathbb{C} \to M$ is a $J$-holomorphic plane, such that $h_{j}$ asymptotically converges to the closed periodic Reeb orbit $\sigma_{j}$ at $+\infty$.

\end{enumerate} \par

	Let $\tilde{\mathcal{M}}_{m, l}(\gamma_{-}, \gamma_{+}; \gamma_{1}, \cdots, \gamma_{m}; \sigma_{1}, \cdots, \sigma_{l})$ be the moduli space of capped pseudoholomorphic strips.
The virtual dimension of this moduli space is
\begin{equation}\label{dimension of the moduli space of anchored holomorphic strips}
v-\dim \tilde{\mathcal{M}}_{m, l}(\gamma_{-}, \gamma_{+}; \gamma_{1}, \cdots, \gamma_{m}; \sigma_{1}, \cdots, \sigma_{l})
= \deg(\gamma_{-}) - \deg(\gamma_{+}).
\end{equation}
This can be seen as follows. 
We first consider pseudoholomorphic curves in $\partial M \times \mathbb{R}$ with no caps.
Let $\tilde{\mathcal{M}}^{\infty}_{m, l}(\gamma_{-}, \gamma_{+}; \gamma_{1}, \cdots, \gamma_{m}; \sigma_{1}, \cdots, \sigma_{l})$
be the moduli space of proper $J_{\infty}$-holomorphic disks in $\partial M \times \mathbb{R}$ with one positive puncture asymptotic to $\gamma_{+}$, $m+1$ negative punctures asymptotic to $\gamma_{-}, \gamma_{1}, \cdots, \gamma_{m}$, $l$ interior punctures asymptotic to $\sigma_{1}, \cdots, \sigma_{l})$. This has virtual dimension
\begin{equation}
v-\dim \tilde{\mathcal{M}}^{\infty}_{m, l}(\gamma_{-}, \gamma_{+}; \gamma_{1}, \cdots, \gamma_{m}; \sigma_{1}, \cdots, \sigma_{l})
= \deg(\gamma_{-}) - \deg(\gamma_{+}) + \sum_{i=1}^{m} \deg(\gamma_{i}) + \sum_{j=1}^{l} \deg(\sigma_{j}).
\end{equation}
Now we look at the moduli spaces of caps - pseudoholomorphic planes and half-planes in $M$.
Let $\mathcal{M}(\sigma_{j})$ be the moduli space of $J$-holomorphic planes $h_{j}$ in $M$ asymptotic to $\sigma_{j}$ at $+\infty$. It has virtual dimension
\begin{equation*}
v-\dim \mathcal{M}(\sigma_{j}) = - \deg(\sigma_{j}).
\end{equation*}
Similarly, the moduli space $\mathcal{M}(\gamma_{i})$ of $J$-holomorphic half-planes $g_{i}$ in $M$ with boundary on $L$, asymptotic to $\gamma_{i}$ at $+\infty$, has virtual dimension
\begin{equation*}
v-\dim \mathcal{M}(\gamma_{i}) = -\deg(\gamma_{i}).
\end{equation*} 
The moduli space $\tilde{\mathcal{M}}_{m, l}(\gamma_{-}, \gamma_{+}; \gamma_{1}, \cdots, \gamma_{m}; \sigma_{1}, \cdots, \sigma_{l})$ can be regarded as the product of $\tilde{\mathcal{M}}^{\infty}_{m, l}(\gamma_{-}, \gamma_{+}; \gamma_{1}, \cdots, \gamma_{m}; \sigma_{1}, \cdots, \sigma_{l})$ with $\mathcal{M}(\sigma_{j})$'s and $\mathcal{M}(\gamma_{i})$'s. Thus the dimension formula \eqref{dimension of the moduli space of anchored holomorphic strips} follows. \par

	Note that, unless $\gamma_{-} = \gamma_{+}$ and $m = 0, l = 0$, there is a free $\mathbb{R}$-action on the moduli space $\tilde{\mathcal{M}}_{m, l}(\gamma_{-}, \gamma_{+}; \gamma_{1}, \cdots, \gamma_{m}; \sigma_{1}, \cdots, \sigma_{l})$ by translation in main level $\partial M \times \mathbb{R}$.
The quotient moduli space is denoted by $\mathcal{M}_{m, l}(\gamma_{-}, \gamma_{+}; \gamma_{1}, \cdots, \gamma_{m}; \sigma_{1}, \cdots, \sigma_{l})$, and also called the moduli space of capped pseudoholomorphic strips by abuse of name. 
It has virtual dimension equal to \eqref{dimension of the moduli space of anchored holomorphic strips} less one. \par

	We shall make the following regularity assumptions on the almost complex structures $J$ on $M$ and $J_{\infty}$ on $\partial M \times \mathbb{R}$. \par

\begin{assumption}\label{regularity assumption on almost complex structures}
\begin{enumerate}[label=(\roman*)]

\item $J$ is regular for $J$-holomorphic planes in $M$ which belong to the moduli spaces $\mathcal{M}(\sigma_{j})$ of virtual dimension $\le 1$;

\item $J$ is regular for $J$-holomorphic half-disks in $M$ with boundary on $L$ which belong to the moduli spaces $\mathcal{M}(\gamma_{i})$ of virtual dimension $\le 1$;

\item $J_{\infty}$ is regular for $J_{\infty}$-holomorphic cylinders in $\partial M \times \mathbb{R}$ which appear in moduli spaces of virtual dimension $\le 2$, 
with asymptotic at $-\infty$ being some Reeb orbit $\sigma$ that can be capped by either a $J$-holomorphic plane in $M$, 
or a holomorphic building of one level in $M$ and several levels in $\partial M$ with only one puncture and this asymptote $\sigma$ at $+\infty$.
\end{enumerate}
\end{assumption}

	The third condition ensures that there are no multiply covered cylinders of negative virtual dimension for that choice of $J$, which could potentially appear in the compactification of the moduli spaces of proper pseudoholomorphic curves in $M$ as by pseudoholomorphic buildings.
These regularity assumptions imply the following results. \par

\begin{proposition}
	Suppose Assumption \ref{regularity assumption on almost complex structures} holds. Then we have:
\begin{enumerate}[label=(\roman*)]

\item If $\deg(\sigma_{j}) = 0$, the moduli space of $J$-holomorphic planes $\mathcal{M}(\sigma_{j})$ is a zero-dimensional compact oriented smooth manifold.

\item If $\deg(\gamma_{i}) = 0$, the moduli space of $J$-holomorphic half-planes $\mathcal{M}(\gamma_{i})$ is a zero-dimensional compact oriented smooth manifold.

\item If $\deg(\sigma_{j}) = 0$, $\deg(\gamma_{i}) = 0$ and $\deg(\gamma_{-}) - \deg(\gamma_{+}) = 1$,
the moduli space of capped pseudoholomorphic strips $\mathcal{M}_{m, l}(\gamma_{-}, \gamma_{+}; \gamma_{1}, \cdots, \gamma_{k}; \sigma_{1}, \cdots, \sigma_{l})$ is a zero-dimensional compact oriented smooth manifold.

\end{enumerate}
\end{proposition}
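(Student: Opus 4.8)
The plan is to establish, for each of the three moduli spaces, the same package of three properties: transversality (so that it is a smooth manifold of the virtual dimension), compactness, and orientability.

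\emph{Smoothness.} The virtual dimensions have already been computed: $\mathcal{M}(\sigma_j)$ has virtual dimension $-\deg(\sigma_j)$, $\mathcal{M}(\gamma_i)$ has virtual dimension $-\deg(\gamma_i)$, and $\mathcal{M}_{m,l}(\gamma_-,\gamma_+;\cdots)$ has virtual dimension $\deg(\gamma_-)-\deg(\gamma_+)-1$ by \eqref{dimension of the moduli space of anchored holomorphic strips} after quotienting by the $\mathbb{R}$-action. Under the degree hypotheses of the proposition all three are zero. That the moduli spaces are then honest smooth $0$-manifolds is exactly what Assumption \ref{regularity assumption on almost complex structures} provides: parts (i) and (ii) directly assert regularity of $J$ for the planes and half-planes of virtual dimension $\le 1$, and for the capped strips one combines this with part (iii) via the product description $\tilde{\mathcal{M}}_{m,l}\cong\tilde{\mathcal{M}}^{\infty}_{m,l}\times\prod_j\mathcal{M}(\sigma_j)\times\prod_i\mathcal{M}(\gamma_i)$ noted above, so that transversality of the product follows from transversality of the factors. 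Hence this step is essentially immediate from the hypotheses.

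\emph{Compactness.} I would apply the SFT compactness theorem in the version for pseudoholomorphic curves in Liouville fillings and symplectizations with Lagrangian boundary. Exactness of $L$ and of $M$ gives an a priori energy bound and rules out sphere and disk bubbles, so any sequence in one of these $0$-dimensional moduli spaces has a subsequence converging to a pseudoholomorphic building. One then shows no nontrivial building can appear: by additivity of the Fredholm index under gluing, a genuine degeneration would force some component of the building to have negative virtual dimension; somewhere-injective such components are excluded by the regularity part of Assumption \ref{regularity assumption on almost complex structures}, and the only multiply covered components that could a priori carry negative index are the cylinders in $\partial M\times\mathbb{R}$, which clause (iii) of that assumption is designed precisely to exclude. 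For the half-planes one also rules out a Reeb-chord breaking off the boundary by the same index count. Therefore the Gromov--SFT compactification adds nothing, and each of the three moduli spaces is compact.

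\emph{Orientations.} Since the moduli spaces are cut out transversally as zero loci of Fredholm sections, orientability reduces to trivializing the determinant line of the linearized operator; this is supplied by the chosen grading and spin structure on $L$ together with the coherent orientation scheme of symplectic field theory for the Reeb-chord and Reeb-orbit asymptotics. Being $0$-dimensional, each moduli space then carries a well-defined sign at each of its finitely many points, and coherence guarantees these signs are compatible with the gluings used later to verify the $A_\infty$-relations. I would cite the coherent-orientation construction rather than reproduce it.

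I expect the genuine work to be in the compactness step for the capped strips: one must enumerate the SFT degenerations --- splitting into several symplectization levels, a Reeb orbit or chord asymptote migrating across levels, or a cap (plane or half-plane) breaking further into a building with intermediate levels in $\partial M$ --- and check that each configuration either has negative virtual dimension, hence is empty by transversality, or is forbidden by the no-negative-multiple-cover clause of Assumption \ref{regularity assumption on almost complex structures}, all while keeping the capping and asymptotic-marker data consistent and matching the index bookkeeping against the dimension formulas already derived. Everything else --- the dimension count, the appeal to the regularity assumption for smoothness, and the orientation construction --- is routine.
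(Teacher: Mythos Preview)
The paper does not actually supply a proof of this proposition: it is stated immediately after Assumption~\ref{regularity assumption on almost complex structures} with the lead-in ``These regularity assumptions imply the following results,'' and the text then moves on without any \texttt{proof} environment. So the paper treats the result as a standard consequence of the regularity hypotheses together with SFT compactness, and your expansion is exactly the kind of argument one would fill in.

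One small correction to your smoothness step: you invoke ``part (iii)'' of Assumption~\ref{regularity assumption on almost complex structures} to obtain transversality of the capped strips via the product description, but clause~(iii) concerns $J_\infty$-holomorphic \emph{cylinders} in $\partial M\times\mathbb{R}$, not the punctured disks in $\tilde{\mathcal{M}}^\infty_{m,l}$. As the paper remarks right after stating the assumption, the role of clause~(iii) is to exclude multiply covered cylinders of negative virtual dimension from the SFT compactification---so it belongs in your compactness step (where you in fact use it correctly), not in the smoothness step. Regularity of the symplectization-level factor $\tilde{\mathcal{M}}^\infty_{m,l}$ itself is being taken for granted, achievable for generic $J_\infty$ by the usual arguments for somewhere-injective curves with Lagrangian boundary; the paper does not spell this out. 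With that adjustment, your outline---dimension count plus assumed regularity for smoothness, SFT compactness with negative-index components excluded by regularity and clause~(iii), and coherent orientations from gradings and spin structures---is the standard package and matches what the paper is implicitly invoking.
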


	Let $\mathbb{K} \mathcal{C}(l)$ be the graded $\mathbb{K}$-vector space generated by these Reeb chords, with grading given by the Conley-Zehnder index. The first step in defining the linearized Legendrian cohomology is to define a differential $d_{\mathcal{C}}$ on $\mathbb{K} \mathcal{C}(l)$. \par
	For closed periodic Reeb orbits $\sigma_{j}$ and Reeb chords $\gamma_{i}$ that are contractible in $M$, we define
\begin{equation*}
e(\sigma_{j}) \in \mathbb{K}
\end{equation*}
and
\begin{equation*}
e(\gamma_{i}) \in \mathbb{K}
\end{equation*}
by counting rigid elements in the moduli spaces $\mathcal{M}(\sigma_{j})$ and respectively $\mathcal{M}(\gamma_{i})$ whenever $\deg(\sigma_{j}) = 0$ and respectively $\deg(\gamma_{i}) = 0$.
Also, by counting rigid elements in the moduli space $\mathcal{M}_{m, l}(\gamma_{-}, \gamma_{+}; \gamma_{1}, \cdots, \gamma_{m}; \sigma_{1}, \cdots, \sigma_{l})$, we get a number
\begin{equation*}
n(\gamma_{-}, \gamma_{+}; \gamma_{1}, \cdots, \gamma_{m}; \sigma_{1}, \cdots, \sigma_{l}) \in \mathbb{K}.
\end{equation*}
Then, we may define a map
\begin{equation}
d_{\mathcal{C}}: \mathbb{K} \mathcal{C}(l) \to \mathbb{K} \mathcal{C}(l)
\end{equation}
by the formula
\begin{equation}
\begin{split}
 d_{\mathcal{C}}(\gamma_{+}) = 
& \sum_{\substack{\gamma_{-}, \gamma_{1}, \cdots, \gamma_{m}, \sigma_{1}, \cdots, \sigma_{l}\\ \deg(\gamma_{+}) - \deg(\gamma_{-}) = 1}}\\
& n(\gamma_{-}, \gamma_{+}; \gamma_{1}, \cdots, \gamma_{m}; \sigma_{1}, \cdots, \sigma_{l})
e(\gamma_{1}) \cdots e(\gamma_{m})
\frac{e(\sigma_{1})}{\kappa(\sigma_{1})} \cdots \frac{e(\sigma_{j})}{\kappa(\sigma_{l})}
\gamma_{-},
\end{split}
\end{equation}
where $\kappa(\sigma_{j})$ is the multiplicity of the closed periodic Reeb orbit $\sigma_{j}$. The map $d_{\mathcal{C}}$ has degree $+1$. By a standard gluing argument, the map $d_{\mathcal{C}}$ squares to zero and therefore makes $\mathbb{K} \mathcal{C}(l)$ into a cochain complex. \par
	We also need to define a cochain map:
\begin{equation}
\delta: \mathbb{K} \mathcal{C}(l) \to CM^{*}(H|_{L}; \mathbb{K})[1],
\end{equation}
where $[1]$ means degree $+1$, i.e. $CM^{*}(H|_{L}; \mathbb{K})[1] = CM^{*+1}(H|_{L}; \mathbb{K})$.
For this, we shall introduce the moduli space of spiked pseudoholomorphic disks.
A spiked pseudoholomorphic disk with one puncture is a pair $(v', v'')$ satisfying the following conditions:
\begin{enumerate}[label=(\roman*)]

\item $v': D^{2} \setminus \{z_{+}\} \to M$ is a proper $J$-holomorphic map from the one-punctured disk to $M$, and satisfies the boundary condition $v'(\partial D^{2} \setminus \{z_{+}\}) \subset L$, and asymptotically converges to the Reeb chord $\gamma_{+}$ of $l$ at $+\infty$;

\item $v'': \mathbb{R}_{-} \to L$ is a negative gradient flow of $H|_{L}$, which asymptotically converges to $p_{-}$. That is, $v''$ is a flow line starting from $p_{-}$;

\item $v'(1) = v''(0)$;

\item The union of $v'$ and $v''$ is a continuous map $v: \Sigma_{1} \to M$, where $\Sigma_{1}$ is obtained from a one-punctured disk by attaching an infinite half-ray to the boundary marked point $1 \in \partial (D^{2} \setminus \{z_{+}\})$.

\end{enumerate} \par

	Let $\tilde{\mathcal{M}}(p_{-}, \gamma_{+})$ be the moduli space of spiked pseudoholomorphic disks. 
The virtual dimension of this moduli space is given by the formula
\begin{equation*}
v-\dim \tilde{\mathcal{M}}(p_{-}, \gamma_{+}) = \deg(p_{-}) - \deg(\gamma_{+}).
\end{equation*}
There is a natural $\mathbb{R}_{+}$-action on the space $\tilde{\mathcal{M}}(p_{-}, \gamma_{+})$, by multiplication on the domain of the pseudoholomorphic half-plane $v'$:
\begin{equation*}
(r, (v', v'')) \mapsto (v'(r \cdot), v'').
\end{equation*}
Since there are no constant maps in this moduli space, the $\mathbb{R}_{+}$-action is free.
Denote by $\mathcal{M}(p_{-}, \gamma_{+})$ the quotient of $\tilde{\mathcal{M}}(p_{-}, \gamma_{+})$ by the $\mathbb{R}_{+}$-action. 
The virtual dimension of this quotient moduli space is
\begin{equation*}
v-\dim \mathcal{M}(p_{-}, \gamma_{+}) = \deg(p_{-}) - \deg(\gamma_{+}) - 1.
\end{equation*}
Let $n_{p_{-}, \gamma_{+}} \in \mathbb{K}$ be the count of rigid elements in the moduli space $\mathcal{M}(p_{-}, \gamma_{+})$.
Then we define the map $\delta$ by
\begin{equation}
\delta(\gamma_{+}) = \sum_{\substack{p_{-}\\ \deg(p_{-}) = \deg(\gamma_{+})+1}} n_{p_{-}, \gamma_{+}} p_{-}.
\end{equation}
This map has degree $+1$. \par

\begin{definition}\label{def:linearized Legendrian complex}
	The linearized Legendrian complex is the mapping cone of the cochain map $\delta$,
\begin{equation}
LC^{*}_{lin}(l, L; \alpha, J) = cone(\delta: \mathbb{Z}\mathcal{C}(l) \to CM^{*}(H|_{L})[1]).
\end{equation}
That is,
\begin{equation*}
LC^{*}_{lin}(l, L; \alpha, J) = \mathbb{Z}\mathcal{C}(l) \oplus CM^{*}(H|_{L}),
\end{equation*}
and the differential is of a matrix form:
\begin{equation*}
d_{lin} = 
\begin{pmatrix}
d_{\mathcal{C}} & 0\\
\delta[-1] & d_{Morse}
\end{pmatrix}
\end{equation*}
\end{definition}

	The cohomology group
\begin{equation*}
LH^{*}_{lin}(l, L; \alpha, J) = H^{*}(LC^{*}_{lin}(l, L; \alpha), d_{lin}),
\end{equation*}
is called the linearized Legendrian cohomology of $l$ with respect to the filling $L$.
A priori, this depends on the contact form $\alpha$, which is the restriction of the Liouville form $\lambda_{M}$ on $\partial M$, as well as the almost complex structure $J$.
Independence of all choices should follow from general transversality results in symplectic field theory. In principle, there are many theories which are expected to yield equivalent results, for example the theory of polyfolds \cite{Hofer-Wysocki-Zehnder1}, \cite{Hofer-Wysocki-Zehnder2}, \cite{Hofer-Wysocki-Zehnder3}, the theory Kuranishi structures \cite{FOOO1}, \cite{FOOO2}, \cite{FOOO3}, \cite{FOOO4}, and the theory of implicit atlases \cite{Pardon1}, \cite{Pardon2}. 
We shall not attempt to deal with the general situations, but only restrict ourselves to the specific setup as described in the previous two subsections. In this case, it is straightforward to show independence of the choice of almost complex structures. \par

\subsection{Chain-level $A_{\infty}$-structure on the linearized Legendrian complex}

	Now we proceed to construct an $A_{\infty}$-algebra structure on the linearized Legendrian complex $LC^{*}_{lin}(l, L; \alpha, J)$. As mentioned before, this $A_{\infty}$-structure will involve more general structures than augmentation. These come from consideration of pseudoholomorphic disks in $M$ with several positive punctures asymptotic to a collection of Reeb chords.
The construction of this $A_{\infty}$-structure is somewhat cumbersome as the description of pseudoholomorphic curves are not as neat as in the case of wrapped Floer $A_{\infty}$-algebra - this is due to the fact that the procedure of splitting manifolds can create complicated configurations of pseudoholomorphic curves. \par
	The goal is to define a sequence of multilinear maps
\begin{equation}\label{A-infinity structure maps on linearized Legendrian complex}
\mu^{k}: LC^{*}_{lin}(l, L; \alpha, J)^{\otimes k} \to LC^{*}_{lin}(l, L; \alpha, J)
\end{equation}
of degree $2-k$ satisfying the $A_{\infty}$-equations. As there are two types of generators of the linearized Legendrian complex $LC^{*}_{lin}(l, L; \alpha, J)$, the  construction of the maps \eqref{A-infinity structure maps on linearized Legendrian complex} will involve several kinds of moduli spaces of pseudoholomorphic curves. \par

	First, we shall define the map \eqref{A-infinity structure maps on linearized Legendrian complex} in the case where the inputs are all Reeb chords.
Consider capped pseudoholomorphic curves in $\partial M \times \mathbb{R}$ with boundary on $l \times \mathbb{R}$, of type $I$, where $I \subset \{1, \cdots, m\}$.
These are broken pseudoholomorphic curves $(\Sigma, f, \{g_{i}\}_{i \in I^{c}}, \{h_{i}\}_{i=1}^{l}, g'_{I})$, which satisfy the following conditions:
\begin{enumerate}[label=(\roman*)]

\item $\Sigma$ is a smooth bordered Riemann surface, which has as many connected components as the cardinality of $I$,
and $z_{-}^{0}, z_{+}^{1}, \cdots, z_{+}^{k}, z_{1}, \cdots, z_{m}$ are boundary punctures, and $y_{1}, \cdots, y_{l}$ are interior punctures, such that for $i \in I$, the boundary punctures $z_{i}$ lie on different connected components of $\Sigma$;

\item The punctures $z_{-}^{0}, z_{+}^{1}, \cdots, z_{+}^{k}$ are moving, while the other punctures $z_{1}, \cdots, z_{m}$ and $y_{1}, \cdots, y_{l}$ are fixed;

\item 
\begin{equation*}
f: \Sigma \setminus \{z_{-}^{0}, z_{+}^{1}, \cdots, z_{+}^{k}, z_{1}, \cdots, z_{m}, y_{1}, \cdots, y_{l}\} \to \partial M \times \mathbb{R}
\end{equation*}
 is a $J_{\infty}$-holomorphic curve, with boundary condition 
 \begin{equation*}
 f(\partial (\Sigma \setminus \{z_{-}^{0}, z_{+}^{1}, \cdots, z_{+}^{k}, z_{1}, \cdots, z_{m}, y_{1}, \cdots, y_{l}\})) \subset l \times \mathbb{R},
 \end{equation*} 
which asymptotically converges to the Reeb chord $\gamma_{-}^{0}$ at $z_{-}^{0}$ at $-\infty$, and to the Reeb chords $\gamma_{+}^{1}, \cdots, \gamma_{+}^{k}$ at $z_{+}^{1}, \cdots, z_{+}^{k}$ at $+\infty$, and to the Reeb chords $\gamma_{1}, \cdots, \gamma_{m}$ at the additional punctures $z_{1}, \cdots, z_{m}$, and to the periodic Reeb orbits $\sigma_{1}, \cdots, \sigma_{l}$ on $\partial M$ at the decorated interior punctures $y_{1}, \cdots, y_{l}$ at $-\infty$, with chosen asymptotic markers so that $f$ asymptotically converges to the preferred basepoint $t = 0$ of each Reeb orbit $\sigma_{j}$ along the asymptotic marker;

\item $g_{i}: H_{+} \to M$ is a $J$-holomorphic half-plane with boundary condition $g_{i}(\partial H_{+}) \subset L$, which asymptotically converges to the Reeb chord $\gamma_{i}$ of $l$ at the puncture at $+\infty$, for $i \in I^{c}$;

\item $h_{i}: \mathbb{C} \to M$ is a $J$-holomorphic plane, which asymptotically converges to the periodic Reeb orbit $\sigma_{i}$ at the puncture at $+\infty$, for $i = 1, \cdots, l$;

\item $g'_{I}: D^{2} \setminus \{z'_{j}: i \in I\} \to M$ is a $J$-holomorphic disk with boundary condition $g'_{I}(\partial D^{2} \setminus \{z'_{i}: i \in I\}) \subset L$, which asymptotically converges to $\gamma_{i}$ at $z'_{i}$ at $+\infty$, provided that $z_{i}, i \in I$ belong to different connected components of $\Sigma$;

\item The compactification $\bar{\Sigma}$ of $\Sigma$, obtained by adding boundary circles and arcs at infinity, glued with the compactification of the domains of $g_{i}, h_{i}$ and $g'_{I}$, is a genus zero bordered Riemann surface with corners, which is homeomorphic to a disk $D^{2}$. In particular, there are no non-contractible loops in $\Sigma$. The boundary consists of $k+1$ arcs, which comes from adding boundary arcs at infinity at the non-capped punctures $z_{-}^{0}, z_{+}^{1}, \cdots, z_{+}^{k}$.

\end{enumerate}
In the case where $I$ is the empty set, there can be no components $g'_{I}$ as in (iv), since $L$ is exact and therefore bounds no $J$-holomorphic maps with domain being a disk $D^{2}$ without punctures. \par

	Let $\mathcal{M}_{k+1, m, l}(\gamma_{-}^{0}, \gamma_{+}^{1}, \cdots, \gamma_{+}^{k}; \gamma_{1}, \cdots, \gamma_{m}; \sigma_{1}, \cdots, \sigma_{l})_{I}$ be the moduli space of capped pseudoholomorphic curves of type $I$.
It has virtual dimension
\begin{equation}
v-\dim \mathcal{M}_{k+1, m, l}(\gamma_{-}^{0}, \gamma_{+}^{1}, \cdots, \gamma_{+}^{k}; \gamma_{1}, \cdots, \gamma_{m}; \sigma_{1}, \cdots, \sigma_{l})_{I} = \deg(\gamma_{-}^{0}) - \sum_{i=1}^{k} \deg(\gamma_{+}^{i}) + k - 2.
\end{equation} 
The dimension is independent of $I$ because every punctured is actually capped.
By counting rigid elements in this moduli space, i.e. when the virtual dimension is zero, we get a number
\begin{equation}
n(\gamma_{-}^{0}, \gamma_{+}^{1}, \cdots, \gamma_{+}^{k}; \gamma_{1}, \cdots, \gamma_{m}; \sigma_{1}, \cdots, \sigma_{l})-[i] \in \mathbb{K}.
\end{equation} \par

	We also need to consider capped pseudoholomorphic curves which have disconnected components connected by gradient flow lines in Morse-Bott level sets.
These can be regarded as tuples of capped pseudoholomorphic curves of the above kind, together with gradient flow lines connecting them at the boundary.
That is, these elements are $(\vec{u}_{1}, \cdots, \vec{u}_{d}, \{v_{p}\})$, which satisfy the following conditions:
\begin{enumerate}[label=(\roman*)]

\item $\vec{u}_{p} = (\Sigma_{p}, f_{p}, \{g_{i}\}_{i \in I_{p}^{c}}, \{h_{i}\}_{i=1}^{l_{p}}, g'_{I_{p}})$ is a capped pseudoholomorphic curve of type $I_{p}$, for every $p = 1, \cdots, d$, such that there is only one of the $\Sigma_{p}$'s, say $\Sigma_{p_{0}}$, which has a negative puncture $z_{-}^{0}$ that is not capped in $M$;

\item These $\vec{u}_{p}$ are ordered such that the positive punctures $z_{+}^{1}, \cdots, z_{+}^{k}$ are partitioned into groups of positive punctures on $\Sigma_{p}$ in an order-preserving way;

\item The asymptotic Reeb chords $\gamma^{p}_{1}, \cdots, \gamma^{p}_{m_{p}}$ and periodic Reeb orbits $\sigma^{p}_{1}, \cdots, \sigma^{p}_{l_{p}}$, which are capped in $M$, are ordered such that as ordered tuples, we have
\begin{equation*}
(\gamma^{1}_{1}, \cdots, \gamma^{1}_{m_{1}}, \cdots, \gamma^{d}_{1}, \cdots, \gamma^{d}_{m_{d}}) = (\gamma_{1}, \cdots, \gamma_{m}),
\end{equation*}
and
\begin{equation*}
(\sigma^{1}_{1}, \cdots, \sigma^{1}_{l_{1}}, \cdots, \sigma^{d}_{1}, \cdots, \sigma^{d}_{l_{d}}) = (\sigma_{1}, \cdots, \sigma_{l}).
\end{equation*}

\item For each $p = 1, \cdots, d - 1$, $v_{p}: [0, s_{p}]$ is a negative gradient flow for some Morse function on $l \times \mathbb{R}$ of the form $g + s$, where $g: l \to \mathbb{R}$ is a Morse function close to a constant function, and $s$ is the $\mathbb{R}$-coordinate on the cylinder $l \times \mathbb{R}$;

\item There exist points $z_{p} \in \partial \Sigma_{p}, p = 1, \cdots, d - 1$ and $w_{p} \in \partial \Sigma_{p}, p = 2, \cdots, d$, such that $f_{p}(z_{p}) = v_{p}(0)$ and $f_{p+1}(w_{p+1}) = v_{p}(s_{p})$ for every $p = 1, \cdots, d - 1$.

\end{enumerate} \par

	The moduli space of such capped Morse-Bott curves is denoted by 
\begin{equation}\label{moduli space of Morse-Bott curves}
\mathcal{M}_{k+1, m, l}(\gamma_{-}^{0}, \gamma_{+}^{1}, \cdots, \gamma_{+}^{k}; \gamma_{1}, \cdots, \gamma_{m}; \sigma_{1}, \cdots, \sigma_{l})_{I_{1}, \cdots, I_{d}, p_{0}}.
\end{equation}
The count of rigid elements in this moduli space is
\begin{equation}\label{count of Morse-Bott curves}
n(\gamma_{-}^{0}, \gamma_{+}^{1}, \cdots, \gamma_{+}^{k}; \gamma_{1}, \cdots, \gamma_{m}; \sigma_{1}, \cdots, \sigma_{l})_{I_{1}, \cdots, I_{d}, p_{0}}  \in \mathbb{K}.
\end{equation} \par
	
	Let $e(\{\gamma_{i}\}_{i \in I}) \in \mathbb{K}$ be the count of rigid elements in the moduli space $\mathcal{M}(\gamma_{i}, i \in I)$ of $J$-holomorphic disks $g'_{I}$ in $M$ with $|I|$ punctures, which asymptotically converges to $\gamma_{i}$ at the punctures $z'_{i}$ at $+\infty$, for $i \in I$. \par
	Let $\mathcal{M}_{k+1, m, l}(\gamma_{-}^{0}, \gamma_{+}^{1}, \cdots, \gamma_{+}^{k}; \gamma_{1}, \cdots, \gamma_{m}; \sigma_{1}, \cdots, \sigma_{l})$ be the moduli space of all capped Morse-Bott curves, i.e. the space of equivalence classes of tuples $(\vec{u}_{1}, \cdots, \vec{u}_{d}, \{v_{p}\})$, for every possible $d$ and types $(I_{1}, \cdots, I_{d}, p_{0})$.
We define the weighted count of elements in this moduli space 
\begin{equation}\label{weighted count of Morse-Bott curves of all types}
n(\gamma_{-}^{0}, \gamma_{+}^{1}, \cdots, \gamma_{+}^{k}; \gamma_{1}, \cdots, \gamma_{m}; \sigma_{1}, \cdots, \sigma_{l})
\end{equation}
as follows.
For capped pseudoholomorphic curves of type $I$, we define the weighted count to be
\begin{equation}\label{weighted count single type}
n(\gamma_{-}^{0}, \gamma_{+}^{1}, \cdots, \gamma_{+}^{k}; \gamma_{1}, \cdots, \gamma_{m}; \sigma_{1}, \cdots, \sigma_{l})_{I} 
  e(\{\gamma_{i}\}_{i \in I}) (\prod_{i \in I^{c}} e(\gamma_{i}))
 \frac{e(\sigma_{1})}{\kappa(\sigma_{1})} \cdots \frac{e(\sigma_{l})}{\kappa(\sigma_{l})}. 
\end{equation}
For capped Morse-Bott curves of type $(I_{1}, \cdots, I_{d})$, we define the weighted count to be
\begin{equation}\label{weighted count multiple type}
\begin{split}
 \sum_{p_{0} = 1}^{d} &[ n(\gamma_{-}^{0}, \gamma_{+}^{1}, \cdots, \gamma_{+}^{k}; \gamma_{1}, \cdots, \gamma_{m}; \sigma_{1}, \cdots, \sigma_{l})_{I_{1}, \cdots, I_{d}, p_{0}}  \\
\times & \prod_{p=1}^{d} e(\{\gamma_{i}\}_{i \in I_{p}}) \prod_{i \in \{1, \cdots, m\} \setminus (I_{1} \cup \cdots \cup I_{d})} e(\gamma_{i})
 \frac{e(\sigma_{1})}{\kappa(\sigma_{1})} \cdots \frac{e(\sigma_{l})}{\kappa(\sigma_{l})}].
 \end{split}
\end{equation}
Then the total weighted count
\begin{equation}
n(\gamma_{-}^{0}, \gamma_{+}^{1}, \cdots, \gamma_{+}^{k}; \gamma_{1}, \cdots, \gamma_{m}; \sigma_{1}, \cdots, \sigma_{l})
\end{equation} 
is defined to be the sum over all possible $I$ and tuples $(I_{1}, \cdots, I_{d})$. \par

	We also need to consider the case where the output is a critical point. For this, we introduce the moduli space $\mathcal{M}_{k+1}(p_{-}^{0}, \gamma_{+}^{1}, \cdots, \gamma_{+}^{k})$ of spiked pseudoholomorphic disks in $M$ with $k$ positive punctures. These are similar to spiked pseudoholomorphic disks in $\mathcal{M}(p_{-}, \gamma_{+})$, but there are now $k$ positive punctures on the domain of $v'$ so that the maps asymptotically converge to the Reeb chords $\gamma_{+}^{1}, \cdots, \gamma_{+}^{k}$ at these punctures at $+\infty$.
By counting rigid elements in the moduli space $\mathcal{M}_{k+1}(p_{-}^{0}, \gamma_{+}^{1}, \cdots, \gamma_{+}^{k})$, we get
\begin{equation*}
n(p_{-}^{0}, \gamma_{+}^{1}, \cdots, \gamma_{+}^{k}) \in \mathbb{K}.
\end{equation*} \par

	We put
\begin{equation}
\begin{split}
& \mu^{k}(\gamma_{+}^{k}, \cdots, \gamma_{+}^{1})\\
= & \sum_{\substack{\gamma_{-}^{0}, \gamma_{1}, \cdots, \gamma_{m}, \sigma_{1}, \cdots, \sigma_{l} \\ \deg(\gamma_{-}^{0}) = \sum_{i=1}^{k} \deg(\gamma_{+}^{i}) + 2 - k}}
n(\gamma_{-}^{0}, \gamma_{+}^{1}, \cdots, \gamma_{+}^{k}; \gamma_{1}, \cdots, \gamma_{m}; \sigma_{1}, \cdots, \sigma_{l})\gamma_{-}^{0}\\
+ & \sum_{\substack{p_{-}^{0}\\ \deg(p_{-}^{0}) = \sum_{i=1}^{k} \deg(\gamma_{+}^{i}) + 2 - k}}
n(p_{-}^{0}, \gamma_{+}^{1}, \cdots, \gamma_{+}^{k}) p_{-}^{0}
\end{split}
\end{equation} \par

	The other simple case is where all the inputs of \eqref{A-infinity structure maps on linearized Legendrian complex} are critical points of $H|_{L}$. In this case, the output must also be a critical point.
Let $\mathcal{M}_{k+1}(p_{-}^{0}, p_{+}^{1}, \cdots, p_{+}^{k})$ be the moduli space of gradient flow trees (\cite{Fukaya-Oh}) in $L$ with one root and $k$ leaves, with asymptotic conditions $p_{-}^{0}$ at the root and $p_{+}^{1}, \cdots, p_{+}^{k}$ at the leaves.
To formally define them in our setting, we need to specify the geometric data involved in the gradient flow equations. \par
	For $k \ge 2$, let $G_{k+1}$ be the moduli space of metric ribbon trees with one root and $k$ leaves, and let $\bar{G}_{k+1}$ be its compactification. For $k = 1$, $G_{2}$ is the stack $pt/\mathbb{R}$.
For each $T_{k+1}$, choose a family of Morse functions $h_{s}$ parametrized by $s \in T_{k+1}$, such that $h_{s}$ agrees with $H|_{L}$ near the end of the root or each leaf. If $k = 1$, we require that this family be translation invariant, i.e. the constant family $h_{s} = H|_{L}$.
We further require that the choices made be consistent, which means that they vary smoothly on the compactification
\begin{equation*}
\bar{G} = \cup_{k \ge 2} \bar{G}_{k+1}.
\end{equation*}
Given such universal and consistent choices, the moduli space $\mathcal{M}_{k+1}(p_{-}^{0}, p_{+}^{1}, \cdots, p_{+}^{k})$ is defined to be the set of equivalence classes of pairs $(T_{k+1}, v)$ which satisfy the following conditions
\begin{enumerate}[label=(\roman*)]

\item $T_{k+1} \in G_{k+1}$;

\item $v: T_{k+1} \to L$ is a continuous map, which satisfies the gradient flow equation
\begin{equation*}
\frac{dv}{ds} + \nabla h_{s}(v) = 0
\end{equation*}
on every edge;

\item $v$ asymptotically converges to $p_{-}^{0}$ at the root, and to $p_{+}^{i}$ at the $i$-th leaf.

\end{enumerate}
The virtual dimension of this moduli space is
\begin{equation*}
v-\dim \mathcal{M}_{k+1}(p_{-}^{0}, p_{+}^{1}, \cdots, p_{+}^{k}) = \deg(p_{-}^{0}) - \sum_{i=1}^{k} \deg(p_{+}^{i}) + k - 2.
\end{equation*}
Then by counting rigid elements, we get a number
\begin{equation*}
n(p_{-}^{0}, p_{+}^{1}, \cdots, p_{+}^{k}) \in \mathbb{K}.
\end{equation*}
With this, we define
\begin{equation}
\begin{split}
& \mu^{k}(p_{+}^{k}, \cdots, p_{+}^{1})\\
= & \sum_{\substack{p_{-}^{0}\\ \deg(p_{-}^{0}) = \sum_{i=1}^{k} \deg(p_{+}^{i}) + 2 - k}}
n(p_{-}^{0}, p_{+}^{1}, \cdots, p_{+}^{k}) p_{-}^{0}.
\end{split}
\end{equation} \par

	Consider the general case where some of the inputs are critical points, while other inputs are Reeb chords.
Let $A \subset \{1, \cdots, k\}$ be a subset with $1 < |A| < k$, and $A^{c}$ its complement. Let $\{p_{+}^{a}\}_{a \in A}$ be a collection of critical points of $H|_{L}$ labeled by $A$, and $\{\gamma_{+}^{a}\}_{a \in A^{c}}$ a collection of Reeb chords labeled by $A^{c}$. 
There are also two sub-cases: either the output is a Reeb chord $\gamma_{-}^{0}$ or a critical point $p_{-}^{0}$. \par
	In the first sub-case where the output is a Reeb chord $\gamma_{-}^{0}$, we consider tuples $(\Sigma_{A}, f, \{v_{a}\}_{a \in A})$ satisfying the following conditions:
\begin{enumerate}[label=(\roman*)]

\item $\Sigma_{A}$ is a smooth bordered Riemann surface with $|A^{c}|$ positive boundary punctures $z_{+}^{a}, a \in A^{c}$, a negative boundary puncture $z_{-}^{0}$, as well as $|A|$ boundary marked points $t_{a} \in \partial \Sigma_{A}, a \in A$;

\item $f: \Sigma_{A} \to M$ is a proper $J$-holomorphic map, which satisfies the boundary condition $f(\partial \Sigma_{A}) \subset L$, and asymptotically converges to $\gamma_{+}^{a}$ at the puncture $z_{+}^{a}$ at $+\infty$, for every $a \in A^{c}$, and to $\gamma_{-}^{0}$ at the puncture $z_{-}^{0}$ at $-\infty$;

\item $v_{a}: \mathbb{R}_{+} \to L$ is a negative gradient flow line of $H|_{L}$, which asymptotically converges to $p_{+}^{a}$ as $s \to +\infty$;

\item $f(t_{a}) = v_{a}(0)$.

\end{enumerate}
There are also pseudoholomorphic curves of a different type, which are given as tuples of pseudoholomorphic curves of the above type which are connected by gradient flow lines in Morse-Bott level sets.
Contrary to the case where the curves lie in the symplectization $\partial M \times \mathbb{R}$, the Morse-Bott level sets only occur at $+\infty$ in $M$.
Such curves are tuples 
\begin{equation}\label{Morse-Bott curves with some inputs being critical points}
(\vec{u}_{1}, \cdots, \vec{u}_{d}, v_{1}, \cdots, v_{d-1})
\end{equation}
 which satisfy the following conditions:
\begin{enumerate}[label=(\roman*)]

\item $\vec{u}_{p} = (\Sigma_{A_{p}}, f_{p}, \{v_{a}\}_{a \in A_{p}})$ is a pseudoholomorphic curve of the above type, for $p = 1, \cdots, d$;

\item These $\vec{u}_{p}$ are ordered such that the positive punctures $\{z_{+}^{a}\}_{a \in A^{c}}$ are partitioned into groups of positive punctures on $\Sigma_{A_{p}}$ in an order-preserving way, and similarly for the marked points $\{t_{a}\}_{a \in A}$;

\item For each $p = 1, \cdots, d-1$, $v_{p}: [0, s_{p}] \to L$ is a negative gradient flow for some Morse function on $L$ which is of the form $g + s$ near infinity;

\item There exists points $z_{p} \in \partial \Sigma_{p}, p = 1, \cdots, d-1$ and $w_{p} \in \partial \Sigma_{p}, p = 2, \cdots, d$, such that $f_{p}(z_{p}) = v_{p}(0)$ and $f_{p+1}(w_{p+1}) = v_{p}(s_{p})$ for every $p = 1, \cdots, d-1$.

\end{enumerate} \par

	Let $\mathcal{M}_{k+1}(\gamma_{-}^{0}; \{p_{+}^{a}\}_{a \in A}, \{\gamma_{+}^{a}\}_{a \in A^{c}})$ be the moduli space of the above two types. 
By index theorem, it has virtual dimension
\begin{equation}
v-\dim \mathcal{M}_{k+1}(\gamma_{-}^{0}; \{p_{+}^{a}\}_{a \in A}, \{\gamma_{+}^{a}\}_{a \in A^{c}})
= \deg(\gamma_{-}^{0}) - \sum_{a \in A} \deg(p_{+}^{a}) - \sum_{a \in A^{c}} \deg(\gamma_{+}^{a}) + k - 2.
\end{equation}
By counting rigid elements in this moduli space, we get a number
\begin{equation*}
n(\gamma_{-}^{0}; \{p_{+}^{a}\}_{a \in A}, \{\gamma_{+}^{a}\}_{a \in A^{c}}) \in \mathbb{K}.
\end{equation*} \par

	In the second sub-case, consider pairs $(\Sigma'_{A}, f', v'_{0}, \{v'_{a}\}_{a \in A})$ satisfying the following conditions
\begin{enumerate}[label=(\roman*)]

\item $\Sigma'_{A}$ is a smooth bordered Riemann surface with $|A^{c}|$ positive boundary punctures $z_{+}^{a}, a \in A^{c}$, as well as $|A| + 1$ boundary marked points $t_{0}, t_{a} \in \partial \Sigma_{A}, a \in A$;

\item $v'_{0}: \mathbb{R}_{-} \to L$ is a negative gradient flow for $H|_{L}$, which asymptotically converges to $p_{-}^{0}$ as $s \to -\infty$;

\item $v'_{a}: \mathbb{R}_{+} \to L$ is a negative gradient flow for $H|_{L}$, which asymptotically converges to $p_{+}^{a}$ as $s \to +\infty$, for every $a \in A$;

\item $f'(t_{0}) = v'_{0}(0)$, and $f'(t_{a}) = v'_{a}(0)$ for every $a \in A$.

\end{enumerate}
The moduli space of such pseudoholomorphic curves is denoted by
\begin{equation*}
\mathcal{M}_{k+1}(p_{-}^{0}; \{p_{+}^{a}\}_{a \in A}, \{\gamma_{+}^{a}\}_{a \in A^{c}}).
\end{equation*}
It has virtual dimension
\begin{equation}
v-\dim \mathcal{M}_{k+1}(p_{-}^{0}; \{p_{+}^{a}\}_{a \in A}, \{\gamma_{+}^{a}\}_{a \in A^{c}})
= \deg(p_{-}^{0}) - \sum_{a \in A} \deg(p_{+}^{a}) - \sum_{a \in A^{c}} \deg(\gamma_{+}^{a}) + k - 2.
\end{equation} \par
	We define $\mu^{k}$ for this collection of inputs by
\begin{equation}
\begin{split}
& \mu^{k}(\{p_{+}^{a}\}_{a \in A}, \{\gamma_{+}^{a}\}_{a \in A^{c}}) \\
= & \sum_{\substack{\gamma_{-}^{0} \\ \deg(\gamma_{-}^{0} + k - 2 = \sum_{a \in A} \deg(p_{+}^{a}) + \sum_{a \in A^{c}} \deg(\gamma_{+}^{a})}} 
n(\gamma_{-}^{0}; \{p_{+}^{a}\}_{a \in A}, \{\gamma_{+}^{a}\}_{a \in A^{c}}) \gamma_{-}^{0}\\
& + \sum_{\substack{p_{-}^{0} \\ \deg(p_{-}^{0} + k - 2 = \sum_{a \in A} \deg(p_{+}^{a}) + \sum_{a \in A^{c}} \deg(\gamma_{+}^{a})}} 
n(p_{-}^{0}; \{p_{+}^{a}\}_{a \in A}, \{\gamma_{+}^{a}\}_{a \in A^{c}}) p_{-}^{0}.
\end{split}
\end{equation}
The construction of \eqref{A-infinity structure maps on linearized Legendrian complex} is now complete. \par

\begin{proposition}\label{A-infinity algebra on linearized Legendrian complex}
	The maps $\mu^{k}$ \eqref{A-infinity structure maps on linearized Legendrian complex} defined as above satisfy the equations of an $A_{\infty}$-algebra. \par
	This $A_{\infty}$-algebra is cohomologically unital, with the unit given by the unique minimum $p_{\min}$ of the Morse function $H|_{L}$. \par
\end{proposition}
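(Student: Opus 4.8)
The plan is the standard cobordism argument. For each admissible tuple of inputs in \eqref{A-infinity structure maps on linearized Legendrian complex} and each candidate output with the expected index, one takes the component of the relevant moduli space whose virtual dimension (after the $\mathbb{R}$-translation quotient, where present) equals one; by Assumption \ref{regularity assumption on almost complex structures} and the accompanying transversality statements it is a compact one-dimensional manifold with boundary, so the signed count of its boundary points vanishes, and this identity is the desired $A_\infty$-relation once the boundary strata are matched with the terms $\mu^{k-j+1}(\dots,\mu^{j}(\dots),\dots)$. The substance of the proof is the enumeration of the codimension-one boundary strata of the five kinds of moduli spaces defining the $\mu^k$ (capped Morse--Bott curves with Reeb-chord output; spiked disks with critical-point output; gradient flow trees; and the two mixed types), carried out one at a time.

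First I would treat the all-Reeb-chord inputs, with the relevant space $\mathcal{M}_{k+1,m,l}(\gamma_-^0,\gamma_+^1,\dots,\gamma_+^k;\gamma_1,\dots,\gamma_m;\sigma_1,\dots,\sigma_l)$. Its boundary consists of: (1) SFT-breaking of the main curve $f$ in $\partial M\times\mathbb{R}$ along the positive ends, producing a top-level curve carrying a cyclically consecutive block $\gamma_+^{i+1},\dots,\gamma_+^{i+j}$ with a new Reeb-chord output, glued beneath to a curve that absorbs this chord as one of its moving inputs; once the weighted sums over the capping types $(I_1,\dots,I_d)$ are reorganized these reproduce exactly the nested terms, including those in which the inner or outer factor is a mixed operation; (2) splitting off of a capping plane $h_j$ or half-plane $g_i$, or the degeneration of such a cap into a holomorphic building --- ruled in or out by Assumption \ref{regularity assumption on almost complex structures}, and, by a gluing argument, cancelling against the variation of the coefficients $e(\gamma_i),e(\sigma_j)$ and against the internal $d_{\mathcal{C}}$-contributions, which is precisely the already-available input $d_{lin}^2=0$ (the $k=1$ relation); (3) degenerations of a connecting gradient trajectory $v_p$, which either shrinks to zero length, merging $\vec u_p$ with $\vec u_{p+1}$ (a boundary stratum shared with a re-expansion, hence cancelling in pairs), or breaks through a critical point of $g+s$, contributing the Morse part of the output differential. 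Breaking along the negative puncture $z_-^0$ contributes the outermost term $\mu^1(\mu^k(\dots))$.

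Next I would run the same analysis for the moduli spaces with a critical-point output and for the two mixed types; the new phenomena to record are: breaking of the spike flow line $v''$ (or of an input flow line $v_a$) through a critical point, contributing Morse-differential terms; the SFT-breaking of the disk component of a spiked disk along its output, which contributes the compositions with $\delta$; breaking of gradient flow trees in the sense of Fukaya--Oh \cite{Fukaya-Oh}, producing the all-critical-input terms $\mu^{l}(\dots,\mu^{j}(\dots),\dots)$; collision of the marked point $v'(1)$, or of a $t_a$, with a puncture, which is codimension two and does not occur; and degenerations in which a cyclically consecutive block of the positive inputs bubbles off as a $J$-holomorphic disk in $M$ with boundary on $L$ and a single Reeb-chord output (converting the configuration to a lower-order operation with that chord inserted) or splits off a gradient-flow-tree piece when some of those inputs are critical points. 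Collecting all strata and comparing with \eqref{A-infinity structure maps on linearized Legendrian complex} yields the $A_\infty$-equations; the signs are pinned down by combining the orientation-line conventions of \cite{Seidel} on the surface components with coherent orientations of the symplectization moduli spaces and the gradient-flow orientations, and checking that the induced Koszul signs are those appearing in the $A_\infty$-equations.

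For cohomological unitality, $p_{\min}$ is a $\mu^1$-cocycle, since the only component of $\mu^1$ on a single critical-point input is the Morse differential $d_{\mathrm{Morse}}$ and $p_{\min}$ represents $1\in H^0(L)$, and one has $[\mu^2(x,p_{\min})]=[x]=(-1)^{|x|}[\mu^2(p_{\min},x)]$: on pairs of critical points this is the fact that the minimum is the unit for the Morse product, and when $x$ is a Reeb chord it follows from the standard degeneration argument in which the boundary marked point carrying the $p_{\min}$-leaf is pushed toward a neighbouring puncture, the same computation as for the fundamental class in an $A_\infty$-category \cite{Seidel}, using that the stable manifold of $p_{\min}$ for the negative gradient flow is dense in $L$. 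The step I expect to be the main obstacle is the boundary analysis of the capped Morse--Bott curves in items (2)--(3) above: showing that the capping and anchoring data, together with the weighted sums over the types $(I_1,\dots,I_d)$, reassemble into exactly the linearized operations with no leftover strata. This relies on the regularity in Assumption \ref{regularity assumption on almost complex structures} to exclude multiply covered planes and cylinders of negative index from the compactification, and on a careful check of which gluings genuinely occur; the attendant sign verification inside these strata is the most delicate of the otherwise routine parts of the argument.
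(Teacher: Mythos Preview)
Your proposal is correct and follows essentially the same approach as the paper: the standard cobordism argument via enumeration of codimension-one boundary strata of the one-dimensional moduli spaces. The paper's proof is in fact much terser than yours---it only sketches the mixed case $\mathcal{M}_{k+1}(\gamma_-^0;\{p_+^a\}_{a\in A},\{\gamma_+^a\}_{a\in A^c})$ and declares the rest ``similar'', and it does not address cohomological unitality at all---so your version, which treats the all-Reeb-chord case in detail, discusses how the capping/anchoring data reassemble under degeneration, and supplies the unitality argument, is more complete.
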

\begin{proof}
	The verification of $A_{\infty}$-equations can be done by studying the boundary strata of one-dimensional moduli spaces of the same types as those that we use to define the $A_{\infty}$-structure maps $\mu^{k}$.
This is basically standard. But since this particular construction has not appeared in the literature, we shall briefly present the description in one case. \par
	Consider the moduli space 
\[
\mathcal{M}_{k+1}(\gamma_{-}^{0}; \{p_{+}^{a}\}_{a\ in A}, \{\gamma_{+}^{a}\}_{a \in A^{c}}).
\]
We want to study the boundary strata of its compactification 
\[
\bar{\mathcal{M}}_{k+1}(\gamma_{-}^{0}; \{p_{+}^{a}\}_{a\in A}, \{\gamma_{+}^{a}\}_{a \in A^{c}}).
\]
For the purpose of obtaining algebraic relations, it suffices to consider the case where the dimension is one.
First, consider a sequence of pseudoholomorphic curves of type $(\Sigma_{A}, f, \{v_{a}\}_{a \in A})$. 
The possible limits of a converging sub-sequence are as follows:
\begin{enumerate}[label=(\roman*)]

\item A pseudoholomorphic building of two levels, consisting of a pair of pseudoholomorphic curves of the same type, 
$(\Sigma_{A_{1}}, f_{1}, \{v_{a}\}_{a \in A_{1}})$, and $(\Sigma_{A_{2}}, f_{2}, \{v_{a}\}_{a \in A_{2}})$, 
such that $f_{2}$ asymptotically converges at the negative puncture to the same chord as that $f_{1}$ does at some positive puncture;

\item A pair of pseudoholomorphic curves, $(\Sigma_{A_{1}}, f_{1}, \{v_{a}\}_{a \in A_{1}}), (\Sigma'_{A_{2}}, f'_{2}, v'_{2, 0}, \{v'_{a}\}_{a \in A_{2}})$, such that the gradient flow $v'_{2, 0}$ asymptotically converges to the same critical point as some gradient flow $v_{a}$ for some $a \in A_{1}$.

\end{enumerate}
Second, consider a sequence of Morse-Bott curves of type $(\vec{u}_{1}, \cdots, \vec{u}_{d}, v_{1}, \cdots, v_{d-1})$.
There are three possible kinds of limits, where the first two kinds are similar to those in the previous case, which appear as limits of a single component $\vec{u}_{p}$ for some $p = 1, \cdots, d$.
The third type occurs due to a different reason: a sequence of Morse-Bott curves can break along some gradient flow $v_{p}$, resulting in a pair of Morse-Bott curves with a common asymptotic critical point.
Nonetheless, such a pair is in fact similar to a broken Morse-Bott curve of the second kind.
Observe that all these kinds of curves contribute to the $A_{\infty}$-operations of lower orders, and that the $A_{\infty}$-equations can be verified by a standard argument following the above analysis on the boundary strata of the relevant moduli spaces. \par
	The other cases can be analyzed in a similar way. \par
\end{proof}

\subsection{Linearized cobordism homomorphism}

	Let $L^{c}_{0} = L_{0} \setminus int(L'_{0})$ be the exact Lagrangian cobordism as above, and let $L^{c}$ be its completion in $W$. Assumption \ref{strong exactness assumption} says that the primitive of $L^{c}$ is globally constant on the negative Legendrian end of $L^{c}$, that is, the value of the primitive on every component is the same. Under this assumption, by the theory of \cite{Ekholm-Honda-Kalman}, $L^{c}$ induces a homomorphism of the full Legendrian homology DGA's:
\begin{equation*}
F = F_{L^{c}}: LC(l) \to LC(l').
\end{equation*}
If we regard the linearized Legendrian homology as being the linearization of the full Legendrian homology DGA by the augmentation giving by the filling $L$, extended by the cohomology of $L$, then naturally there is a homomorphism on the linearized complex.

\begin{lemma}
Suppose Assumption \ref{strong exactness assumption} holds.
The exact Lagrangian cobordism $L^{c}$ induces a homomorphism on the linearized Legendrian complexes:
\begin{equation}\label{linearized cobordism homomorphism}
F_{lin}: LC^{*}_{lin}(l, L; \alpha) \to LC^{*}_{lin}(l', L'; \alpha')
\end{equation}
\end{lemma}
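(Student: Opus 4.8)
\emph{Proof strategy.} The plan is to build $F_{lin}$ in the lower-triangular form
\[
F_{lin} = \begin{pmatrix} F_{\mathcal{C}} & 0 \\ F_{\delta} & F_{M} \end{pmatrix}\colon \mathbb{K}\mathcal{C}(l)\oplus CM^{*}(H|_{L}) \longrightarrow \mathbb{K}\mathcal{C}(l')\oplus CM^{*}(H|_{L'})
\]
that mirrors the triangular form of $d_{lin}$, and then to verify the chain-map identity $F_{lin}\circ d_{lin} = d_{lin}'\circ F_{lin}$. Unwinding this identity, it is equivalent to: $F_{\mathcal{C}}$ being a chain map for $(d_{\mathcal{C}},d_{\mathcal{C}}')$; $F_{M}$ being a chain map for $(d_{Morse},d_{Morse}')$; and $F_{\delta}$ being a chain homotopy between $F_{M}\circ\delta$ and $\delta'\circ F_{\mathcal{C}}$. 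All three pieces will be defined by counting rigid elements of moduli spaces of the same flavors as those used to define $d_{lin}$ and the $\mu^k$ of Proposition~\ref{A-infinity algebra on linearized Legendrian complex}, now in the completed cobordism $W$ with caps taken in $M$ (at the positive end) and in $U$ (at the negative end).

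First I would define $F_{\mathcal{C}}$. Since $L^{c}\subset W$ is an exact Lagrangian cobordism from $l'$ (negative end) to $l$ (positive end) whose primitive is constant on the negative end by Assumption~\ref{strong exactness assumption}, the theory of \cite{Ekholm-Honda-Kalman} produces a DGA morphism $F_{L^{c}}\colon LC(l)\to LC(l')$ counting rigid $J_{W}$-holomorphic disks on $L^{c}$ with one positive puncture at a chord of $l$ and arbitrarily many negative punctures at chords of $l'$, exactness forbidding extra positive punctures for index reasons. The generalized augmentation $\epsilon_{L}$ of $LC(l)$ used to build $d_{\mathcal{C}}$ (counting rigid $J$-holomorphic half-planes on $L$ and planes in $M$ asymptotic to Reeb chords/orbits contractible in $M$) and the analogous $\epsilon_{L'}$ of $LC(l')$ built from caps in $U$ are compatible, $\epsilon_{L}=\epsilon_{L'}\circ F_{L^{c}}$; geometrically this is the neck-stretching identity along $\partial U$ decomposing a capped disk in $M$ into a disk in $W$ together with caps in $U$. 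Linearizing $F_{L^{c}}$ at the compatible pair $(\epsilon_{L},\epsilon_{L'})$ yields $F_{\mathcal{C}}$: concretely it counts rigid $J_{W}$-holomorphic disks in $W$ on $L^{c}$ with one distinguished positive puncture at $\gamma_{+}\in\mathcal{C}(l)$, one distinguished negative puncture at $\gamma_{-}\in\mathcal{C}(l')$, all remaining negative and interior punctures capped by rigid half-planes/planes in $U$, and any additional positive punctures capped by rigid half-planes/planes in $M$, exactly paralleling the definition of $d_{\mathcal{C}}$ itself. That $F_{\mathcal{C}}$ is then a chain map for $(d_{\mathcal{C}},d_{\mathcal{C}}')$ is the standard fact that the linearization of a DGA morphism at compatible augmentations is a chain map of linearized complexes; it admits the usual geometric proof via the codimension-one boundary of the one-dimensional versions of these moduli spaces.

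Next I would define $F_{M}\colon CM^{*}(H|_{L})\to CM^{*}(H|_{L'})$ as the Morse-theoretic restriction map associated to the cobordism $L^{c}$, counting rigid configurations built from negative gradient trajectories in the completed cobordism $L^{c}$ for a Morse function interpolating $H|_{L}$ at the positive end and $H|_{L'}$ at the negative end (possibly with $J_{W}$-holomorphic disk pieces attached, i.e.\ cobordism analogues of spiked disks); a count of broken configurations shows it is a chain map for $(d_{Morse},d_{Morse}')$. Finally $F_{\delta}$ is defined by counting rigid \emph{spiked cobordism disks}: a capped $J_{W}$-holomorphic disk in $W$ on $L^{c}$ with one positive puncture at $\gamma_{+}\in\mathcal{C}(l)$ (plus auxiliary punctures capped in $M$ and $U$ as above) carrying a negative gradient half-trajectory attached at a boundary point and running to a critical point $p_{-}$ of $H|_{L'}$ --- the cobordism analogue of the spiked disks defining $\delta$. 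The homotopy identity $F_{M}\circ\delta-\delta'\circ F_{\mathcal{C}} = d_{Morse}'\circ F_{\delta}-F_{\delta}\circ d_{\mathcal{C}}$ (signs up to convention) then follows from the codimension-one boundary of the one-dimensional moduli space of spiked cobordism disks: the degenerations at which a $d_{\mathcal{C}}$-configuration breaks off the positive side give $F_{\delta}\circ d_{\mathcal{C}}$, those at which a $\delta'$-spiked disk splits off after the cobordism disk give $\delta'\circ F_{\mathcal{C}}$, the breaking of the gradient spike gives $d_{Morse}'\circ F_{\delta}$, and the appearance of the cobordism Morse configuration capping off a $\delta$-disk gives $F_{M}\circ\delta$; Assumption~\ref{strong exactness assumption} excludes disk bubbling off $L^{c}$, so no further strata contribute.

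The main obstacle is the compactness and transversality package for these capped moduli spaces in the cobordism $W$: a priori a one-parameter family can degenerate into a pseudoholomorphic building with levels in $\partial U\times\mathbb{R}$, in $W$, and in $\partial M\times\mathbb{R}$, interspersed with Morse--Bott gradient cascades and with caps in both $M$ and $U$, and one must show that in the virtual dimension $\le 1$ range the only surviving codimension-one strata are the ones listed above. This is precisely where Assumption~\ref{strong exactness assumption} is essential --- it both forbids positive disk bubbling off $L^{c}$ and guarantees $\epsilon_{L}=\epsilon_{L'}\circ F_{L^{c}}$ --- and where the cobordism versions of the regularity hypotheses of Assumption~\ref{regularity assumption on almost complex structures} are needed to exclude multiply covered curves of negative index. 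Granting these, the boundary analysis is a routine, if lengthy, combination of the arguments of \cite{Ekholm-Honda-Kalman} with the proof of Proposition~\ref{A-infinity algebra on linearized Legendrian complex}.
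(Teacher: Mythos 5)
Your proposal is correct and follows essentially the same route as the paper: the paper's own proof is only the one-line observation that, since $LC^{*}_{lin}$ is a mapping cone, $F_{lin}$ is defined by three pieces, with the actual pieces supplied by the subsequent construction of $\mathcal{F}^{1}$ via the capped cobordism curves $\mathcal{N}_{1+1,m,l}(\gamma'_{-}{}^{0},\gamma_{+}^{1};\cdots)$, the spiked cobordism disks $\mathcal{N}_{2}(p'_{-}{}^{0},\gamma_{+}^{1})$, and the parametrized gradient flow trees $\mathcal{N}_{2}(p'_{-}{}^{0},p_{+}^{1})$. Your triangular decomposition into $F_{\mathcal{C}}$, $F_{\delta}$, $F_{M}$ and the identification of where Assumption \ref{strong exactness assumption} enters match the paper's construction exactly.
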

\begin{proof}
	Since the linearized Legendrian complex is a cone, Definition \ref{def:linearized Legendrian complex}, we can define $F$ as consisting of three pieces
\end{proof}

Indeed, this homomorphism agrees with the linearization of the cobordism homomorphism on the quotient complex $LC^{*}_{lin}(l, L; \alpha)/CM^{*}(H|_{L})$.

	This homomorphism can be extended to an $A_{\infty}$-homomorphism between the $A_{\infty}$-algebras underlying the linearized Legendrian complexes. This will be important later in sections \ref{section: extending the Viterbo functor} and \ref{section: construction of the bounding cochain} when we discuss an extension of the Viterbo restriction functor. \par

\begin{proposition}
Suppose Assumption \ref{strong exactness assumption} holds.
	There is a natural $A_{\infty}$-homomorphism
\begin{equation}\label{linearized cobordism A-infinity homomorphism}
\mathcal{F}: (LC^{*}_{lin}(l, L; \alpha), \mu_{+}^{k}) \to (LC^{*}_{lin}(l', L'; \alpha'), \mu_{-}^{k}),
\end{equation}
such that $\mathcal{F}^{1} = F_{lin}$.
\end{proposition}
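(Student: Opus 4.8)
The plan is to construct the $A_\infty$-homomorphism $\mathcal{F}$ by counting the same kinds of pseudoholomorphic curves used to define the chain-level map $F_{lin}$, but now allowing the domains to carry the moduli of disks with several positive punctures, so that the output is $\mathcal{F}^k$ rather than just $\mathcal{F}^1$. Concretely, I would work in the completed Liouville cobordism $W$ and consider, for each $k \ge 1$, moduli spaces of (capped, Morse-Bott) pseudoholomorphic curves in $W$ with $k$ positive boundary punctures asymptotic to Reeb chords of $l$ (or boundary marked points meeting negative gradient trajectories of $H|_{L}$, for critical-point inputs), one negative boundary puncture asymptotic to a Reeb chord of $l'$ (or a gradient trajectory hitting a critical point of $H|_{L'}$), together with the auxiliary caps in $M$: $J$-holomorphic planes asymptotic to closed Reeb orbits of $\partial M$ that are contractible in $M$, $J$-holomorphic half-planes on $L$ asymptotic to Reeb chords of $l$ contractible in $M$, and $J$-holomorphic disks on $L$ with several positive punctures. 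These are the direct $k$-input generalizations of the curves appearing in the construction of $\mu^k$ in Proposition \ref{A-infinity algebra on linearized Legendrian complex}, with the symplectization level $\partial M \times \mathbb{R}$ replaced by the cobordism level $W$ (which has no translation symmetry, so there is no quotient by $\mathbb{R}$). Since the linearized Legendrian complex is a mapping cone (Definition \ref{def:linearized Legendrian complex}), $\mathcal{F}^k$ will be assembled from several pieces according to whether the inputs/output are Reeb chords or critical points, exactly as $F_{lin}$ in the preceding lemma was built from three pieces; I would count rigid elements with the appropriate weights $e(\gamma_i)$, $e(\sigma_j)/\kappa(\sigma_j)$, and $e(\{\gamma_i\}_{i\in I})$, summing over the combinatorial types $I$, $(I_1,\dots,I_d)$ of how the caps and Morse-Bott breakings are distributed.

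The key steps, in order, are: (1) fix universal and consistent choices of Floer/gradient data on the relevant compactified moduli spaces of domains (disks with $k+1$ punctures, metric ribbon trees, multiplihedra-type parameter spaces when caps get resolved), extending the choices already made for the $\mu^k$'s on the two ends and compatible with the cobordism $W$; (2) establish transversality and the dimension formula, so that the zero-dimensional moduli spaces are compact oriented $0$-manifolds and the one-dimensional ones are compact $1$-manifolds with boundary; (3) identify the codimension-one boundary strata of the one-dimensional moduli spaces — these will be of three kinds: breaking of an SFT-type building into a positive piece over the symplectization $\partial M\times\mathbb{R}$ (contributing $\mu_+^j$) glued onto a cobordism piece, breaking into a cobordism piece glued onto a negative piece over $\partial U \times \mathbb{R}$ (contributing $\mu_-^j$), and degeneration of the domain in $W$ into several cobordism curves connected by gradient flow lines in Morse-Bott level sets (contributing compositions $\mu_-^s(\mathcal{F}^{\dots},\dots,\mathcal{F}^{\dots})$ with lower $\mathcal{F}$'s); plus the internal degenerations of the cap moduli spaces, which by the regularity assumptions on $J$ either cancel in pairs or reproduce lower-order terms; (4) read off from this boundary description that $\sum \mathcal{F}^k$ satisfies the $A_\infty$-homomorphism equation \eqref{curved A-infinity functor equation}, and check that $\mathcal{F}^1 = F_{lin}$ by observing that the $k=1$ moduli spaces are exactly those used to define the cobordism map on the cone.

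The main obstacle I expect is step (3) together with the transversality needed for step (2): the splitting of the symplectic cobordism $W$ into $\partial U\times\mathbb{R}$, $W_0$, and $\partial M\times\mathbb{R}$ can produce SFT buildings with many intermediate levels and with caps (planes and half-planes) bubbling off at various levels, and one must argue — using Assumption \ref{strong exactness assumption} (so that the primitive of $L^c$ is globally constant on the negative Legendrian end and no low-action chord configurations appear on $\partial U$) and Assumption \ref{regularity assumption on almost complex structures} (no multiply covered cylinders/planes of negative index, and regularity for the relevant cap moduli spaces) — that all such configurations either are among the expected boundary strata listed above or occur in codimension $\ge 2$. Controlling the combinatorics of how the cap-types $I$, $(I_1,\dots,I_d)$ and the Morse-Bott gradient-flow breakings interact under gluing, so that the weighted counts match the algebraic $A_\infty$-functor equation termwise, is the genuinely delicate bookkeeping; everything else (index computations, orientations, consistency of perturbation data) is standard and I would only indicate the argument, as in the proof of Proposition \ref{A-infinity algebra on linearized Legendrian complex}, treating one representative case in detail and asserting the others go through analogously.
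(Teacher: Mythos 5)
Your proposal follows essentially the same route as the paper: you define $\mathcal{F}^{k}$ by counting the capped (Morse--Bott) pseudoholomorphic curves in the completed cobordism $W$ with $k$ positive punctures and one negative puncture — the direct cobordism analogue of the curves defining $\mu^{k}$, with the loss of the $\mathbb{R}$-translation accounting for the degree shift — assemble the map from the chord/critical-point cases dictated by the mapping-cone structure, weight the counts by the cap contributions, and verify the $A_{\infty}$-homomorphism equations from the codimension-one boundary strata (positive-end breakings giving $\mathcal{F}(\cdots,\mu_{+},\cdots)$, negative-end and Morse--Bott breakings giving $\mu_{-}(\mathcal{F},\dots,\mathcal{F})$), with Assumption \ref{strong exactness assumption} ruling out the curvature term. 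This matches the paper's construction and its proof of Proposition \ref{A-infinity homomorphism on linearized Legendrian complex}.
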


	To define higher order terms $\mathcal{F}^{k}$, we shall introduce several kinds of moduli spaces similar to those used in the definition of $\mu^{k}$. \par
	The first case is where all the inputs are Reeb chords. We consider the moduli space
\begin{equation}\label{capped cobordism curves of type I}
\mathcal{N}_{k+1, m, l}(\gamma'_{-}{}^{0}, \gamma_{+}^{1}, \cdots, \gamma_{+}^{k}; \gamma'_{1}, \cdots, \gamma'_{m}; \sigma'_{1}, \cdots, \sigma'_{l})_{I}
\end{equation}
of capped pseudoholomorphic curves in $(W, L^{c})$ of type $I$, where $I \subset \{1, \cdots, m\}$.
These are broken pseudoholomorphic curves $(\Sigma, f, \{g_{i}\}_{i \in I^{c}}, \{h_{j}\}, g'_{I})$, which satisfy the following conditions:
\begin{enumerate}[label=(\roman*)]

\item $\Sigma$ is a smooth bordered Riemann surface, which has as many connected components as the cardinality of $I$,
and $z_{-}^{0}, z_{+}^{1}, \cdots, z_{+}^{k}, z_{1}, \cdots, z_{m}$ are boundary punctures, and $y_{1}, \cdots, y_{l}$ are interior punctures, such that for $i \in I$, the boundary punctures $z_{i}$ lie on different connected components of $\Sigma$;

\item \begin{equation*}
f: \Sigma \setminus \{z_{-}^{0}, z_{+}^{1}, \cdots, z_{+}^{k}, z_{1}, \cdots, z_{m}, y_{1}, \cdots, y_{l}\} \to W
\end{equation*}
is a $J_{W}$-holomorphic curve, with boundary condition 
\begin{equation*}
f(\partial (\Sigma \setminus \{z_{-}^{0}, z_{+}^{1}, \cdots, z_{+}^{k}, z_{1}, \cdots, z_{m}, y_{1}, \cdots, y_{l}\})) \subset L^{c},
\end{equation*} 
which asymptotically converges to the Reeb chord $\gamma'_{-}{}^{0}$ at $z_{-}^{0}$ at $-\infty$, and to the Reeb chords $\gamma_{+}^{1}, \cdots, \gamma_{+}^{k}$ of $l$ at $z_{+}^{1}, \cdots, z_{+}^{k}$ at $+\infty$, and to the Reeb chords $\gamma'_{1}, \cdots, \gamma'_{m}$ of $l'$ at the additional punctures $z_{1}, \cdots, z_{m}$, and to the periodic Reeb orbits $\sigma'_{1}, \cdots, \sigma'_{l}$ on $V_{-} = \partial U$ at the decorated interior punctures $y_{1}, \cdots, y_{l}$ at $-\infty$, with chosen asymptotic markers so that $f$ asymptotically converges to the preferred basepoint $t = 0$ of each Reeb orbit $\sigma_{j}$ along the asymptotic marker;

\item $g_{i}: H_{+} \to U$ is a $J_{U}$-holomorphic half-plane with boundary condition $g_{i}(\partial H_{+}) \subset L'$, which asymptotically converges to the Reeb chord $\gamma'_{i}$ of $l'$ at the puncture at $+\infty$;

\item $h_{i}: \mathbb{C} \to U$ is a $J_{U}$-holomorphic plane, which asymptotically converges to the periodic Reeb orbit $\sigma'_{i}$ at the puncture at $+\infty$;

\item $g'_{I}: D^{2} \setminus \{z'_{j}: i \in I\} \to U$ is a $J_{U}$-holomorphic disk with boundary condition 
\begin{equation*}
g'_{I}(\partial D^{2} \setminus \{z'_{i}: i \in I\}) \subset L', 
\end{equation*}
which asymptotically converges to $\gamma'_{i}$ at $z'_{i}$ at $+\infty$, provided that $z_{i}, i \in I$ belong to different connected components of $\Sigma$;

\item The compactification $\bar{\Sigma}$ of $\Sigma$, obtained by adding boundary circles and arcs at infinity, glued with the compactification of the domains of $g_{i}, h_{i}$ and $g'_{I}$, is a genus zero bordered Riemann surface with corners, which is homeomorphic to a disk $D^{2}$. In particular, there are no non-contractible loops in $\Sigma$. The boundary consists of $k+1$ arcs, which comes from adding boundary arcs at infinity at the non-capped punctures $z_{-}^{0}, z_{+}^{1}, \cdots, z_{+}^{k}$.

\end{enumerate}
In the case where $I$ is the empty set, there can be no components $g'_{I}$ as in (iv), since $L'$ is exact and therefore bounds no $J_{U}$-holomorphic maps with domain being a disk $D^{2}$ without punctures. \par
	The moduli space $\mathcal{N}_{k+1, m, l}(\gamma'_{-}{}^{0}, \gamma_{+}^{1}, \cdots, \gamma_{+}^{k}; \gamma'_{1}, \cdots, \gamma'_{m}; \sigma'_{1}, \cdots, \sigma'_{l})_{I}$ has virtual dimension
\begin{equation}\label{virtual dimension of the moduli space N}
v-\dim \mathcal{N}_{k+1, m, l}(\gamma'_{-}{}^{0}, \gamma_{+}^{1}, \cdots, \gamma_{+}^{k}; \gamma'_{1}, \cdots, \gamma'_{m}; \sigma'_{1}, \cdots, \sigma'_{l})_{I}
= \deg(\gamma'_{-}{}^{0}) - \sum_{i=1}^{k} \deg(\gamma_{+}^{i}) + k - 1.
\end{equation}
The reason why the last term in the formula \eqref{virtual dimension of the moduli space N} is $k - 1$ instead of $k - 2$ is because there is no $\mathbb{R}$-action on the moduli space induced from the $\mathbb{R}$-symmetry on the ambient space pair $(W, L^{c})$,
in contrast to the case of the moduli spaces $\mathcal{M}_{k+1, m, l}(\gamma_{-}{}^{0}, \gamma_{+}^{1}, \cdots, \gamma_{+}^{k}; \gamma_{1}, \cdots, \gamma_{m}; \sigma_{1}, \cdots, \sigma_{l})_{I}$. 
The count of rigid elements defines a number
\begin{equation}
n(\gamma_{-}'{}^{0}, \gamma_{+}^{1}, \cdots, \gamma_{+}^{k}; \gamma'_{1}, \cdots, \gamma'_{m}; \sigma'_{1}, \cdots, \sigma'_{l})_{I} \in \mathbb{K}.
\end{equation}
	Let $e(\gamma'_{i}) \in \mathbb{K}$ be the count of rigid elements in the moduli space $\mathcal{M}(\gamma'_{i})$ of $J_{U}$-holomorphic half-planes $g_{i}$ asymptotic to $\gamma'_{i}$,
and $e(\sigma'_{i}) \in \mathbb{K}$ be the count of rigid elements in the moduli space $\mathcal{M}(\sigma'_{i})$ of $J_{U}$-holomorphic planes $h_{i}$ asymptotic to $\sigma'_{i}$,
and $e(\gamma'_{i}, i \in I) \in \mathbb{K}$ be the count of rigid elements in the moduli space $\mathcal{M}(\gamma'_{i}, i \in I)$ of $J_{U}$-holomorphic disks $g'_{I}$ with $|I|$ punctures, which asymptotically converges to $\gamma'_{i}$ at the punctures $z'_{i}$ at $+\infty$, for $i \in I$. \par

	Following the definition of \eqref{moduli space of Morse-Bott curves}, we can consider the moduli space of capped pseudoholomorphic Morse-Bott curves of certain types:
\begin{equation}
\mathcal{N}_{k+1, m, l}(\gamma'_{-}{}^{0}, \gamma_{+}^{1}, \cdots, \gamma_{+}^{k}; \gamma'_{1}, \cdots, \gamma'_{m}; \sigma'_{1}, \cdots, \sigma'_{l})_{I_{1}, \cdots, I_{d}, p_{0}},
\end{equation}
such that the count of rigid elements defines a number
\begin{equation}
n(\gamma_{-}'{}^{0}, \gamma_{+}^{1}, \cdots, \gamma_{+}^{k}; \gamma'_{1}, \cdots, \gamma'_{m}; \sigma'_{1}, \cdots, \sigma'_{l})_{I_{1}, \cdots, I_{d}, p_{0}} \in \mathbb{K}.
\end{equation}
The union of all such Morse-Bott curves is denoted by
\begin{equation}
\mathcal{N}_{k+1, m, l}(\gamma'_{-}{}^{0}, \gamma_{+}^{1}, \cdots, \gamma_{+}^{k}; \gamma'_{1}, \cdots, \gamma'_{m}; \sigma'_{1}, \cdots, \sigma'_{l}).
\end{equation}
We can define the total weighted count
\begin{equation}\label{total weighted count of cobordism curves}
n(\gamma'_{-}{}^{0}, \gamma_{+}^{1}, \cdots, \gamma_{+}^{k}; \gamma'_{1}, \cdots, \gamma'_{m}; \sigma'_{1}, \cdots, \sigma'_{l})
\end{equation} 
in a way similar to \eqref{weighted count of Morse-Bott curves of all types} as a sum of \eqref{weighted count single type} and \eqref{weighted count multiple type}.
That is, for capped pseudoholomorphic curves of type $I$, we define the weighted count to be
\begin{equation}\label{weighted count of cobordism curves single type}
n(\gamma_{-}'{}^{0}, \gamma_{+}^{1}, \cdots, \gamma_{+}^{k}; \gamma'_{1}, \cdots, \gamma'_{m}; \sigma'_{1}, \cdots, \sigma'_{l})_{I} 
  e(\{\gamma'_{i}\}_{i \in I}) (\prod_{i \in I^{c}} e(\gamma'_{i}))
 \frac{e(\sigma'_{1})}{\kappa(\sigma'_{1})} \cdots \frac{e(\sigma'_{l})}{\kappa(\sigma'_{l})}.
\end{equation}
For capped Morse-Bott curves of type $(I_{1}, \cdots, I_{d})$, we define the weighted count to be
\begin{equation}\label{weighted count of cobordism curves multiple type}
\begin{split}
\sum_{p_{0} = 1}^{d} & [n(\gamma_{-}'{}^{0}, \gamma_{+}^{1}, \cdots, \gamma_{+}^{k}; \gamma'_{1}, \cdots, \gamma'_{m}; \sigma'_{1}, \cdots, \sigma'_{l})_{I_{1}, \cdots, I_{d}, p_{0}}  \\
\times & \prod_{p=1}^{d} e(\{\gamma'_{i}\}_{i \in I_{p}}) \prod_{i \in \{1, \cdots, m\} \setminus (I_{1} \cup \cdots \cup I_{d})} e(\gamma'_{i})
 \frac{e(\sigma'_{1})}{\kappa(\sigma'_{1})} \cdots \frac{e(\sigma'_{l})}{\kappa(\sigma'_{l})}].
 \end{split}
\end{equation}
Then we define \eqref{total weighted count of cobordism curves} as the sum of \eqref{weighted count of cobordism curves single type} and \eqref{weighted count of cobordism curves multiple type} over all possible types.
\par

	If the output is a critical point, we shall consider the moduli space $\mathcal{N}_{k+1}(p'_{-}{}^{0}, \gamma_{+}^{1}, \cdots, \gamma_{+}^{k})$ of spiked pseudoholomorphic disks. This moduli space is similar to $\mathcal{M}_{k+1}(p_{-}^{0}, \gamma_{+}^{1}, \cdots, \gamma_{+}^{k})$, but there are two differences, to be described below. 
First, we only restrict to critical points that are on $L'$. Note that the Morse function on $L'$ is increasing in the cylindrical end, so all critical points are in the interior of the compact part $L'_{0}$, which is naturally a subset of $L_{0}$. Thus this condition makes sense.
Second, the gradient flow equation for $v''$ is now a parametrized version. 
A more precise description of elements in the moduli space is as follows. Choose a family of Morse functions $h_{t}$ on $L$ parametrized by $t \in \mathbb{R}_{-}$, which satisfies
\begin{equation*}
h_{0} = H|_{L},
\end{equation*}
and
\begin{equation*}
h_{t}|_{L'_{0}} = H'|_{L'_{0}}, \text{ for } t \ll 0.
\end{equation*}
Here $H = H_{M}$ is the Hamiltonian on $M$ while $H' = H_{U}$ is the Hamiltonian on $U$.
Let us consider pseudoholomorphic curves of the form $((D^{2} \setminus \{z_{+}^{1}, \cdots, z_{+}^{k}\}, v'), v'')$, which satisfy the following conditions:
\begin{enumerate}[label=(\roman*)]

\item $D^{2} \setminus \{z_{+}^{1}, \cdots, z_{+}^{k}\}$ is a disk with $k$ positive boundary punctures;

\item $v': D^{2} \setminus \{z_{+}^{1}, \cdots, z_{+}^{k}\} \to M$ is a $J$-holomorphic curve, which asymptotically converges to the Reeb chord $\gamma_{+}^{i}$ at the puncture $z_{+}^{i}$ at $+\infty$;

\item $v'': \mathbb{R}_{-} \to L$ is a smooth map that satisfies the following equation
\begin{equation*}
\frac{dv''}{dt} + \nabla h_{t}(v'') = 0,
\end{equation*}
and the asymptotic convergence condition
\begin{equation*}
\lim\limits_{t \to -\infty} v''(t) = p'_{-}{}^{0}.
\end{equation*}

\end{enumerate}
Since the family $h_{t}$ agrees with the Morse function $H'|_{L'}$ when restricted to $L'_{0}$, the asymptotic convergence condition to $p'_{-}{}^{0}$ makes sense. \par
	Let $\mathcal{N}_{k+1}(p'_{-}{}^{0}, \gamma_{+}^{1}, \cdots, \gamma_{+}^{k})$ be the moduli space of spiked pseudoholomorphic curves described as above. Similarly, there is no $\mathbb{R}$-symmetry, so this moduli space has virtual dimension
\begin{equation*}
v-\dim \mathcal{N}_{k+1}(p'_{-}{}^{0}, \gamma_{+}^{1}, \cdots, \gamma_{+}^{k}) = \deg(p'_{-}{}^{0}) - \sum_{i=1}^{k} \deg(\gamma_{+}^{i}) + k - 1.
\end{equation*}
By counting rigid elements in the moduli space $\mathcal{N}_{k+1}(p'_{-}{}^{0}, \gamma_{+}^{1}, \cdots, \gamma_{+}^{k})$, we get a number
\begin{equation*}
n(p'_{-}{}^{0}, \gamma_{+}^{1}, \cdots, \gamma_{+}^{k}) \in \mathbb{K}.
\end{equation*} \par
	Now we set
\begin{equation}
\begin{split}
& \mathcal{F}^{k}(\gamma_{+}^{k}, \cdots, \gamma_{+}^{1})\\
= & \sum_{\substack{\gamma'_{-}{}^{0}\\ \deg(\gamma'_{-}{}^{0}) + k - 1 = \deg(\gamma_{+}^{1}) + \cdots + \deg(\gamma_{+}^{k})}}
n(\gamma'_{-}{}^{0}, \gamma_{+}^{1}, \cdots, \gamma_{+}^{k}; \gamma'_{1}, \cdots, \gamma'_{m}; \sigma'_{1}, \cdots, \sigma'_{l}) \gamma'_{-}{}^{0}\\
& + \sum_{\substack{p'_{-}{}^{0}\\ \deg(p'_{-}{}^{0}) + k - 1 = \sum_{i=1}^{k} \deg(\gamma_{+}^{i})}}
n(p'_{-}{}^{0}, \gamma_{+}^{1}, \cdots, \gamma_{+}^{k}) p'_{-}{}^{0}.
\end{split}
\end{equation}\par

	The second case is where all inputs are critical points $p_{+}^{1}, \cdots, p_{+}^{k}$ on $L$. By an argument using maximum principle or energy estimate, we can show that the output must also be some critical point $p'_{-}{}^{0}$ on $L'$.
We shall introduce the moduli space $\mathcal{N}_{k+1}(p'_{-}{}^{0}, p_{+}^{1}, \cdots, p_{+}^{k})$ of parametrized gradient flow trees.
The domains of parametrized gradient flow trees are pairs $(T_{k+1}, \rho)$ where $\rho \in (0, +\infty)$, and $T_{k+1} \in G_{k+1}$. 
We use the metric on $T_{k+1}$ to define a depth function $dep: T_{k+1} \to \mathbb{R}_{+}$, such that for $s \in T_{k+1}$, the depth $dep(s)$ is equal to the sum of lengths of edges from the root to $s$, excluding the $0$-th exterior edge from the root itself. \par
	For each pair $(T_{k+1}, \rho)$, we shall choose a family of Morse functions $h_{s, \rho}$ parametrized by $s \in T_{k+1}$ depending smoothly on $\rho$. This family should further satisfy the following condition:
\begin{equation}
h_{s, \rho} =
\begin{cases}
h'_{s}, &\text{ if } dep(s) < \rho, \\
h_{s}, &\text{ if } dep(s) > 2\rho, \\
\end{cases}
\end{equation}
where $h_{s}$ is a family of Morse functions which agree with $H|_{L}$ near the root and leaves, and $h'_{s}$ is a family of Morse functions which agree with $H'|_{L'}$ near the root and leaves. 
We require that these choices be consistent, in the sense that they vary smoothly over the compactification $\bar{Y}_{k+1}$ of moduli spaces $Y_{k+1} = G_{k+1} \times (0, +\infty)$. \par
	Given universal and consistent choices of Morse functions $h_{s, \rho}$ as above, we define parametrized gradient flow trees to be triples $(T_{k+1}, \rho, v)$ which satisfy the following conditions:
\begin{enumerate}[label=(\roman*)]

\item $T_{k+1} \in G_{k+1}$, and $\rho \in (0, +\infty)$;

\item $v: T_{k+1} \to L$ is a continuous map (whose image is contained in $L_{0}$), which satisfies the gradient flow equation:
\begin{equation*}
\frac{dv}{ds} + \nabla h_{s, \rho}(v(s)) = 0,
\end{equation*}
on each edge;

\item The map $v$ asymptotically converges to $p'_{-}{}^{0}$ at the root and to $p_{+}^{1}, \cdots, p_{+}^{k}$ at the leaves.

\end{enumerate} 
Two parametrized gradient flow trees $(T_{k+1}, \rho, v)$ and $(T'_{k+1}, \rho', v')$ are equivalent, if $\rho = \rho'$, and there exists an isomorphism of metric ribbon trees $\phi: T'_{k+1} \to T_{k+1}$ such that $v' = v \circ \phi$.
The moduli space $\mathcal{N}_{k+1}(p'_{-}{}^{0}, p_{+}^{1}, \cdots, p_{+}^{k})$ is the set of equivalence classes of parametrized gradient flow trees. It has virtual dimension
\begin{equation*}
v-\dim \mathcal{N}_{k+1}(p'_{-}{}^{0}, p_{+}^{1}, \cdots, p_{+}^{k}) = \deg(p'_{-}{}^{0}) - \sum_{i=1}^{k} \deg(p_{+}^{i}) + k - 1.
\end{equation*}
By counting rigid elements in this moduli space $\mathcal{N}_{k+1}(p'_{-}{}^{0}, p_{+}^{1}, \cdots, p_{+}^{k})$, we get a number
\begin{equation*}
n(p'_{-}{}^{0}, p_{+}^{1}, \cdots, p_{+}^{k}) \in \mathbb{K}.
\end{equation*} \par
	We set
\begin{equation}
\begin{split}
& \mathcal{F}^{k}(p_{+}^{k}, \cdots, p_{+}^{1})\\
= & \sum_{\substack{p'_{-}{}^{0} \\ \deg(p'_{-}{}^{0}) + k - 1 = \sum_{i=1}^{k} \deg(p_{+}^{i})}}
n(p'_{-}{}^{0}, p_{+}^{1}, \cdots, p_{+}^{k}) p'_{-}{}^{0}.
\end{split}
\end{equation} \par

\begin{remark}
	The sub-complexes $CM^{*}(H|_{L})$ and $CM^{*}(H'|_{L'})$ are indeed $A_{\infty}$-sub-algebras, and the maps $\mathcal{F}^{k}$ restricted to $CM^{*}(H|_{L})$ form an $A_{\infty}$-homomorphism between these $A_{\infty}$-sub-algebras.
\end{remark}

	The more general case is where some of the inputs are Reeb chords, and other are critical points. The relevant moduli spaces are similar to $\mathcal{M}_{k+1}(\gamma_{-}^{0}; \{p_{+}^{a}\}_{a \in A}, \{\gamma_{+}^{a}\}_{a \in A^{c}})$ and $\mathcal{M}_{k+1}(p_{-}^{0}; \{p_{+}^{a}\}_{a \in A}, \{\gamma_{+}^{a}\}_{a \in A^{c}})$, but will involve an additional parameter which reflects the change in geometric data.
Let $\{p_{+}^{a}\}_{a \in A}$ be a collection of critical points of $H|_{L}$ labeled by $A$, and $\{\gamma_{+}^{a}\}_{a \in A^{c}}$ a collection of Reeb chords labeled by $A^{c}$. \par
	In the first sub-case when the output is a Reeb chord $\gamma'_{-}{}^{0}$, we consider tuples $(\Sigma_{A}, \rho, f, \{v_{a}\}_{a \in A})$ satisfying the following conditions:
\begin{enumerate}[label=(\roman*)]

\item $\Sigma_{A}$ is a smooth bordered Riemann surface with $|A^{c}|$ positive boundary punctures $z_{+}^{a}, a \in A^{c}$, a negative puncture $z_{-}^{0}$, as well as $|A|$ boundary marked points $t_{a} \in \partial \Sigma_{A}, a \in A$;

\item $f: \Sigma_{A} \to W$ is a proper $J_{W}$-holomorphic map, which satisfies the boundary condition $f(\partial \Sigma_{A}) \subset L^{c}$, and asymptotically converges to $\gamma_{+}^{a}$ at the puncture $z_{+}^{a}$ at $+\infty$, for every $a \in A^{c}$, and to $\gamma_{-}^{0}$ at the puncture $z_{-}^{0}$ at $-\infty$;

\item $v_{a}: \mathbb{R}_{+} \to L$ is a negative gradient flow line of $H|_{L}$, which asymptotically converges to $p_{+}^{a}$ as $t \to +\infty$;

\item $f(t_{a}) = v_{a}(0)$.

\end{enumerate} 
There are also Morse-Bott curves, which are tuples of these curves connected by Morse flow lines at $+\infty$.
These can be similarly defined as \eqref{Morse-Bott curves with some inputs being critical points}.
The moduli space of all such curves is denoted by
\[
\mathcal{N}_{k+1}(\gamma_{-}'{}^{0}; \{p_{+}^{a}\}_{a \in A}, \{\gamma_{+}^{a}\}_{a \in A^{c}}),
\]
and the count of rigid elments is
\begin{equation}
n(\gamma_{-}'{}^{0}; \{p_{+}^{a}\}_{a \in A}, \{\gamma_{+}^{a}\}_{a \in A^{c}})  \in \mathbb{K}
\end{equation}

	In the second sub-case when the output is a critical point $p'_{-}{}^{0}$, we consider tuples $(\Sigma'_{A}, f', \{v'_{a}\}_{a \in A})$ satisfying the following conditions
\begin{enumerate}[label=(\roman*)]

\item $\Sigma'_{A}$ is a smooth bordered Riemann surface

\item $v'_{0}: \mathbb{R}_{-} \to L'$

\item $v'_{a}: \mathbb{R}_{+} \to L$

\item $f'(t_{0}) = v'_{0}(0)$, and $f'(t_{a}) = v'_{a}(0)$ for every $a \in A$.

\end{enumerate}
The moduli space of such curves is denoted by
\[
\mathcal{N}_{k+1}(p_{-}'{}^{0}; \{p_{+}^{a}\}_{a \in A}, \{\gamma_{+}^{a}\}_{a \in A^{c}}),
\]
and the count of rigid elements is
\begin{equation}
n(p_{-}'{}^{0}; \{p_{+}^{a}\}_{a \in A}, \{\gamma_{+}^{a}\}_{a \in A^{c}})  \in \mathbb{K}.
\end{equation}

	Then we set
\begin{equation}
\begin{split}
& \mathcal{F}^{k}(\{p_{+}^{a}\}_{a \in A}, \{\gamma_{+}^{a}\}_{a \in A^{c}})\\
= & \sum n(\gamma_{-}'{}^{0}; \{p_{+}^{a}\}_{a \in A}, \{\gamma_{+}^{a}\}_{a \in A^{c}}) \gamma_{-}'{}^{0}
 + \sum n(p_{-}'{}^{0}; \{p_{+}^{a}\}_{a \in A}, \{\gamma_{+}^{a}\}_{a \in A^{c}}) p_{-}'{}^{0}.
\end{split}
\end{equation}
Thus we have completed the construction of the map $\mathcal{F}^{k}$ in all cases. \par

\begin{proposition}\label{A-infinity homomorphism on linearized Legendrian complex}
	The maps $\mathcal{F}^{k}$ defined as above satisfy the equations for an $A_{\infty}$-homomorphism.
\end{proposition}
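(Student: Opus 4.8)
The plan is to verify the $A_{\infty}$-homomorphism equations relating the maps $\mathcal{F}^{k}$ of \eqref{linearized cobordism A-infinity homomorphism} to the $A_{\infty}$-structures $\mu_{+}^{k}$ on $LC^{*}_{lin}(l,L;\alpha)$ and $\mu_{-}^{k}$ on $LC^{*}_{lin}(l',L';\alpha')$ by the standard mechanism: for every admissible collection of asymptotic data we examine the Gromov--SFT compactification of the corresponding one-dimensional moduli space, and the $A_{\infty}$-homomorphism equation is read off from the vanishing of the signed count of its codimension-one boundary. Since the construction of $\mathcal{F}^{k}$ splits into cases according to which of the inputs are Reeb chords and which are critical points of $H|_{L}$, and whether the output is a chord or a critical point, the verification proceeds through exactly the moduli spaces used in the definition: the cobordism moduli spaces $\mathcal{N}_{k+1,m,l}(\cdots)_{I}$ with their Morse--Bott analogues $\mathcal{N}_{k+1,m,l}(\cdots)_{I_{1},\dots,I_{d},p_{0}}$ and the weighted counts built from the cap invariants $e(\gamma'_{i})$, $e(\sigma'_{i})$, $e(\{\gamma'_{i}\}_{i\in I})$; the spiked disks $\mathcal{N}_{k+1}(p'_{-}{}^{0},\gamma_{+}^{1},\dots,\gamma_{+}^{k})$; the parametrized gradient flow trees $\mathcal{N}_{k+1}(p'_{-}{}^{0},p_{+}^{1},\dots,p_{+}^{k})$ over $\bar{Y}_{k+1}=\overline{G_{k+1}\times(0,+\infty)}$; and the mixed moduli spaces $\mathcal{N}_{k+1}(\gamma'_{-}{}^{0};\{p_{+}^{a}\}_{a\in A},\{\gamma_{+}^{a}\}_{a\in A^{c}})$ and $\mathcal{N}_{k+1}(p'_{-}{}^{0};\{p_{+}^{a}\}_{a\in A},\{\gamma_{+}^{a}\}_{a\in A^{c}})$.

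First I would enumerate the codimension-one degenerations, which fall into three families. A degeneration at the positive cylindrical ends over $\partial M$: a set of consecutive positive punctures together with some interior punctures breaks off to form a pseudoholomorphic building in $\partial M\times\mathbb{R}$ and its caps in $M$, i.e.\ precisely a configuration contributing to some $\mu_{+}^{d}$, leaving a cobordism curve below; summing over these yields the terms $\mathcal{F}^{k-d+1}(\dots,\mu_{+}^{d}(\dots),\dots)$. Dually, a degeneration at the negative end over $\partial U$: one or several buildings in $\partial U\times\mathbb{R}$ with their caps in $U$ split off below the cobordism, which after the Morse--Bott gluing analysis are exactly the configurations contributing to $\mu_{-}^{l}$; summing over these yields $\sum_{l}\sum_{s_{1}+\cdots+s_{l}=k}\mu_{-}^{l}(\mathcal{F}^{s_{l}}(\dots),\dots,\mathcal{F}^{s_{1}}(\dots))$. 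The third family consists of degenerations internal to the cobordism level and to the cap levels: splitting of a plane or half-plane, breaking of a connecting gradient flow line in a Morse--Bott set, and in the parametrized cases the limits $\rho\to 0$ (where $h_{s,\rho}=h'_{s}$, so the configuration is governed entirely by $H'|_{L'}$) and $\rho\to+\infty$ (where $h_{s,\rho}=h_{s}$, governed by $H|_{L}$). One checks, exactly as in the proof of Proposition \ref{A-infinity algebra on linearized Legendrian complex} and as in the cobordism DGA homomorphism of \cite{Ekholm-Honda-Kalman}, that these internal and cap degenerations either occur already inside the weighted counts $n(\cdots)$, $n(\cdots)_{I}$, $n(\cdots)_{I_{1},\dots,I_{d},p_{0}}$ or cancel in pairs, and that the $\rho\to 0$ and $\rho\to+\infty$ strata are absorbed into the target-side and source-side contributions respectively; the surviving boundary terms are then precisely the two sides of the $A_{\infty}$-homomorphism equation.

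As in the proof of Proposition \ref{A-infinity algebra on linearized Legendrian complex}, I would carry out the argument in detail in one representative case, say the one-dimensional mixed moduli space $\mathcal{N}_{k+1}(\gamma'_{-}{}^{0};\{p_{+}^{a}\}_{a\in A},\{\gamma_{+}^{a}\}_{a\in A^{c}})$, drawing the list of its boundary strata in analogy with the degenerations of \eqref{Morse-Bott curves with some inputs being critical points}, and then indicate that the remaining cases (all inputs chords, all inputs critical points, chord output versus critical-point output) are handled identically.

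The hard part will be the compactness and transversality bookkeeping rather than the algebra: one must ensure that SFT compactness in this neck-stretched, Morse--Bott situation produces no configurations beyond those listed --- in particular no multiply covered planes or cylinders of negative index and no disk or sphere bubbling --- which is where Assumption \ref{regularity assumption on almost complex structures} enters, and one must track the cap weights $e(\cdot)$ and the multiplicity factors $1/\kappa(\sigma'_{i})$ through every gluing so that the combinatorial coefficients on the two sides of the equation match. The strong exactness hypothesis, Assumption \ref{strong exactness assumption}, is what makes the negative-end breakings and the fillings behave as a genuine augmentation, so that the linearization is well defined and the homomorphism equation closes up; without it the corresponding homomorphism would be curved, as anticipated in the introduction.
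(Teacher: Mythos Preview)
Your proposal is correct and follows essentially the same approach as the paper: both arguments identify the $A_{\infty}$-homomorphism equation with the vanishing of the signed count of codimension-one boundary strata of the one-dimensional moduli spaces, separating degenerations at the positive end over $\partial M$ (yielding the $\mathcal{F}^{k-d+1}(\dots,\mu_{+}^{d}(\dots),\dots)$ terms) from those at the negative end over $\partial U$ (yielding the $\mu_{-}^{l}(\mathcal{F}^{s_{l}}(\dots),\dots,\mathcal{F}^{s_{1}}(\dots))$ terms). The paper works out the all-Reeb-chord case $\mathcal{N}_{k+1,m,l}(\gamma'_{-}{}^{0};\gamma_{+}^{1},\dots)_{I}$ as its representative while you chose the mixed case, and you list the internal cap and $\rho\to 0,\infty$ degenerations more explicitly than the paper does, but otherwise the arguments coincide.
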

\begin{proof}
	To verify that the maps $\mathcal{F}^{k}$ form an $A_{\infty}$-homomorphism, we need to study the codimension-one boundary strata of the following compactified moduli spaces
\begin{enumerate}[label=(\roman*)]

\item 
\begin{equation*}
\bar{\mathcal{N}}_{k+1, m, l}(\gamma'_{-}{}^{0}, \gamma_{+}^{1}, \cdots, \gamma_{+}^{k}; \gamma'_{1}, \cdots, \gamma'_{k}; \sigma'_{1}, \cdots, \sigma'_{l}),
\end{equation*}

\item
\begin{equation*}
\bar{\mathcal{N}}_{k+1}(p'_{-}{}^{0}, \gamma_{+}^{1}, \cdots, \gamma_{+}^{k}),
\end{equation*}

\item
\begin{equation*}
\bar{\mathcal{N}}_{k+1}(p'_{-}{}^{0}, p_{+}^{1}, \cdots, p_{+}^{k}),
\end{equation*}

\item
\begin{equation*}
\bar{\mathcal{N}}_{k+1}(\gamma'_{-}{}^{0}; \{p_{+}^{a}\}_{a \in A}, \{\gamma_{+}^{a}\}_{a \in A^{c}}),
\end{equation*}

\item
\begin{equation*}
\bar{\mathcal{N}}_{k+1}(p'_{-}{}^{0}; \{p_{+}^{a}\}_{a \in A}, \{\gamma_{+}^{a}\}_{a \in A^{c}}).
\end{equation*}

\end{enumerate}
That is, we need to study how a one-dimensional family of pseudoholomorphic curves of the above kind degenerates into broken pseudoholomorphic curves. \par
	Consider a simple case where a sequence of capped pseudoholomorphic curves converges to some limit.
There are two types of broken pseudoholomorphic curves appearing as limits of a sequence of capped pseudoholomorphic curves in 
\begin{equation*}
\mathcal{N}_{k+1, m, l}(\gamma'_{-}{}^{0}; \gamma_{+}^{1}, \cdots, \gamma_{+}^{k}; \gamma'_{1}, \cdots, \gamma'_{k}; \sigma'_{1}, \cdots, \sigma'_{l})_{I},
\end{equation*}
which are pseudoholomorphic buildings of sub-level two:
\begin{enumerate}[label=(\roman*)]

\item a $(s+1)$-tuple of pseudoholomorphic curves, with one in $\mathcal{M}_{i+1, m', l'}()$, in the second level, and the other $s$ in 
\begin{equation}\label{piece of broken moduli spaces}
\mathcal{N}_{i_{j}+1, m_{j}, l_{j}}(\tilde{\gamma}'_{-}{}^{j}; \gamma_{+}^{i_{1} + \cdots + i_{j-1} + 1}, \cdots, \gamma_{+}^{i_{1} + \cdots + i_{j}}; \gamma'_{m_{1} + \cdots + m_{j-1} + 1}, \cdots, \gamma'_{m_{1} + \cdots + m_{j}}; \{\sigma'_{p}\}_{p \in \Lambda_{j}} ), j = 1, \cdots, s,
\end{equation}
all in the first level.
Such a tuple occurs, in the limiting process, when the parametrized geometric data converge near the positive punctures of the given one-dimensional family of pseudoholomorphic curves in 
\begin{equation*}
\mathcal{N}_{k+1, m, l}(\gamma'_{-}{}^{0}; \gamma_{+}^{1}, \cdots, \gamma_{+}^{k}; \gamma'_{1}, \cdots, \gamma'_{k}; \sigma'_{1}, \cdots, \sigma'_{l})_{I},
\end{equation*}
resulting in a breaking of disks into $s$ parametrized pseudoholomorphic curves and one unparameterized pseudoholomorphic curve.
In \eqref{piece of broken moduli spaces}, the $\Lambda_{j}$'s are subsets of $\{1, \cdots, l\}$ such that 
\begin{equation*}
\Lambda_{j} \cap \Lambda_{j'} = \varnothing, \text{ and } \cup_{j=1}^{s} \Lambda_{j} = \{1, \cdots, l\}.
\end{equation*}
Note that the Reeb orbits are not distributed in the natural ordering, as they are asymptotics at the interior punctures of the domain, which can freely move.
However, this phenomenon does not affect our desired $A_{\infty}$-equation, because the maps $\mathcal{F}^{i_{j}}$ are defined by counting all curves with any possible combinations of asymptotic Reeb orbits and chords, which are capped at infinity.

\item a pair of pseudoholomorphic curves, with one in 
$\mathcal{M}_{i+1, m', l'}(\cdot)$,
and the other in $\mathcal{N}_{k - i + 1, m''. l''}(\cdots)$.
Such a pair occurs, in the limiting process, when the parametrized geometric data converge near the negative puncture of the given one-dimensional family of pseudoholomorphic curves.

\end{enumerate}
Counting elements of the first kind contributes to terms of the form 
\begin{equation*}
\mu_{-}^{s} (\mathcal{F}^{i_{s}}(\cdots), \cdots, \mathcal{F}^{i_{1}}(\cdots))
\end{equation*}
in the $A_{\infty}$-homomorphism equation, while the second kind contributes to terms of the form 
\begin{equation*}
\mathcal{F}^{k - i} (\cdots, \mu_{+}^{i}(\cdots), \cdots).
\end{equation*}
 \par
	The other cases can be analyzed in a similar way. \par
\end{proof}

	Also, it follows immediately from the definition of $F_{lin}$ that $\mathcal{F}^{1} = F_{lin}$. \par
	To see that this $A_{\infty}$-homomorphism $\mathcal{F}$ is a different realization of the Viterbo restriction map, we must understand the relation between the $A_{\infty}$-algebra structure on the linearized Legendrian complex and that on the wrapped Floer complex.
This will be discussed in subsection \ref{section: equivalence between linearized cobordism homomorphism and Viterbo restriction homomorphism}. \par

\subsection{Non-strongly-exact case}

We also want to discuss the case where Assumption \ref{strong exactness assumption} is not satisfied.
In this case, instead of an $A_{\infty}$-homomorphism, we get a curved $A_{\infty}$-homomorphism
\[
\mathcal{F}: (LC^{*}_{lin}(l, L; \alpha), \mu_{+}^{k}) \to (LC^{*}_{lin}(l', L'; \alpha'), \mu_{-}^{k}),
\]
consisting of a sequence of maps
\[
\mathcal{F}^{k}: LC^{*}_{lin}(l, L; \alpha)^{\otimes k} \to LC^{*}_{lin}(l', L'; \alpha'),
\]
for $k \ge 0$. The difference is that there is a $0$-th order term
\begin{equation}
\mathcal{F}^{0}: \mathbb{K} \to LC^{*}_{lin}(l', L'; \alpha'),
\end{equation}
which is in general non-zero. \par

To define this term, we consider the moduli spaces, for $k=0$,
\begin{equation}
\mathcal{N}_{1, m, l}(\gamma'_{-}{}^{0}; \gamma'_{1}, \cdots, \gamma'_{m}; \sigma'_{1}, \cdots, \sigma'_{l})_{I}
\end{equation}
Elements in this moduli space are the same kinds of maps as in
\[
\mathcal{N}_{k+1, m, l}(\gamma'_{-}{}^{0}, \gamma_{+}^{1}, \cdots, \gamma_{+}^{k}; \gamma'_{1}, \cdots, \gamma'_{m}; \sigma'_{1}, \cdots, \sigma'_{l})
\]
for $k \ge 1$, except that this time there is no input at $+\infty$ end of the cobordism $W$.
And because of this, there are no Morse-Bott curves, because Morse-Bott level sets appear at $+\infty$.
Thus the curves are pseudoholomorphic curves in $W$ with boundary on $L^{c}$, with one distinguished negative strip-like end, $m$ negative strip-like ends that are capped by half-disks in $(U, L')$, and $l$ negative cylindrical ends that are capped by planes in $U$. \par

These kinds of maps appear because Assumption \ref{strong exactness assumption} is not satisfied, 
so that the primitive of the exact Lagrangian cobordism $L^{c}$ is not globally constant near the Legendrian ends.
At the negative end $l'$, there are several connected components on which the primitive takes different values.
Thus, there exists some Reeb chord between some connected components of $l'$ with a positive action, 
such that there can be disks in $W$ with a negative end asymptotic to such a chord.
The details in counting such pseudoholomorphic maps and the definition of the term $\mathcal{F}^{0}$ will be discussed in section \ref{section: construction of the bounding cochain}. \par

\subsection{Slowing down the inhomogeneous term}\label{section: equivalence between linearized cobordism homomorphism and Viterbo restriction homomorphism}
	The relation between linearized Legendrian cohomology and wrapped Floer cohomology is well-known to experts: they are isomorphic over a field of characteristic zero; see for example \cite{Bourgeois-Oancea1}, \cite{Bourgeois-Oancea}, \cite{Bourgeois-Ekholm-Eliashberg}. There is more detailed discussion in \cite{Ekholm-Lekili} in the case of Legendrian surgery of a Weinstein manifold. Note that they establish an $A_{\infty}$-quasi-isomorphism between the $A_{\infty}$-algebra underlying wrapped Floer cohomology of the co-core disks the full Legendrian DGA of the attaching spheres. On the other hand, there is a natural $A_{\infty}$-structure on the linearized Legenedrian complex of the co-core spheres with filling by the co-core disks, together with an $A_{\infty}$-quasi-isomorphism to the full Legendrian DGA of the attaching spheres. \par
	Not only the homology groups are isomorphic, but there is an explicit chain homotopy equivalence between these two cochain complexes. In fact, more is true: they are homotopy equivalent as $A_{\infty}$-algebras. We shall explain the construction of such an $A_{\infty}$-homotopy equivalence. \par
	In this subsection, we change the coefficient of the wrapped Floer complex to $\mathbb{K}$, so that $(CW^{*}(L; H) = CW^{*}(L; H; \mathbb{K}), m^{k})$ is an $A_{\infty}$-algebra over $\mathbb{K}$.
We first define the linear map from the linearized Legendrian complex to the wrapped Floer complex. \par

\begin{proposition}\label{prop: the map between wrapped Floer complex and linearized Legendrian contact complex}
	There is a natural chain homotopy equivalence
\begin{equation}\label{the map from the linearized Legendrian contact homology to wrapped Floer cohomology}
S: LC^{*}_{lin}(l, L; \alpha, J) \to CW^{*}(L; H).
\end{equation}
\end{proposition}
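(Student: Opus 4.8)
The plan is to realize $S$ as a count of rigid inhomogeneous pseudoholomorphic strips in which the inhomogeneous term is gradually switched off toward the end that carries the linearized-Legendrian generators — this is the ``slowing down'' referred to in the title of this subsection. Concretely, over $Z = \mathbb{R}\times[0,1]$ one chooses a family of Floer data $(\beta_s\, dt, H_s, J_s)$ with $\beta_s = 1$, $H_s = H$, $J_s = J_t$ for $s \gg 0$, while as $s \to -\infty$ the weight $\beta_s$ (equivalently, the rescaling weight on the Hamiltonian) is driven to zero and $J_s$ is simultaneously degenerated to contact type with the neck near $\partial M$ stretched. A solution of the resulting equation with boundary on $L$ converges at $+\infty$ to a time-one $H$-chord $x$ of $L$; at the slowed-down end, SFT compactness forces it to break into a building of the shape that defines $LC^*_{lin}$: a curve in the symplectization $\partial M\times\mathbb{R}$ with boundary on $l\times\mathbb{R}$, asymptotic to a Reeb chord $\gamma$ of $l$ (together with auxiliary Reeb chords and closed Reeb orbits that are contractible in $M$), these auxiliary asymptotics filled by $J$-holomorphic half-planes and planes in $M$, and possibly a negative gradient half-trajectory of $H|_L$ terminating at a critical point $p_-$. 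Counting the rigid components of these moduli spaces, with the same weighting by capping counts $e(\gamma_i)$, $e(\sigma_j)/\kappa(\sigma_j)$ used in the definitions of $d_{\mathcal C}$ and $\delta$, gives the $\mathbb{K}$-linear map $S$ (which depends on the auxiliary choices only up to chain homotopy).

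The chain map property $m^1\circ S = S\circ d_{lin}$ is then read off from the codimension-one boundary of the compactified one-dimensional moduli spaces. There are two sources of boundary: breaking off a rigid $H$-Floer strip at the $+\infty$ output, which contributes $m^1\circ S$; and degenerations at the slowed-down end, which by SFT compactness and the non-degeneracy of Reeb chords and orbits split off exactly the three pieces assembling the differential $d_{lin}$ of Definition \ref{def:linearized Legendrian complex} — a rigid cylinder in $\partial M\times\mathbb{R}$ (giving the $d_{\mathcal C}$ block), a spiked disk with a Morse half-flow (giving the $\delta$ block), or a Morse trajectory on $L$ (giving the $d_{Morse}$ block). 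One uses Assumption \ref{regularity assumption on almost complex structures} to exclude the multiply covered or negative-index buildings that could otherwise appear, and the action--energy identity to keep the topological energy bounded in terms of the actions of the two asymptotics, so that the capping half-planes and planes remain in the expected (rigid) moduli spaces.

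For the statement that $S$ is a \emph{homotopy equivalence} I would construct an explicit homotopy inverse $T: CW^*(L;H)\to LC^*_{lin}(l,L;\alpha,J)$ by running the same construction with the slowing-down reversed (the inhomogeneous term switched on as one moves toward the Floer end), and then prove $T\circ S\simeq\mathrm{id}$ and $S\circ T\simeq\mathrm{id}$ by inserting one extra parameter that first decreases and then increases (respectively, increases and then decreases) the Hamiltonian weight: the one-dimensional moduli spaces of these two-parameter configurations have codimension-one boundary equal to the composition on one side and, in the limit where the interpolation region escapes to $\pm\infty$, to the identity continuation solutions on the other, which are the diagonal by the standard degenerate-gluing argument. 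An alternative that may be cleaner is to filter both complexes — by the action of Hamiltonian chords on $CW^*$, and by a combination of Reeb-chord period and Morse value on $LC^*_{lin}$ — check that $S$ is filtered, and induct over the filtration steps, where each graded piece is a count of short interpolating solutions that one identifies with a bijection; then $S$ is a filtered quasi-isomorphism, hence a chain homotopy equivalence over the field $\mathbb{K}$.

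The main obstacle is the compactness-and-gluing analysis of the slowed-down moduli spaces: one must show that as $\beta_s\to 0$ (with the neck stretched) a sequence of interpolating solutions converges to a building of precisely the advertised shape — one symplectization level, the capping curves in $M$, and at most one Morse half-trajectory, with no hidden sphere or disk bubbling and no extra symplectization levels in the zero- and one-dimensional strata — and, conversely, that every such building arises as a limit via gluing. Arranging transversality for these mixed Floer-and-SFT configurations simultaneously with, and compatibly with, the transversality already imposed on the capping curves in Section \ref{section: linearized Legendrian contact homology} is the part requiring the most care; granting the regularity hypotheses there and staying within that restricted geometric setup, the argument is standard but lengthy.
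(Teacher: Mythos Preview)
Your overall strategy---interpolate the Hamiltonian to zero toward the Legendrian end---is the right one, and your alternative filtration argument is essentially what the paper does. But your main line is more complicated than necessary in two respects, and differs from the paper's cleaner route.

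First, the definition of $S$: the paper does \emph{not} stretch the neck or pass to SFT buildings. It simply takes $H_\chi(s,\cdot) = \chi(s)H$ with $\chi(s)=1$ for $s\ll 0$ and $\chi(s)=0$ for $s\gg 0$ (note the orientation is opposite to yours: the Hamiltonian output $x_-$ sits at $-\infty$, the Reeb input $\gamma_+$ at $+\infty$), and counts inhomogeneous strips $v:\mathbb{R}\times[0,1]\to M$ that converge at $+\infty$ directly to $\gamma_+$. Finite Hofer energy forces this convergence---there is no breaking into a symplectization level, no auxiliary capped punctures, and no weights $e(\gamma_i)$, $e(\sigma_j)/\kappa(\sigma_j)$ in the definition of $S$ itself (those live only in $d_{lin}$). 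For critical-point inputs one uses a spiked configuration: an inhomogeneous half-disk plus a Morse half-trajectory. Your SFT-building description is substantially overbuilt and introduces exactly the compactness-and-gluing issues you flag as the main obstacle.

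Second, the homotopy equivalence: the paper's argument is not a cobordism but the sharp triangularity statement you only mention as an alternative. One observes that Reeb chords of $l$ biject with non-constant $H$-chords in the cylindrical end (the chord on $\partial M\times\{r\}$ corresponds to the Reeb chord of length $2r$), orders both bases by action, and shows $S_+:\mathbb{K}\mathcal{C}(l)\to CW^*_+(L;H)$ is upper-triangular with diagonal entries $\pm 1$. The diagonal is computed explicitly: for the unique $x_-=x_-(\gamma_+)$ matching $\gamma_+$, the equation in the cylindrical end separates variables---the radial function $a(s,t)=a(s)$ satisfies an ODE and the transverse part is forced to be the trivial strip over $\gamma_+$---so the solution is unique and the count is $\pm 1$. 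Thus $S_+$ is a chain-level \emph{isomorphism}, not merely a quasi-isomorphism; the inverse $T_+$ is built with the reversed cutoff $1-\chi$ and lifted to $T:CW^*\to LC^*_{lin}$. Your cobordism argument for $T\circ S\simeq\mathrm{id}$, $S\circ T\simeq\mathrm{id}$ would also succeed, but it gives less and costs more analytically; the paper's route sidesteps the neck-stretching gluing entirely.
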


	To define the map $S$, we shall introduce a cobordism between the SFT moduli spaces and Floer moduli spaces, which is a parametrized moduli space of interpolating pseudoholomorphic strips. To construct the cobordism, we shall introduce a $\mathbb{R}$-dependent family of Hamiltonians $H_{\chi}$ on $M$,
\begin{equation*}
H_{\chi}(s, p) = \chi(s) H(p),
\end{equation*}
for a smooth decreasing cut-off function $\chi: \mathbb{R} \to [0, 1]$, such that $\chi(s) = 1$ for $s \ll 0$ and $\chi(s) = 0$ for $s \gg 0$.
We define the moduli space $\mathcal{R}(x_{-}, \gamma_{+})$ to be the moduli space of maps
\begin{equation*}
v: \mathbb{R} \times [0, 1] \to M,
\end{equation*}
satisfying the following equation:
\begin{equation}\label{Cauchy-Riemann equation for cobordism maps}
\partial_{s}v + J(\partial_{t}v - X_{H_{\chi}(s)}(v)) = 0,
\end{equation}
the boundary condition:
\begin{equation*}
v(s, 0), v(s, 1) \in L,
\end{equation*}
and the asymptotic conditions: $u$ asymptotically converges at $s = -\infty$ to the time-one Hamiltonian chord $x_{-}$, and asymptotically converges at $s = +\infty$ to the Reeb chord $\gamma_{+}$. \par
	As $H$ is non-degenerate for time-one Hamiltonian chords, the above moduli space behaves nicely as the linearized operator of \eqref{Cauchy-Riemann equation for cobordism maps} is Fredholm. \par
	
	There is another kind of moduli space similar to the ones above, but the input $\gamma_{+}$ is replaced by a critical point. Let $p_{+}$ be a critical point of the Morse function $H|_{L}$.
Consider the moduli space $\mathcal{R}(x_{-}, p_{+})$ of maps $v: \Sigma_{1} \to M$, satisfying the following properties:
\begin{enumerate}[label=(\roman*)]

\item $\Sigma_{1}$ is the union of a disk $D^{2}$ with one negative strip-like end, and an infinite half-ray $\mathbb{R}_{+}$ whose vertex is attached to a preferred point on the boundary of the disk;

\item $v = v' \cup v''$, where $v' = v|_{D}: D \to M$ and $v'' = v|_{E}: E \to L$;

\item $v'$ satisfies the same equation \eqref{Cauchy-Riemann equation for cobordism maps}, the same Lagrangian boundary condition, and asymptotically converges to $x_{-}$ over the negative strip-like end;

\item $v'': \mathbb{R}_{+} \to L$ is a negative gradient flow of $H|_{L}$, and asymptotically converges to the critical point $p_{+}$ as $s \to +\infty$.

\end{enumerate}

	Let $n_{x_{-}, \gamma_{+}}$ and $n_{x_{-}, p_{+}})$ denote the counts of rigid elements in the moduli spaces $\mathcal{M}_{L, 1}(x_{-}, \gamma_{+})$ and $\mathcal{M}_{L, 1}(x_{-}, p_{+})$ respectively. Define the map $S$ by the formulas
\begin{equation}
S(\gamma_{+}) = \sum_{\substack{x_{-}\\\deg(x_{-}) = \deg(\gamma_{+})}} n_{x_{-}, \gamma_{+}} x_{-},
\end{equation}
and
\begin{equation}
S(p_{+}) = \sum_{\substack{x_{-}\\\deg(x_{-}) = \deg(p_{+})}} n_{x_{-}, p_{+}} x_{-}.
\end{equation} \par

	To prove that $S$ is a chain homotopy equivalence, we present more detailed description of the properties of the map $S$. \par

\begin{lemma}\label{the plus chain map is an isomorphism}
	The composition with the natural projection $\pi_{+}: CW^{*}(L; H) \to CW^{*}_{+}(L; H)$
\begin{equation*}
\pi_{+} \circ S: LC^{*}_{lin}(l, L; \alpha, J) \to CW^{*}_{+}(L; H)
\end{equation*}
has kernel being the sub-complex $LC_{*}^{lin, 0}(l, L; \alpha, J)$.
Moreover, the quotient map
\begin{equation}
S_{+}: LC_{*}^{lin, +}(l, L; \alpha, J) \to CW^{*}_{+}(L; H)
\end{equation}
is an isomorphism on the chain level.
\end{lemma}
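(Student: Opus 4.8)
The plan is to reduce the statement to a triangularity argument, by combining the natural splittings of both complexes into an ``interior/Morse'' part and an ``exterior/Reeb'' part with the action filtration. On the wrapped side, since $H$ is $C^{2}$-small and Morse on $M_{0}$ and quadratic at infinity, every time-one chord of $H$ from $L$ to $L$ is either a constant chord at a critical point of $H|_{L}$ — these span $CM^{*}(H|_{L})$, which is a genuine subcomplex $CW^{*}_{0}(L;H)\subseteq CW^{*}(L;H)$ because $m^{1}$ increases action while every interior chord has strictly larger action than every exterior chord, with quotient $CW^{*}_{+}(L;H)$ and projection $\pi_{+}$ — or an exterior chord, and the convex quadratic growth of $H$ gives a degree- and order-preserving bijection $\gamma\mapsto x_{\gamma}$ between Reeb chords of $l$ and exterior chords of $H$, the chord $x_{\gamma}$ lying on the level hypersurface of $\partial M\times[1,+\infty)$ determined by the period of $\gamma$. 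On the Legendrian side the generators of $LC^{*}_{lin}(l,L;\alpha,J)$ are the Reeb chords of $l$, whose images span $LC^{lin,+}_{*}(l,L;\alpha,J)$, together with the critical points of $H|_{L}$, which span $LC^{lin,0}_{*}(l,L;\alpha,J)=CM^{*}(H|_{L})$.

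First I would show $LC^{lin,0}_{*}\subseteq\ker(\pi_{+}\circ S)$. An element $v\in\mathcal{R}(x_{-},p_{+})$ carries, on its positive end, a negative gradient trajectory of $H|_{L}$ limiting to $p_{+}\in L_{0}$, while the family $H_{\chi}=\chi(s)H$ turns the inhomogeneous term off as $s\to+\infty$; the action--energy identity for this family therefore bounds $\mathcal{A}_{H}(x_{-})$ from below by $\mathcal{A}(p_{+})$ minus a controlled quantity, and since $\mathcal{A}(p_{+})$ sits near the top of the action spectrum this forces $x_{-}$ to be an interior chord, so $\pi_{+}(x_{-})=0$. Hence $\pi_{+}\circ S(p_{+})=0$ for every critical point $p_{+}$, and $\pi_{+}\circ S$ descends to a chain map, which is exactly the asserted quotient map $S_{+}\colon LC^{lin,+}_{*}(l,L;\alpha,J)\to CW^{*}_{+}(L;H)$.

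The heart is to show that $S_{+}$ is an isomorphism on the chain level, for which I would argue by triangularity with respect to the filtrations — the filtration of $LC^{*}_{lin}$ by the negative of the period of Reeb chords, and the action filtration on $CW^{*}$ — which correspond under $\gamma\mapsto x_{\gamma}$. Since the energy of any $v\in\mathcal{R}(x_{-},\gamma_{+})$ is non-negative and bounded in terms of $\mathcal{A}_{H}(x_{-})$ and the period of $\gamma_{+}$, the map $S$ respects these filtrations. For the distinguished pair $x_{-}=x_{\gamma_{+}}$ one should exhibit, by a model computation in the cylindrical end using the Liouville/Reeb data, a unique, regular, rigid solution of zero energy, contributing $\pm1$; every other contribution to the $x_{-}$-coefficient of $S(\gamma_{+})$ has strictly positive energy, hence places $x_{-}$ strictly below $x_{\gamma_{+}}$ in the action filtration. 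So in the ordered bases of Reeb chords and of exterior chords, $S_{+}$ is represented by a triangular matrix with $\pm1$ on the diagonal, hence is invertible; invertibility of $S_{+}$ together with the previous step shows that the full kernel of $\pi_{+}\circ S$ is exactly $LC^{lin,0}_{*}(l,L;\alpha,J)$.

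The main obstacle is analytic rather than algebraic. One must establish compactness of the zero- and one-dimensional moduli spaces $\mathcal{R}(x_{-},\gamma_{+})$ and $\mathcal{R}(x_{-},p_{+})$ as the cut-off $\chi$ degenerates, ruling out loss of energy into the cylindrical end — this is where the monotonicity of $\chi$ and the convex quadratic behaviour of $H$ at infinity are needed — and one must carry out the model computation identifying the diagonal term with $\pm1$ and confirming that every off-diagonal contribution strictly raises the action, which rests on transversality of the trivial solution and on careful bookkeeping of the period--level correspondence. These are standard features of the ``slowing down the inhomogeneous term'' technique, but they are where the real work is concentrated; the reduction to triangularity, once these are in hand, is routine linear algebra.
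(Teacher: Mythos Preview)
Your proposal is correct and follows essentially the same approach as the paper: both arguments establish the bijection $\gamma\mapsto x_{\gamma}$ between Reeb chords and exterior Hamiltonian chords, use the action/period filtrations to render $S_{+}$ upper-triangular, and identify the diagonal entries as $\pm 1$ via a model computation on the trivial strip over $\gamma_{+}$. The paper carries out that model computation explicitly by separating variables in the inhomogeneous Cauchy--Riemann equation on the cylindrical end (writing $v=(a,\gamma_{+}\circ b)$ and reducing to an ODE), whereas you leave it as a ``model computation''; otherwise the two proofs are the same in structure and content.
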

\begin{proof}
	First observe that there is a natural bijection between the time-one $H$-chords from $L$ to itself that are contained in the cylindrical end and the Reeb chords on $\partial M$ from $l$ to itself. The correspondence is as follows: every time-one $H$-chord on $\partial M \times \{r\}$ corresponds to a Reeb chord of length $2r$, because on $\partial M \times \{r\}$ the restriction of the Hamiltonian vector field $X_{H}$ is $2r$ times the Reeb vector field $R_{\alpha}$. \par
	Consider the symplectic action-filtration on the wrapped Floer complex, and the contact action-filtration on the linearized Legendrian complex. We order the Reeb chords as a basis for $\mathbb{Z} \mathcal{C}$ by their action in decreasing order, and also order the corresponding $H$-chords as a basis for $CW^{*}_{+}(L; H)$ by their action in decreasing order. 
By definition, the map $S$ increases the action, and therefore can be written as an upper-triangular matrix with respect to the chosen basis for $\mathbb{Z} \mathcal{C}(l)$ and that for $CW^{*}_{+}(L; H)$. \par
	We claim that the diagonal entries of the upper-triangular matrix are in fact $\pm 1$. This can be shown by using the relation between the energy of $v$ and the action of $x_{-}$ and that of $\gamma_{+}$. Let $x_{-} = x_{-}(\gamma_{+})$ be the unique $H$-chord that corresponds to the Reeb chord $\gamma_{+}$. Then we have
\begin{equation*}
0 \le \int v^{*} d\alpha = \mathcal{A}(\gamma_{+}) - \mathcal{A}(\gamma_{+}),
\end{equation*}
forcing $\int v^{*} d\alpha = 0$, so that $u$ is trivial strip, meaning that it is of the form
\begin{equation*}
v(s, t) = (a(s, t), \gamma_{+} \circ b(s, t)),
\end{equation*}
where $a: \mathbb{R} \times [0, 1] \to \mathbb{R}_{+}$, $b: \mathbb{R} \times [0, 1] \to [0, 1]$. The reason why the map $v$ can be written in the above form is because the image of any such map $v$ in $\mathcal{M}_{L, 1}(x_{-}(\gamma_{+}), \gamma_{+})$ must be contained in the cylindrical end of $M$, which is part of the symplectization $\partial M \times \mathbb{R}_{+}$. Noting that in the cylindrical end the Hamiltonian $H$ depends only on the radial coordinate, $H(r, y) = h(r)$, the equation \eqref{Cauchy-Riemann equation for cobordism maps} can be split into the following set of equations
\begin{equation}\label{separation of variables in the Cauchy-Riemann equation}
\begin{cases}
\partial_{s} a - \partial_{t} b - \chi(s) h'(a) = 0, \\
\partial_{t} a + \partial_{s} b = 0.
\end{cases}
\end{equation}
While the asymptotic convergence conditions are equivalent to the following set of conditions:
\begin{align}\label{separation of variables for the asymptotic conditions}
\lim\limits_{s \to -\infty} a(s, \cdot) = r_{0},\\
\lim\limits_{s \to +\infty} a(s, \cdot) = +\infty,\\
\lim\limits_{s \to \pm \infty} b(s, t) = h'(r_{0}) t.
\end{align}\par
	We claim that any solution $u = (a, \gamma_{+} \circ b)$ must satisfy the following property:
\begin{equation*}
a(s, t) = a(s), b(s, t) = h'(r_{0}) t.
\end{equation*}
That is to say, $a$ is always independent of $t$, and $b$ is always independent of $s$. The proof is a straightforward calculation by linearizing the above equation \eqref{separation of variables in the Cauchy-Riemann equation}, which can be referred to \cite{Bourgeois-Oancea1}. Since $h'(r)$ is increasing in $r$ as $h(r)$ is quadratic, we must have $a(s, t) = a(s) = a_{0}$ whenever $\chi(s) = 1$. Thus the solution is unique. This implies that any $u$ which satisfies the equation \eqref{separation of variables in the Cauchy-Riemann equation} and \eqref{separation of variables for the asymptotic conditions} is necessarily "trivial", i.e. the map obtained from the trivial strip over $\gamma_{+}$ by perturbing according to the Hamiltonian vector field of $H_{\chi}$. This proves that the diagonal entries are $\pm 1$ and thus $S_{+}$ is an isomorphism. \par
\end{proof}

	There is an explicit construction of the inverse of $S_{+}$, which uses similar moduli spaces defined by reversing the parameter $s$ in the defining equation \eqref{Cauchy-Riemann equation for cobordism maps}. We consider the moduli space 
\begin{equation}\label{moduli space of maps defining the homotopy inverse T}
\mathcal{M}_{L, 1}(\gamma_{-}, x_{+})
\end{equation}
 of maps $v: \mathbb{R} \times [0, 1] \to M$ satisfying the following equation
\begin{equation}
\partial_{s} v + J(\partial_{t} v - X_{H_{1 - \chi(s)}}(v)) = 0,
\end{equation}
and Lagrangian boundary condition:
\begin{equation*}
v(s, 0) \in L, v(s, 1) \in L,
\end{equation*}
which asymptotically converges to a Reeb chord $\gamma_{-}$ at $-\infty$, and a time-one Hamiltonian chord $x_{+}$ at $+\infty$, where we require that the Hamiltonian chord $x_{+}$ to be one which is contained in the cylindrical end of $M$. Counting rigid elements defines a chain map
\begin{equation}
T_{+}: CW^{*}_{+}(L; H) \to \mathbb{K} \mathcal{C}.
\end{equation} \par

\begin{lemma}
	The map $T_{+}$ is the inverse of $S_{+}$. 
\end{lemma}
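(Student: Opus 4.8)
The plan is to deduce that $T_{+}$ is a two-sided inverse of $S_{+}$ from the single identity $T_{+}\circ S_{+}=\id$ on $\mathbb{K}\mathcal{C}(l)$: since $S_{+}$ has already been shown to be an isomorphism on the chain level in Lemma \ref{the plus chain map is an isomorphism}, once $T_{+}\circ S_{+}=\id$ is established it follows formally that $T_{+}=S_{+}^{-1}$ and hence also $S_{+}\circ T_{+}=\id$. First I would record that $T_{+}$ is a chain map, by the usual boundary analysis of the one-dimensional components of the moduli spaces $\mathcal{M}_{L,1}(\gamma_{-},x_{+})$ of \eqref{moduli space of maps defining the homotopy inverse T}: a degenerating sequence of such strips can only produce a Floer trajectory for $H$ at the $+\infty$ end or a $J_{\infty}$-holomorphic SFT building (capped in $M$ exactly as in the definitions of $d_{\mathcal{C}}$ and $\delta$) at the $-\infty$ end, so reading off the codimension-one strata yields the chain-map relation for $T_{+}$ with respect to $m^{1}$ and $d_{lin}$, just as for $S$.

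To compute $T_{+}\circ S_{+}$ I would use a gluing-and-homotopy argument. A matrix entry of $T_{+}\circ S_{+}$ counts pairs $(v_{1},v_{2})$, where $v_{1}$ is an $S_{+}$-strip from $x_{-}$ to $\gamma_{+}$ and $v_{2}$ a $T_{+}$-strip from $\gamma'_{-}$ to $x_{-}$, glued along the common Hamiltonian chord $x_{-}$. By the standard gluing theorem such a pair is a boundary point of a one-dimensional moduli space $\mathcal{N}^{R}(\gamma'_{-},\gamma_{+})$ of maps $v\colon\mathbb{R}\times[0,1]\to M$ with boundary on $L$ solving
\[
\partial_{s}v + J\bigl(\partial_{t}v - X_{H_{\psi_{R}(s)}}(v)\bigr)=0,
\]
where $\psi_{R}\colon\mathbb{R}\to[0,1]$ is the glued profile built from $s\mapsto 1-\chi(s)$ (used in $T_{+}$) followed by $s\mapsto\chi(s)$ (used in $S_{+}$) with neck length of order $R$; thus $\psi_{R}\equiv 0$ near $\pm\infty$ and $\psi_{R}$ attains the value $1$ on a middle interval. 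The signed count of rigid elements of $\mathcal{N}^{R}$ is independent of $R>0$ by the cobordism argument, it recovers $T_{+}\circ S_{+}$ as $R\to+\infty$, and as $R\to 0$ the profile $\psi_{R}$ converges to the constant function $0$.

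When $\psi\equiv 0$ the equation degenerates to the translation-invariant, unperturbed equation $\partial_{s}v+J\,\partial_{t}v=0$ with boundary on $L$, i.e.\ the plain $J$-holomorphic strip equation of symplectic field theory; its solutions asymptotic to $\gamma'_{-}$ at $-\infty$ and $\gamma_{+}$ at $+\infty$ that are rigid for the continuation-type count (before quotienting by the restored $\mathbb{R}$-translation) are precisely the trivial strips over Reeb chords, which exist only for $\gamma'_{-}=\gamma_{+}$ and contribute $\pm 1$ — the same computation as the diagonal-entry argument in the proof of Lemma \ref{the plus chain map is an isomorphism}. Hence the $R\to 0$ end of $\mathcal{N}^{R}$ contributes the identity matrix, giving $T_{+}\circ S_{+}=\id$. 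Equivalently, one could run the argument with $S_{+}\circ T_{+}$: there the glued profile is homotopic to the constant $1$, the limiting equation is the translation-invariant Floer equation for $H$, and its continuation count is $\id_{CW^{*}_{+}(L;H)}$.

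The hard part will be the compactness analysis of the parametrized family $\mathcal{N}^{R}$ over the whole range $R\in(0,+\infty)$, and especially its behaviour as $R\to 0$ where the translation symmetry reappears: one must show that no energy escapes into bubbled-off $J$-holomorphic disks or planes in $M$, that no extra SFT building splits off, and that no Floer trajectory breaks, so that the compactified one-dimensional moduli space is a genuine interval with the glued two-level configuration at one end and the trivial strip at the other. This is the familiar ``continuation map for a constant homotopy is the identity'' mechanism transported to the mixed Floer/SFT setting; I expect to make it rigorous using the action estimates of Lemma \ref{the plus chain map is an isomorphism}, which confine all the curves in question to the cylindrical end near the action diagonal, together with exactness of $L$ (to exclude disk and sphere bubbles) and a transversality argument for the parametrized problem, following \cite{Bourgeois-Oancea1}.
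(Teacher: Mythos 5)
Your preliminary step (that $T_{+}$ is a chain map) is fine, and your computation of the $R\to 0$ end of the family $\mathcal{N}^{R}$ — only trivial strips over Reeb chords survive, contributing the identity matrix — is correct; it is essentially the diagonal-entry computation from the proof of Lemma \ref{the plus chain map is an isomorphism}. The genuine gap is the assertion that ``the signed count of rigid elements of $\mathcal{N}^{R}$ is independent of $R$.'' In a one-parameter family this is precisely what fails in general: the compactified one-dimensional parametrized moduli space has, besides the two ends $R\to 0$ and $R\to+\infty$, boundary components coming from breakings at interior parameter values (an index $-1$ solution at an isolated $R$ concatenated with an index $1$ SFT building at either end), and these contribute a chain homotopy. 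The cobordism argument therefore yields only $T_{+}\circ S_{+}=\id+d_{\mathcal{C}}\circ K+K\circ d_{\mathcal{C}}$, hence $T_{+}\simeq S_{+}^{-1}$ up to chain homotopy — weaker than the lemma's claim of a strict inverse. The action estimates you invoke confine only the diagonal entries, where equality of the asymptotic actions forces zero energy and hence triviality; for pairs $\gamma'_{-}\neq\gamma_{+}$ with $\mathcal{A}(\gamma'_{-})<\mathcal{A}(\gamma_{+})$ nothing excludes nontrivial curves or breakings, so ``no Floer trajectory breaks for any $R$'' is an unsupported transversality claim, not something achievable by generic choices.

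The paper closes the argument with a different second ingredient that your proposal omits: the same action-filtration argument used for $S_{+}$ applies verbatim to $T_{+}$, showing that $T_{+}$ is itself upper triangular with diagonal entries $\pm 1$ with respect to the action-ordered bases of $CW^{*}_{+}(L;H)$ and $\mathbb{K}\mathcal{C}(l)$ (the maps in \eqref{moduli space of maps defining the homotopy inverse T} are action-nondecreasing, and the diagonal entries again count only trivial strips). Thus $T_{+}$ is a strict chain-level isomorphism, and it is this fact, combined with the homotopy-inverse property obtained from gluing, that the paper uses to identify $T_{+}$ with the strict inverse of $S_{+}$. The shortest repair of your proof is to add this triangularity statement for $T_{+}$; your degeneration-to-$R=0$ computation then serves as a pleasant alternative verification of the diagonal entries, but it cannot by itself deliver the strict identity $T_{+}\circ S_{+}=\id$.
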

\begin{proof}
	Standard cobordism argument shows that $T_{+}$ is a chain homotopy inverse of $S_{+}$. This can be shown by gluing the two moduli spaces $\mathcal{M}_{L, 1}(\gamma_{-}, x_{+})$ and $\mathcal{M}_{L, 1}(x_{-}, \gamma_{+})$ whenever one of the asymptotic conditions agree, i.e. $\gamma_{-} = \gamma_{+}$ or $x_{-} = x_{+}$. \par
	On the other hand, the same proof as that for Lemma \ref{the plus chain map is an isomorphism} shows that the map $T_{+}$ is an isomorphism at the chain level. More precisely, with respect to the natural basis for $CW^{*}_{+}(L; H)$ and that for $\mathbb{Z} \mathcal{C}(l)$, the map $T_{+}$ can be written as an upper triangular matrix with diagonal entries $\pm 1$. Thus $T_{+}$ must coincide with the strict inverse of $S_{+}$. \par
\end{proof}

\begin{lemma}
	The inverse $T_{+}$ has a natural lift over $CW^{*}(L; H)$ to a chain map
\begin{equation}\label{lift of homotopy inverse}
T: CW^{*}(L; H) \to LC^{*}_{lin}(l, L; \alpha, J).
\end{equation}
Moreover, the maps $S$ and $T$ are both chain homotopy equivalences, and $T$ is a chain homotopy inverse of $S$.
\end{lemma}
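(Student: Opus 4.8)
The plan is to mirror the template already used for $S_+$ and $T_+$: build $T$ geometrically as a lift of $T_+$, check it is a chain map, then run the standard gluing cobordism to get $S\circ T\simeq\mathrm{id}$ and $T\circ S\simeq\mathrm{id}$, and finally (in the spirit of the preceding lemma) upgrade "homotopy inverse" to a genuine inverse by an action argument. Recall that $LC^{*}_{lin}(l,L;\alpha,J)=\mathbb{K}\mathcal{C}(l)\oplus CM^{*}(H|_{L})$ as a graded vector space (Definition \ref{def:linearized Legendrian complex}), and that, since $H$ is $C^{2}$-small and Morse on $M_{0}$, the wrapped Floer complex splits as $CW^{*}(L;H)=CW^{*}_{+}(L;H)\oplus CM^{*}(H|_{L})$, where $CW^{*}_{+}$ is generated by the non-constant chords in the cylindrical end (in bijection with the Reeb chords of $l$) and the second summand is generated by the constant chords, with Floer differential equal to the Morse differential of $H|_{L}$. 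I would define $T$ by counting rigid elements of moduli spaces built from the same equation as $\mathcal{M}_{L,1}(\gamma_{-},x_{+})$ in \eqref{moduli space of maps defining the homotopy inverse T} (the reversed cut-off $1-\chi(s)$), but now allowing the input $x_{+}$ to be \emph{any} time-one $H$-chord, and allowing the output to be either a Reeb chord $\gamma_{-}$ or a critical point $p_{-}$ of $H|_{L}$; in the latter case one attaches an infinite negative gradient half-ray of $H|_{L}$ to a boundary marked point of the domain, converging to $p_{-}$, exactly as in the spiked disks already used for $\delta$ and for $S$. Counting rigid elements of $\mathcal{M}_{L,1}(\gamma_{-},x_{+})$ and of these spiked moduli spaces $\mathcal{M}_{L,1}(p_{-},x_{+})$ gives $T$ in block form for the two splittings above; by construction its $\mathbb{K}\mathcal{C}(l)$--$CW^{*}_{+}$ block is $T_{+}$, so $T$ lifts $T_{+}$, and it carries the sub-complex $CM^{*}(H|_{L})\subset CW^{*}$ into the sub-complex $CM^{*}(H|_{L})\subset LC^{*}_{lin}$.

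\textbf{$T$ is a chain map.} This is the usual analysis of the codimension-one boundary strata of the compactified $1$-dimensional versions of $\bar{\mathcal{M}}_{L,1}(\gamma_{-},x_{+})$ and $\bar{\mathcal{M}}_{L,1}(p_{-},x_{+})$. Degenerations at the $x_{+}$ end (the Hamiltonian region) produce $T\circ d_{CW}$; degenerations at the output end produce $d_{lin}\circ T$, where the off-diagonal $\delta[-1]$ entry of $d_{lin}$ arises precisely from a spiked configuration in which the attached gradient ray splits off a short Morse segment. Because the Floer differential among constant chords equals the Morse differential of $H|_{L}$, the Morse blocks of these identities are literal, and summing the contributions yields $T\,d_{CW}=d_{lin}\,T$.

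\textbf{$S$ and $T$ are mutually inverse chain homotopy equivalences.} To prove $S\circ T\simeq\mathrm{id}_{CW^{*}}$ I would run the standard gluing cobordism: introduce a one-parameter family of cut-off profiles with a neck of length $R\in[0,\infty)$ inserted in the region where the Hamiltonian term is turned off, and form the corresponding parametrized moduli space of strips (with its spiked variants) having both ends asymptotic to $H$-chords. At $R=\infty$ this space degenerates to the fibre product of the moduli spaces defining $S$ and $T$ over a common Reeb-chord or critical-point asymptotic, whose count is $S\circ T$; at $R=0$ the neck disappears and one is left with continuation strips for the constant homotopy of $H$, whose count is $\pm1$ on the diagonal by the same trivial-strip computation as in the proof of Lemma \ref{the plus chain map is an isomorphism}, hence represents a map chain homotopic to $\mathrm{id}_{CW^{*}}$; the boundary of the resulting $1$-dimensional parametrized moduli space then produces the homotopy. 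The symmetric argument, gluing in the opposite order and inserting the neck on the $LC^{*}_{lin}$-side (now with spiked and Morse--Bott pieces as in the definition of the cone), gives $T\circ S\simeq\mathrm{id}_{LC^{*}_{lin}}$. Hence $S$ and $T$ are chain homotopy equivalences and $T$ is a chain homotopy inverse of $S$; and exactly as for $T_{+}$, an action-filtration argument shows $T$ is triangular with $\pm1$ on the diagonal with respect to the natural bases (a contribution between an $H$-chord and a generator of equal action is a translation-invariant trivial strip over the corresponding Reeb chord, or a constant in the Morse region), so $T$ is in fact a chain isomorphism and coincides with the strict inverse of $S$.

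\textbf{Main obstacle.} The real work is the compactness and gluing for the two composition cobordisms: identifying the $R\to0$ limit with $\mathrm{id}$ rather than with some less transparent map (which forces one to keep careful track of the trivial solutions and their orientation signs), and treating the Morse/mapping-cone pieces uniformly so that the spiked curves contributing the $\delta$-terms glue correctly with the Morse trajectories on both sides. Everything else is a routine transcription of the arguments already established for $S$, $S_{+}$ and $T_{+}$.
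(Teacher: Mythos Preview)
Your construction of $T$ is exactly the paper's: count rigid elements in $\mathcal{M}_{L,1}(\gamma_-,x_+)$ and in the spiked moduli spaces $\mathcal{M}_{L,1}(p_-,x_+)$, noting (as you do implicitly) that the Reeb-output moduli space is empty when $x_+$ is an interior chord, so $T$ preserves the Morse sub-complex and induces $T_+$ on the quotient. The paper's proof in fact stops there---it only writes down $T$ and checks it lifts $T_+$, leaving both the chain-map verification and the ``Moreover'' clause to the reader---so your boundary analysis for $d_{lin}T=T\,d_{CW}$ and your gluing-cobordism argument for $S\circ T\simeq\mathrm{id}$, $T\circ S\simeq\mathrm{id}$ are genuine additions rather than deviations. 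Your final remark that $T$ is actually a strict chain isomorphism (triangular with $\pm 1$ diagonal) is not claimed in this lemma, only in the preceding one for $T_+$; it is plausible and harmless, but strictly speaking goes beyond what is being asked.
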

\begin{proof}
	For a non-constant Hamiltonian chord $x_{+}$ that is contained in the cylindrical end, we must also consider the moduli space $\mathcal{M}_{L, 1}(p_{-}, x_{+})$, where $p_{-}$ is a critical point of $H|_{L}$, in addition to the moduli space $\mathcal{M}_{L, 1}(\gamma_{-}, x_{+})$. We count rigid elements to get a number $n_{p_{-}, x_{+}}$, and set
\begin{equation}
T(x_{+}) = \sum_{\substack{p_{-}\\\deg(p_{-}) = \deg(x_{+})}} n_{p_{-}, x_{+}} p_{-} + \sum_{\substack{\gamma_{-}\\\deg(\gamma_{-}) = \deg(x_{+})}} n_{\gamma_{-}, x_{+}} \gamma_{-}.
\end{equation} \par
	To define $T$ on the sub-complex $CW^{*}_{0}(L; H)$, we consider the moduli space $\mathcal{M}_{L, 1}(p_{-}, x_{+})$, and $x_{+}$ is an interior Hamiltonian chord. On the other hand, another relevant moduli space $\mathcal{M}_{L, 1}(\gamma_{-}, x_{+})$ is empty, for any $\gamma_{-} \in \mathcal{C}(l)$ and interior Hamiltonian chord $x_{+}$. The count of rigid elements of the moduli space $\mathcal{M}_{L, 1}(p_{-}, x_{+})$ gives a number $n_{p_{-}, x_{+}}$, using which we may define
\begin{equation}
T(x_{+}) = \sum_{\substack{p_{-}\\\deg(p_{-}) = \deg(x_{+})}} n_{p_{-}, x_{+}} p_{-}.
\end{equation} \par
	It remains to verify that the map $T$ does induce a map on the quotient complex $CW^{*}_{+}(L; H)$, which coincides with $T_{+}$. Consider the composition of $T$ with the projection $LC^{*}_{lin}(l, L; \alpha, J) \to \mathbb{K} \mathcal{C}(l)$. This map has kernel being the sub-complex $CW^{*}_{0}(L; H)$, and therefore induces a map on the quotient complex
\begin{equation*}
CW^{*}_{+}(L; H) \to \mathbb{K} \mathcal{C}(l).
\end{equation*}
By construction, this map is defined by counting rigid elements in the moduli space $\mathcal{M}_{L, 1}(\gamma_{-}, x_{+})$, which is precisely $T_{+}$. \par
\end{proof}

	Thus the proof of Proposition \ref{prop: the map between wrapped Floer complex and linearized Legendrian contact complex} is complete. \par
	
	The map \eqref{the map from the linearized Legendrian contact homology to wrapped Floer cohomology} can be extended to an $A_{\infty}$-homomorphism. \par
	
\begin{proposition}\label{A-infinity homomorphism from LC to CW}
	There is an $A_{\infty}$-homomorphism
\begin{equation}\label{A-infinity homomorphism from the linearized Legendrian complex to the wrapped Floer complex}
\mathcal{S}: (LC^{*}_{lin}(l; L; \alpha, J), \mu^{k}) \to (CW^{*}(L; H), m^{k}),
\end{equation}
such that $\mathcal{S}^{1} = S$.
\end{proposition}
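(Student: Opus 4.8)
The plan is to construct the higher-order terms $\mathcal{S}^k$ by counting rigid parametrized pseudoholomorphic curves that interpolate, as one moves across a multiplihedron-type parameter space, between the broken SFT configurations used to define $\mu^k$ (Proposition \ref{A-infinity algebra on linearized Legendrian complex}) and the inhomogeneous Floer disks used to define $m^k$ on $CW^*(L;H)$. This is the natural generalization of the definition of $S = \mathcal{S}^1$ in Proposition \ref{prop: the map between wrapped Floer complex and linearized Legendrian contact complex}, where the interpolation was carried out by the single cut-off function $\chi$ in equation \eqref{Cauchy-Riemann equation for cobordism maps}, to disks with $k$ positive punctures. Concretely, for each $k \ge 2$ I would work over the compactified moduli space $\bar{\mathcal{S}}^{k+1}$ of section \ref{section: a model for multiplihedra}, and for each $(w,S) \in \mathcal{S}^{k+1}$ choose a function $\chi_{w,S}\colon S \to [0,1]$ equal to $1$ near the negative (output) puncture and to $0$ near all $k$ positive (input) punctures, together with a domain-dependent inhomogeneous term $\chi_{w,S}\cdot H$ and an admissible domain-dependent almost complex structure; as $w \to 0$ the ``Hamiltonian-on'' region is pushed into the output end, while as $w \to +\infty$ it engulfs the input ends. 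On top of the main curve in $M$ one attaches, exactly as in the definition of $\mu^k$, the capping pieces (half-planes on $L$ asymptotic to Reeb chords, planes asymptotic to closed Reeb orbits, and the multi-punctured disks $g'_I$ on $L$), weighted by the counts $e(\cdot)$ and $1/\kappa(\cdot)$, and for critical-point inputs or outputs one attaches negative gradient half-rays of $H|_L$ (the ``spikes'') as in the construction of $S$. Counting rigid elements of the resulting parametrized moduli spaces then defines $\mathcal{S}^k$, and the $k=1$ case recovers $S$ verbatim, so $\mathcal{S}^1 = S$.

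The core step is the compactness and gluing analysis, i.e.\ identifying the codimension-one boundary of the one-dimensional parametrized moduli spaces. I would show these strata are of exactly two kinds. First, $w$ may degenerate at the input ends (type I boundary of $\bar{\mathcal{S}}^{k+1}$, equation \eqref{type I boundary}), producing an unparametrized inhomogeneous Floer disk in $M$ --- one of the pieces defining $m^s$ --- whose inputs are outputs of parametrized curves of lower order; these contribute the terms $\sum m^s(\mathcal{S}^{k_s}(\cdots),\dots,\mathcal{S}^{k_1}(\cdots))$. Second, $w$ may degenerate at the output end (type II boundary, equation \eqref{type II boundary}), or an SFT level may split off at $-\infty$ in the symplectization, producing on one side an unparametrized linearized-Legendrian configuration --- a piece defining $\mu^j$ --- and on the other a lower-order parametrized curve; these contribute $\sum \pm\, \mathcal{S}^{k-j+1}(\cdots, \mu^j(\cdots),\cdots)$. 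Summing over all such strata yields precisely the $A_\infty$-homomorphism equation relating $\{\mu^k\}$ and $\{m^k\}$.

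The \emph{main obstacle} will be the bookkeeping of the ``spurious'' degenerations and the consistency of choices. On the SFT side, sequences can break off holomorphic buildings with Morse--Bott level sets at $+\infty$ in $M$ (planes, half-planes, and multi-punctured disks on $L$ capping extra Reeb chords and orbits), exactly as in the proofs of Proposition \ref{A-infinity algebra on linearized Legendrian complex} and of the linearized cobordism homomorphism; as there, these do not produce genuinely new algebraic terms, since $\mu^k$ and $\mathcal{S}^k$ are defined with all possible cappings summed in with their weights, so the contributions reassemble into the weighted counts. One must also handle the spike degenerations --- a gradient half-ray may break at an intermediate critical point, or the Floer/SFT disk to which it is attached may bubble --- which again reproduce lower-order terms of $\mu$, $m$ and $\mathcal{S}$. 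Finally, all auxiliary choices (domain-dependent Floer data and cut-off functions over $\bar{\mathcal{S}} = \bigcup_k \bar{\mathcal{S}}^{k+1}$, and the families of Morse functions on the gradient-tree moduli $\bar{G}$) must be made universally and conformally consistently and compatibly with the gluing maps, so that the boundary identifications above hold on the nose; this is standard but delicate to set up. Transversality is achieved by generic choice of these data, using that $L$ is exact (no disk or sphere bubbles) and that $H$ is non-degenerate and quadratic at infinity (Fredholm linearized operators and action/energy estimates confining curves, as in Lemma \ref{the plus chain map is an isomorphism}). I would close by remarking that, since $\mathcal{S}^1 = S$ is a chain homotopy equivalence by Proposition \ref{prop: the map between wrapped Floer complex and linearized Legendrian contact complex}, the $A_\infty$-homomorphism $\mathcal{S}$ is automatically an $A_\infty$-homotopy equivalence.
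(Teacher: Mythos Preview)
Your approach matches the paper's: define $\mathcal{S}^k$ by counting inhomogeneous pseudoholomorphic disks in $M$ with $k$ positive punctures asymptotic to Reeb chords (or, for critical-point inputs, boundary marked points carrying attached gradient half-rays of $H|_L$) and one negative puncture asymptotic to a Hamiltonian chord, for a domain-dependent Hamiltonian equal to $H$ near the output and vanishing near the inputs. Your explicit use of the multiplihedra parameter $w$ over $\bar{\mathcal{S}}^{k+1}$ is in fact a refinement of the paper's terser account, which only says ``inhomogeneous for a domain-dependent Hamiltonian similar to $H_\chi$'' without naming the extra parameter; the multiplihedra structure is what produces the boundary strata contributing $m^s(\mathcal{S}^{k_s},\dots,\mathcal{S}^{k_1})$ for $s>1$, and is needed for the virtual dimension to come out as $k-1$ rather than $k-2$.

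One point to correct: the capping pieces you propose to attach to the main curve are not part of the \emph{definition} of $\mathcal{S}^k$, and the paper does not include them. Those cappings enter the definition of $\mu^k$ because the main curve there lives in the symplectization $\partial M\times\mathbb{R}$, which has a negative end where extra boundary and interior punctures (asymptotic to auxiliary Reeb chords $\gamma_i$ and orbits $\sigma_j$) can occur and must be filled by curves in $M$. For $\mathcal{S}^k$ the main curve already lives in $M$, which has only a positive cylindrical end, so there are no extra negative punctures to cap; the paper's moduli space $\mathcal{M}_{L,k}(x_-;\gamma_+^1,\dots,\gamma_+^k)$ is simply the space of single such disks in $M$ with the indicated asymptotics. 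Capped configurations do enter the \emph{boundary} analysis --- when an SFT level breaks off at $+\infty$ in $M$, the resulting symplectization level may carry extra negative punctures capped in $M$, reproducing exactly a $\mu^j$-curve --- but they are not part of the interior moduli problem. Drop the cappings from your definition (keeping them only in the degeneration analysis, as you already indicate in your ``main obstacle'' paragraph) and your construction agrees with the paper's. A minor related point: the output of $\mathcal{S}^k$ lies in $CW^*(L;H)$, whose generators are Hamiltonian chords, so there is no ``critical-point output'' case to treat.
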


	To define higher order maps
\begin{equation}
\mathcal{S}^{k}: LC^{*}_{lin}(l; L; \alpha, J)^{\otimes k} \to CW^{*}(L; H),
\end{equation}
we shall consider moduli spaces similar to $\mathcal{M}_{L, 1}(x_{-}, \gamma_{+})$, but with more positive punctures on the disk asymptotic to Reeb chords. Let $\gamma_{+}^{1}, \cdots, \gamma_{+}^{k}$ be a collection of Reeb chords as generators of $LC^{*}_{lin}(l; L; \alpha, J)$. 
Consider the moduli space
\begin{equation}\label{moduli space of disks for the homotopy equivalence}
\mathcal{M}_{L, k}(x_{-}; \gamma_{+}^{1}, \cdots, \gamma_{+}^{k})
\end{equation}
of maps $v: \Sigma \to M$ from a disk with $k$ positive punctures asymptotic to $\gamma_{+}^{1}, \cdots, \gamma_{+}^{k}$, and one negative puncture asymptotic to $x_{-}$, with boundary condition $v(\partial \Sigma) \subset L$. \par
	We also need to consider the cases where some of the inputs are critical points of $H|_{L}$. The relevant moduli spaces are denoted by
\begin{equation}\label{moduli space defining A-infinity homomorphism from linearized Legendrian complex to wrapped Floer complex}
\mathcal{M}_{L, k}(x_{-}; \{p_{+}^{a}\}_{a \in A}, \{\gamma_{+}^{a}\}_{a \in A^{c}}),
\end{equation}
where $A \subset \{1, \cdots, k\}$ and $A^{c}$ is the complement. 
Elements in this moduli space \eqref{moduli space defining A-infinity homomorphism from linearized Legendrian complex to wrapped Floer complex} are equivalence classes of tuples $(\Sigma_{A}, f, \{v_{a}\}_{a \in A})$, such that
\begin{enumerate}[label=(\roman*)]

\item $\Sigma_{A}$ is the union of a disk with a negative puncture $z_{-}^{0}$, $|A|$ marked points $t_{a}, a \in A$, and $|A^{c}|$ positive punctures $z_{+}^{a}, a \in A^{c}$;

\item $f: \Sigma_{A} \to M$ is an inhomogeneous pseudoholomorphic map, inhomogeneous for a domain-dependent Hamiltonian similar to $H_{\chi}$, i.e. a Hamiltonian that agrees with $H$ near $z_{-}^{0}$, and is zero near all other positive punctures $z_{+}^{a}, a \in A^{c}$ and marked points $t_{a}, a \in A$;

\item $f$ satisfies the boundary condition $f(\partial \Sigma_{A}) \subset L$, 
and asymptotically converges to $\gamma_{+}^{a}$ near the puncture $z_{+}^{a}, a \in A^{c}$;

\item $v_{a}: \mathbb{R}_{+} \to L$ is a negative gradient flow of $H|_{L}$, and asymptotically converges to the critical point $p_{+}^{a}$ as $s \to +\infty$;

\item $v_{a}(0) = f(t_{a})$ for every $a \in A$.

\end{enumerate} \par

	We define $\mathcal{S}^{k}$ by
\begin{equation}
\mathcal{S}^{k}(\{p_{+}^{a}\}_{a \in A}, \{ \gamma_{+}^{a} \}_{a \in A^{c}}) = \sum_{x_{-}} n_{x_{-}; \{p_{+}^{a}\}_{a \in A}, \{ \gamma_{+}^{a} \}_{a \in A^{c}}} x_{-}.
\end{equation}

	Since $S$ is a quasi-isomorphism, it follows immediately that: \par
	
\begin{lemma}
	The $A_{\infty}$-homomorphism \eqref{A-infinity homomorphism from the linearized Legendrian complex to the wrapped Floer complex} is an $A_{\infty}$-homotopy equivalence.
\end{lemma}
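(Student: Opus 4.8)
The plan is to deduce the statement from the already-established fact that the linear term $\mathcal{S}^{1} = S$ is a chain homotopy equivalence (Proposition \ref{prop: the map between wrapped Floer complex and linearized Legendrian contact complex}), combined with a standard piece of homological algebra: an $A_{\infty}$-homomorphism between $A_{\infty}$-algebras over a field whose underlying chain map is a quasi-isomorphism admits an $A_{\infty}$-homotopy inverse. Since we are working over the field $\mathbb{K}$ of characteristic zero, and since under Assumption \ref{strong exactness assumption} (in force throughout this subsection) both $LC^{*}_{lin}(l, L; \alpha, J)$ and $CW^{*}(L; H)$ are honest uncurved $A_{\infty}$-algebras, this ``Whitehead theorem for $A_{\infty}$-algebras'' applies without any completeness or filtration hypothesis. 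Concretely: given an $A_{\infty}$-homomorphism $\mathcal{F}\colon \mathcal{A} \to \mathcal{B}$ with $\mathcal{F}^{1}$ a quasi-isomorphism, there is an $A_{\infty}$-homomorphism $\mathcal{G}\colon \mathcal{B} \to \mathcal{A}$ with $\mathcal{G}^{1}$ a quasi-inverse of $\mathcal{F}^{1}$ such that $\mathcal{G} \circ \mathcal{F}$ and $\mathcal{F} \circ \mathcal{G}$ are $A_{\infty}$-homotopic to the identities; this follows from the homological perturbation lemma or may be quoted from the literature on $A_{\infty}$-structures. Applying it with $\mathcal{F} = \mathcal{S}$ gives the conclusion immediately.

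The first step, then, is to recall this lemma with a precise reference and to verify the (trivial) hypotheses: $\mathbb{K}$ is a field, the two $A_{\infty}$-algebras are uncurved, and $S$ is a quasi-isomorphism by the preceding lemmas (where $S_{+}$ was shown to be a chain-level isomorphism on the quotient complexes and the mapping-cone structure propagates this to $S$). An alternative, more in the spirit of the geometric constructions of this section, is to build an explicit $A_{\infty}$-homotopy inverse $\mathcal{T}\colon CW^{*}(L; H) \to LC^{*}_{lin}(l, L; \alpha, J)$ extending the chain-level map $T$ of \eqref{lift of homotopy inverse}: one counts the same inhomogeneous pseudoholomorphic disks with $k$ inputs that define the higher terms of $\mathcal{S}$, but with the cut-off function $\chi$ reversed, so that the Hamiltonian inhomogeneity is switched on near the inputs and off near the output, together with the Morse-flow-line caps at the critical-point asymptotics. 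One then verifies $\mathcal{S} \circ \mathcal{T} \simeq \mathrm{id}$ and $\mathcal{T} \circ \mathcal{S} \simeq \mathrm{id}$ by a standard gluing/stretching argument: gluing the two families of interpolating disks along matching asymptotics produces one-parameter families of moduli spaces whose codimension-one boundary strata split off the identity operator plus the components of an $A_{\infty}$-homotopy.

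The main obstacle here is bookkeeping rather than anything conceptual. On the algebraic route, the only point requiring care is confirming that no filtration or convergence issue intervenes; it does not, because over a field every chain complex is appropriately cofibrant for this purpose and the relevant $A_{\infty}$-structures carry no curvature term. On the geometric route, the genuine work would be the setup of the parametrized moduli spaces of interpolating disks and the verification of their Gromov compactness and transversality — exactly parallel to the analysis of the moduli spaces $\mathcal{M}_{L, k}(x_{-}; \{p_{+}^{a}\}_{a\in A}, \{\gamma_{+}^{a}\}_{a\in A^{c}})$ used to define $\mathcal{S}$ — and the identification of the boundary contributions with the terms of an $A_{\infty}$-homotopy relation. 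Since the chain-level equivalence is already in hand, I would present the algebraic argument as the proof and merely remark that the explicit geometric inverse can be written down by the recipe above.
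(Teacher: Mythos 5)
Your proposal matches the paper's argument: the paper's proof is precisely the observation that since $\mathcal{S}^{1}=S$ is a quasi-isomorphism (indeed a chain homotopy equivalence by the preceding lemmas), the standard Whitehead-type theorem for $A_{\infty}$-algebras over a field yields that $\mathcal{S}$ is an $A_{\infty}$-homotopy equivalence. Your remark about the explicit geometric inverse $\mathcal{T}$ also mirrors what the paper does immediately afterward, so the proposal is correct and essentially identical in approach.
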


	We can extend the linear map \eqref{lift of homotopy inverse} to an $A_{\infty}$-homomorphism of $A_{\infty}$-algebras
\begin{equation}
\mathcal{T} = \{ \mathcal{T}^{k} \}: (CW^{*}(L; H), m^{k}) \to (LC^{*}_{lin}(l; L; \alpha, J), \mu^{k}),
\end{equation}
in a similar way to that we extend the linear map $S$ to the $A_{\infty}$-homomorphism $\mathcal{S}$. \par

\begin{proposition}
	There is an $A_{\infty}$-homomorphism
\begin{equation}\label{homotopy equivalence from CW to LC}
\mathcal{T} = \{ \mathcal{T}^{k} \}: (CW^{*}(L; H), m^{k}) \to (LC^{*}_{lin}(l; L; \alpha, J), \mu^{k}).
\end{equation}
whose first order term $\mathcal{T}^{1}$ is \eqref{lift of homotopy inverse}.
Moreover, $\mathcal{T}$ is an $A_{\infty}$-homotopy inverse of $\mathcal{S}$.
\end{proposition}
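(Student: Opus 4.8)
The plan is to construct the higher maps $\mathcal{T}^{k}$ by counting rigid elements in the moduli spaces obtained from those defining $\mathcal{S}^{k}$ by exchanging the roles of the positive and negative ends, and then to verify the $A_{\infty}$-homomorphism equations and the homotopy-inverse property by the usual boundary-strata and gluing arguments. Concretely, for $k\ge 1$ Hamiltonian-chord inputs $x_{+}^{1},\dots,x_{+}^{k}$ I consider disks with $k$ positive boundary punctures and one negative puncture, equipped with a domain-dependent Hamiltonian of the type used for $\mathcal{S}$ but turned off near the (now negative) output puncture and equal to a rescaled copy of $H$ near each positive input — the several-input generalization of the datum $H_{1-\chi}$ used to define $T=\mathcal{T}^{1}$ in \eqref{lift of homotopy inverse}. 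When the output asymptote is a Reeb chord $\gamma_{-}$ of $l$ I count the resulting rigid disks after capping in $M$ (with the weights $e(\gamma)$ and $e(\sigma)/\kappa(\sigma)$, and the Morse-Bott gradient spikes, exactly as in the definition of $\mu^{k}$); when the output is a Morse critical point $p_{-}$ of $H|_{L}$ I attach a negative gradient half-ray at a boundary marked point, as in the $k=1$ case. If some inputs are interior Hamiltonian chords I insert gradient-flow spikes at marked points, and the compatibility with the sub-complexes $CW^{*}_{0}$, $CW^{*}_{+}$ and with the cone decomposition of $LC^{*}_{lin}$ is handled exactly as in the proof of Proposition \ref{prop: the map between wrapped Floer complex and linearized Legendrian contact complex}. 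Having fixed universal and conformally consistent Floer data so that the zero- and one-dimensional such moduli spaces are regular, $\mathcal{T}^{k}$ is the signed count of rigid elements.

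Next I would prove the $A_{\infty}$-homomorphism equations by analyzing the codimension-one boundary strata of the one-dimensional moduli spaces above. These are of two flavours: first, among the inputs — where the Hamiltonian still equals $H$ — a wrapped Floer operation $m^{j}$ bubbles off, contributing the terms $\mathcal{T}^{k-j+1}(\dots,m^{j}(\dots),\dots)$; second, near the output end — where the Hamiltonian has been slowed to $0$ — an SFT-type building over $\partial M\times\mathbb{R}$, possibly with Morse-Bott level sets of $H|_{L}$ at infinity, splits off, producing a configuration of linearized Legendrian operations $\mu^{j}$ applied to outputs of lower $\mathcal{T}$'s, contributing $\mu^{j}(\mathcal{T}^{i_{j}}(\dots),\dots,\mathcal{T}^{i_{1}}(\dots))$. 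Matching these with the standard sign conventions (the same $*_{m}$ and $\Diamond$ signs as before) yields the $A_{\infty}$-functor equation. That $\mathcal{T}^{1}$ is the linear map \eqref{lift of homotopy inverse} is immediate from the definitions, and unit preservation follows from the $k=1$ analysis already carried out.

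For the homotopy-inverse statement I would first invoke the abstract fact: since $T$ is a chain homotopy inverse of $S$ and $S$ is a quasi-isomorphism (Lemma \ref{the plus chain map is an isomorphism} and the surrounding lemmas), $\mathcal{T}^{1}$ is a quasi-isomorphism, so $\mathcal{T}$ is an $A_{\infty}$-quasi-isomorphism, hence an $A_{\infty}$-homotopy equivalence. To identify it with an $A_{\infty}$-homotopy inverse of $\mathcal{S}$, it suffices to show $\mathcal{S}\circ\mathcal{T}\simeq\id_{CW^{*}(L;H)}$ and $\mathcal{T}\circ\mathcal{S}\simeq\id_{LC^{*}_{lin}(l,L;\alpha,J)}$ as $A_{\infty}$-homomorphisms; uniqueness of homotopy inverses then finishes the argument. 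Both homotopies arise from a one-parameter family of the defining Floer data: gluing the "turn off, then turn on" cut-off functions produces, for a gluing parameter $R\in[0,\infty]$, a family $H_{\chi_{R}}$ which at $R=\infty$ reproduces the broken configuration computing $\mathcal{S}\circ\mathcal{T}$ (resp.\ $\mathcal{T}\circ\mathcal{S}$) and at $R=0$ a Floer datum whose solutions are translation-invariant, hence compute the identity $A_{\infty}$-functor; reading off the boundary of the one-dimensional parametrized moduli spaces gives the $A_{\infty}$-homotopy. This is the same mechanism that shows a composition of continuation maps is a continuation map, upgraded to the $A_{\infty}$ level and to the mixed Morse/SFT setting.

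The hard part will be the output-end degeneration: because the Hamiltonian has been turned off near the negative puncture, the relevant compactness is SFT-type neck-stretching into $\partial M\times\mathbb{R}$ together with the Morse-Bott level sets of $H|_{L}$ at infinity on $L$, and one must rule out unwanted bubbling and check that the surviving configurations are exactly products of $\mu^{j}$'s with lower $\mathcal{T}$'s, with the correct capping multiplicities $e(\gamma)$ and $e(\sigma)/\kappa(\sigma)$. This is the same analysis underlying Propositions \ref{A-infinity algebra on linearized Legendrian complex}, \ref{A-infinity homomorphism on linearized Legendrian complex} and \ref{A-infinity homomorphism from LC to CW}, so it is structural rather than new, but it is where the SFT regularity hypotheses (Assumption \ref{regularity assumption on almost complex structures}) are actually used, and care is needed to ensure the parametrized Floer data for the gluing homotopies can be chosen consistently with all previously fixed choices.
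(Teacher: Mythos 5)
Your proposal matches the paper's intended argument: the paper gives no proof of this proposition beyond the remark that $\mathcal{T}$ is constructed ``in a similar way'' to $\mathcal{S}$, and your role-reversed moduli spaces (Hamiltonian equal to $H$ near the positive inputs, turned off near the negative output, with a gradient half-ray attached when the output is a critical point) together with the standard boundary-strata analysis is exactly that construction. Your treatment of the homotopy-inverse claim is in fact more careful than the paper's, since you correctly recognize that $\mathcal{T}^{1}$ being a chain homotopy inverse of $S$ only makes $\mathcal{T}$ \emph{some} homotopy equivalence, and you supply the parametrized gluing of the two cut-off functions needed to show $\mathcal{S}\circ\mathcal{T}$ and $\mathcal{T}\circ\mathcal{S}$ are homotopic to the identity; the only stray remark is the suggestion to insert gradient spikes at interior Hamiltonian-chord inputs, which is unnecessary since all inputs of $\mathcal{T}^{k}$ are Floer-type asymptotics.
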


\subsection{Equivalence to the Viterbo functor}

	Given the equivalence between wrapped Floer cohomology and linearized Legendrian cohomology, we can view linearized cobordism map $\mathcal{F}$ as an $A_{\infty}$-homomorphism between the wrapped Floer complexes. 
It is observed that this is equivalent to the Viterbo restriction homomorphism. \par

\begin{theorem}\label{Viterbo restriction homomorphism homotopic to linearized cobordism homomorphism}
Suppose $L \subset M$ is an exact cylindrical Lagrangian submanifold which restricts to $L'$, such that Assumption \ref{strong exactness assumption} is satisfied.
	Then the linearized cobordism $A_{\infty}$-homomorphism $\mathcal{F}$ agrees with the Viterbo restriction homomorphism up to homotopy of $A_{\infty}$-homomorphisms, 
after we identify $LC^{*}_{lin}(l, L; \alpha, J_{M})$ with $CW^{*}(L; H_{M})$, and $LC^{*}_{lin}(l', L'; \alpha', J_{U})$ with $CW^{*}(L'; H_{U})$ (using the $A_{\infty}$-homotopy equivalence $\mathcal{S}$). 
More precisely, the $A_{\infty}$-homomorphisms $\mathcal{F}$ and $\mathcal{T} \circ R \circ \mathcal{S}$ are homotopic.
\end{theorem}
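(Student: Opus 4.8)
Write $\mathcal{S},\mathcal{T}$ for the $A_{\infty}$-homotopy equivalences of Proposition \ref{A-infinity homomorphism from LC to CW} and its companion attached to $(l,L;\alpha,J_{M})$, and $\mathcal{S}',\mathcal{T}'$ for the ones attached to $(l',L';\alpha',J_{U})$; the map called $\mathcal{T}$ in the statement is $\mathcal{T}'$. Composition of $A_{\infty}$-homomorphisms descends to homotopy classes, and $\mathcal{S}'\circ\mathcal{T}'\simeq\id$, $\mathcal{T}'\circ\mathcal{S}'\simeq\id$; hence the asserted homotopy $\mathcal{F}\simeq\mathcal{T}'\circ R\circ\mathcal{S}$ is equivalent to
\begin{equation}\label{eqn:reduced homotopy}
\mathcal{S}'\circ\mathcal{F}\;\simeq\;R\circ\mathcal{S}
\end{equation}
as $A_{\infty}$-homomorphisms $LC^{*}_{lin}(l,L;\alpha,J_{M})\to CW^{*}(L';H_{U})$. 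I will prove \eqref{eqn:reduced homotopy}; the theorem follows by composing with $\mathcal{T}'$.

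\textbf{Both sides as three-zone curve counts.} Both $\mathcal{S}'\circ\mathcal{F}$ and $R\circ\mathcal{S}$ are computed by counts of inhomogeneous pseudoholomorphic disks in $M$ carrying a \emph{three-zone} Floer datum: near the positive punctures there is no inhomogeneous term and the boundary lies on $L$, so the asymptotics there are Reeb chords of $l$ (or, after attaching spikes and gradient trees, critical points of $H|_{L}$), exactly as in the definition of $\mathcal{S}$ in subsection \ref{section: equivalence between linearized cobordism homomorphism and Viterbo restriction homomorphism}; near the unique negative puncture the Hamiltonian is $H_{U}$, the boundary lies on $L'$, and the asymptote is an $H_{U}$-chord; in between, the datum is a concatenation of the "slowing down" profile $\chi$ with the "rescaling" profile $\rho\mapsto(H^{\rho},J^{\rho},L^{\rho})$ of \eqref{Hamiltonian on the rescaled domain}--\eqref{moving Lagrangian boundary condition for the inhomogeneous pseudoholomorphic map defining the Viterbo map}. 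For $R\circ\mathcal{S}$ the slowing-down zone sits above the rescaling zone (first turn on $H_{M}$, then rescale $U_{0}$ and pass to $(H^{\rho},L^{\rho})$); for $\mathcal{S}'\circ\mathcal{F}$ the order is reversed (first push the curve across $\partial U$ into the cobordism $W$ with no Hamiltonian term and boundary on $L^{c}$, then turn on $H_{U}$ deep inside $U$). By a gluing argument the $\rho\to0^{+}$ limits of these two families of counts recover $(R\circ\mathcal{S})^{k}$ and $(\mathcal{S}'\circ\mathcal{F})^{k}$ respectively: the former because turning on $H_{M}$ and then applying the Viterbo rescaling is by construction the geometric composite of $\mathcal{S}$ with $R$; the latter because, by Lemma \ref{small parameter implies coming from U} and its multi-input analogue, for $\rho$ small such a curve is forced to come from $U$ via $u=i\circ\psi_{U}^{\rho}\circ v$, so that after neck-stretching along $\partial U$ its $W$-part degenerates into the capped cobordism curves defining $\mathcal{F}$ and its $U$-part into the slowing-down curves defining $\mathcal{S}'$ (use also Corollary \ref{isomorphism of the rescaled moduli space}).

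\textbf{The interpolating family.} Introduce a homotopy parameter $\tau\in[0,1]$ and a $\tau$-family of three-zone Floer data on $M$, defined over $[0,1]$ times the multiplihedron-type polytopes $\bar{\mathcal{S}}^{k+1}$ of subsection \ref{section: a model for multiplihedra}, which at $\tau=0$ is the (pre-glued) composition datum for $R\circ\mathcal{S}$ and at $\tau=1$ the composition datum for $\mathcal{S}'\circ\mathcal{F}$; concretely, $\tau$ slides the slowing-down zone from above to below the rescaling zone. Make a universal and conformally consistent choice of these data, compatible with all earlier choices entering $R$, $\mathcal{S}$, $\mathcal{F}$ and $\mathcal{S}'$, so that the resulting parametrized moduli spaces are regular in dimensions zero and one and, for $\rho$ sufficiently small, compact. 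As in the construction of $R$, the action--energy identity together with Assumption \ref{strong exactness assumption} guarantees that the only codimension-one degenerations of the one-dimensional moduli spaces are: a $\mu^{j}$-operation of $LC^{*}_{lin}(l,L;\alpha,J_{M})$ bubbling off at an input; an $m^{j}$-operation of $CW^{*}(L';H_{U})$ bubbling off at the output; an interior breaking contributing a lower-order homotopy term; or $\tau\to0,1$. Counting rigid elements of the zero-dimensional moduli spaces yields maps $h^{k}\colon LC^{*}_{lin}(l,L;\alpha,J_{M})^{\otimes k}\to CW^{*}(L';H_{U})$ of degree $-k$, and the boundary description is precisely the $A_{\infty}$-homotopy equation relating $\{h^{k}\}$ to $\{(R\circ\mathcal{S})^{k}\}$ and $\{(\mathcal{S}'\circ\mathcal{F})^{k}\}$; signs follow the conventions of \cite{Seidel} used throughout. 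This proves \eqref{eqn:reduced homotopy}.

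\textbf{Main obstacle.} The hard part is the Floer-theoretic analysis of these parametrized moduli spaces, which simultaneously involve honest Floer strips, $J_{\infty}$-holomorphic SFT buildings in $\partial M\times\mathbb{R}$ and in the cobordism $W$, Morse--Bott gradient-flow configurations in the Legendrian ends, and caps (planes and half-planes, weighted by $e(\sigma),e(\gamma)$ with multiplicities) in both $M$ and $U$; one must show that for $\rho$ small no \emph{unexpected} breaking occurs, which rests entirely on the energy estimates of Lemma \ref{small parameter implies coming from U} and on Assumption \ref{strong exactness assumption} preventing curves from leaking between $U_{0}$ and its complement. Producing the universal and conformally consistent family of Floer data over $[0,1]\times\bar{\mathcal{S}}^{k+1}$ compatibly with the data already fixed for $R$, $\mathcal{S}$ and $\mathcal{F}$ is the other substantial, if routine, bookkeeping task. (One could instead try a purely homological-algebra argument, matching the two $A_{\infty}$-homomorphisms via their chain-homotopic linear terms and a vanishing-obstruction statement, but the geometric construction above is the more robust route and is the one I would carry out.)
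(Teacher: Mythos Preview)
Your proposal is correct and follows essentially the same route as the paper's proof: the same reduction to $\mathcal{S}'\circ\mathcal{F}\simeq R\circ\mathcal{S}$, and the same construction of a parametrized moduli space whose $\tau=0,1$ boundaries recover the glued curves for the two composites, with rigid counts giving the homotopy $H^{k}$. The paper phrases the interpolation slightly differently (``the domain-dependent families of Hamiltonians \ldots coincide near each puncture, and may be connected by a path in the moduli of domains''), but this is exactly your $\tau$-family over $[0,1]\times\bar{\mathcal{S}}^{k+1}$; your explicit three-zone picture and discussion of the cap/Morse--Bott bookkeeping are in fact a bit more detailed than the paper's sketch.
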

\begin{proof}
	Since all the relevant moduli spaces involved in the definitions of the $A_{\infty}$-structure maps and both $A_{\infty}$-functors are described in details before, we shall only sketch a proof by explaining the underlying geometric idea, while not spelling out or repeating the details about the structures of those moduli spaces. 
The proof combines the ingredients in the proofs of Propositions \ref{A-infinity algebra on linearized Legendrian complex}, \ref{A-infinity homomorphism on linearized Legendrian complex} and \ref{A-infinity homomorphism from LC to CW}. \par
	It is more convenient to consider the two compositions $\mathcal{S} \circ \mathcal{F}$ and $R \circ \mathcal{S}$ than the triple composition $\mathcal{T} \circ R \circ \mathcal{S}$.
So we shall instead prove that $\mathcal{S} \circ \mathcal{F}$ and $R \circ \mathcal{S}$ are homotopic.
For this purpose, we need to compare the two kinds of pseudoholomorphic curves involved in the definitions of these two $A_{\infty}$-homomorphisms. 
Write $\mathcal{G}_{1} = \mathcal{S} \circ \mathcal{F}$, and $\mathcal{G}_{2} = R \circ \mathcal{S}$.
\par
	First consider $\mathcal{S} \circ \mathcal{F}$, whose $k$-th order term $\mathcal{G}_{1}^{k}$ is a sum
\begin{equation}
\mathcal{G}_{1}^{k}(\gamma_{+}^{k}, \cdots, \gamma_{+}^{1})
 = \sum_{\substack{s \ge 1 \\ k_{1} + \cdots + k_{s} = k}} \mathcal{S}^{s}(\mathcal{F}^{k_{s}}(\gamma_{+}^{k}, \cdots, \gamma_{+}^{k_{1} + \cdots + k_{s-1} + 1}), \cdots, \mathcal{F}^{k_{1}}(\gamma_{+}^{k_{1}}, \cdots, \gamma_{+}^{1})).
\end{equation}
A single term $\mathcal{S}^{s}(\mathcal{F}^{k_{s}}(\cdots), \cdots, \mathcal{F}^{k_{1}}(\cdots))$ is defined by counting certain broken pseudoholomorphic of various types.
One type is a tuple $(v, \vec{u}_{1}, \cdots, \vec{u}_{s})$, such that $v$ represents an element of $\mathcal{M}_{L', s}(x'_{-}; \gamma'_{+}{}^{1}, \cdots, \gamma'_{+}{}^{s})$, 
where $x'_{-}$ is a Hamiltonian chord for $H_{U}$, and $\gamma'_{+}{}^{j}$ is a Reeb chord of the Legendrian boundary $l'$ of $L'$,
and $\vec{u}_{j}$ represents an element of 
\begin{equation*}
\mathcal{N}_{k_{j}+1, m_{j}, l_{j}}(\gamma'_{-}{}^{j}; \gamma_{+}^{k_{1} + \cdots + k_{j-1} + 1}, \cdots, \gamma_{+}^{k_{1} + \cdots + k_{j}}; \gamma'_{1}, \cdots, \gamma'_{m}; \{\sigma'_{p}\}_{p \in \Lambda_{j}}),
\end{equation*}
with $\gamma'_{-}{}^{j} = \gamma'_{+}{}^{j}$, for $j = 1, \cdots, s$. 
Here $k_{1} + \cdots + k_{s} = k$.
The curves $\vec{u}_{j}$ are in general Morse-Bott curves, but for simplicity let us just consider the case where they are capped parametrized pseudoholomorphic curves.
In this case, the gluing of such a broken curve is a one-dimensional family of parametrized inhomogeneous pseudoholomorphic curves, described as follows.
$\vec{u}_{j}$ asymptotically converges to some Reeb chords $\gamma'$ and $\sigma'$ at some extra negative boundary and interior punctures, but those are capped in $U$.
When we glue the non-capped negative puncture of $\vec{u}_{j}$ to the positive punctures of $v$, we also glue the capping disks and planes to the extra boundary and interior punctures.
The result is a one-dimensional family of parametrized inhomogeneous pseudoholomorphic curves with moving Lagrangian boundary conditions, with $k$ positive punctures and a negative puncture, 
such that the asymptotic condition at the positive punctures are the Reeb chords $\gamma_{+}^{1}, \cdots, \gamma_{+}^{k}$ at infinity, and the asymptotic condition at the negative puncture is the Hamiltonian chord $x'_{-}$ for $H_{U}$.
Such a one-dimensional family can be viewed as an element $\mathbf{u}$ of certain moduli space of parametrized inhomogeneous pseudoholomorphic curves with moving Lagrangian boundary conditions, where the moduli of domains has one dimension higher than that of $\mathcal{M}_{L', s}(x'_{-}; \gamma'_{+}{}^{1}, \cdots, \gamma'_{+}{}^{s})$. \par
	
	At the same time, consider a broken pseudoholomorphic curve of similar type, used in the definition of $R \circ \mathcal{S}$.
Such a broken pseudoholomorphic curve is a tuple $((\rho, S, u), v_{1}, \cdots, v_{s})$,
where $(\rho, S, u)$ represents an element of $\mathcal{P}_{s+1}(x', x_{1}, \cdots, x_{s})$ defined in \eqref{moduli space of continuation disks with moving boundary conditions},
and $v_{j}$ represents an element of 
\begin{equation*}
\mathcal{M}_{L, k_{j}}(x_{j}; \gamma_{+}^{k_{1} + \cdots + k_{j-1} + 1}, \cdots, \gamma_{+}^{k_{1} + \cdots + k_{j}}),
\end{equation*}
where $x_{j}$ is a Hamiltonian chord of $H_{M}$ for $j = 1, \cdots, s$, and $\gamma_{+}$'s are Reeb chords of the Legendrian boundary $l$ of $L$.
Since $v_{j}$ is a pamametrized inhomogeneous pseudoholomorphic curve, with a domain-dependent family of Hamiltonians interpolating $0$ near the positive punctures and $H_{M}$ near the negative puncture, 
we may glue them with the parametrized inhomogeneous pseudoholomorphic curve $(\rho, S, u)$ with moving boundary conditions.
The result of gluing is also a one-dimensional family of parametrized inhomogeneous pseudoholomorphic curves with moving Lagrangian boundary conditions, with the same asymptotic conditions and compactly-supported isotopic moving Lagrangian boundary conditions as $\mathbf{u}$. 
We can think of such a family as an element $\mathbf{u}'$ of certain moduli space of parametrized inhomogeneous pseudoholomorphic curves with moving Lagrangian boundary conditions. \par
	A priori, the domain-dependent families of Hamiltonians (and almost complex structures) for $\mathbf{u}$ and $\mathbf{u}'$ are not exactly the same. However, they coincide near each puncture, and may be connected by a path in the moduli of domains.
Thus, the two curves $\mathbf{u}$ and $\mathbf{u}$ belong to the same moduli space.
Let
\begin{equation}\label{A-infinity homotopy between the two compositions}
H^{k}: LC_{lin}(l, L; \alpha, J_{M})^{\otimes k} \to CW^{*}(L'; H_{U})
\end{equation}
be the multilinear map defined by counting rigid elements in such a moduli space, partially defined for generators being Reeb chords. \par
	Running this argument for all curves in the moduli spaces, we thus prove that the moduli spaces
\begin{equation}
\begin{split}
& \mathcal{M}_{L', s}(x'_{-}; \gamma'_{+}{}^{1}, \cdots, \gamma'_{+}{}^{s}) \\
\times & \prod_{j = 1}^{s} \mathcal{N}_{k_{j}+1, m_{j}, l_{j}}(\gamma'_{-}{}^{j}; \gamma_{+}^{k_{1} + \cdots + k_{j-1} + 1}, \cdots, \gamma_{+}^{k_{1} + \cdots + k_{j}}; \gamma'_{1}, \cdots, \gamma'_{m}; \{\sigma'_{p}\}_{p \in \Lambda_{j}}),
\end{split}
\end{equation}
and
\begin{equation}
\begin{split}
& \mathcal{P}_{s+1}(x', x_{1}, \cdots, x_{s}) \\
 \times & \prod_{j=1}^{s} \mathcal{M}_{L, k_{j}}(x_{j}; \gamma_{+}^{k_{1} + \cdots + k_{j-1} + 1}, \cdots, \gamma_{+}^{k_{1} + \cdots + k_{j}})
\end{split}
\end{equation}
are both codimension-one boundary strata of the same moduli space.
There are some other boundary strata, consisting of broken pseudoholomorphic curves that are other types of degenerations.
For
Counting those elements contribute to other terms on the right hand side of  the $A_{\infty}$-homotopy equation:
\begin{equation}
\begin{split}
& \mathcal{G}_{1}^{k}(\gamma_{+}^{k}, \cdots, \gamma_{+}^{1}) - \mathcal{G}_{2}^{k}(\gamma_{+}^{k}, \cdots, \gamma_{+}^{1}) \\
= & \sum \sum (-1)^{\Diamond} m_{U}^{k}(\mathcal{G}_{2}^{s_{r}}(\cdots), \cdots, \mathcal{G}_{2}^{s_{i+1}}(\cdots), H^{s_{i}}(\cdots), \mathcal{G}_{1}^{s_{i-1}}(\cdots), \cdots, \mathcal{G}_{1}^{s_{1}}(\cdots)) \\
+ & \sum (-1)^{*} H^{k - j + 1}(\gamma_{+}^{k}, \cdots, \gamma_{+}^{i + j + 1}, \mu^{j}_{+}(\gamma_{+}^{i + j}, \cdots, \gamma_{+}^{i+1}), \gamma_{+}^{i}, \cdots, \gamma_{+}^{1}),
\end{split}
\end{equation}
where 
\begin{equation}
\Diamond = - (\deg(\gamma_{+}^{1}) + \cdots + \deg(\gamma_{+}^{s_{1} + \cdots + s_{i-1}}) - s_{1} - \cdots - s_{i-1}),
\end{equation}
and 
\begin{equation}
* = \deg(\gamma_{+}^{1}) + \cdots + \deg(\gamma_{+}^{i}) - i - 1.
\end{equation} \par
	We can analyze the other cases, for example where some inputs are critical points, in a similar way, and complete the construction of the map \eqref{A-infinity homotopy between the two compositions} on the whole complex $LC_{lin}(l, L; \alpha, J_{M})$.
\end{proof}

\section{The graph correspondence functor}

\subsection{Functors associated to Lagrangian correspondences}\label{section: construction of functors}
	In \cite{Gao2}, we construct functors between wrapped Fukaya categories from certain classes of Lagrangian correspondences $\mathcal{L} \subset M^{-} \times N$. \par

\begin{definition}
	A Lagrangian correspondence $\mathcal{L} \subset M^{-} \times N$ is said to be admissible, if it is admissible for wrapped Floer theory on $M^{-} \times N$.
That is, $\mathcal{L}$ is an exact cylindrical Lagrangian submanifold of $M^{-} \times N$, together with choice of grading and spin structure, which makes it an object of the wrapped Fukaya category of $\mathcal{W}(M^{-} \times N)$.
\end{definition}

	The role that such a Lagrangian correspondence $\mathcal{L} \subset M^{-} \times N$ plays in relating $\mathcal{W}(M)$ with $\mathcal{W}(N)$,
as discussed in \cite{Gao2} is that it defines a module-valued functor from $\mathcal{W}(M)$ to the category of left $A_{\infty}$-modules over a suitable enlargement of the wrapped Fukaya category of $\mathcal{W}(M)$. 
This enlargement of the (unobstructed) immersed wrapped Fukaya category, denoted by $\mathcal{W}_{im}(N)$,
whose objects are pairs $(\iota: L' \to N, b)$, where $\iota: L \to N$ is an exact cylindrical Lagrangian immersion with transverse or clean self-intersections contained in the compact domain $N_{0}$, 
and $b$ is a Maurer-Cartan element for the curved $A_{\infty}$-structure on the wrapped Floer cochain space $CW^{*}(L')$.
That is, there is a curved $A_{\infty}$-category $\mathcal{W}_{im}^{pre}(N)$ of exact cylindrical Lagrangian immersions, 
and $\mathcal{W}_{im}(N)$ is the $A_{\infty}$-category as the unobstructed deformation of $\mathcal{W}_{im}^{pre}(N)$.

In our current setup, we have: \par

\begin{proposition}
	Let $\mathcal{L} \subset M^{-} \times N$ be an admissible Lagrangian correspondence. Then it defines an $A_{\infty}$-bimodule $P_{\mathcal{L}}$ over $(\mathcal{W}(M), \mathcal{W}(N))$.
The bimodule $P_{\mathcal{L}}$ gives rise to a module-valued functor
\begin{equation}
\Phi_{\mathcal{L}}: \mathcal{W}(M) \to \mathcal{W}(N)^{l-mod},
\end{equation}
to the category of left $A_{\infty}$-modules over $\mathcal{W}(N)$, which is naturally extended to
\begin{equation}\label{module-valued functor}
\Phi_{\mathcal{L}}: \mathcal{W}(M) \to \mathcal{W}_{im}(N)^{l-mod}.
\end{equation}
\end{proposition}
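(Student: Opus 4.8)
The plan is to reconstruct the bimodule $P_{\mathcal{L}}$ and the induced functor following \cite{Gao2}, reorganised in the conventions of Section 3. \emph{First} I would fix the underlying chain complexes: for $L \in \ob \mathcal{W}(M)$ and $K \in \ob \mathcal{W}(N)$ I set $P_{\mathcal{L}}(L, K)$ to be the wrapped Floer complex generated by the ``quilted chords'' of the triple $(\mathcal{L}; L, K)$, which via the folding/unfolding correspondence is identified with $CW^{*}_{M^{-} \times N}(\mathcal{L}, L^{-} \times K)$, the wrapped Floer complex in the product for the pair $(\mathcal{L}, L^{-}\times K)$. I would work in this folded picture, so that transversality and Gromov compactness are those of ordinary inhomogeneous pseudoholomorphic disks, now in the mixed-sign product; the admissibility hypothesis on $\mathcal{L}$ and the cylindricity of $L$ and $K$ at infinity, together with the properness assumption, are precisely what is needed for the relevant Floer data to exist and for the energy estimates of Section 3 to confine the curves. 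One also records that the action filtration on $P_{\mathcal{L}}$ is discrete and bounded above in the sense of Section 3.2.

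\emph{Second}, I would define the $A_{\infty}$-bimodule structure maps $\mu^{r|1|s}$ by counting rigid inhomogeneous pseudoholomorphic disks whose boundary carries a cyclically ordered tuple $L_{0}, \dots, L_{r}$ of objects of $\mathcal{W}(M)$ along the $M$-part and a tuple $K_{0}, \dots, K_{s}$ of objects of $\mathcal{W}(N)$ along the $N$-part, with one distinguished puncture carrying a generator of $P_{\mathcal{L}}$ and the remaining punctures asymptotic to Hamiltonian chords in $\mathcal{W}(M)$ and $\mathcal{W}(N)$ respectively. For generic Floer data these moduli spaces are compact smooth manifolds in virtual dimensions $0$ and $1$. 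The $A_{\infty}$-bimodule relations then follow from the standard description of the codimension-one boundary strata of the one-dimensional moduli spaces: a disk bubbling off on the $\mathcal{W}(M)$-side (contributing $m^{k}_{\mathcal{W}(M)}$ on a consecutive string of inputs), a disk bubbling off on the $\mathcal{W}(N)$-side (contributing $m^{k}_{\mathcal{W}(N)}$), and the main component splitting off a disk of the same quilted type but lower order (contributing a composition of two $\mu^{\bullet|1|\bullet}$'s). Summing signed counts over these strata yields the bimodule equations; the only compactness input is that of Section 3, and since discreteness and boundedness above of the filtration persist in the product, all sums involved are finite.

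\emph{Third}, I would package $P_{\mathcal{L}}$ as a module-valued functor via the standard correspondence between $A_{\infty}$-bimodules over $(\mathcal{W}(M), \mathcal{W}(N))$ and $A_{\infty}$-functors $\mathcal{W}(M) \to \mathcal{W}(N)^{l-mod}$: concretely $\Phi_{\mathcal{L}}(L) = P_{\mathcal{L}}(L, -)$ carries the left $\mathcal{W}(N)$-module structure coming from the $\mathcal{W}(N)$-inputs of the bimodule operations, while the $\mathcal{W}(M)$-inputs assemble into the higher terms $\Phi_{\mathcal{L}}^{r}$, each valued in $\mathcal{W}(N)$-module homomorphisms from $\Phi_{\mathcal{L}}(L_{0})$ to $\Phi_{\mathcal{L}}(L_{r})$. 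For the extension to $\mathcal{W}_{im}(N)$, I would observe that exactly the same curve counts continue to make sense when the $\mathcal{W}(N)$-side boundary is allowed to lie on exact cylindrical Lagrangian immersions with clean or transverse self-intersections, with the boundary permitted to pass through self-intersection points labelled by the corresponding chords; this defines a bimodule $P_{\mathcal{L}}^{pre}$ over $(\mathcal{W}(M), \mathcal{W}_{im}^{pre}(N))$, curved on the immersed side. Inserting the Maurer-Cartan elements that define $\mathcal{W}_{im}(N)$ object-wise, exactly as in \eqref{deformed A-infinity structure maps}, and transcribing the pushforward and curved auto-equivalence arguments of Sections 2.3--2.5 (in the form of Lemma \ref{deformed functor}) to the bimodule setting, turns $P_{\mathcal{L}}^{pre}$ into an honest bimodule over $(\mathcal{W}(M), \mathcal{W}_{im}(N))$, hence into the extended functor $\Phi_{\mathcal{L}}\colon \mathcal{W}(M) \to \mathcal{W}_{im}(N)^{l-mod}$.

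I expect the main obstacle to be the analytic foundations of wrapped Floer theory for the mixed-sign product $M^{-} \times N$ together with the family of Lagrangians arising as boundary conditions: because the $M^{-}$ factor carries an ``inward'' Liouville flow, the usual convexity arguments guaranteeing compactness of the moduli spaces must be run with care, using cylindricity of $\mathcal{L}$, $L$ and $K$ at infinity and the properness assumption to control the ends of the curves, and transversality must be arranged in this setting. Once that package is in place, the boundary-stratum analysis yielding the bimodule equations and the object-wise deformation on the immersed side are essentially formal, the latter being a direct transcription of the algebra of Section 2; a secondary point, the compatibility of the pushforward of Maurer-Cartan elements with the bimodule operations, is a routine adaptation of the discussion around Lemma \ref{deformed functor}.
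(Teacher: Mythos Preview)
Your overall strategy is sound and matches what the paper cites from \cite{Gao2}: build quilted wrapped Floer complexes $CW^{*}(L,\mathcal{L},K)$, assemble the bimodule/module-valued functor by counting quilted disks with punctures on both patches, and extend to $\mathcal{W}_{im}(N)$ by allowing immersed boundary on the $N$-side and inserting Maurer-Cartan elements into the extra punctures, exactly as in the formula \eqref{deformed module valued functor}. The deformation-theoretic packaging you invoke from Section~2 is the right one.

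Two points where your write-up diverges from the paper. First, you repeatedly invoke the \emph{properness} assumption on $\mathcal{L}\to N$, but that hypothesis is not part of this proposition; it enters only in Proposition~\ref{functor associated to Lagrangian correspondence} to obtain representability of $\Phi_{\mathcal{L}}$. For the bimodule and the module-valued functor, admissibility of $\mathcal{L}$ in $\mathcal{W}(M^{-}\times N)$ is all that is used, so your compactness discussion should not appeal to properness here.

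Second, and more substantively, the paper (following \cite{Gao2}) works throughout in the \emph{quilted} picture---pairs of maps $(u,v)$ to $M$ and $N$ with seam condition on $\mathcal{L}$ and punctures on each patch separately---rather than folding to a single disk in $M^{-}\times N$ with boundary on product Lagrangians $L_i^{-}\times K_j$. While folding identifies the generators, the higher bimodule operations do not fold cleanly: the punctures on the two patches move independently, so a folded disk would need boundary conditions $L_i^{-}\times K_j$ with $i$ and $j$ changing in an interleaved pattern governed by the relative positions of punctures, and the product Lagrangians are not themselves cylindrical in the ordinary sense (they have corners at infinity). The quilted setup sidesteps both issues: compactness and transversality are run patch-by-patch with split Hamiltonians, which is what \cite{Gao2} develops and what the sketch after the proposition describes. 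Your proposal would go through if you either stay in the quilted language or supply the product-manifold wrapped Floer package for non-cylindrical product Lagrangians; the paper takes the first route.
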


On the level of objects,
\begin{equation}
\Phi_{\mathcal{L}}(L)((L', b')) = CW^{*}(L, \mathcal{L}, (L', b')) = CW^{*}(L, \mathcal{L}, L'),
\end{equation}
the module whose underlying cochain space is the quilted wrapped Floer cochain space,
with a (non-curved) $A_{\infty}$-module structure over $\mathcal{W}_{im}(N)$, 
whose $A_{\infty}$-structure maps are obtained from a deformation of the curved $A_{\infty}$-category $\mathcal{W}_{im}^{pre}(N)$. \par	
	In order to "lift" the above module-valued functor to one which lands in the wrapped Fukaya category, 
further geometric condition on $\mathcal{L}$ should be imposed.
Such a condition turns out to be simple, and is in analogy to that for a proper morphism in algebraic geometry. 
The result is briefly summarized in the following proposition. \par

\begin{proposition} \label{functor associated to Lagrangian correspondence}
	Suppose $\mathcal{L} \subset M^{-} \times N$ is an admissible Lagrangian correspondence such that the projection $\mathcal{L} \to N$ is proper.
Then the module-valued functor \eqref{module-valued functor} is represented by a canonical $A_{\infty}$-functor
\begin{equation} \label{functor to the immersed wrapped Fukaya category}
\Theta_\mathcal{L}: \mathcal{W}(M) \to \mathcal{W}_{im}(N).
\end{equation}
In particular, on the level of objects, for each $L \in \ob \mathcal{W}(M)$, there is a canonical and unique Maurer-Cartan element $b$ for the geometric composition $L \circ \mathcal{L}$, such that
\begin{equation}
\Theta_{\mathcal{L}}(L) = (L \circ \mathcal{L}, b).
\end{equation}
\end{proposition}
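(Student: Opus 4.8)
## Proof Proposal

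The plan is to deduce this proposition from the existence of the module-valued functor $\Phi_{\mathcal{L}}$ of \eqref{module-valued functor} together with a Yoneda-type representability argument, where the properness of the projection $\mathcal{L} \to N$ is exactly what guarantees that the relevant $A_\infty$-modules are representable. First I would recall that for each $L \in \ob\mathcal{W}(M)$, the left $A_\infty$-module $\Phi_{\mathcal{L}}(L)$ over $\mathcal{W}_{im}(N)$ has underlying cochain spaces the quilted wrapped Floer complexes $CW^*(L, \mathcal{L}, L')$. The key geometric input is that when $\mathcal{L} \to N$ is proper, the geometric composition $L \circ \mathcal{L}$ — defined as the image of the fiber product $(L \times \mathcal{L}) \cap (\Delta_M \times N)$ under projection to $N$ — is itself an exact cylindrical Lagrangian immersion into $N$ with transverse or clean self-intersections contained in $N_0$; properness ensures this image is a closed immersed submanifold (no escape to infinity in the fiber direction) and that it inherits the cylindrical-at-infinity structure. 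Thus $L \circ \mathcal{L}$ is a legitimate object of $\mathcal{W}_{im}^{pre}(N)$.

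Next I would establish that the module $\Phi_{\mathcal{L}}(L)$ is quasi-represented by $L \circ \mathcal{L}$ equipped with a suitable Maurer-Cartan element. The geometric heart here is a \emph{figure-eight / strip-shrinking} degeneration argument (in the style of Wehrheim–Woodward and Lekili–Lipyanskiy, adapted to the wrapped setting in \cite{Gao2}): as the width of the middle quilted strip shrinks to zero, quilted holomorphic strips with boundary on $(L, \mathcal{L}, L')$ converge to honest holomorphic strips with boundary on $L \circ \mathcal{L}$ and $L'$, possibly with figure-eight bubbles forming at the self-intersection points of $L \circ \mathcal{L}$. Counting these figure-eight bubbles (together with their iterated configurations) assembles precisely into a degree-one element $b \in CW^1(L\circ\mathcal{L}, L\circ\mathcal{L})$, and the SFT-type compactness together with the codimension-one boundary analysis of the one-dimensional shrinking moduli spaces forces $b$ to satisfy the curved Maurer-Cartan equation $\sum_{k\ge 0} m^k(b,\dots,b)=0$. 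This is the content one must verify carefully; once it holds, the strip-shrinking cobordism itself furnishes a module quasi-isomorphism $\Phi_{\mathcal{L}}(L) \simeq \mathcal{Y}_{(L\circ\mathcal{L},\, b)}$, the Yoneda module of the object $(L\circ\mathcal{L}, b) \in \ob\mathcal{W}_{im}(N)$.

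With representability of every $\Phi_{\mathcal{L}}(L)$ in hand, I would invoke (a curved, and then via the deformation formalism of Section 2, uncurved) Yoneda lemma: since the Yoneda embedding $\mathcal{W}_{im}(N) \hookrightarrow \mathcal{W}_{im}(N)^{l\text{-}mod}$ is a cohomologically full and faithful $A_\infty$-functor, a module-valued functor landing (up to quasi-isomorphism) in the image of the Yoneda embedding lifts \emph{uniquely up to homotopy} to an $A_\infty$-functor $\Theta_{\mathcal{L}}: \mathcal{W}(M) \to \mathcal{W}_{im}(N)$ representing it. Concretely, the higher-order terms $\Theta_{\mathcal{L}}^k$ are extracted from the module-morphism components of $\Phi_{\mathcal{L}}$ applied to morphisms of $\mathcal{W}(M)$, and the $A_\infty$-functor equations for $\Theta_{\mathcal{L}}$ follow from the $A_\infty$-module and module-morphism equations already verified for $\Phi_{\mathcal{L}}$ in \cite{Gao2}. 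On objects one reads off $\Theta_{\mathcal{L}}(L) = (L\circ\mathcal{L}, b)$ with $b$ the Maurer-Cartan element constructed above; its uniqueness follows because any two choices give quasi-isomorphic Yoneda modules, hence gauge-equivalent Maurer-Cartan elements, and the canonical geometric origin of $b$ pins down the gauge-equivalence class.

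The main obstacle I anticipate is the strip-shrinking analysis: proving that quilted strips with boundary on the correspondence degenerate precisely to holomorphic strips on $L \circ \mathcal{L}$ plus figure-eight bubbles, with \emph{no other breaking phenomena} (no codimension-zero loss of compactness, no sphere or disk bubbling that would obstruct the count, and the figure-eight bubbles exactly matching the self-intersection structure), and that the resulting count of bubbles is well-defined and satisfies the Maurer-Cartan equation. This requires the properness hypothesis in an essential way (to control behavior at infinity in the fiber of $\mathcal{L} \to N$) and relies on the compactness and gluing results established in \cite{Gao2}; I would cite those rather than reprove them, and focus the argument on identifying the limiting configurations and checking the algebra. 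The Yoneda/representability step is then comparatively formal, modulo care with signs, gradings, and the curved-to-uncurved passage via Section 2.
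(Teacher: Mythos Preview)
Your overall architecture is correct and matches the paper: build $\Phi_{\mathcal{L}}$, show each $\Phi_{\mathcal{L}}(L)$ is representable by the geometric composition equipped with a Maurer-Cartan element, then lift through the Yoneda embedding by choosing a homotopy inverse on representable modules. The difference lies in how you produce the Maurer-Cartan element $b$ and the module equivalence. You go directly to strip-shrinking and figure-eight bubbles \`a la Wehrheim--Woodward/Bottman--Wehrheim, defining $b$ as the count of those bubbles and reading the module quasi-isomorphism off the shrinking cobordism. The paper (following \cite{Gao2}) instead uses Fukaya's \emph{cyclic element} technique: there is a distinguished generator $e_{L}\in CW^{*}(L,\mathcal{L},L\circ\mathcal{L})$, and $b$ is characterized as the \emph{unique} solution of $\sum_{k\ge 0} n^{k}(b,\ldots,b;e_{L})=0$; the module equivalence is then the ``geometric composition map'' $gc$ built from generalized $Y$-diagrams of Lekili--Lipyanskiy, not from strip-shrinking. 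The paper's route buys a sharper uniqueness statement (a unique solution to an explicit equation rather than uniqueness up to gauge equivalence inferred from Yoneda), and it avoids the delicate codimension-zero gluing analysis for figure-eight bubbles. Your route is more direct geometrically, and indeed the paper later (in Section~\ref{section: identifying Maurer-Cartan elements}) establishes that the cyclic-element $b$ \emph{coincides} with the figure-eight count, so the two constructions agree; but as a primary construction the paper prefers the cyclic-element approach precisely because the strip-shrinking compactness/gluing is the hard part you flag as the main obstacle.
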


	We now sketch the main steps of the construction.
The construction of $\Phi_{\mathcal{L}}$ is by counting elements in certain moduli spaces of inhomogeneous pseudoholomorphic quilted strips with some punctures on both patches of the quilted strip.
This can be further extended to be an $A_{\infty}$-functor
\begin{equation*}
\Phi_{\mathcal{L}}: \mathcal{W}(M) \to \mathcal{W}_{im}(N)^{l-mod}.
\end{equation*}
To define the extension, we count the same kinds of quilted strips, with extra punctures on the second patch of the quilted surface, to define a sequence of operations
\begin{equation}
\begin{split}
\Phi_{\mathcal{L}}^{k, l; s_{0}, \cdots, s_{l}}: & CW^{*}(L_{k-1}, L_{k}) \otimes \cdots \otimes CW^{*}(L_{0}, L_{1}) \to \hom_{\mathbb{K}}(CW^{*}(L'_{l})^{\otimes s_{l}} \otimes CW^{*}(L'_{l-1}, L'_{l}) \\
& \otimes CW^{*}(L'_{l-1})^{\otimes s_{l-1}} \otimes \cdots \otimes CW^{*}(L'_{0}, L'_{1}) \otimes CW^{*}(L'_{0})^{\otimes s_{0}} \otimes CW^{*}(L_{0}, \mathcal{L}, L'_{0}) , CW^{*}(L_{k}, \mathcal{L}, L'_{l})),
\end{split}
\end{equation}
Then define $\Phi_{\mathcal{L}}^{k}(x_{k}, \cdots, x_{1})$ to be the homomorphism of modules
\[
CW^{*}(L'_{l-1}, L'_{l}) \otimes \cdots \otimes CW^{*}(L'_{0}, L'_{1}) \otimes CW^{*}(L_{0}, \mathcal{L}, L'_{0}) \to CW^{*}(L_{k}, \mathcal{L}, L'_{l}))
\]
by inserting the Maurer-Cartan elements
\begin{equation}\label{deformed module valued functor}
\Phi_{\mathcal{L}}^{k}(x_{k}, \cdots, x_{1}) = \sum_{s_{0}, \cdots, s_{l} \ge 0} \Phi_{\mathcal{L}}^{k, l; s_{0}, \cdots, s_{l}}(x_{k}, \cdots, x_{1})(\underbrace{b'_{l}, \cdots, b'_{l}}_{s_{l} \text{ times }}, x'_{l}, \underbrace{b'_{l-1}, \cdots, b'_{l-1}}_{s_{l-1} \text{ times }}, \cdots, x'_{1}, \underbrace{b'_{0}, \cdots, b'_{0}}_{s_{0} \text{ times }}).
\end{equation}

	The second step is to prove that this module-valued functor is representable. 
This is the place where we use the assumption that the projection $\mathcal{L} \to N$ is proper. 
For any object $L \in \ob \mathcal{W}(M)$, consider the geometric composition 
\begin{equation}
\iota: L \circ \mathcal{L} \to N,
\end{equation}
which is generically a Lagrangian immersion to $N$, and is proper by the properness of $\mathcal{L} \to N$. 
Under mild genericity assumption, the geometric composition $L \circ \mathcal{L}$ is admissible for wrapped Floer theory, 
and there is a well-defined curved $A_{\infty}$-algebra on the wrapped Floer cochain space $CW^{*}(L \circ \mathcal{L})$.
It is then proved that the geometric composition comes with a unique Maurer-Cartan element
\begin{equation}\label{Maurar-Cartan element under geometric composition}
b = b_{L, \mathcal{L}} \in CW^{*}(L \circ \mathcal{L})
\end{equation}
determined by $L$ and $\mathcal{L}$, 
which makes the pair $(L \circ \mathcal{L}, b)$ into an object of the immersed wrapped Fukaya category $\mathcal{W}_{im}(N)$.
Consider the left Yoneda functor
\[
\mathfrak{y}_{l}: \mathcal{W}_{im}(N) \to \mathcal{W}_{im}(N)^{l-mod},
\]
which can be obtained as the deformation of the Yoneda functor for the curved $A_{\infty}$-category
\[
\mathfrak{y}_{l}: \mathcal{W}_{im}^{pre}(N) \to \mathcal{W}_{im}^{pre}(N).
\]
Then we show that there is a canonical homotopy equivalence from the left $A_{\infty}$-module $\Phi_{\mathcal{L}}(L)$ to the left Yoneda module of $(L \circ \mathcal{L}, b)$, called the geometric composition map
\begin{equation}\label{geometric composition map}
gc: \Phi_{\mathcal{L}}(L) \to \mathfrak{y}_{l}(L \circ \mathcal{L}, b),
\end{equation}
consisting of a sequence of maps
\begin{equation}
gc^{0, l}: CW^{*}(L'_{l-1}, L'_{l}) \otimes \cdots \otimes CW^{*}(L'_{0}, L'_{1}) \otimes CW^{*}(L, \mathcal{L}, L'_{0}) \to CW^{*}((L \circ \mathcal{L}, b), L'_{l}).
\end{equation}
This equivalence of modules is by a geometric composition argument in \cite{Gao2}, 
which uses a generalization of a $Y$-diagram due to \cite{Lekili-Lipyanskiy}.
Each map $gc^{0, l}$ is defined by counting elements in certain moduli spaces, and inserting the Maurer-Cartan elements in a way similar to \eqref{deformed module valued functor}.
This construction is also functorial in $\mathcal{W}(M)$, in the sense that there are maps
\begin{equation}
gc^{k}: CW^{*}(L_{k-1}, L_{k}) \otimes \cdots \otimes CW^{*}(L_{0}, L_{1}) \to \hom_{\mathcal{W}_{im}(N)^{l-mod}}(\Phi_{\mathcal{L}}(L_{0}), \mathfrak{y}_{l}(L_{k} \circ \mathcal{L}, b_{k})),
\end{equation}
given as sequences of maps 
\begin{equation}
\begin{split}
gc^{k, l}: & CW^{*}(L_{k-1}, L_{k}) \otimes \cdots \otimes CW^{*}(L_{0}, L_{1}) \to \hom_{\mathbb{K}}(CW^{*}(L'_{l-1}, L'_{l}) \\
& \otimes \cdots \otimes CW^{*}(L'_{0}, L'_{1}) \otimes CW^{*}(L_{0}, \mathcal{L}, L'_{0}), CW^{*}(L_{k} \circ \mathcal{L}, L'_{l})).
\end{split}
\end{equation}
satisfying certain analogue of $A_{\infty}$-functor equations.
The data are equivalent to the data of an $A_{\infty}$-bimodule homomorphism, which is a bimodule homotopy equivalence. \par

	The third step is to lift the module-valued functor $\Phi_{\mathcal{L}}$ to the immersed wrapped Fukaya category $\mathcal{W}_{im}(N)$. 
The previous step shows that the image of $\Phi_{\mathcal{L}}$ is in the sub-category of 'representable' modules $\mathcal{W}_{im}(N)^{rep-l-mod}$, i.e. the sub-category of modules that are homotopy equivalent to Yoneda modules.
By the Yoneda lemma, we may choose a homotopy inverse
\begin{equation*}
\lambda_{l}: \mathcal{W}_{im}(N)^{rep-l-mod} \to \mathcal{W}_{im}(N)
\end{equation*}
of the Yoneda functor
\begin{equation*}
\mathfrak{y}_{l}: \mathcal{W}_{im}(N) \to \mathcal{W}_{im}(N)^{rep-l-mod},
\end{equation*}
when restricting to the full sub-category $\mathcal{W}_{im}(N)^{rep-l-mod}$ of representable left modules. 
By composing $\Phi_{\mathcal{L}}$ with $\lambda_{l}$, we get the desired functor \eqref{functor to the immersed wrapped Fukaya category}. \par

\subsection{A restriction functor from a Lagrangian correspondence}
	As before, let $U_{0} \subset M_{0}$ a Liouville sub-domain. Let $U$ be the completion of $U_{0}$ with respect to the induced Liouville structure.
The graph of the natural inclusion $U_{0} \subset M_{0}$ is a Lagrangian submanifold (with corners) of $M_{0}^{-} \times U_{0}$, and can be completed to a non-compact exact cylindrical Lagrangian submanifold $\Gamma$ of $M^{-} \times U$ with respect to the natural Liouville structure on the product manifold.
Equipped with a natural grading and spin structure (which exists as we assume that the first Chern class of $M$ vanishes), $\Gamma$ becomes a Lagrangian correspondence from $M$ to $U$. \par

\begin{definition}
	The Lagrangian submanifold $\Gamma \subset M^{-} \times U$ with the natural grading and spin structure is called the graph correspondence.
\end{definition}
	
	More importantly, $\Gamma$ is admissible for wrapped Floer theory, hence becomes an object of $\mathcal{W}(M^{-} \times U)$. This is a very straightforward observation. First note that $\Gamma$ is the graph of the map $i: U \to M$, which is defined as follows:
\begin{equation}
i(p) =
\begin{cases}
p, \text{ if } p \in U_{0},\\
\psi_{M}^{r}(y), \text{ if } p = (y, r) \in \partial U \times [1, +\infty).
\end{cases}
\end{equation}
Here $\psi_{M}^{r}$ denotes the time-$\log{r}$ Liouville flow on $M$. 
Thus, $\Gamma$ is invariant under the Liouville flow outside a compact set of $M^{-} \times U$, i.e. $\Gamma$ is cylindrical. Clearly, $\Gamma$ is exact, thus by the general theory in \cite{Gao2} about wrapped Floer theory on a product Liouville manifold, we have: \par

\begin{proposition}
	The graph correspondence $\Gamma \subset M^{-} \times U$ is an admissible Lagrangian submanifold for wrapped Floer theory in the product manifold, with respect to the split Hamiltonian and product almost complex structure.
\end{proposition}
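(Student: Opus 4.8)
The plan is to verify the three defining features of an admissible Lagrangian submanifold of the product $M^{-}\times U$ for the wrapped Floer theory of a product manifold — exactness, cylindricity at infinity, and the existence of a grading and spin structure — and then to observe that, once these hold, admissibility with respect to the split Hamiltonian and product almost complex structure is an immediate instance of the general construction for product Liouville manifolds in \cite{Gao2}. Since $\Gamma$ is the graph of the map $i\colon U\to M$, all three features reduce to elementary properties of $i$, the crucial one being that $i$ intertwines the Liouville dynamics at infinity.

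First I would record the relevant data on the product: $\lambda_{M^{-}\times U}=-\mathrm{pr}_{M}^{*}\lambda_{M}+\mathrm{pr}_{U}^{*}\lambda_{U}$, and the Liouville vector field is $Z_{M}\oplus Z_{U}$ (passing to $M^{-}$ negates $\omega_{M}$ and $\lambda_{M}$ but leaves $Z_{M}$ unchanged). On the compact piece $U_{0}$, where $i$ is the inclusion $U_{0}\subset M_{0}$ and the Liouville structure on $U_{0}$ is by definition the restriction of $\lambda_{M}$, one has $i^{*}\lambda_{M}=\lambda_{U}$. On the cylindrical end $\partial U\times[1,+\infty)$ I would differentiate $i(y,r)=\psi_{M}^{r}(y)$: this gives $di(r\partial_{r})=Z_{M}$, hence $\lambda_{M}(di(r\partial_{r}))=\lambda_{M}(Z_{M})=0$, while for $v$ tangent to $\partial U$ one has $\lambda_{M}(di(v))=(\psi_{M}^{r*}\lambda_{M})(v)=r\,\alpha'(v)=\lambda_{U}(v)$; so $i^{*}\lambda_{M}=\lambda_{U}$ on the whole cylindrical end as well. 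Consequently $\lambda_{M^{-}\times U}\big|_{\Gamma}=\lambda_{U}-i^{*}\lambda_{M}\equiv 0$, so $\Gamma$ is exact with a constant primitive. The same identity $di(r\partial_{r})=Z_{M}$ shows that $Z_{M}\oplus Z_{U}$ is tangent to $\Gamma$ over $\partial U\times[1,+\infty)$, so $\Gamma$ is invariant under the product Liouville flow outside the compact set $\{(i(p),p):p\in U_{0}\}$; that is, $\Gamma$ is cylindrical at infinity.

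For the topological data, $c_{1}(M)=0$ forces $c_{1}(U)=0$ and hence $c_{1}(M^{-}\times U)=-c_{1}(M)\boxplus c_{1}(U)=0$, so $2c_{1}$ vanishes and $\Gamma$ (which is diffeomorphic to $U$) carries a grading; a spin structure on $\Gamma$ can likewise be fixed once and for all. To conclude admissibility I would then invoke the framework of \cite{Gao2}: it is shown there that an exact cylindrical Lagrangian submanifold of a product $M^{-}\times N$ equipped with grading and spin structure becomes an object of $\mathcal{W}(M^{-}\times N)$ once one uses the split Hamiltonian $H_{M}\oplus H_{U}$ and a product of contact-type almost complex structures, the only further input being the generic non-degeneracy of the resulting time-one chords, achieved by a small perturbation.

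The main obstacle is not in the exactness or cylindricity computations above, which are short, but in the bookkeeping concealed in the last step: the contact boundary of $M^{-}\times U$ has a corner, so the phrases "split Hamiltonian" and "product almost complex structure" must be interpreted in the precise sense of \cite{Gao2}, where the quadratic Hamiltonian on the product is smoothed near the corner and the maximum-principle and energy estimates are arranged so that the Floer moduli spaces with boundary condition $\Gamma$ are compact. I would therefore phrase the proof as: the computations above place $\Gamma$ in exactly the class of Lagrangians to which the product construction of \cite{Gao2} applies, and then cite the admissibility statement from that reference rather than reproving the analytic foundations.
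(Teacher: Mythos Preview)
Your proposal is correct and takes essentially the same approach as the paper: establish that $\Gamma$ is exact and cylindrical (the paper asserts these tersely from the definition of $i$, while you compute them explicitly), then invoke the general product-manifold framework of \cite{Gao2}. Your version is more detailed than the paper's, which does not spell out the computation $i^{*}\lambda_{M}=\lambda_{U}$ or the grading/spin discussion, but the logical structure is identical.
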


	In practice, it is helpful to get a straightforward description of Hamiltonian chords starting from and landing on $\Gamma$. By the results of \cite{Gao2}, it suffices to study the chords for a split Hamiltonian $H_{M, U} = \pi_{M}^{*} H_{M} + \pi_{U}^{*} H_{U}$. To describe these chords, we shall need the following definition. \par

\begin{definition}
	A point $p \in U$ which is located in the cylindrical end $\partial U \times [1, +\infty)$ is said to be stoppable in $M$, if the trajectory of $i(p)$ under the Liouville flow on $M$ of positive time converges to a critical point of the Liouville vector field $Z_{M}$. Otherwise $p$ is said to be unstoppable.
\end{definition}

	Let $x = (x_{0}, x_{1})$ be a Hamiltonian chord from $\Gamma$ to itself. By definition, $x_{0}$ satisfies the Hamilton equation for $H_{M}$, and $x_{1}$ satisfies the Hamilton equation for $H_{U}$. Also, $x_{0}(0) = i(x_{1}(0))$ and $x_{0}(1) = i(x_{1}(1))$. There are several cases to discuss:
\begin{enumerate}[label=(\roman*)]

\item $x_{1}$ lies in the interior part $U_{0}$ of $U$. In this case, the endpoints $x_{1}(0), x_{1}(1)$ of $x_{1}$ are contained in $U_{0}$, forcing the endpoints of $x_{0}$ to be contained in $M_{0}$.

\item $x_{1}$ lies in the cylindrical end $\partial U \times [1, +\infty)$ of $U$, and both of the endpoints $x_{1}(0), x_{1}(1)$ are unstoppable in $M$.

\item $x_{1}$ lies in the cylindrical end $\partial U \times [1, +\infty)$ of $U$, and both of the endpoints $x_{1}(0), x_{1}(1)$ are stoppable in $M$.

\end{enumerate}\par

	In the first case, the action of such a generalized chord is uniformly bounded by some universal constant $C$, as the Hamiltonians $H_{M}$ and $H_{U}$ are chosen so that they are $C^{2}$-small in the interior part. \par
	In the second case, because the points $x_{1}(0), x_{1}(1)$ are unstoppable in $M$, the $H_{M}$-chord $x_{0}$ must be contained in the cylindrical end $\partial M \times [1, +\infty)$, and correspond to Reeb chords on $\partial M$ of length $R$. Thus the action of the generalized chord satisfies the following inequality
\begin{equation}
- r_{1}^{2} - r_{2}^{2} - C_{1} \le \mathcal{A}((x_{0}, x_{1})) \le - r_{1}^{2} - r_{2}^{2} + C_{1},
\end{equation}
where $C_{1}$ is a constant depending only on the Hamiltonians and the primitives for the Lagrangian submanifolds, which are locally constant in the cylindrical ends and thus uniformly bounded. \par
	In the third case, $x_{0}$ must be contained in the interior part. Thus a straightforward computation shows that the action of such a Hamiltonian chord satisfies
\begin{equation}
- r_{2}^{2} - C_{2} \le \mathcal{A}((x_{0}, x_{1})) \le - r_{2}^{2} + C_{2},
\end{equation}
for some universal constant $C_{2}$. \par
	To prove that the Floer differential is well-defined over $\mathbb{Z}$, we must prove that when an input $(x'_{0}, x'_{1})$ is fixed, the moduli space $\bar{\mathcal{M}}((x_{0}, x_{1}), (x'_{0}, x'_{1}))$ is empty for all but finitely many outputs $(x_{0}, x_{1})$. By the action-energy equality, any inhomogeneous pseudoholomorphic strip in $M^{-} \times U$ connecting $(x_{0}, x_{1})$ to $(x'_{0}, x'_{1})$ has sufficiently negative energy if $(x_{0}, x_{1})$ lies sufficiently far from the interior part $M_{0} \times U_{0}$. \par

	By Proposition \ref{functor associated to Lagrangian correspondence}, we get an $A_{\infty}$-functor
\begin{equation}
\Theta_{\Gamma}: \mathcal{W}(M) \to \mathcal{W}_{im}(U)
\end{equation}
associated to the graph correspondence $\Gamma$.
By definition, $\Theta_{\Gamma}$ looks like a restriction functor of some sort, as $\Gamma$ is completed from the graph of the inclusion $U_{0} \subset M_{0}$.
What is different from the Viterbo restriction functor is that this functor is defined on the whole wrapped Fukaya category of $M$, though the target is the immersed wrapped Fukaya category of $U$. \par
	To understand what this functor does concretely, we recall that the object representing the left $A_{\infty}$-module $\Phi_{\Gamma}(L)$ over $\mathcal{W}_{im}(N)$ is the geometric composition $\iota: L \circ \Gamma \to U$, together with a canonical and unique Maurer-Cartan element $b$. 
Note that for this particular Lagrangian correspondence $\Gamma$, the geometric composition is always a proper embedding.
Under Assumption \ref{strong exactness assumption}, it is also proved in \cite{Gao2} the Maurer-Cartan element $b$ is in fact zero. This has the following consequence: \par

\begin{proposition}
	Restricted to the sub-category $\mathcal{B}(M)$, the $A_{\infty}$-functor $\Theta_{\Gamma}$ takes values in the full sub-category $\mathcal{W}(U)$,
\begin{equation}
\Theta_{\Gamma}: \mathcal{B}(M) \to \mathcal{W}(U).
\end{equation}
\end{proposition}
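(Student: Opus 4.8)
The plan is to reduce the statement to two facts about the geometric composition $L\circ\Gamma$ that are already available in the excerpt: that $\iota\colon L\circ\Gamma\to U$ is a proper \emph{embedding}, and that, under Assumption~\ref{strong exactness assumption}, its canonical Maurer--Cartan element vanishes. Recall from Proposition~\ref{functor associated to Lagrangian correspondence} that for every $L\in\ob\mathcal{W}(M)$ one has $\Theta_{\Gamma}(L)=(L\circ\Gamma,b)$, where $b=b_{L,\Gamma}\in CW^{*}(L\circ\Gamma)$ is the unique Maurer--Cartan element attached to $(L,\Gamma)$. First I would record that for this particular correspondence $L\circ\Gamma$ is, set-theoretically, the restriction $L'$ of $L$ to $U$ completed in the cylindrical end, and that $\iota$ is a proper embedding; this follows from the explicit form of the map $i\colon U\to M$ and from the classification of Hamiltonian chords of $\Gamma$ described above, so that $L\circ\Gamma$ is \emph{a priori} an embedded exact cylindrical Lagrangian, regarded as an object of $\mathcal{W}(U)$ sitting inside $\mathcal{W}_{im}(U)$.

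Next I would invoke the vanishing $b_{L,\Gamma}=0$ for $L\in\ob\mathcal{B}(M)$, proved in \cite{Gao2}: the geometric point is that strong exactness (Assumption~\ref{strong exactness assumption}) makes the primitive of $L'$ globally constant near $\partial U$, so that an action/energy estimate forces the moduli spaces of inhomogeneous pseudoholomorphic quilted strips with boundary on $(L,\Gamma)$ whose asymptotic output is a non-trivial $H_{U}$-chord of $L'$ to be empty. Consequently $\Theta_{\Gamma}(L)=(L',0)$, which is an object of the full $A_{\infty}$-subcategory $\mathcal{W}(U)\subset\mathcal{W}_{im}(U)$ on zero-Maurer--Cartan objects.

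It then remains to check that the higher-order components of $\Theta_{\Gamma}$ also stay inside $\mathcal{W}(U)$. Since $\mathcal{W}(U)$ is a \emph{full} subcategory, for $L_{0},\dots,L_{k}\in\ob\mathcal{B}(M)$ the target $\hom_{\mathcal{W}_{im}(U)}(\Theta_{\Gamma}(L_{0}),\Theta_{\Gamma}(L_{k}))$ of $\Theta_{\Gamma}^{k}$ is exactly $\hom_{\mathcal{W}(U)}(L_{0}\circ\Gamma,L_{k}\circ\Gamma)=CW^{*}(L'_{0},L'_{k})$, because morphism spaces of $\mathcal{W}_{im}(U)$ between objects with vanishing Maurer--Cartan data involve no $b$-insertions; likewise the structure maps $m^{k}$ of $\mathcal{W}_{im}(U)$ specialized to such objects coincide with the ordinary $m^{k}$ of $\mathcal{W}(U)$. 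Hence the $A_{\infty}$-functor equations satisfied by $\{\Theta_{\Gamma}^{k}\}$ as a functor into $\mathcal{W}_{im}(U)$ become, verbatim, the $A_{\infty}$-functor equations for a functor into $\mathcal{W}(U)$, yielding the desired $\Theta_{\Gamma}\colon\mathcal{B}(M)\to\mathcal{W}(U)$.

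I expect the genuine obstacle to be the vanishing $b_{L,\Gamma}=0$. Proving it directly requires controlling the relevant quilted moduli spaces and showing, via the action--energy identity together with the control on the primitive of $L'$ provided by Assumption~\ref{strong exactness assumption}, that no rigid configuration with output chord in $U$ survives; once this input is granted, the rest is the formal remark that $\mathcal{W}(U)$ is the full subcategory of $\mathcal{W}_{im}(U)$ on zero-Maurer--Cartan embedded objects and is therefore closed under the image of $\Theta_{\Gamma}|_{\mathcal{B}(M)}$.
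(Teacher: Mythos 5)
Your proof is correct and takes essentially the same route as the paper's: the properness and embeddedness of $L\circ\Gamma$ together with the vanishing of the Maurer--Cartan element $b_{L,\Gamma}$ under Assumption \ref{strong exactness assumption} (cited from \cite{Gao2}) place the image of each object in the full embedding $\mathcal{W}(U)\hookrightarrow\mathcal{W}_{im}(U)$, and fullness takes care of the higher-order components. One minor caveat: $L\circ\Gamma$ is not literally equal to $L'$ set-theoretically (the paper only establishes an isotopy/isomorphism between them in Proposition \ref{factorization result}), but this does not affect your argument, since embeddedness of $L\circ\Gamma$ is all that is actually used.
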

\begin{proof}
	Recall that there is a canonical full and faithful embedding 
\begin{equation*}
j: \mathcal{W}(U) \to \mathcal{W}_{im}(U),
\end{equation*}
which takes $L$ to $(L, 0)$ on the level of objects, and maps on the morphism spaces by identity, $CW^{*}((L_{0}, 0), (L_{1}, 0)) = CW^{*}(L_{0}, L_{1})$.
Using an appropriate chain model for the wrapped Floer cochain spaces, we may also identify the $A_{\infty}$-structure maps among such objects.
Since the geometric composition is properly embedded, and the bounding cochain $b$ for the geometric composition vanishes, $\Theta_{\Gamma}(L)$ lies in the sub-category $\mathcal{W}(U)$ for every $L$.
\end{proof}

\subsection{Factorization}

	In general, given any exact cylindrical Lagrangian submanifold $L \subset M$, the geometric composition $L \circ \Gamma$ does not precisely coincide with $L'$, the completion of $L'_{0} = L_{0} \cap U_{0}$ in $U$. However, these two objects are closed related in a natural way: \par

\begin{proposition}\label{factorization result}
	Let $L \subset M$ be an exact cylindrical Lagrangian submanifold, with a choice of a primitive $f_{L}$. Suppose that $f_{L}$ can be extended to a function on $M$ which is locally constant near $\partial U$. Then
\begin{enumerate}[label=(\roman*)]

\item Both the geometric composition $L \circ \Gamma$ and the restriction $L'$ are admissible for wrapped Floer theory of $U$;

\item There is a canonical morphism $L \circ \Gamma \to L'$ in the wrapped Fukaya category $\mathcal{W}(U)$;

\item The above morphism induces a quasi-isomorphism on all wrapped Floer cochain complexes.

\end{enumerate}
\end{proposition}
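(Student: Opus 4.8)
The plan is to treat the three assertions in turn, using the machinery of functors associated to Lagrangian correspondences recalled in Proposition~\ref{functor associated to Lagrangian correspondence} together with an action estimate in the spirit of Lemma~\ref{small parameter implies coming from U}.

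\emph{Admissibility (i).} The object $L'$ is admissible essentially by construction: it is the completion in $U$ of the exact cylindrical Lagrangian $L'_{0}=L_{0}\cap U_{0}$, and it inherits a grading and a spin structure from $L$. For the geometric composition, recall that $\Gamma$ is the graph of the map $i\colon U\to M$, so $L\circ\Gamma=i^{-1}(L)$, and since $\Gamma$ is a graph, $\pi_{M}^{-1}(L)\cap\Gamma$ projects diffeomorphically onto $i^{-1}(L)\subset U$; hence $L\circ\Gamma$ is an embedded submanifold, and it is exact with primitive $i^{*}f_{L}$. For cylindricity one checks that in the cylindrical end $\partial U\times[1,+\infty)$ of $U$, once $r$ is large the Liouville flow $\psi_{M}^{r}$ has carried $\partial U$ — away from the stoppable locus, which the properness of $\Gamma\to U$ (automatic for a graph) and a genericity assumption make negligible — into the cylindrical end of $M$, where the $\partial M$-coordinate is $\psi_{M}$-invariant and $L$ is conical; thus $L\circ\Gamma$ is eventually a cylinder over the Legendrian $l''=\{y\in\partial U:\text{the forward }\psi_{M}\text{-trajectory of }y\text{ meets }\partial M\text{ in a point of }\partial L\}$. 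A grading and spin structure are pulled back along $i$. Finally, by Proposition~\ref{functor associated to Lagrangian correspondence} and the discussion following it, the canonical Maurer--Cartan element $b$ for $L\circ\Gamma$ vanishes under the present hypothesis (which is a form of strong exactness), so $L\circ\Gamma$, equipped with $b=0$, is an honest object of $\mathcal{W}(U)$.

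\emph{The morphism and the quasi-isomorphism (ii),(iii).} I would obtain both at once by comparing $L\circ\Gamma$ and $L'$ through the quilted Floer theory of $\Gamma$. On one side, Proposition~\ref{functor associated to Lagrangian correspondence} and the geometric composition map of \cite{Gao2} give a canonical quasi-isomorphism of left $A_{\infty}$-modules over $\mathcal{W}(U)$ between $\Phi_{\Gamma}(L)\colon K\mapsto CW^{*}(L,\Gamma,K)$ and the Yoneda module $\mathfrak{y}_{l}(L\circ\Gamma)$. On the other side, I claim $\Phi_{\Gamma}(L)$ is also canonically quasi-isomorphic to $\mathfrak{y}_{l}(L')$: because $i$ restricts to the identity on $U_{0}$ and the Hamiltonians are $C^{2}$-small there, the hypothesis on the primitive together with the action--energy identity forces every rigid quilted strip contributing to the module $\Phi_{\Gamma}(L)$ to have its $M$-patch contained in $i(U)$; folding along the seam $\Gamma=\mathrm{graph}(i)$ converts such a quilted strip into an ordinary strip in $U$ with boundary on $i^{-1}(L)=L\circ\Gamma$, and the same action bound confines that strip to $U_{0}$ together with a short collar of $\partial U$, on which $L\circ\Gamma$ and $L'$ literally coincide (with identical primitives). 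Carrying this identification over all test objects $K$ and over the higher quilted operations yields the module quasi-isomorphism $\Phi_{\Gamma}(L)\simeq\mathfrak{y}_{l}(L')$; composing the two equivalences and invoking the Yoneda lemma produces a canonical isomorphism between $L\circ\Gamma$ and $L'$ in $H^{0}(\mathcal{W}(U))$. The degree-zero cocycle representing it is the asserted morphism — equivalently it may be described as the continuation element for an interpolating moving boundary condition equal to $L'_{0}$ on $U_{0}$ — and its invertibility in $H^{0}(\mathcal{W}(U))$ is exactly the statement that it induces a quasi-isomorphism on all wrapped Floer cochain complexes.

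\emph{Main obstacle.} The crux is the action estimate that confines the quilted strips: one must show that strong exactness forces the $M$-patch into $i(U)$ and then the folded strip into the common part of $L\circ\Gamma$ and $L'$. This is the analogue, for quilted strips with a graph seam, of Lemma~\ref{small parameter implies coming from U} from the construction of the Viterbo restriction map, and I expect to handle it by the same device: the primitive of $L$, and hence those of $L\circ\Gamma$ and $L'$, are locally constant near $\partial U$, so any generator or any holomorphic piece escaping into the complement $W_{0}$, or into the cylindrical end of $M$, would carry strictly negative topological energy, which is impossible. A secondary, more formal point is to upgrade the chain-level matching of moduli spaces to one compatible with the full $A_{\infty}$-module structure and to check independence of the auxiliary choices; once the moduli spaces are matched this follows by the standard parametrized-moduli-space arguments, as in the comparison of $\mathcal{S}\circ\mathcal{F}$ with $R\circ\mathcal{S}$ in the proof of Theorem~\ref{Viterbo restriction homomorphism homotopic to linearized cobordism homomorphism}.
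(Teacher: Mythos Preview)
Your approach to (ii) and (iii) is genuinely different from the paper's, and the key step has a gap.

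The paper does not go through $\Phi_{\Gamma}(L)$ or the Yoneda lemma at all. Instead it constructs, in two geometric lemmas following the statement, an explicit \emph{compactly supported} exact Lagrangian isotopy $L'_{t}$ in $U$ with $L'_{0}=L\circ\Gamma$ and $L'_{1}=L'$, by analysing the cobordism $L_{0}\setminus\mathrm{int}(L'_{0})$ component by component (treating separately the components that meet the critical locus of $Z_{M}$ and those that do not). The element $\lambda_{L}\in CW^{*}(L\circ\Gamma,L')$ is then defined directly as the continuation element for this isotopy, by counting inhomogeneous pseudoholomorphic discs from a one-punctured disc with the moving boundary condition $L'_{\rho(z)}$. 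The quasi-isomorphism property in (iii) is immediate from the standard fact that continuation elements for compactly supported exact isotopies are invertible. You allude to this in passing, but your main argument rests on a different mechanism.

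The gap is in your step ``$\Phi_{\Gamma}(L)\simeq\mathfrak{y}_{l}(L')$ by folding and confinement.'' Folding a quilted strip along the graph seam gives a strip in $U$ with boundary on $i^{-1}(L)=L\circ\Gamma$, not on $L'$; to relabel the boundary as $L'$ you would need every such strip to lie in the locus where $L\circ\Gamma$ and $L'$ coincide. Now $L\circ\Gamma$ and $L'$ do agree both inside $U_{0}$ and outside a compact set (this is the content of the paper's isotopy lemmas), but they differ in an intermediate compact collar of $\partial U$ in the cylindrical end. Your action estimate cannot forbid a strip from passing through that collar: wrapped Floer theory in $U$ involves chords at arbitrarily large radius, and a strip in $U$ joining a chord deep in the end to one near $U_{0}$ must cross the collar. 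The analogue of Lemma~\ref{small parameter implies coming from U} you invoke is about confining curves in $M$ to the image of $U$; it says nothing about confining curves in $U$ away from a fixed compact set. So the folding argument only reproduces the equivalence $\Phi_{\Gamma}(L)\simeq\mathfrak{y}_{l}(L\circ\Gamma)$ you already had from the general representability theorem, and you are back to needing an independent comparison between $L\circ\Gamma$ and $L'$---which is exactly the compactly supported isotopy and continuation element that the paper supplies.
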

\begin{proof}
	For (i), the admissibility of $L \circ \Gamma$ is proved in \cite{Gao2}, while that of $L'$ is clear by definition. \par
	To prove (ii) and (iii), we shall construct an element $\lambda_{L} \in CW^{*}(L \circ \Gamma, L')$, using appropriate moduli spaces of pseudoholomorphic curves. 
The desired property of $\lambda_{L}$ is that multiplications with it from the left
\begin{equation*}
m^{2}(\lambda_{L}, \cdot): CW^{*}(K, L \circ \Gamma) \to CW^{*}(K, L')
\end{equation*}
and from the right
\begin{equation*}
m^{2}(\cdot, \lambda_{L}): CW^{*}(L \circ \Gamma, K) \to CW^{*}(L', K)
\end{equation*}
are both quasi-isomorphisms, for every $K \in \ob \mathcal{W}(U)$.
The construction of such an element will be given in the rest of this subsection. \par
\end{proof}

	Let us take a closer look at the geometric composition $L \circ \Gamma$. 
We decompose $L$ into three parts:
\begin{equation*}
L = L'_{0} \cup (L_{0} \setminus int(L'_{0})) \cup \partial L \times [1, +\infty),
\end{equation*}
i.e., the compact part inside $U_{0}$, the Lagrangian cobordism $L_{0} \setminus L'_{0}$ and the cylindrical end $L \times [1, +\infty)$.
For the interior part, $L_{0} = L'_{0} \cap (L_{0} \setminus L'_{0})$. By definition, the geometric composition of $L'_{0}$ with $\Gamma$ is precisely $L'_{0}$.
Note that the cylindrical end $\partial L \times [1, +\infty)$ of $L$ is invariant under the Liouville flow, so the geometric composition of $\partial L \times [1, +\infty)$ with $\Gamma$ is contained in the cylindrical end of $L'$.
It remains to study the geometric composition of the cobordism $L_{0} \setminus int(L'_{0})$ with $\Gamma$.
Since $\Gamma$ is the graph of $i: U \to M$, we have
\begin{equation}
(L_{0} \setminus int (L'_{0})) \circ \Gamma 
=  \{q \in U: \exists p \in L_{0} \setminus int(L'_{0}) \text{ such that } p = i(q) \} 
\end{equation}
Because the map $i: U \to M$ is defined by extending the Liouville flow, we deduce that if $p \in L_{0} \setminus int(L'_{0})$ is a point on the exact Lagrangian cobordism, 
then the corresponding point $q \in (L_{0} \setminus int(L'_{0})) \circ \Gamma$ on the geometric composition is the unique point $q = (y, r) \in \partial U \times [1, +\infty)$ such that the time-$\log r$ flow of $y \in \partial U \subset U_{0} \subset M_{0}$ is $p$.

Now we observe that the geometric composition $L \circ \Gamma$ and the restriction $L'$ are isotopic. We shall carefully choose such an isotopy in order to construct the element $\lambda_{L}$ with desired properties. \par

\begin{lemma}\label{deforming the geometric composition without critical locus}
	Suppose the Lagrangian submanifold $L \subset M$ does not meet the critical locus of the Liouville vector field $Z_{M}$ in $M_{0} \setminus int(U_{0})$. 
Then there is an exact Lagrangian isotopy $L'_{t}$ which is supported in a compact subset $V \subset U$, 
such that  $L'_{0} = L \circ \Gamma$ and $L'_{1} = L'$.
Moreover, one can choose such an isotopy such that the support $V$ is contained in a small neighborhood of the boundary $l' \subset \partial U$ in $\partial U \times [1, +\infty)$.
\end{lemma}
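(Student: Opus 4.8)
The plan is to describe the geometric composition $L\circ\Gamma$ completely explicitly, pin down the region where it differs from $L'$, and then straighten it onto $L'$ by a compactly supported exact Lagrangian isotopy built from the Liouville flow.

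First, since $\Gamma$ is the graph of $i\colon U\to M$, one has $L\circ\Gamma=i^{-1}(L)$. On $U_{0}$ the map $i$ is the inclusion, so $L\circ\Gamma\cap U_{0}=L_{0}\cap U_{0}=L'_{0}$, which already agrees with $L'$ there; and because $L$ is cylindrical along $\partial U$ (it equals the straight cone $\partial L'\times(\text{collar})$ near $\partial U$) while $i$ on $\partial U\times[1,1+\delta)$ is the short-time Liouville flow, $L\circ\Gamma$ coincides with $\partial L'\times[1,1+\delta)$, hence with $L'$, on a collar of $\partial U$ in the cylindrical end. Using the decomposition $L=L'_{0}\cup L^{c}_{0}\cup(\partial L\times[1,+\infty))$ recalled above, the remaining part of $L\circ\Gamma$ is the inverse-Liouville-flow image of the cobordism $L^{c}_{0}$ together with that of the cylindrical end of $L$; this is the only place where $L\circ\Gamma$ and $L'$ can disagree.

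Second, I would invoke the hypothesis to control that part. Since $L$ misses the critical locus of $Z_{M}$ inside $W_{0}=M_{0}\setminus\mathrm{int}(U_{0})$, each backward $Z_{M}$-trajectory through a point of the compact cobordism $L^{c}_{0}$ reaches $\partial U$ in finite time, and by compactness these times are bounded uniformly; consequently $i^{-1}(L^{c}_{0})$ lies in a compact annular region $\partial U\times[1,R_{0}]$ and, together with the image of $\partial L\times[1,+\infty)$, is cylindrical over $\partial L'$ outside a compact set. I would then straighten: viewing $L^{c}$ as a Lagrangian cobordism in the completed cobordism $W$, cylindrical near its negative end over $l'=\partial L'$, the Liouville flow of $W$ pushing toward the negative end $E_{-}$ deforms $L^{c}$ onto the cylinder $l'\times\mathbb{R}$, and the uniform flow-time bound lets us truncate this deformation by a cutoff supported in a compact subset of $W_{0}\cup E_{-}$. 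Transporting the truncated deformation back through $i^{-1}$, and leaving $U_{0}$ fixed, produces a compactly supported exact Lagrangian isotopy $L'_{t}$ with $L'_{0}=L\circ\Gamma$ and $L'_{1}=L'$; a final radial contraction, using the Liouville flow of $U$, pushes its support into an arbitrarily small neighborhood of $l'\subset\partial U$ inside $\partial U\times[1,+\infty)$.

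Finally, exactness of the isotopy — i.e.\ that it integrates a compactly supported Hamiltonian — follows from the standing hypothesis that $f_{L}$ extends to a function on $M$ locally constant near $\partial U$: this fixes compatible primitives for $L\circ\Gamma$ and $L'$, and the straightening can be carried out respecting primitives throughout, so the flux vanishes. The step I expect to be the main obstacle is the straightening in the previous paragraph: producing one exact, compactly supported isotopy that simultaneously unbends the image of the non-cylindrical portion of the cobordism and confines the support near $l'$, while keeping the primitive under control. The no-critical-locus hypothesis is precisely what upgrades the a priori bounded geometry into genuine compact support, and the primitive bookkeeping — though routine — is where most of the care goes.
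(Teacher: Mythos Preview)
Your outline is correct and is essentially the paper's argument: identify $L\circ\Gamma=i^{-1}(L)$; observe it agrees with $L'$ on $U_{0}$ and on a collar of $\partial U$; use the no-critical-locus hypothesis to confine $i^{-1}(L^{c}_{0})$ to a compact annulus in $\partial U\times[1,\infty)$; construct an isotopy between the two cobordism pieces on the $M$-side; and pull it back to $U$ via $i^{-1}$. The paper phrases the middle step dually --- it works in $M$ and compares the two exact Lagrangian cobordisms $L_{0}\setminus\mathrm{int}(U_{0})$ and $(i(L')\cap M_{0})\setminus L'_{0}$, noting they have the same homotopy type and the same Legendrian ends --- but the substance is the same, and the isotopy is then transported through $i^{-1}$, which is a diffeomorphism off the critical locus.

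One caveat worth recording: your specific mechanism, that the backward Liouville flow of $W$ ``deforms $L^{c}$ onto the cylinder $l'\times\mathbb{R}$'', is not correct as stated. That flow translates the cylindrical negative end and pushes $W_{0}$-content toward it, but it preserves the shape of $L^{c}$ rather than straightening it; after any finite flow time the image is still not the trivial cylinder, and truncating by a cutoff does not repair this. What is actually needed --- and what the paper also only \emph{asserts}, without construction --- is an exact Lagrangian isotopy rel Legendrian ends between the two cobordism pieces. You are right to flag this step as the main obstacle; the Liouville flow by itself is not the tool that resolves it.
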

\begin{proof}
	Consider he image of $L'$ under the map $i: U \to M$, which is by definition the extension of $L'_{0}$ under the Liouville flow on $M$.
Since $L$ does not meet the critical locus of the Liouville vector field $Z_{M}$ in $M_{0} \setminus int(U_{0})$, we may assume that $i(L')$ does not either.
In this case, the two Lagrangian cobordisms $L_{0} \setminus (int(U_{0})$ and$( i(L') \cap M_{0}) \setminus L'_{0}$ have the same homotopy type, with the same Legendrian boundaries $\partial L'$ and $\partial L$.
We can find an isotopy from $L_{0} \setminus (int(U_{0})$ to $( i(L') \cap M_{0}) \setminus L'_{0}$ which fixes the Legendrian boundaries. 
Moreover, we may assume that the isotopy is supported in a compact subset $\tilde{V}$ in $M_{0} \setminus int(U_{0})$ that contains both Lagrangian cobordisms and does not meet the critical locus of $Z_{M}$.
Such an isotopy then induces an isotopy in $U$ via the map $i$, as $i$ is invertible on the subset away from the critical locus of $Z_{M}$.
Then the desired isotopy from $L'$ to $L \circ \Gamma$ is supported in $V = i^{-1}(\tilde{V})$.
\end{proof}

\begin{lemma}\label{deforming the geometric composition to the restriction in the presence of critical locus}
	Suppose that the exact Lagrangian cobordism $L_{0} \setminus int(L'_{0})$ can be decomposed into connected components, such that only one component $L_{c}$ meets the critical locus of the Liouville vector field $Z_{M}$ in $M_{0} \setminus int(U_{0})$.
Let $l'_{c} \subset \partial U$ denote the negative boundary of this component.
In addition, assume that the other components are all invariant under the Liouville flow.
Let $l'_{i} \subset \partial U$ denote the union of the negative boundaries of these components.
Then there is an exact Lagrangian isotopy $L'_{t}$ from $L \circ \Gamma$ to $L'$, such that its support is a compact subset $V_{c} \subset \partial U \times [1, +\infty) \subset U$, 
which contains a small neighborhood $l'_{c} \times [1, 1+\epsilon)$ of $l'_{c}$ and is disjoint from $l'_{i} \times [1, +\infty)$.
\end{lemma}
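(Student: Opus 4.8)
The plan is to adapt the argument of Lemma \ref{deforming the geometric composition without critical locus}: we compare $L\circ\Gamma$ with $L'$ by transporting the problem through the map $i\colon U\to M$ and straightening the cobordism $L_{0}\setminus\mathrm{int}(L'_{0})$ along the Liouville flow of $M$, but now we isolate the one component $L_{c}$ meeting the critical locus and make the resulting isotopy localized near its negative boundary $l'_{c}$.

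First I would dispose of the flow-invariant components. Write $L_{0}\setminus\mathrm{int}(L'_{0})=L_{c}\sqcup C$, where $C$ is the union of the flow-invariant components, with negative boundary $l'_{i}\subset\partial U$ and positive boundary $l_{i,+}\subset\partial M$; correspondingly $\partial L'=l'_{c}\sqcup l'_{i}$ and $\partial L=l_{c,+}\sqcup l_{i,+}$. The piece $\widehat{C}:=C\cup(l_{i,+}\times[1,+\infty))$ is a union of forward trajectories of $Z_{M}$ issuing from $l'_{i}\subset\partial U$, and since $Z_{M}$ is transverse to the contact-type hypersurface $\partial U$ a trajectory meets $\partial U$ at most once; hence $i^{-1}(\widehat{C})=l'_{i}\times[1,+\infty)$ exactly. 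This is precisely the part of $L'$ over $l'_{i}$, so $L\circ\Gamma$ and $L'$ already coincide over a neighborhood of $l'_{i}\times[1,+\infty)$; any isotopy constructed below can be kept stationary there, which is what forces its support to be disjoint from $l'_{i}\times[1,+\infty)$. It therefore remains to build an exact Lagrangian isotopy, with compact support of the stated shape, carrying $i^{-1}(\widehat{L}_{c})$ to the trivial cylinder $l'_{c}\times[1,+\infty)$, where $\widehat{L}_{c}:=L_{c}\cup(l_{c,+}\times[1,+\infty))$.

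Next I would localize this remaining problem. By \cite{Gao2}, $i^{-1}(\widehat{L}_{c})=\widehat{L}_{c}\circ\Gamma$ is an embedded exact cylindrical Lagrangian in $\partial U\times[1,+\infty)$ with Legendrian boundary $l'_{c}$; thus it agrees with $l'_{c}\times[1,+\infty)$ for $r$ beyond some $R_{0}$ and meets $\partial U\times\{1\}$ in $l'_{c}\times\{1\}$, so the two differ only over the compact collar $\partial U\times[1,R_{0}]$. Applying $i$, which is a diffeomorphism onto its image away from the critical locus of $Z_{M}$, it is equivalent to produce a compactly supported exact Lagrangian isotopy inside $i(U)\subset M$, fixing the Legendrian ends, from $\widehat{L}_{c}\cap i(U)$ to the Liouville straightening $i(l'_{c}\times[1,+\infty))$ of the trivial cylinder. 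On the part of $L_{c}$ away from the (finitely many, after a generic perturbation of $L$) zeros of $Z_{M}$ lying on $L_{c}$, this is carried out exactly as in the proof of Lemma \ref{deforming the geometric composition without critical locus}, using a straightening isotopy compactly supported in $M_{0}\setminus\mathrm{int}(U_{0})$ away from the critical locus; its $i$-preimage is a compactly supported isotopy in $\partial U\times[1,+\infty)$. The remaining task is to straighten $\widehat{L}_{c}$ in a neighborhood of each critical point $p_{0}$ of $Z_{M}$ on $L_{c}$.

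For this last step I would work in a local normal form for $Z_{M}$ near $p_{0}$, in which the Liouville flow is linear, and use that $L_{c}$ is Lagrangian together with the conformal-symplectic nature of conjugation by the linear flow to put $\widehat{L}_{c}\circ\Gamma$ near $l'_{c}$ into a normal form identifying it, up to a compactly supported Hamiltonian isotopy rel ends, with the trivial cylinder; this is the analogue, at the level of isotopies, of the normal-form computation by which \cite{Gao2} shows $\widehat{L}_{c}\circ\Gamma$ to be embedded and admissible. Patching the local isotopies with the one from the previous step and pulling everything back by $i^{-1}$ yields the isotopy $L'_{t}$ with $L'_{0}=L\circ\Gamma$ and $L'_{1}=L'$; its support lies in the $i$-preimage of a compact neighborhood of the critical part of $L_{c}$, which can be arranged to be a compact set $V_{c}\subset\partial U\times[1,+\infty)$ containing a collar $l'_{c}\times[1,1+\epsilon)$ and disjoint from $l'_{i}\times[1,+\infty)$, and exactness of the isotopy is automatic since it is generated by a compactly supported Hamiltonian. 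The main obstacle is precisely this last step: when $L_{c}$ genuinely passes through a zero of $Z_{M}$ one cannot push it off the critical locus, and a priori $\widehat{L}_{c}\circ\Gamma$ could differ from the trivial cylinder by an exotic exact Lagrangian cobordism; ruling this out requires the explicit local analysis of how $L_{c}$ sits relative to the stable and unstable manifolds of $p_{0}$, in the same spirit as the admissibility argument of \cite{Gao2}.
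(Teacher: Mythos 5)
Your handling of the flow-invariant components and of the part of $L_{c}$ away from the zeros of $Z_{M}$ is in line with the intended argument, but there is a genuine gap at exactly the point you flag yourself: the behaviour of $L_{c}$ at the critical locus. You propose to resolve it by a local normal form for $Z_{M}$ near each zero $p_{0}$, comparing $L_{c}$ with the stable and unstable manifolds and invoking the conformal-symplectic nature of the linearized flow, but you do not carry this analysis out, and as you yourself note it is the hard part: a priori $i^{-1}(\widehat{L}_{c})$ near the critical locus could be a nontrivial exact Lagrangian end rather than a collar of the trivial cylinder over $l'_{c}$. As written, the proof is therefore incomplete precisely where the hypothesis of this lemma differs from that of Lemma \ref{deforming the geometric composition without critical locus}.

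The paper's proof sidesteps this local analysis entirely by a normalization you are missing: since $L_{c}$ meets the critical locus, one may assume that $L_{c}$ is invariant under the Liouville flow both near its negative boundary $l'_{c} \subset \partial U$ and near the critical locus of $Z_{M}$. With that arrangement no isotopy is performed at the critical locus at all --- the flow-invariant portions of $L_{c}$ pull back under $i$ to the corresponding portions of the trivial cylinder, so $L \circ \Gamma$ and $L'$ already agree there --- and the entire straightening is carried out by an isotopy of $L$ in $M$ supported in the part of the compact cobordism $M_{0} \setminus int(U_{0})$ disjoint from both $\partial U$ and the critical locus, where $i$ is invertible and Lemma \ref{deforming the geometric composition without critical locus} applies verbatim; pulling back by $i^{-1}$ gives the isotopy with support $V_{c}$ of the stated form. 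So the missing ingredient is not a local model at $p_{0}$ but the reduction that makes any such local model unnecessary; without it, or without actually completing your normal-form argument, the step near the critical points does not go through.
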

\begin{proof}
	Since $L_{c}$ meets the critical locus of $Z_{M}$, we may assume that $L_{c}$ is invariant under the Liouville flow near both the negative boundary $l'_{c} \subset \partial U$ and the critical locus.
Then we can use Lemma \ref{deforming the geometric composition without critical locus} to find an isotopy from $L \circ \Gamma$  to $L'$ that is induced from an isotopy of $L$ in $M$, 
which is supported in a subset of the compact cobordism $M_{0} \setminus int(U_{0})$ away from the negative boundary $\partial U$ and the critical locus of $Z_{M}$.
\end{proof}

	For a general $L$, we may construct an exact Lagrangian isotopy $L'_{t}$ from $L'$ to $L \circ \Gamma$ using the above two lemmas repeatedly. 
Suppose the exact Lagrangian cobordism $L_{0} \setminus int(L'_{0})$ is decomposed into connected components
\begin{equation}
L_{0} \setminus int(L'_{0}) = \coprod_{b \in B} L_{b} \cup \coprod_{c \in C} L_{c},
\end{equation}
labeled by finite sets $B, C$ such that the components $L_{c}, c \in C$ meet the critical locus of $Z_{M}$, while the components $L_{b}, b \in B$ do not.
For each component $L_{b}, b \in B$, choose an exact Lagrangian isotopy $L'_{b, t}$ in $U$ which is supported in a compact subset $V_{b} \subset \partial U \times [1, +\infty)$, such that the supports $V_{b}$'s are disjoint.
Such an exact Lagrangian isotopy $L'_{b, t}$ has the property that $L'_{b, 0} = L \circ \Gamma$, while $L'_{b, 1}$ deforms $L \circ \Gamma$ near $l'_{b}$ making it into $l'_{b} \times [1, +\infty)$.
Once we perform this isotopy, we may change the original $L$ in the cobordism part $L_{0} \setminus int(L'_{0})$ to a new Lagrangian submanifold, replacing the component $L_{b}$ by the trivial cylinder over $l'_{b}$. 
Then this part is invariant under the Liouville flow.
After performing this construction for all $b \in B$, we get a Lagrangian submanifold $L_{inv} \subset M$ such that $L_{inv} \cap (M_{0} \setminus int(U_{0}))$ is an exact Lagrangian cobordism whose connected components either are invariant under the Liouville flow, or do not meet the critical locus of $Z_{M}$.
In particular, the components $L_{c}, c \in C$ of the original Lagrangian cobordism $L_{0} \setminus int(L'_{0})$ remain to be components of $L_{inv} \cap (M_{0} \setminus int(U_{0}))$.
Thus, we may apply Lemma \ref{deforming the geometric composition to the restriction in the presence of critical locus} to $L_{inv}$ by choosing for each component $L_{c}$ an exact Lagrangian isotopy $L'_{c, t}$ in $U$ which is supported in a compact subset $V_{c} \subset \partial U \times [1, +\infty)$ containing $l'_{c} \times [1, 1+\epsilon)$.
We may assume that these compact sets $V_{c}, c \in C$ are all disjoint so that these exact Lagrangian isotopies $L'_{c, t}$ can be composed together without affecting each other. 
Finally, we define $L'_{t}$ to be the composition of the exact Lagrangian isotopies $L'_{b, t}$'s and $L'_{c, t}$'s. This is well-defined and independent of order of composition. \par

	Given the exact Lagrangian isotopy $L'_{t}$, we now introduce certain moduli spaces of inhomogeneous pseudoholomorphic maps with moving boundary conditions given by the above exact Lagrangian isotopy $L'_{t}$.
To define these maps, we choose a smooth non-decreasing function $\rho: \mathbb{R} \to [0, 1]$ such that $\rho(s) = 0$ for $s \ll 0$, and $\rho(s) = 1$ for $s \gg 0$. 
Let $S$ be the unit disk with one negative puncture, near which a negative strip-like end is chosen.
Let $\beta_{S}$ be a sub-closed one-form on $S$ which vanishes along the boundary, such that $d \beta_{S} = 0$ in a neighborhood of the boundary and $\beta_{S} = dt$ over the negative strip-like end. 
Let $H_{S}$ be a family of Hamiltonians on $U$, parametrized by $z \in S$, such that over the strip-like end $H_{S}$ is constantly equal to $H_{U}$, the fixed time-independent quadratic Hamiltonian on $U$.
Let $J_{S}$ be a family of compatible almost complex structures of contact type, parametrized by $z \in S$. This family should furthere satisfy the following constraint. Choose a path $J_{t}$ of compatible almost complex structures such that $J_{t}$ is regular for $L'_{t}$ for each $t$. Then we require that the induced family by restricting $J_{S}$ to the boundary should agree with $J_{t = \rho(s)}$.
We consider maps
\begin{equation*}
u: S \to U
\end{equation*}
which satisfy the inhomogeneous Cauchy-Riemann equation:
\begin{equation}
(du - \beta_{S} \otimes X_{H_{S}}(u)) + J_{S} \circ (du - \beta_{S} \otimes X_{H_{S}}(u)) \circ j = 0,
\end{equation}
and the boundary condition:
\begin{equation}
u(z) \in L'_{\rho(z)}, z \in \partial S,
\end{equation}
where $\partial S$ is identified with $\mathbb{R}$ as $S$ is identified with the lower half plane with compatible orientation.
In addition, these maps must satisfy the asymptotic convergence condition:
\begin{equation}
\lim\limits_{s \to -\infty} u(s, \cdot) = x'(\cdot),
\end{equation}
for some time-one $H_{U}$-chord $x'$ from $L \circ \Gamma$ to $L'$. \par
	Let us denote by $\mathcal{M}_{1}(x'; L'_{t})$ the moduli space of all such inhomogeneous pseudoholomorphic maps.
There are two possible configurations for this moduli space:
\begin{enumerate}[label=(\roman*)]

\item Suppose that the Hamiltonian chord $x'$ is contained inside $U_{0}$, or away from a compact set in the cylindrical end, where the relevant part of $L \circ \Gamma$ agrees with that of $L'$, and the exact Lagrangian isotopy $L'_{t}$ is constant.
In this case, the moduli space $\mathcal{M}_{1}(x'; L'_{t})$ consists of a single point - the constant map.

\item Suppose that the Hamiltonian chord $x'$ is contained in the compact set $V$, where the isotopy $L'_{t}$ is non-constant.
In this case, there is no automorphism of each element $u$ in the moduli space $\mathcal{M}_{1}(x'; L'_{t})$, so that the moduli space is a smooth manifold of dimension equal to $\deg(x')$.

\end{enumerate}
The moduli space $\mathcal{M}_{1}(x'; L'_{t})$ can be compactified in a natural way to $\bar{\mathcal{M}}_{1}(x'; L'_{t})$.
This compactification is obtained by adding inhomogeneous pseudoholomorphic strips with boundary on $(L \circ \Gamma, L')$.
In particular, if the dimension is zero, and if we choose Floer data generically making the relevant moduli spaces regular, the moduli space is compact. \par

	Define the element $\lambda_{L} \in CW^{*}(L \circ \Gamma, L')$ by counting rigid elements in the moduli space $\mathcal{M}_{1}(x'; L'_{t})$, and summing over all such possible $x'$ (rigidity implies $\deg(x') = 0$), i.e.,
\begin{equation}
\lambda_{L} = \sum_{\deg(x') = 0} \sum_{u \in \mathcal{M}_{1}(x'; L'_{t})} o_{u},
\end{equation}
where $o_{u}: \mathbb{Z} \to o_{x'}$ is the canonical map to the orientation line of $x'$ determined by $u$.
In other words, the element $\lambda_{L} \in CW^{*}(L \circ \Gamma, L')$ is the continuation element associated to the exact Lagrangian isotopy $L'_{t}$, which has compact support. This proves: \par

\begin{lemma}
	For every $K \in \ob \mathcal{W}(U)$, the multiplications with $\lambda_{L}$ from the left
\begin{equation*}
m^{2}(\lambda_{L}, \cdot): CW^{*}(K, L \circ \Gamma) \to CW^{*}(K, L')
\end{equation*}
and from the right
\begin{equation*}
m^{2}(\cdot, \lambda_{L}): CW^{*}(L \circ \Gamma, K) \to CW^{*}(L', K)
\end{equation*}
are both quasi-isomorphisms.
\end{lemma}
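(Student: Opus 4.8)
The plan is to recognize $\lambda_{L}$ as a cocycle which represents an isomorphism between the objects $L \circ \Gamma$ and $L'$ in the cohomology category of $\mathcal{W}(U)$; left and right multiplication by such a class are then automatically invertible.

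First I would verify that $m^{1}(\lambda_{L}) = 0$. The domains of the maps counted by $\lambda_{L}$ form a single point (a half-disk with one negative strip-like end, carrying no modulus), so the only boundary degeneration of a one-dimensional moduli space $\bar{\mathcal{M}}_{1}(x'; L'_{t})$ with $\deg(x') = 1$ is the splitting off of an inhomogeneous Floer strip with boundary on $(L \circ \Gamma, L')$ at the negative end; enumerating these boundary points gives precisely the identity $m^{1}(\lambda_{L}) = 0$. No ``parametrized'' breaking occurs, since the moving boundary condition $L'_{\rho(z)}$ already interpolates all the way from $L \circ \Gamma$ to $L'$. Consequently $m^{2}(\lambda_{L}, \cdot)$ and $m^{2}(\cdot, \lambda_{L})$ are chain maps (up to sign) by the $A_{\infty}$-relations.

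Next I would produce a homotopy inverse. Let $\bar{L}'_{t} = L'_{1-t}$ be the reversed isotopy, again compactly supported, with $\bar{L}'_{0} = L'$ and $\bar{L}'_{1} = L \circ \Gamma$, and let $\bar{\lambda}_{L} \in CW^{0}(L', L \circ \Gamma)$ be the continuation element defined by the analogous moduli spaces $\mathcal{M}_{1}(x''; \bar{L}'_{t})$; as before $\bar{\lambda}_{L}$ is a cocycle. The key step is the gluing identity
\begin{equation*}
m^{2}(\bar{\lambda}_{L}, \lambda_{L}) = e_{L \circ \Gamma} + m^{1}(h_{1}), \qquad m^{2}(\lambda_{L}, \bar{\lambda}_{L}) = e_{L'} + m^{1}(h_{2}),
\end{equation*}
where $e_{L \circ \Gamma}$ and $e_{L'}$ are cocycles representing the cohomological units and $h_{1}, h_{2}$ are suitable chains. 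This follows from the standard argument for continuation elements: the concatenation of $L'_{t}$ with its reverse $\bar{L}'_{t}$ is homotopic rel endpoints to the constant isotopy at $L \circ \Gamma$, and a one-parameter family interpolating these homotopies of isotopies yields a parametrized moduli space of maps with moving boundary conditions whose codimension-one boundary relates $m^{2}(\bar{\lambda}_{L}, \lambda_{L})$ to the continuation element of the constant isotopy, which in turn is cohomologous to a chosen unit cocycle by the usual count of ``trivial'' solutions. All isotopies and Hamiltonians stay compactly supported throughout, so the cylindrical ends are never involved and the wrapped framework is preserved. (Alternatively, one may extend $L'_{t}$ to a compactly supported ambient Hamiltonian isotopy and invoke invariance of $\mathcal{W}(U)$ under such isotopies, with $\lambda_{L}$ the resulting isomorphism.)

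Finally, passing to cohomology and using associativity of $m^{2}$, for every $K \in \ob \mathcal{W}(U)$ the two composites of left multiplication by $[\lambda_{L}]$ and by $[\bar{\lambda}_{L}]$ are the identity maps on $H^{*}(CW^{*}(K, L \circ \Gamma))$ and on $H^{*}(CW^{*}(K, L'))$; hence $m^{2}(\lambda_{L}, \cdot)$ induces an isomorphism on cohomology, i.e. is a quasi-isomorphism. Exchanging the two tensor factors and using associativity on the other side gives the statement for $m^{2}(\cdot, \lambda_{L})$. I expect the principal difficulty to be the rigorous setup of the parametrized moduli spaces realizing the concatenation-versus-constant homotopy with moving Lagrangian boundary conditions in the quadratic-Hamiltonian model, together with the precise identification of the continuation element of the constant isotopy with a unit cocycle; this is where the bookkeeping with Floer data and consistency conditions is heaviest, though it runs entirely parallel to established arguments in wrapped Floer theory.
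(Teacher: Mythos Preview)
Your proof is correct and follows the same idea as the paper. The paper's argument is in fact a one-liner: immediately before stating the lemma it observes that $\lambda_{L}$ is, by construction, the continuation element associated to the compactly supported exact Lagrangian isotopy $L'_{t}$, and declares that this proves the lemma. What you have done is unpack that standard fact---construct the reverse continuation element $\bar{\lambda}_{L}$ from the reversed isotopy, relate $m^{2}(\bar{\lambda}_{L},\lambda_{L})$ and $m^{2}(\lambda_{L},\bar{\lambda}_{L})$ to units via a homotopy-of-isotopies parametrized moduli space, and conclude by associativity---which is precisely the argument the paper is implicitly invoking. Your identification of the main technical burden (the parametrized moduli spaces realizing the concatenation-versus-constant homotopy, and matching the constant-isotopy element with the unit) is accurate, and these are indeed routine in the wrapped setting because everything is compactly supported.
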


	Algebraically, Proposition \ref{factorization result} implies: \par

\begin{corollary}
	The geometric composition $L \circ \Gamma$ and the restriction $L'$ define isomorphic objects in the wrapped Fukaya category $\mathcal{W}(M)$.
\end{corollary}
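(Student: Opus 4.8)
The plan is to read this off directly from Proposition \ref{factorization result}, or more precisely from the element $\lambda_L \in CW^*(L \circ \Gamma, L')$ constructed in its proof. First I would check that $\lambda_L$ is a cocycle, i.e. $m^1(\lambda_L) = 0$. This is immediate from the description of the codimension-one boundary of the compactification $\bar{\mathcal{M}}_1(x'; L'_t)$: the strata are products of $\mathcal{M}_1(\,\cdot\,; L'_t)$ with Floer strips for the pair $(L\circ\Gamma, L\circ\Gamma)$ or for the pair $(L', L')$, exactly as for an ordinary continuation element associated to a compactly supported exact Lagrangian isotopy. Hence $[\lambda_L]$ is a well-defined degree-zero morphism $L \circ \Gamma \to L'$ in the cohomological category $H^0(\mathcal{W}(U))$.

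Next I would upgrade the Lemma preceding the Corollary — that $m^2(\lambda_L, \,\cdot\,)$ and $m^2(\,\cdot\,, \lambda_L)$ are quasi-isomorphisms for every $K \in \ob \mathcal{W}(U)$ — to the assertion that $[\lambda_L]$ is an isomorphism in $H^0(\mathcal{W}(U))$. This is a formal Yoneda-type argument: the functors $K \mapsto H^*\hom_{\mathcal{W}(U)}(K, \,\cdot\,)$ and $K \mapsto H^*\hom_{\mathcal{W}(U)}(\,\cdot\,, K)$ detect isomorphisms in the homotopy category of an $A_\infty$-category, and since pre- and post-composition with $[\lambda_L]$ induce isomorphisms on all of these groups, $[\lambda_L]$ must be invertible. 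Concretely, one obtains an inverse $\mu_L \in CW^*(L', L\circ\Gamma)$ by applying bijectivity of $m^2(\lambda_L, \,\cdot\,) : H^*(CW^*(L', L\circ\Gamma)) \to H^*(CW^*(L', L'))$ to the unit $e_{L'}$, and then verifying $[m^2(\mu_L, \lambda_L)] = e_{L\circ\Gamma}$ using cohomological associativity of $m^2$.

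Finally, I would invoke the standard fact that two objects isomorphic in $H^0$ of an $A_\infty$-category are quasi-isomorphic as objects: an invertible degree-zero cocycle extends to an $A_\infty$-quasi-isomorphism of the corresponding Yoneda modules, equivalently the Yoneda embedding is cohomologically full and faithful. This yields the stated isomorphism of $L\circ\Gamma$ and $L'$ as objects of the wrapped Fukaya category (the statement, as written, names $\mathcal{W}(M)$, but since both $L\circ\Gamma$ and $L'$ are Lagrangians in $U$ the intended category is $\mathcal{W}(U)$).

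The genuine geometric content is already contained in the Lemma on $\lambda_L$; everything after that is routine homological algebra. The only point requiring a little care is fixing a chain-level model for the wrapped Floer complexes of $L\circ\Gamma$ and of $L'$ in which $\lambda_L$, $\mu_L$, and the units $e_{L\circ\Gamma}, e_{L'}$ all live, so that the composition identities can be checked on the nose rather than merely up to quasi-isomorphism. This is the same bookkeeping already used to identify the $A_\infty$-structure maps of $\mathcal{W}(U)$ with their images under the embedding $j : \mathcal{W}(U) \to \mathcal{W}_{im}(U)$, so no new difficulty arises.
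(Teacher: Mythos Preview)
Your proposal is correct and follows the same route the paper intends: the paper gives no argument beyond the sentence ``Algebraically, Proposition~\ref{factorization result} implies,'' and what you have written is precisely the standard unpacking of that implication via the Yoneda lemma. You also correctly flag the typo $\mathcal{W}(M)$ for $\mathcal{W}(U)$.

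One small inaccuracy worth fixing: the codimension-one boundary of $\bar{\mathcal{M}}_{1}(x'; L'_{t})$ does not consist of Floer strips for $(L\circ\Gamma, L\circ\Gamma)$ or $(L', L')$. The domain $S$ has a \emph{single} negative puncture, and the boundary $\partial S$ is a single arc on which the moving condition interpolates from $L\circ\Gamma$ to $L'$; the only possible breaking is a Floer strip for the pair $(L\circ\Gamma, L')$ escaping at that puncture. This is exactly what the paper says (``adding inhomogeneous pseudoholomorphic strips with boundary on $(L\circ\Gamma, L')$''), and it is also what you need: it gives $m^{1}(\lambda_{L}) = 0$ directly, with no further terms.
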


\section{Extension of the Viterbo restriction functor}\label{section: extending the Viterbo functor}

	One limitation of the Viterbo restriction functor is that it is only defined on a full sub-category, whose objects must satisfy certain restrictive geometric condition - strong exactness. A natural question is how we can extend the restriction functor to the whole wrapped Fukaya category $\mathcal{W}(M)$. \par
	One answer, as a result of the discussion in the previous section is the restriction functor $\Theta_{\Gamma}$ associated to the Lagrangian correspondence $\Gamma$. It is defined for all objects $\mathcal{W}(M)$, but takes value in the immersed wrapped Fukaya category $\mathcal{W}_{im}(U)$. \par
	In this section, we shall provide another answer, by introducing a deformation of the wrapped Fukaya category $\mathcal{W}(U)$.

\subsection{Deformation of the wrapped Fukaya category}
	To seek a more geometric extension of the Viterbo restriction functor to the whole wrapped Fukaya category $\mathcal{W}(M)$, let us consider deformations of the wrapped Fukaya category of $U$. \par
	Suppose we are given a bounding cochain $b$ for every object $L' \in Ob \mathcal{W}(U)$. Then there are uniquely determined deformed structure maps
\begin{equation}
m^{k; b_{k}, \cdots, b_{0}}: CW^{*}(L'_{k-1}, L'_{k}) \otimes \cdots \otimes CW^{*}(L'_{0}, L'_{1}) \to CW^{*}(L'_{0}, L'_{k}),
\end{equation}
given by the formula
\begin{equation}\label{deformed a-infinity structure maps}
\begin{split}
m^{k; b_{k}, \cdots, b_{0}}(x_{k}, \cdots, x_{1}) = \sum_{j_{0}, \cdots, j_{k}} & m^{k + j_{0} + \cdots + j_{k}}(b_{k}, \cdots, b_{k}, x_{k}, b_{k-1}, \cdots, b_{k-1},\\
&\cdots, x_{2}, b_{1}, \cdots, b_{1}, x_{1}, b_{0}, \cdots, b_{0}).
\end{split}
\end{equation}
In terms of inhomogeneous pseudoholomorphic disks with boundary components on $L_{0}, \cdots, L_{k}$, these deformed structure maps are defined by inserting the Maurer-Cartan elements for the Lagrangian submanifolds to which the corresponding boundary components are mapped to. \par
	Following the algebraic framework in section \ref{section:algebraic preliminaries}, we introduce the following definition: \par

\begin{definition}
	A deformation of $\mathcal{W}(U)$ consists of a choice of Maurer-Cartan element $b$ for each object $L' \in \ob \mathcal{W}(U)$. The totality of such choices of Maurer-Cartan elements will be denoted by $B$. \par
	The deformed wrapped Fukaya category determined by the above choices of Maurer-Cartan elements has objects being pairs $(L', b)$, morphism spaces being the wrapped Floer cochain spaces
\begin{equation*}
\hom((L'_{0}, b_{0}), (L'_{1}, b_{1})) = CW^{*}((L'_{0}, b_{0}), (L'_{1}, b_{1})) := CW^{*}(L'_{0}, L'_{1}),
\end{equation*}
and structure maps which vary object-wise:
\begin{equation}
m^{k; b_{k}, \cdots, b_{0}} \text{ for objects } (L'_{0}, b_{0}), \cdots, (L'_{k}, b_{k}).
\end{equation}
We shall call this $A_{\infty}$-category the $B$-deformed wrapped Fukaya category, and denote it by $\mathcal{W}(U; B)$.
\end{definition}

\begin{lemma}
	The operations $m^{k; b_{k}, \cdots, b_{0}}$ satisfy an analogue of $A_{\infty}$-equation.
\begin{equation}
	\sum (-1)^{*_{i}} m^{k-j; b_{k}, \cdots, b_{i+j+1}, b', b_{i}, \cdots, b_{0}}(x_{k}, \cdots, x_{i+j+1}, m^{j; b_{i+j}, \cdots, b_{i+1}, b'}(x_{i+j}, \cdots, x_{i+1}), x_{i}, \cdots, x_{1}) = 0,
\end{equation}
where $*_{i} = \deg(x_{1}) + \cdots + \deg(x_{i}) - i$.
\end{lemma}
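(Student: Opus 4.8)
The plan is to derive this identity from the purely algebraic result already established in Section~\ref{section:algebraic preliminaries}. The wrapped Fukaya category $\mathcal{W}(U)$ carries the action filtration $F^{\ge C} CW^*(L'_0,L'_1)$, which is discrete (the action spectrum is discrete) and bounded above, and the chosen Maurer--Cartan elements $b \in B$ lie in $F^{>0}$. Thus $\mathcal{W}(U)$ together with $B$ satisfies the hypotheses of the lemma on deformed structure maps in Section~\ref{section: Maurer-Cartan elements}; in particular, the infinite sums in \eqref{deformed a-infinity structure maps} are finite by the convergence lemma there. So the asserted equation is an instance of that lemma, specialized to the uncurved $A_\infty$-category $\mathcal{W}(U)$, where the Maurer--Cartan equation for $b$ reads $\sum_{k\ge 1} m^k(b,\dots,b) = 0$. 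What remains is to recall the generating-function argument that proves it.

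First I would reformulate the operations $m^{k;b_k,\dots,b_0}$ in terms of the bar complex: the collection $\{m^{k;b_k,\dots,b_0}\}$ is precisely the structure obtained from $m = \{m^k\}$ by precomposing with insertion of the formal exponentials $e^{b_j} = 1 + b_j + b_j\otimes b_j + \cdots$ around each tensor slot, so that on generators $m^{k;b_k,\dots,b_0}(x_k,\dots,x_1)$ equals $\sum_N m^N$ evaluated on $e^{b_k}\otimes x_k \otimes e^{b_{k-1}} \otimes \cdots \otimes x_1 \otimes e^{b_0}$. The desired relation then follows by applying the bar differential $\widehat m$ twice to this element: since the $A_\infty$-relations for $\mathcal{W}(U)$ give $\widehat m^2 = 0$, and the Maurer--Cartan equation forces the internal contribution of $\widehat m$ on each factor $e^{b_j}$ to vanish, the only surviving terms are those in which an inner operation $m^j$ absorbs a consecutive block of the $x_i$'s (together with some interspersed copies of the $b$'s) and the result is fed into an outer operation $m^{k-j}$. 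Grouping these by the position of the inner vertex and by the numbers of inserted $b$'s reproduces the stated sum, with $b'$ being the Maurer--Cartan element attached to the Lagrangian at the internal vertex. The sign bookkeeping is routine: each inserted $b_j$ has degree one, hence reduced degree zero, so it contributes nothing to the Koszul sign exponents, and the sign accompanying the term with inner operation $m^j$ placed after $x_1,\dots,x_i$ is exactly $(-1)^{*_i}$ with $*_i = \deg(x_1) + \cdots + \deg(x_i) - i$, identical to the undeformed case.

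The hard part will be the treatment of the seam: one must check that the terms in which the inner $m^j$ is applied to a pure block of $b$'s coinciding with one of the factors $e^{b_j}$ cancel by virtue of the Maurer--Cartan equation, so that no curvature term $m^{0}$ is produced and $\mathcal{W}(U;B)$ stays an honest (uncurved) $A_\infty$-category. Equivalently, the content is that $m^{0;b}(1) = \sum_{k\ge1} m^k(b,\dots,b)$ vanishes, which is just the Maurer--Cartan condition on $b \in F^{>0}CW^*(L',L')$; granting this, the deformed operations satisfy the $A_\infty$-relations with no $m^0$ term, which is precisely the assertion.
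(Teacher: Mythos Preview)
Your proposal is correct and follows essentially the same approach as the paper: expand the deformed operations via their definition \eqref{deformed a-infinity structure maps} and observe that the resulting identity reduces to the original $A_\infty$-equations for $m^k$ together with the Maurer--Cartan equations for the $b_i$. The paper's own proof is a one-line sketch to precisely this effect, whereas you have phrased the same computation more formally via the bar complex and the exponentials $e^{b_j}$; your observation that the ``pure $b$'' blocks vanish by the Maurer--Cartan equation is exactly what the paper means when it says the expanded equations are ``equivalent to the Maurer--Cartan equations for all the elements $b_i$, plus the original $A_\infty$-equations''.
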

\begin{proof}
	Expanding the terms by \eqref{deformed a-infinity structure maps} in the above equation, we get a series of equations, which are equivalent to the Maurer-Cartan equations for all the elements $b_{i}$, plus the original $A_{\infty}$-equations for the operations $m^{k}$.
\end{proof}

\begin{remark}
	In fact, the deformation $\mathcal{W}(U; B)$ comes from the larger category $\mathcal{W}_{im}(U)$,
the immersed wrapped Fukaya category, whose objects are exact cylindrical Lagrangian immersions together with Maurer-Cartan elements.
This is the category of all 
\end{remark}

\subsection{The boundary of the restricted Lagrangian submanifold}\label{boundary of L'}
	Suppose that we have chosen a model for $\mathcal{W}(M)$ whose objects are completions of exact cylindrical Lagrangian submanifolds $L_{0}$ of $M_{0}$, such that the intersection $L'_{0} = L_{0} \cap U_{0}$ with $U_{0}$ is an exact cylindrical Lagrangian submanifold of $U_{0}$, which thus can be completed to an object $L'$ of $\mathcal{W}(U)$. \par
	In the case where such an $L_{0}$ satisfies the additional condition that its primitive is locally constant near $\partial U$, then the Viterbo restriction map is defined for it, as an $A_{\infty}$-algebra homomorphism
\begin{equation*}
R: CW^{*}(L) \to CW^{*}(L').
\end{equation*}
Thus, among collections of such Lagrangian submanifolds, which make up the sub-category $\mathcal{B}(M)$, the Viterbo restriction functor
\begin{equation*}
R: \mathcal{B}(M) \to \mathcal{W}(U)
\end{equation*}
is well-defined as an $A_{\infty}$-functor \par

	We would like to extend the Viterbo restriction functor to those Lagrangian submanifolds $L_{0}$ of $M_{0}$ which do not satisfy the strong exactness condition when restricting to the sub-domain $U_{0}$. Let $L_{0}$ be an exact Lagrangian submanifold of $M_{0}$, possibly with boundary, such that $L'_{0} = L_{0} \cap U_{0}$ is "nice": $L_{0}$ intersects transversely with the boundary $\partial U$, and $\partial L' = L_{0} \cap \partial U$ is a Legendrian submanifold. More specifically, we shall consider a cylindrical Lagrangian submanifold $L_{0}$ with primitive being locally constant near $\partial M$, but not locally constant near $\partial U$. Let us make the geometric assumption on such Lagrangian submanifolds more apparent. \par

\begin{assumption}
	$\partial L' \subset \partial U$ is a Legendrian submanifold with respect to the contact form $\alpha' = \lambda_{M}|_{\partial U}$, which is disconnected and is decomposed to connected components:
\begin{equation}
\partial L' = \coprod_{i=1}^{N}l'_{i},
\end{equation}
where each $l'_{i} \subset \partial U$ is a connected Legendrian submanifold. 
The primitive $f_{L}$ for $L$ is locally constant near $\partial L'$, but takes different values $c_{i}$ on $l'_{i}$.
\end{assumption}

	The assumption on the primitive implies the following existence result on Hamiltonian chords and inhomogeneous pseudoholomorphic disks. \par

\begin{lemma}\label{existence of disks outside the sub-domain limiting to Hamiltonian chords on the boundary}
	If $c_{i} - c_{j}$ is bigger than the minimal action of a Reeb chord in $\partial U$ from $l'_{i}$ to $l'_{j}$, then there exists a time-one $H$-chord from $L'$ to itself, which lies on $\partial U$ and connects $l'_{i}$ to $l'_{j}$. Moreover, for every such $H$-chord $x$, there can exist inhomogeneous pseudoholomorphic disk with one puncture, which lies outside $U_{0}$ with boundary on $L$, and asymptotically converges to $x$ at the puncture.
\end{lemma}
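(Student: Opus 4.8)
The plan is to prove the two assertions separately: the existence of the chord $x$ comes from elementary Reeb dynamics together with the action--energy identity, while the (potential) existence of the disk comes from an index count combined with a Stokes-type energy estimate, the real content being that the hypothesis $c_{i}-c_{j}>\mathfrak{a}_{\min}(l'_{i},l'_{j})$ removes the energy obstruction that would otherwise force such disks to be empty. For the first assertion, recall that in the cylindrical end $\partial U\times[1,+\infty)$ the Hamiltonian $H=H_{U}$ depends only on the radial coordinate, $H_{U}(y,r)=h(r)$ with $h$ convex and quadratic at infinity, so that on the level $\partial U\times\{r\}$ one has $X_{H_{U}}=h'(r)R_{\alpha'}$. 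Hence a time-one $H_{U}$-chord of $L'$ supported on a level parallel to $\partial U$ is exactly a Reeb chord of $\alpha'$ between two components of $\partial L'=\coprod_{i}l'_{i}$ of period $h'(r)$, and these are isolated by the non-degeneracy assumption. Exponentiating the Reeb chord $\gamma$ from $l'_{i}$ to $l'_{j}$ realizing $\mathfrak{a}_{\min}$ at the level $r_{\gamma}$ with $h'(r_{\gamma})=\mathfrak{a}_{\min}$ (which lies in $[1,+\infty)$ after the usual normalization of $h$) produces the desired chord $x$. A direct computation from the action--energy identity gives $\mathcal{A}(x)=h(r_{\gamma})-r_{\gamma}h'(r_{\gamma})+(c_{i}-c_{j})$; since $r\mapsto r h'(r)-h(r)$ is monotone and controlled by $\mathfrak{a}_{\min}$, the hypothesis forces $\mathcal{A}(x)>0$. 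This positivity --- rather than the mere existence of $\gamma$ --- is the real point, as it is what eventually places the associated class in the positive part $F^{>0}$ of the action filtration, as required of a Maurer--Cartan element.

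For the second assertion, the relevant moduli space consists of inhomogeneous pseudoholomorphic one-punctured disks $u\to M$ with $u(\partial S)\subset L$, image disjoint from $\mathrm{int}(U_{0})$, asymptotic at the puncture to the trivial strip over $x$; equivalently, after completing $\overline{M_{0}\setminus U_{0}}$ to the Liouville cobordism $W$, these are $J_{W}$-holomorphic half-disks in $W$ with boundary on $L^{c}$ and a single negative puncture asymptotic to $\gamma$, with no positive asymptotics --- exactly the building blocks of the curved term $\mathcal{F}^{0}$. The passage between the Floer and SFT pictures is the neck-stretching comparison of subsection~\ref{section: equivalence between linearized cobordism homomorphism and Viterbo restriction homomorphism}. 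A standard index computation gives virtual dimension $\deg(x)-1$, so when $\deg(x)=1$ the elements are rigid and, for a generic admissible almost complex structure (using that a simple such half-disk is somewhere injective along its boundary), the moduli space is a compact oriented $0$-manifold, so the count is well posed. The substantive input is the energy estimate: by Stokes' theorem the topological energy of any element equals $\mathcal{A}(x)$ (the boundary arc contributes $f_{L}$ at the two endpoints of $x$, which is already part of $\mathcal{A}(x)$, and the puncture contributes the action of $x$), which is positive by the first part, so there is no energy obstruction; in the strongly exact case (Assumption~\ref{strong exactness assumption}) every such chord would instead have non-positive action and these disks would be excluded, consistent with $\mathcal{F}$ being uncurved there. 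That such disks genuinely can occur --- i.e.\ that the count need not vanish --- is seen from a local analysis near a transverse minimal Reeb chord $\gamma$, where the positive number $(c_{i}-c_{j})-\mathfrak{a}_{\min}=\mathcal{A}(x)$ is precisely the energy available to glue the trivial half-strip over $\gamma$ onto the compact cobordism piece $L^{c}_{0}$.

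I expect the main obstacle to be transversality and compactness for these moduli spaces in the cobordism $W$, rather than the index or energy bookkeeping, which are routine applications of the index theorem and the action--energy identity recorded earlier. One must exclude bubbling of planes and half-disks and multi-level SFT breakings in one-parameter families, and must handle the possibility that a half-disk with boundary on $L^{c}$ fails to be somewhere injective because it wraps; the positive-energy hypothesis is used essentially to bound the limiting configurations and to make the gluing converge. These questions are, however, of the same nature as the transversality and compactness issues already treated for the moduli spaces $\mathcal{M}$ and $\mathcal{N}$ in Section~\ref{section: linearized Legendrian contact homology}, so no genuinely new analytic input is required --- the lemma is essentially an observation that, once the action computation of the first part is in hand, nothing prevents these disks from appearing, which is exactly why the non-strongly-exact case produces a nonzero $\mathcal{F}^{0}$.
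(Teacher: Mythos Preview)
The paper states this lemma without proof --- it is immediately followed by the next subsection heading, with no proof environment. Your argument is therefore not so much to be compared against the paper's proof as it is the natural elaboration of what the paper leaves implicit.

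Your two-part structure is exactly right. For the first assertion, the bijection between Hamiltonian chords in the cylindrical end and Reeb chords via $X_{H_U}=h'(r)R_{\alpha'}$ is the mechanism the paper uses throughout (e.g.\ in the proof of Lemma~\ref{the plus chain map is an isomorphism}), and your action computation correctly identifies that the hypothesis $c_i-c_j>\mathfrak{a}_{\min}$ forces $\mathcal{A}(x)>0$. This positivity is indeed the substantive point, not mere existence of the chord. For the second assertion, your reading of ``there can exist'' as removal of the energy obstruction is correct: the paper confirms this interpretation in Lemma~\ref{strong exactness implies non-existence of disks with a single output} (where the obstruction is present and kills the disks) and in Section~\ref{section: construction of the bounding cochain} (where these disks are exactly the curves counted to define $b_{lin}$ via the moduli spaces $\mathcal{N}_{1,m,l}$). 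Your identification of these disks with the curved term $\mathcal{F}^0$ and the passage to $W$ via neck-stretching is the content the paper develops later, so you have correctly anticipated the role of the lemma.

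One small caution: be careful with the sign in the primitive contribution. Depending on the convention for ``chord from $l'_i$ to $l'_j$'' (i.e.\ whether $x(0)\in l'_i$ or $x(1)\in l'_i$), the primitive term in $\mathcal{A}(x)$ is $\pm(c_i-c_j)$; the paper's own formula for $\mathcal{A}(x)$ has $f_{L_1}(x(1))-f_{L_0}(x(0))$, so the direction matters. This does not affect the validity of the argument, only the bookkeeping.
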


\subsection{The generalized Viterbo restriction functor}
	To extend the Viterbo restriction functor to the whole of $\mathcal{W}(M)$, the idea is to deform the $A_{\infty}$-structures of the wrapped Fukaya category of the sub-domain, so that the geometrically defined Viterbo map becomes an $A_{\infty}$-map after deformation of the $A_{\infty}$-structures. \par

	In the same way as the Viterbo restriction map is defined, we can define a sequence of multilinear maps
\[
R^{k}: CW^{*}(L)^{\otimes k} \to CW^{*}(L'),
\]
for $k \ge 1$.
The maps are defined by counting elements in the moduli spaces $\mathcal{P}_{k+1}(x', x_{1}, \cdots, x_{k})$ defined in \eqref{moduli space of continuation disks with moving boundary conditions}.
These maps in general do not satisfy the desired equations for $A_{\infty}$-homomorphisms. 
Instead, they form a curved $A_{\infty}$-homomorphism with the presence of an extra term $R^{0}$.
However, if we deform the $A_{\infty}$-structure on $CW^{*}(L')$ appropriately, we may deform $R$ to an $A_{\infty}$-homomorphism. \par
	
Roughly speaking, $R^{0}$ is constructed by counting pseudoholomorphic half-disks outside of the subdomain asymptotic to certain  chords with small positive action between connected components of $l'$, 
which correspond to small Reeb chords of small positive action of degree $1$.
This is discussed in section \ref{section: construction of the bounding cochain}. 
To define the term $R^{0}$ rigorously, we shall alternatively consider the sequence of maps
\[
\mathcal{F}^{k}: LC_{lin}^{*}(l, L; \alpha, J_{M})^{\otimes k} \to LC_{lin}^{*}(l', L'; \alpha', J_{U})
\]
defined in \eqref{linearized cobordism A-infinity homomorphism},
and construct a Maurer-Cartan element $b_{lin} \in LC_{lin}^{*}(l', L'; \alpha', J_{U})$ to extend the sequence by a zeroth order term $\mathcal{F}^{0}$.
Using the homotopy equivalence 
\[
\mathcal{S}: LC_{lin}^{*}(l', L'; \alpha', J_{U}) \to CW^{*}(L'; H_{U})
\]
defined in \eqref{A-infinity homomorphism from LC to CW}, we get a Maurer-Cartan element $b \in CW^{*}(L'; H_{U})$ by pushforward.

\begin{theorem} \label{generalized Viterbo functor}
	There is a canonical deformation $B$ of $\mathcal{W}(U)$, so that the Viterbo restriction functor extends to an $A_{\infty}$-functor
\begin{equation}
R_{B}: \mathcal{W}(M) \to \mathcal{W}(U; B).
\end{equation}
It agrees with the Viterbo restriction functor on the sub-category $\mathcal{B}(M)$.
\end{theorem}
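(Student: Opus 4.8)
The plan is to assemble the extended functor out of the curved linearized-Legendrian machinery of section \ref{section: linearized Legendrian contact homology} together with the algebraic deformation formalism of section \ref{section:algebraic preliminaries}. First I would work entirely on the linearized Legendrian side. The curved $A_\infty$-homomorphism $\mathcal{F}\colon (LC^{*}_{lin}(l,L;\alpha),\mu_{+}^{k}) \to (LC^{*}_{lin}(l',L';\alpha'),\mu_{-}^{k})$ was constructed in the non-strongly-exact case, and its zeroth-order term $\mathcal{F}^{0}(1) = b_{lin} \in LC^{*}_{lin}(l',L';\alpha')$ is, by Lemma \ref{pushforward satisfies Maurer-Cartan equation} applied to the pushforward of $0$, a Maurer-Cartan element for $\mu_{-}^{k}$. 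The first step is to verify that $b_{lin}$ lies in the positive part $F^{>0}$ of the action filtration on $LC^{*}_{lin}(l',L';\alpha')$: this follows from Lemma \ref{existence of disks outside the sub-domain limiting to Hamiltonian chords on the boundary}, since the relevant pseudoholomorphic disks limit to Reeb chords of small positive action between the components $l'_{i}$, so their contributions land in $F^{>0}$. One also needs discreteness and boundedness-above of this filtration, which hold because the chord/action spectrum is discrete and actions go to $-\infty$ in the cylindrical end, just as in the wrapped case.

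Next I would transport everything to the wrapped Floer side. By Proposition \ref{A-infinity homomorphism from LC to CW} there is an $A_\infty$-homotopy equivalence $\mathcal{S}\colon LC^{*}_{lin}(l',L';\alpha',J_{U}) \to CW^{*}(L';H_{U})$, which respects action filtrations up to a uniform shift; applying Definition \ref{push forward bounding cochains} one sets $b = \mathcal{S}_{*}b_{lin} \in CW^{*}(L';H_{U}) = \hom_{\mathcal{W}(U)}(L',L')$, and by Lemma \ref{pushforward satisfies Maurer-Cartan equation} this is a Maurer-Cartan element for the wrapped $A_\infty$-structure; it again lies in $F^{>0}$ because $\mathcal{S}$ increases the filtration and $b_{lin} \in F^{>0}$. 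Doing this for every exact cylindrical $L \subset M$ (with restriction $L'$) produces the collection $B$; for Lagrangians in $\mathcal{B}(M)$ Assumption \ref{strong exactness assumption} forces $b_{lin} = 0$, hence $b = 0$, so $\mathcal{W}(U;B)$ restricts over $\mathcal{B}(M)$ to the undeformed category and $B$ is "canonical" in the sense that it vanishes exactly where the original Viterbo functor was already defined.

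For the functor itself I would use Lemma \ref{deformed functor}. The linearized cobordism construction extends to multiple Lagrangian labels exactly as the Viterbo functor does in the multi-object discussion of section \ref{section: a model for multiplihedra} and the following subsections, yielding a curved $A_\infty$-functor $\mathcal{F}\colon \mathcal{LC}(M) \to \mathcal{LC}(U)$ between the linearized Legendrian $A_\infty$-categories that respects the action filtrations (with the zeroth-order terms given by the $b_{lin}$'s, hence in $F^{>0}$). The collection $\{b_{lin}\}$ is an unobstructed deformation whose pushforward under $\mathcal{F}$ is $\{b_{lin}\}$ again up to the curved auto-equivalences $\mathcal{T}_{B},\mathcal{S}_{B}$ of subsection \ref{section:algebraic preliminaries}; by Lemma \ref{deformed functor} one obtains an ordinary $A_\infty$-functor $\mathcal{F}_{B} = \mathcal{S}_{\mathcal{F}_{*}B}\circ \mathcal{F}\circ \mathcal{T}_{B}$ between the deformed linearized categories. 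Conjugating by the $A_\infty$-homotopy equivalences $\mathcal{S},\mathcal{T}$ (which carry the linearized deformation $\{b_{lin}\}$ to $B$, and are compatible with the deformation formalism since they respect filtrations) then yields $R_{B}\colon \mathcal{W}(M) \to \mathcal{W}(U;B)$. Finally, Theorem \ref{Viterbo restriction homomorphism homotopic to linearized cobordism homomorphism} identifies $\mathcal{F}$ on $\mathcal{B}(M)$ with $\mathcal{T}\circ R\circ\mathcal{S}$ up to $A_\infty$-homotopy, and since the deformation data vanish there, $R_{B}|_{\mathcal{B}(M)}$ is $A_\infty$-homotopic to $R$.

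I expect the main obstacle to be not the algebra but the geometry packaged into the claim that $\mathcal{F}$ extends compatibly to all Lagrangian labelings with a \emph{consistent} universal choice of Floer/perturbation data, so that the resulting curved $A_\infty$-functor genuinely respects the filtrations and so that the identification $\mathcal{F} \simeq \mathcal{T}\circ R\circ \mathcal{S}$ of Theorem \ref{Viterbo restriction homomorphism homotopic to linearized cobordism homomorphism} upgrades from a single $A_\infty$-algebra to the whole category. Concretely: one must choose the parametrized Morse/Hamiltonian data for the linearized cobordism maps, the multiplihedra-type data of subsection \ref{section: a model for multiplihedra}, and the homotopy-equivalence data for $\mathcal{S},\mathcal{T}$ all simultaneously and consistently over the compactified moduli spaces, then check that the higher gluings appearing in Proposition \ref{A-infinity homomorphism on linearized Legendrian complex} still produce exactly the boundary strata matching the deformed $A_\infty$-functor equation \eqref{curved A-infinity functor equation} after inserting the $b_{lin}$'s. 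Granting the transversality and consistency that are by now standard in this setting, everything else is a formal consequence of the lemmas in section \ref{section:algebraic preliminaries}.
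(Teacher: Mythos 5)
Your construction of the deformation $B$ is essentially the paper's: $b_{lin}=\mathcal{F}^{0}(1)$ is the pushforward of the zero Maurer--Cartan element under the curved linearized cobordism homomorphism, it lies in $F^{>0}$, and $b=\mathcal{S}_{*}b_{lin}$ satisfies the Maurer--Cartan equation by Lemma \ref{pushforward satisfies Maurer-Cartan equation} together with the fact that $\mathcal{S}$ is filtration non-decreasing and the filtration on $CW^{*}$ is bounded above. (Two small corrections: positivity of $b_{lin}$ comes from the energy--action inequality for proper disks with a single negative puncture, Lemma \ref{b_lin lies in positive filtration}, not from Lemma \ref{existence of disks outside the sub-domain limiting to Hamiltonian chords on the boundary}, which is an existence statement; and the source deformation in Lemma \ref{deformed functor} is $0$, not $\{b_{lin}\}$ --- the target deformation is $\mathcal{F}_{*}0=\{b_{lin}\}$.) The vanishing of $b$ on $\mathcal{B}(M)$ and the resulting agreement of the categories there are also as in the paper.

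Where you diverge is in how the functor itself is assembled, and there your route has a genuine gap. You propose to promote $\mathcal{F}$ to a curved $A_{\infty}$-functor between linearized Legendrian \emph{categories}, deform it by Lemma \ref{deformed functor}, and conjugate by categorical versions of $\mathcal{S}$ and $\mathcal{T}$. The paper never defines morphism complexes $LC^{*}_{lin}$ between distinct Legendrians with distinct fillings, nor categorical $\mathcal{S}$, $\mathcal{T}$, $\mathcal{F}$; the entire SFT package is developed only for a single $L$ and is used solely to produce the curvature term. The paper's proof instead takes the higher-order terms $R^{k}$, $k\ge 1$, to be the already-constructed geometric maps counting continuation disks with moving boundary conditions in $\mathcal{P}^{\rho}_{k+1}$ (whose multi-Lagrangian version is built in section 3), sets $R^{0}(1):=b$ so that $b=R_{*}0$, and applies Lemma \ref{deformed functor} with source deformation $0$ to obtain $R_{B}=\mathcal{S}_{B}\circ R$; since the source Maurer--Cartan elements vanish, $R^{k}_{B}=R^{k}$ for $k\ge 1$ and $R^{0}_{B}=0$, so agreement with the Viterbo functor on $\mathcal{B}(M)$ holds on the nose. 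In your version that agreement would only hold up to the $A_{\infty}$-homotopy of Theorem \ref{Viterbo restriction homomorphism homotopic to linearized cobordism homomorphism}, which as stated assumes strong exactness and compares algebra homomorphisms, not functors. If you wish to keep your route you must first construct the categorical linearized Legendrian theory (objects, morphism complexes, consistent Floer data over all the relevant compactified moduli spaces, and categorical $\mathcal{S}$, $\mathcal{T}$); with the paper's route none of that is required.
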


	We shall split the proof of Theorem \ref{generalized Viterbo functor} into several steps. The first one is the existence of a Maurer-Cartan element for $L' \subset U$ when we regard it as the restriction of some $L \subset M$. \par

\begin{proposition}
	Suppose that $L$ is an exact cylindrical Lagrangian submanifold of $M$, whose restriction to $U$ is an exact cylindrical Lagrangian submanifold $L'$.
Then there is a canonical Maurer-Cartan element $b \in CW^{*}(L'; H_{U})$ determined by the $L$ and $L'$.
\end{proposition}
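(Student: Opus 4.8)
The plan is to construct the Maurer-Cartan element $b$ as the pushforward of a Maurer-Cartan element $b_{lin}$ living on the linearized Legendrian complex, via the $A_{\infty}$-homotopy equivalence $\mathcal{S}$ of Proposition \ref{A-infinity homomorphism from LC to CW}. First I would recall the curved $A_{\infty}$-homomorphism $\mathcal{F}: (LC^{*}_{lin}(l, L; \alpha), \mu_{+}^{k}) \to (LC^{*}_{lin}(l', L'; \alpha'), \mu_{-}^{k})$ from the non-strongly-exact case (subsection ``Non-strongly-exact case''), whose zeroth order term $\mathcal{F}^{0}: \mathbb{K} \to LC^{*}_{lin}(l', L'; \alpha')$ is defined by counting pseudoholomorphic curves in the cobordism $W$ with boundary on $L^{c}$, one distinguished negative strip-like end asymptotic to a Reeb chord between connected components $l'_{i}, l'_{j}$ of $l'$ of small positive action (which exists by Lemma \ref{existence of disks outside the sub-domain limiting to Hamiltonian chords on the boundary}, using that the primitive $f_L$ takes distinct values $c_i$ on the components), together with extra negative ends capped by half-disks and planes in $U$, and no asymptotics at the positive end. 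I would then set $b_{lin} = \mathcal{F}^{0}(1) \in LC^{*}_{lin}(l', L'; \alpha')$.

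The next step is to verify that $b_{lin}$ is a Maurer-Cartan element for the $A_{\infty}$-algebra $(LC^{*}_{lin}(l', L'; \alpha'), \mu_{-}^{k})$, i.e. that $\sum_{k \ge 0} \mu_{-}^{k}(b_{lin}, \cdots, b_{lin}) = 0$. This is exactly the curved $A_{\infty}$-homomorphism equation \eqref{curved A-infinity functor equation} for $\mathcal{F}$ evaluated with all inputs trivial: it says $\mu_{-}^{0; b_{lin}}(1) = \mathcal{F}_{*}(\mu_{+}^{0}(1)) = \mathcal{F}_{*}(0)$, but since $L$ itself satisfies the strong exactness condition at the $\partial M$ end (by our standing assumption that $f_L$ is locally constant near $\partial M$ and actually constant on its components), the source $A_{\infty}$-algebra has vanishing curvature $\mu_{+}^{0} = 0$, so Lemma \ref{pushforward satisfies Maurer-Cartan equation} applies and $b_{lin}$ solves the Maurer-Cartan equation. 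I would also note that $b_{lin}$ lies in the positive part of the action filtration: the extra chord asymptotics have small positive action and the curve has nonnegative energy, so by the action-energy identity the output $\gamma_-^0$ has strictly positive action; hence $b_{lin} \in F^{>0}LC^{1}_{lin}(l', L'; \alpha')$, which is what is needed for convergence in all the deformation formulas.

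Then I would push $b_{lin}$ forward under $\mathcal{S}$: by Lemma \ref{pushforward satisfies Maurer-Cartan equation}, $b = \mathcal{S}_{*} b_{lin} = \sum_{k \ge 0} \mathcal{S}^{k}(b_{lin}, \cdots, b_{lin}) \in CW^{*}(L'; H_U)$ is a Maurer-Cartan element for the wrapped Floer $A_{\infty}$-algebra, and it converges since $b_{lin} \in F^{>0}$ and $\mathcal{S}$ respects the action filtrations (this last point requires a small check that the interpolating moduli spaces defining $\mathcal{S}$ increase action, which follows from the same energy estimate as in Lemma \ref{the plus chain map is an isomorphism}). Canonicity follows because $\mathcal{F}^{0}$ is determined by $L$ and the geometric data, and the pushforward is characterized uniquely by $e^{\mathcal{S}_{*}b_{lin}} = \hat{\mathcal{S}}(e^{b_{lin}})$; different choices of $\mathcal{S}$ are related by $A_{\infty}$-homotopy, under which pushforwards of Maurer-Cartan elements are gauge-equivalent, so $b$ is canonical up to gauge equivalence, which is the relevant notion of uniqueness for deformations.

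The main obstacle I anticipate is the analytic setup behind $\mathcal{F}^{0}$ and the verification of the curved $A_{\infty}$-equation: one must control the compactification of the moduli spaces of these capped half-disks in $W$ with only negative ends, ensuring the only codimension-one degenerations are those contributing to $\sum_k \mu_-^k(b_{lin}, \cdots, b_{lin})$ and that no unexpected breaking off of positive-end components occurs (there are no inputs at the positive end, so the relevant degenerations are purely among the various negative asymptotics and Morse-Bott splittings near $\partial U$). This is essentially the $k=0$ case of the analysis in the proof of Proposition \ref{A-infinity homomorphism on linearized Legendrian complex}, but the absence of the $\mathbb{R}$-symmetry and the presence of the distinguished positive-action chord require care; granting the transversality and gluing results already invoked for Propositions \ref{A-infinity algebra on linearized Legendrian complex} and \ref{A-infinity homomorphism on linearized Legendrian complex}, the rest is formal.
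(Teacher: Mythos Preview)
Your proposal is correct and follows essentially the same route as the paper: define $b_{lin} = \mathcal{F}^{0}(1)$ via capped pseudoholomorphic disks in the cobordism $W$ with one distinguished negative end, observe that $b_{lin} = \mathcal{F}_{*}0$ is the pushforward of the zero Maurer-Cartan element under the curved $A_{\infty}$-homomorphism $\mathcal{F}$ (hence satisfies the Maurer-Cartan equation and lies in $F^{>0}$), and then push forward along the filtration-preserving $A_{\infty}$-homotopy equivalence $\mathcal{S}$ to obtain $b = \mathcal{S}_{*}b_{lin} \in CW^{*}(L'; H_{U})$, with canonicity up to gauge equivalence. The only minor inaccuracy is your justification for $\mu_{+}^{0} = 0$: the $A_{\infty}$-algebra on $LC^{*}_{lin}(l, L; \alpha)$ is uncurved by its construction in Proposition~\ref{A-infinity algebra on linearized Legendrian complex}, not because of any strong exactness at $\partial M$; but this does not affect the argument.
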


This is Proposition \ref{bounding cochain is independent of choices}, proved in section \ref{section: construction of the bounding cochain}.
Algebraically, by its definition in \eqref{definition of the zeroth term of the Viterbo map} this Maurer-Cartan element is the pushforward of $0$ by the curved $A_{\infty}$-homomorphism $R$,
\begin{equation}
R_{*} 0 = R^{0}(1) = b.
\end{equation}
Since $0$ is a Maurer-Cartan element for $CW^{*}(L; H_{M})$, it follows from Lemma \ref{pushforward satisfies Maurer-Cartan equation} that $b$ is a Maurer-Cartan element for $CW^{*}(L'; H_{U})$.

	Using the Maurer-Cartan element $b = R_{*} 0$, we now deform the curved $A_{\infty}$-functor $R$ by the forumulas \eqref{deformation of the curved A-infinity functor} and \eqref{zeroth term of the deformed curved functor} to an ordinary $A_{\infty}$-functor,
\begin{equation}
R_{b} = \mathcal{S}_{b} \circ R \circ \mathcal{T}_{0} = \mathcal{S}_{b} \circ R.
\end{equation}
Since the Maurer-Cartan element from the source is $0$, the deformed maps $R^{k}_{b}$ are the same as $R^{k}$, except that the zeroth term,
\[
R^{0}_{b} = 0.
\]

\begin{lemma}\label{the generalized Viterbo map satisfy A-infinity relations}
	The above multi-linear maps form an $A_{\infty}$-homomorphism from $CW^{*}(L)$ to the $b$-deformed $A_{\infty}$-algebra $CW^{*}(L', b)$:
\begin{equation}\label{generalized Viterbo map}
R_{b}: CW^{*}(L) \to CW^{*}(L', b).
\end{equation}
\end{lemma}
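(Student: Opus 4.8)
The plan is to deduce this lemma from the algebraic machinery of section \ref{section:algebraic preliminaries}, once we have verified that $R = \{R^k\}_{k \ge 0}$, with $R^0(1) = b$, is a \emph{curved} $A_\infty$-homomorphism from $CW^*(L; H_M)$ (with vanishing curvature) to $CW^*(L'; H_U)$, i.e.\ that the $R^k$ satisfy the curved $A_\infty$-functor equations \eqref{curved A-infinity functor equation}. Geometrically this is established, exactly as in the strongly exact case, by examining the codimension-one boundary strata of the one-dimensional moduli spaces $\bar{\mathcal{P}}_{k+1}(x', x_1, \ldots, x_k)$ attached to \eqref{moduli space of continuation disks with moving boundary conditions}. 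The strata of type \eqref{boundary strata of the moduli space of continuation disks with moving boundary conditions} — a pseudoholomorphic disk among the $L_j \subset M$ breaking off at a positive input, and disks among the $L'_j \subset U$ breaking off at the negative output — contribute all the terms in \eqref{curved A-infinity functor equation} built from $R^k$ and $m^k$ for $k \ge 1$. The new feature in the non-strongly-exact situation is an additional degeneration in which a half-disk outside the subdomain, with boundary on $L$ and no input punctures, bubbles off at the negative end asymptotic to one of the short positive-action chords between components of $l'$ produced by Lemma \ref{existence of disks outside the sub-domain limiting to Hamiltonian chords on the boundary}; the count of such configurations is precisely $R^0(1) = b$, and these strata contribute exactly the $R^0$ terms of \eqref{curved A-infinity functor equation}. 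One must also rule out any further uncontrolled configurations, which is an action--energy estimate parallel to the one used under Assumption \ref{strong exactness assumption}.

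Granting this, the second step is to observe that $b = R^0(1)$ lies in $F^{>0} CW^1(L'; H_U)$: the chords in its support lie on $\partial U$ connecting components $l'_i, l'_j$ of $l'$, with action $c_i - c_j$ minus the length of the corresponding Reeb chord, which by the choice of primitives is positive and can be made arbitrarily small. Since the action filtration on $CW^*(L'; H_U)$ is discrete and bounded above, $b$ is a genuine Maurer--Cartan element in the sense of subsection \ref{section: Maurer-Cartan elements}, and by Lemma \ref{pushforward satisfies Maurer-Cartan equation}, applied with the zero Maurer--Cartan element on the source, it satisfies the Maurer--Cartan equation. We are then exactly in the hypotheses of Lemma \ref{deformed functor}, applied to $\mathcal{A} = CW^*(L; H_M)$ with the trivial deformation $B_0 = \{0\}$, $\mathcal{B} = CW^*(L'; H_U)$, and the curved $A_\infty$-functor $\mathcal{F} = R$: the deformed functor $R_b = \mathcal{S}_b \circ R \circ \mathcal{T}_0$ is an honest $A_\infty$-functor into $\mathcal{B}(\mathcal{F}_* B_0) = CW^*(L', b)$. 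Because the source deformation is $0$, the functor $\mathcal{T}_0$ has $\mathcal{T}_0^0(1) = 0$, $\mathcal{T}_0^1 = \id$ and higher terms zero, so $R_b = \mathcal{S}_b \circ R$, giving $R^k_b = R^k$ for $k \ge 1$ and $R^0_b(1) = R^0(1) - b = 0$ by \eqref{zeroth term of the deformed curved functor}. This is exactly the asserted $A_\infty$-homomorphism \eqref{generalized Viterbo map}.

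The main obstacle is the first step: pinning down the complete list of codimension-one degenerations of $\bar{\mathcal{P}}_{k+1}(x', x_1, \ldots, x_k)$ in the absence of strong exactness, and showing that the only new phenomenon beyond \eqref{boundary strata of the moduli space of continuation disks with moving boundary conditions} is the bubbling of an $R^0$-type half-disk at the negative end. This rests on the action estimates underlying the construction of $b$ in section \ref{section: construction of the bounding cochain}, which confine the relevant chords outside $U_0$ to small positive action and thereby bound the energy of a bubbling half-disk, preventing any cascade of further disks. Once this geometric input is secured, the passage to the deformed $A_\infty$-homomorphism is purely formal, amounting to the rearrangement of the curved $A_\infty$-functor equations carried out in subsections \ref{section: Maurer-Cartan elements}--\ref{section: pushforward} and packaged in Lemma \ref{deformed functor}.
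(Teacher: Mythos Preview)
Your core argument---that once $R=\{R^k\}_{k\ge 0}$ with $R^0(1)=b$ is a curved $A_\infty$-homomorphism respecting the filtrations, Lemma \ref{deformed functor} applied with the trivial deformation on the source yields the honest $A_\infty$-homomorphism $R_b=\mathcal S_b\circ R\circ\mathcal T_0=\mathcal S_b\circ R$ with $R_b^k=R^k$ for $k\ge 1$ and $R_b^0=0$---is exactly the paper's proof, which is the single sentence invoking $b=R_*0$ and Lemma \ref{deformed functor}.

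Where you diverge is in the prerequisite you spend most of your effort on: you propose to verify the curved $A_\infty$-functor equations for $R$ directly, by identifying the extra codimension-one strata of $\bar{\mathcal P}_{k+1}(x';x_1,\dots,x_k)$ as half-disks outside $U_0$ bubbling at the negative end. The paper does not do this. It explicitly avoids defining $R^0$ and verifying the curved equations in the Hamiltonian Floer picture, because the maximum principle fails for curves escaping into the negative cylindrical end of the cobordism $W$ (see the opening of section \ref{section: construction of the bounding cochain}). Instead it passes to the linearized Legendrian complex: the curved $A_\infty$-homomorphism is $\mathcal F$ of \eqref{curved version of linearized cobordism homomorphism}, the zeroth term $b_{lin}=\mathcal F^0(1)$ is the weighted count of capped cobordism disks in $W$, and $b=\mathcal S_* b_{lin}$ is then \emph{declared} to be $R^0(1)$ by \eqref{definition of the zeroth term of the Viterbo map}. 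Your direct approach is morally the right picture (and the paper sketches it informally in subsection 6.4), but making it rigorous requires precisely the SFT compactness and capping arguments that section \ref{section: construction of the bounding cochain} supplies; the ``obstacle'' you flag is real and is the reason the paper takes the detour. For the lemma at hand, though, all of this is upstream of the statement: the paper treats the curved property of $R$ as already established, and the proof is purely the formal deformation step you carry out in your second paragraph.
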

\begin{proof}
	Since $b = R_{*} 0$, Lemma \ref{deformed functor} implies the maps $R^{k; b, \cdots, b}$ define an $A_{\infty}$-homomorphism, with $R^{0; b} = 0$.
\end{proof}

\begin{lemma}\label{strong exactness implies non-existence of disks with a single output}
	Suppose that $L$ and $L'$ satisfy strong exactness condition, Assumption \ref{strong exactness assumption}. That is, the primitive for $L$ is locally constant near $\partial U$. 
Then the above bounding cochain $b$ is zero. Moreover, the $A_{\infty}$-map $R$ defined in the previous lemma agrees with the extension of the Viterbo restriction map $R$.
\end{lemma}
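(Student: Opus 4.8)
The plan is to treat the two assertions separately. The first, that $b=0$, I would reduce to the vanishing of the curvature term $\mathcal{F}^{0}$ of the linearized cobordism homomorphism. Recall that by construction $b=\mathcal{S}_{*}b_{lin}$ with $b_{lin}=\mathcal{F}^{0}(1)\in LC^{*}_{lin}(l',L';\alpha',J_{U})$, and that $\mathcal{S}$ is an honest (uncurved) $A_{\infty}$-homomorphism by Proposition \ref{A-infinity homomorphism from LC to CW}, so $\mathcal{S}^{0}=0$ and hence $\mathcal{S}_{*}0=\mathcal{S}^{0}(1)=0$. Thus it suffices to see that $b_{lin}=0$, i.e.\ $\mathcal{F}^{0}=0$, and under Assumption \ref{strong exactness assumption} this is precisely the content of Proposition \ref{A-infinity homomorphism on linearized Legendrian complex}, that $\mathcal{F}$ is an honest $A_{\infty}$-homomorphism. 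For completeness I would also spell out the geometric reason: $\mathcal{F}^{0}$ is defined by counting elements of the moduli spaces $\mathcal{N}_{1,m,l}(\gamma'^{0}_{-};\dots)$ of $J_{W}$-holomorphic curves in $W$ with boundary on $L^{c}$, a single negative puncture at a Reeb chord $\gamma'^{0}_{-}$ of $l'$ and no positive punctures; strong exactness forces the primitive of $L^{c}$ to take a common value on all components of $l'$, so the numerical condition in Lemma \ref{existence of disks outside the sub-domain limiting to Hamiltonian chords on the boundary} is never met — a chord between two components would need the difference of the two primitive values to exceed its (positive) length, and a chord inside one component has positive length and no primitive slack at all — and in each case the action-energy identity forces negative energy, so all of these moduli spaces, as well as the broken configurations appearing in their compactifications, are empty. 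Hence $\mathcal{F}^{0}=0$, $b_{lin}=0$, and $b=R_{*}0=R^{0}(1)=0$.

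For the second assertion, once $b=0$ the $B$-deformed target algebra $CW^{*}(L',b)$ is just the undeformed $CW^{*}(L')$, and, as recorded in the discussion preceding Lemma \ref{the generalized Viterbo map satisfy A-infinity relations}, the deformed maps satisfy $R^{k}_{b}=R^{k}$ for all $k\ge 1$ while $R^{0}_{b}=0$. So $R_{b}$ is the sequence $\{R^{k}\}_{k\ge 1}$. But these $R^{k}$ are, by their very definition in Section \ref{section: extending the Viterbo functor}, the multilinear maps obtained by counting rigid elements of the moduli spaces $\mathcal{P}_{k+1}(x',x_{1},\dots,x_{k})$ of inhomogeneous pseudoholomorphic disks with moving Lagrangian boundary conditions from \eqref{moduli space of continuation disks with moving boundary conditions}, together with the limit $\rho\to 0^{+}$ over a sequence adapted to the (discrete) action spectrum — which is exactly the construction of the Viterbo restriction functor carried out in Section 3. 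It then remains only to note that for these maps the $A_{\infty}$-homomorphism equations hold without any curvature correction, and that is precisely the point already made in Section 3: under Assumption \ref{strong exactness assumption} the codimension-one boundary strata of the one-dimensional spaces $\bar{\mathcal{P}}^{\rho}_{k+1}(x',x_{1},\dots,x_{k})$ reduce to the two standard types of \eqref{boundary strata of the moduli space of continuation disks with moving boundary conditions}, the extra configurations that would produce $R^{0}$ being ruled out by the same energy estimate as above. Therefore $R_{b}$ coincides, on the sub-category $\mathcal{B}(M)$, with the Viterbo restriction functor $R$; this is of course also consistent with Theorem \ref{Viterbo restriction homomorphism homotopic to linearized cobordism homomorphism}.

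The one genuinely delicate step is the energy bookkeeping in the first paragraph: one must check that the exclusion of disks contributing to $\mathcal{F}^{0}$ (equivalently $R^{0}$) under strong exactness persists for all broken Morse-Bott curves and pseudoholomorphic buildings that can occur in the relevant compactifications, and that it is uniform in the auxiliary data — the contact form, the almost complex structures, and the rescaling parameter $\rho$. Granting that, both conclusions follow formally from the definitions and from the construction in Section 3.
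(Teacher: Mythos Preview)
Your proof is correct and follows essentially the same approach as the paper: the key geometric observation in both is that under strong exactness the primitive takes a common value on all components of $l'$, so there are no Reeb chords of positive action between components and hence no disks contributing to $b_{lin}$ (and thus $b=0$). You are more explicit than the paper in tracing the algebraic chain $b=\mathcal{S}_{*}b_{lin}$ and in addressing the second assertion (which the paper's short proof leaves implicit), but the substance is the same.
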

\begin{proof}
	Since the primitive for $L'$ is locally constant near $\partial U$, the values on different connected components $l'_{i}$ of $\partial L'$ are equal. 
Hence there are no Reeb chords from one component to another with small positive action, 
and consequently no inhomogeneous pseudoholomorphic disks outside the sub-domain with these prescribed asymptotic convergence conditions. Thus, $b = 0$. \par
\end{proof}

\subsection{The role of the Maurer-Cartan element}
	To get a geometric idea of what the element $b$ is, we shall study the degeneration of a one-dimensional family of inhomogeneous pseudoholomorphic maps in the moduli spaces $\mathcal{P}^{\rho}(x', x)$,
which are used to define the first order term $R^{1}$ of the Viterbo restriction functor.
Suppose the virtual dimension of $\mathcal{P}^{\rho}(x', x)$ is one.
Without Assumption \ref{strong exactness assumption}, the description of the boundary strata \eqref{boundary of moduli space of degenerate continuation maps} is no longer true.
In fact, there are other configurations, which can be seen as follows. \par
	Consider a sequence of parametrized inhomogeneous pseudoholomorphic maps
\[
u_{i} \in \mathcal{P}^{\rho}(x', x)
\]
satisfying the equation \eqref{degenerate continuation strip}.
In addition to the broken maps in  \eqref{boundary of moduli space of degenerate continuation maps}  which arise as possible limits of subsequences of $u_{i}$,
we shall find that there are other types of limits. 
One new type of limit is as follows. 
All the maps $u_{i}$ have isomorphic domains $\mathbb{R} \times [0, 1]$, but choosing a representative in each equivalence class of maps requires a choice of identification of the domain with the standard $\mathbb{R} \times [0, 1]$, up to a translation by $\mathbb{R}$.
There could be a possibility where the function $\chi_{\rho}$ \eqref{rescaling cutoff function} is different for each $u_{i}$,
such that $\chi_{\rho, i}(s) = \rho$ for $s \le T_{i}$ and $\chi_{\rho, i}(s) = 1$ for $s \ge T_{i}+1$, such that $T_{i} \to +\infty$ as $i \to \infty$.
In the strongly exact case, i.e. under Assumption \ref{strong exactness assumption}, 
the only possible sub-sequential limit of such a sequence is a broken inhomogeneous pseudoholomorphic map consisting of two components, 
which belong to the product moduli space $\mathcal{M}^{\rho}(x', x'_{new}) \times \mathcal{P}^{\rho}(x'_{new}, x)$.
Without the strong exactness condition, there could be other possible broken maps as sub-sequential limits.
One possibility is a broken inhomogeneous pseudoholomorphic map with $l+2$ components, 
with one component being a parametrized inhomogeneous pseudoholomorphic map $u_{\infty}: \mathbb{R} \times [0, 1] \to M$,
and one component being a inhomogeneous pseudoholomorphic disk $v^{\rho}$ with respect to the Floer data defined by rescalings of $(H^{\rho}, J^{\rho})$ with $l+1$ boundary punctures,
and the other $l-1$ components being parametrized inhomogeneous pseudoholomorphic maps from a disk with one boundary puncture equipped with a negative end,
\begin{equation}\label{disk with a single output}
u_{0, i}: Y_{-} \to M, \hspace{0.5cm} i = 1, \cdots, l-1,
\end{equation}
where $Y_{-}$ is conformally equivalent to a disk with a boundary puncture equipped with a negative strip-like end. 
The asymptotic conditions of $u_{0, i}$ and $u_{\infty}$ over the negative ends agree with asymptotic conditions of $v^{\rho}$ over the positive ends.
When we glue such a broken pseudoholomorphic map, we glue the negative ends of $u_{0, i}$ and $u_{\infty}$ to positive ends of $v^{\rho}$ simultaneously.
The result is a one-dimensional family of parametrized inhomogeneous pseudoholomorphic maps in $\mathcal{P}^{\rho}(x', x)$. \par
	By the relation between action of chords and energy of an inhomogeneous pseudoholomorphic map, 
the asymptotic condition of $u_{0, i}$ \eqref{disk with a single output} is some chord for $H^{\rho}$ from $L^{\rho}$ to itself that comes from a chord in $U$ from $L'$ to itself, which has positive action.
These cannot be small perturbations of constant chords inside a compact set in $U_{0}$, and must be chords on $\partial U = \partial U \times \{\rho\}$ between different connected components of $l' = \partial L'$.
These maps do not exist under Assumption \ref{strong exactness assumption} by Lemma \ref{strong exactness implies non-existence of disks with a single output}. \par
	Because of these extra broken maps, the Viterbo restriction map $R^{1}$ is not a chain map, and the maps $R^{k}$ do not form an $A_{\infty}$-functor.
The Maurer-Cartan element will be introduced to deform these maps to obtain an $A_{\infty}$-functor.

\subsection{An example}

	Let us demonstrate the above theory by computing an example. 
The example is given in \cite{Abouzaid-Seidel} to explain that the Viterbo restriction functor is not well-defined for an exact Lagrangian submanifold which does not satisfy the strong exactness condition when restricting to the sub-domain. 
Consider $M_{0} = DT^{*} S^{1}$, the unit cotangent bundle of $S^{1}$, which is identified with an annulus in $\mathbb{C} = \mathbb{R}^{2}$. Let $U_{0}$ be a neighborhood of some Hamiltonian perturbation of the zero section.
Note that $U_{0}$ itself is isomorphic to $DT^{*} S^{1}$, so we may equip it with the standard Liouville form on the cotangent bundle to make it a Liouville domain.
However, this is not compatible with the standard Liouville form on $M_{0}$, but one may add $df$ to the Liouville form on $M_{0}$ for some appropriate function $f: M_{0} \to \mathbb{R}$ such that $U_{0}$ becomes a Liouville sub-domain.
Let $L_{0} \subset M_{0}$ be a cotangent fiber, such that $L'_{0} = L_{0} \cap U_{0}$ is a disjoint union of three cotangent fibers. Denote $L' = L'_{1} \coprod L'_{2} \coprod L'_{3}$.
We then have
\begin{equation*}
CW^{*}(L) \cong R = \mathbb{K}<\cdots, x^{-2}, x^{-1}, x^{0}=1, x^{1}, x^{2}, \cdots > = \mathbb{K}[x, x^{-1}], 
\end{equation*}
with $A_{\infty}$-operations $m^{1} = 0$, $m^{2}$ being the usual multiplication of Laurent polynomials, 
\[
m^{2}(x^{k}, x^{l}) = x^{k+l},
\]
and $m^{k} = 0$ for all $k \ge 3$.
Similarly, we can compute
\begin{equation*}
CW^{*}(L') = CW^{*}(L'_{1} \coprod L'_{2} \coprod L'_{3}) = \bigoplus_{i, j=1}^{3} CW^{*}(L'_{i}, L'_{j}) \cong M(3, R),
\end{equation*}
the $(3 \times 3)$-matrix algebra over $R$. 
As explained in \cite{Abouzaid-Seidel}, there cannot be a non-zero unital ring homomorphism from $CW^{*}(L)$ to $CW^{*}(L')$. \par

	Let us give somewhat detailed description of the matrix multiplication structure.
Since the components of $L'$ are isomorphic to cotangent fibers, and each intersects the zero-section of $U_{0} = DT^{*}S^{1}$ at a basepoint, 
we may label the three components by $L'_{1}, L'_{2}, L'_{3}$ so that their basepoints are in a counterclockwise order on the section-section.
If we remember that $L'$ comes from the restriction of $L$, there are two arcs outside $U_{0}$, each connects two components of $L'$.
We may assume that $L'_{1}$ is connected to $L'_{2}$ in $M_{0}$ by an arc $C_{12}$, and $L'_{2}$ is connected to $L'_{3}$ by an arc $C_{23}$.
Moreover, the endpoints of $C_{12}$ lie on the negative boundary component of $U_{0} = DT^{*}S^{1}$, and the endpoints of $C_{23}$ lie on the positive boundary component.
	
The generators for the direct summand $CW^{*}(L'_{i})$ are $x_{i}^{k}, k \in \mathbb{Z}$,
where $x_{i}$ is the shortest non-constant Hamiltonian chord from $L'_{i}$ to itself of positive Maslov index, and $x_{i}^{k}$ its $k$-th iterate.
When $k$ is negative, this means the $(-k)$-th iterate of the shortest non-constant Hamiltonian chord $x_{i}^{-1}$ from $L'_{i}$ to itself with negative Maslov index, where $x_{i} x_{i}^{-1} = x_{i}^{-1} x_{i} = 1$.
The chords $x_{i}$ and $x_{i}^{-1}$ are in the cylindrical ends of different boundary components of $U_{0} = DT^{*}S^{1}$.
Let us fix an absolute grading so that $\deg(x_{i}) = 0$ and $\deg(x_{i}^{-1}) = 0$. \par

Let $y_{ij}$ be the shortest chord from $L'_{i}$ to $L'_{j}$ in the same direction as $x_{i}$.
In the other cylindrical end, let $z_{ij}$ the be shortest chord from $L'_{i}$ to $L'_{j}$ in the same direction as $x_{i}^{-1}$.
We may concatenate $y_{ij}$ with the $k$-th iterate of $x_{j}$ to obtain chords from $L'_{i}$ to $L'_{j}$,
which is the same as the concatenation of the $k$-th iterate of $x_{i}$ with $y_{ij}$,
\begin{equation}\label{left and right module structure on different cotangent fibers}
y_{ij} x_{i}^{k} = x_{j}^{k} y_{ij}, \hspace{0.5cm} k \ge 0.
\end{equation}
Here we write concatenation of paths from the right to the left, similarly to products.
The wrapped Floer cochain space $CW^{*}(L'_{i}, L'_{j})$ is generated as a $\mathbb{K}$-module by these chords,
\begin{equation}
CW^{*}(L'_{i}, L'_{j}) =  \mathbb{K}<\cdots, z_{ij} x_{i}^{-2}, z_{ij} x_{i}^{-1}, z_{ij}, y_{ij}, y_{ij} x_{i}, y_{ij} x_{i}^{2}, \cdots>, \\
\end{equation}
with action of $CW^{*}(L'_{i})$ and $CW^{*}(L'_{j})$ given by \eqref{left and right module structure on different cotangent fibers}.
For triangle products, we have
\begin{equation}
m^{2}(y_{ji}, y_{ij}) = x_{i}, \hspace{0.5cm} m^{2}(z_{ji}, z_{ij}) = x_{i}^{-1}, \hspace{0.5cm} m^{2}(z_{ji}, y_{ij}) = m^{2}(y_{ij}, z_{ij}) = 1.
\end{equation}
Because of these multiplication identities, we  have the following relation between $y_{ij}$ and $z_{ij}$:
\begin{equation}
y_{ij} = m^{2}(z_{ij}, x_{i}) = m^{2}(x_{j}, z_{ij}), \hspace{0.5cm} z_{ij} = m^{2}(y_{ij}, x_{i}^{-1}) = m^{2}(x_{j}^{-1}, y_{ij}).
\end{equation}

The Maslov indices for the chords $y_{ij}$ and $z_{ji}$ must satisfy the following relations
\[
\deg(y_{ij}) = - \deg(y_{ji}) = - \deg(z_{ij}).
\]
For our assumption on the orderings of the components $L'_{1}, L'_{2}, L'_{3}$ and the intersection patterns of the Hamiltonian perturbed copies of them,
it is natural to choose $\deg(y_{23}) = 1$ and $\deg(y_{21}) = -1$.
 
	We would like to deform the $A_{\infty}$-structure on $CW^{*}(L')$ by some Maurer-Cartan element $b$,
 such that there is a well-defined $A_{\infty}$-homomorphism $CW^{*}(L) \to CW^{*}(L', b)$.
In this case, the Maurer-Cartan element is
\begin{equation}
b = y_{23} + z_{21}
\end{equation}
The $b$-deformed differential is
\begin{equation}
m^{1; b}(x) = m^{2}(b, x) + m^{2}(x, b).
\end{equation}
And the product structure is
\begin{equation}
m^{2; b}(x, y) = m^{2}(x, y).
\end{equation}
There are no other terms since $m^{1} = 0$ and $m^{k} = 0$ for $k \ge 3$.
Keeping in mind that the original product $m^{2}$ should be understood to be the matrix multiplication,
it is a straightforward calculation to show that the complex $CW^{*}(L')$ with respect to the $b$-deformed differential and product is quasi-isomorphic to the ring of Laurent polynomials.
For example, some of the terms are
\begin{align}
m^{1; b}(x_{1}) & = m^{2}(x_{1}, z_{21}) = y_{21},\\
m^{1; b}(x_{2}) & = m^{2}(y_{23}, x_{2}) + m^{2}(z_{21}, x_{2}) = y_{23} x_{2} + y_{21}, \\
m^{1; b}(x_{3}) & = m^{2}(x_{3}, y_{23}) = x_{3} y_{23} = y_{23} x_{2}, \\
m^{1; b}(y_{12}) & = m^{2}(y_{23}, y_{12}) + m^{2}(z_{21}, y_{12})  = y_{13} + 1_{1}, \\
m^{1; b}(y_{13}) & = m^{2}(y_{13}, z_{21}) = y_{23}, \\
m^{1; b}(y_{21}) & = 0, \\
m^{1; b}(y_{23}) & = 0, \\
m^{1; b}(y_{31}) & = m^{2}(y_{31}, y_{23}) = y_{32} \\
m^{1; b}(y_{32}) & = m^{2}(z_{21}, y_{32}) = y_{31}.
\end{align}
It is not difficult to observe that the element $u = x_{1} + x_{3} - x_{2}$ is closed but not exact.
Similarly, the element $v = x_{1}^{-1} + x_{3}^{-1} - x_{2}^{-1}$ is closed but not exact. 
And their product $m^{2}(u, v) = 1_{1} + 1_{2} + 1_{3}$ is the identity matrix.
	
\section{Constructing the Maurer-Cartan element}\label{section: construction of the bounding cochain}

\subsection{Strategy of the construction}
	In this section, we shall introduced certain moduli spaces of disks that are used to define the Maurer-Cartan element $b$, 
which appears when restricting an exact cylindrical Lagrangian submanifold $L_{0}$ of $M_{0}$ to a Liouville sub-domain $U_{0}$. The basic geometric setup follows that in section \ref{section: linearized Legendrian contact homology}. \par
	As before, let $x$ be a time-one $H$-chord from one connected component of $\partial L'$ to another, on which the primitive $f_{L'}$ takes different values. 
Because $H$ depends only on the radial coordinate near $\partial U$, such a Hamiltonian chord $x$ necessarily lies on $\partial U$. The construction of the Maurer-Cartan element in the wrapped Floer cochain space is by appropriately counting pseudoholomorphic disks in $W$ asymptotic to $x$. 
However, there is some technical difficulty: the maximum principle does not apply to pseudoholomorphic curves which escape to negative cylindrical end of $W$. \par
	The basic strategy is to use techniques from symplectic field theory.
Instead of trying to define this element in the setup of Hamiltonian Floer theory, we shall construct it in the linearized Legendrian complex, 
and send it to the wrapped Floer cochain complex by the natural $A_{\infty}$-homotopy equivalence $\mathcal{S}$ \eqref{A-infinity homomorphism from the linearized Legendrian complex to the wrapped Floer complex}. \par
	In our setup, we shall always work under Assumption \ref{regularity assumption on almost complex structures},
which says that all the moduli spaces of capping holomorphic planes $\mathcal{M}(\sigma'_{j})$ and the moduli spaces of capping half-disks $\mathcal{M}(\gamma'_{i})$ in $U$ are regular and rigid in dimension zero. 
Consider moduli spaces $\mathcal{N}_{1}(\gamma')$ of proper pseudoholomorphic maps from a once-punctured disk to $M$, with a negative end near the puncture asymptotic to $\gamma'$ at $-\infty$,
and more generally moduli spaces
\[
\mathcal{N}_{1, m, l}(\gamma; \gamma'_{1}, \cdots, \gamma'_{m}; \sigma'_{1}, \cdots, \sigma'_{l})_{I}
\]
of pseudoholomorphic maps with additional boundary and interior punctures which are capped in $U$.
Similar moduli spaces have appeared in \eqref{capped cobordism curves of type I}, but now we consider the case $k=0$. \par
	Counting rigid elements in this moduli space in an appropriate way gives us a number
\begin{equation*}
n_{\gamma'} \in \mathbb{K},
\end{equation*}
We then estimate the action of any possible $\gamma'$ to show that the sum
\begin{equation}
b_{lin} = \sum_{\substack{\gamma' \\ \deg(\gamma') = 1}} n_{\gamma'} \gamma'
\end{equation}
is a finite sum, and therefore gives rise to a well-defined element in the linearized Legendrian complex $LC_{*}^{lin}(l', L'; \alpha', J)$. 
Moreover, since the energy of any pseudoholomorphic curve is positive, from the relation between energy and action we get that
\begin{equation}
b_{lin} \in F^{>0} LC_{*}^{lin}(l', L'; \alpha', J).
\end{equation} \par

\begin{remark}
	In general, we could define the count using virtual techniques from e.g. \cite{FOOO1}, \cite{FOOO2}, \cite{Pardon1}, \cite{Pardon2}, \cite{Hofer-Wysocki-Zehnder1},  \cite{Hofer-Wysocki-Zehnder2}, \cite{Hofer-Wysocki-Zehnder3},
by constructing virtual fundamental chains on these moduli spaces. 
\end{remark}

	Finally, we shall push forward the element $b_{lin} \in LC_{*}^{lin}(l', L'; \alpha', J)$ to $b \in CW^{*}(L'; H_{U})$ using the $A_{\infty}$-homomorphism \eqref{A-infinity homomorphism from the linearized Legendrian complex to the wrapped Floer complex}. 
We show that the push-forward is a well-defined element in the wrapped Floer cochain complex using a priori estimate on energy of pseudoholomorphic curves used in the definition of the $A_{\infty}$-homomorphism \eqref{A-infinity homomorphism from the linearized Legendrian complex to the wrapped Floer complex}. \par

\begin{proposition}\label{bounding cochain is independent of choices}
	There is a canonical element $b \in CW^{*}(L'; H_{U})$, which lives in the positive part of the filtration on the wrapped Floer complex.
It satisfies the Maurer-Cartan equation:
\begin{equation}
\sum_{k=1}^{\infty} m^{k}(b, \cdots, b) = 0.
\end{equation}
\end{proposition}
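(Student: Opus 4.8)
The plan is to construct $b$ in two stages: first a Maurer--Cartan element $b_{lin}$ in the linearized Legendrian complex $LC^{*}_{lin}(l', L'; \alpha', J_{U})$ produced by a symplectic field theory count in the cobordism $W$, and then its image $b = \mathcal{S}_{*} b_{lin}$ under the $A_{\infty}$-homotopy equivalence $\mathcal{S}$ of Proposition \ref{A-infinity homomorphism from LC to CW}. To build $b_{lin}$, I would count rigid elements of the moduli spaces $\mathcal{N}_{1, m, l}(\gamma'; \gamma'_{1}, \cdots, \gamma'_{m}; \sigma'_{1}, \cdots, \sigma'_{l})_{I}$ of proper $J_{W}$-holomorphic disks in $W$ with boundary on $L^{c}$, having one distinguished negative puncture asymptotic to $\gamma'$ and additional negative boundary and interior punctures capped by half-planes and planes in $(U, L')$ --- the $k = 0$ analogues of the curves in \eqref{capped cobordism curves of type I} --- weight them by the numbers $e(\gamma'_{i})$, $e(\sigma'_{i})/\kappa(\sigma'_{i})$, $e(\{\gamma'_{i}\}_{i \in I})$ and sum over types, exactly as in the definition of the higher terms $\mathcal{F}^{k}$. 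Since the Morse--Bott level sets of the construction occur only at the positive end of $W$, there are no Morse--Bott curves in the $k = 0$ case, so the description simplifies. Setting $b_{lin} = \sum_{\deg(\gamma') = 1} n_{\gamma'} \gamma'$, the first task is finiteness: positivity of the energy together with the action--energy relation \eqref{action-energy identity} forces $\mathcal{A}(\gamma')$ to be bounded above by a quantity governed by the primitive discrepancies $c_{i} - c_{j}$ on the components of $l'$ and the actions of the capped asymptotics, while the radial location of $\gamma'$ on $\partial U$ bounds it below; discreteness of the Reeb chord spectrum then leaves only finitely many contributing $\gamma'$. The same positivity gives $b_{lin} \in F^{> 0} LC^{*}_{lin}(l', L'; \alpha', J_{U})$.

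Next I would verify that $b_{lin}$ satisfies the Maurer--Cartan equation $\sum_{k} \mu_{-}^{k}(b_{lin}, \cdots, b_{lin}) = 0$. The cleanest route is to observe that $b_{lin}$ is precisely the zeroth-order term $\mathcal{F}^{0}(1)$ of the curved $A_{\infty}$-homomorphism $\mathcal{F} \colon (LC^{*}_{lin}(l, L; \alpha), \mu_{+}^{k}) \to (LC^{*}_{lin}(l', L'; \alpha'), \mu_{-}^{k})$ in the non-strongly-exact case: the curves above are exactly the degenerations with no positive puncture that enter the compactifications $\bar{\mathcal{N}}$. One checks that the collection $\{\mathcal{F}^{k}\}_{k \ge 0}$ satisfies the curved $A_{\infty}$-homomorphism equations by the same boundary analysis of the one-dimensional moduli spaces as in the proof of Proposition \ref{A-infinity homomorphism on linearized Legendrian complex}, now allowing components with no positive puncture; the only new feature is that a breaking can produce a $k = 0$ curve, contributing an $\mathcal{F}^{0}$-insertion. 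Since $\mathcal{F}$ counts curves of positive energy it increases the contact action filtration and has $\mathcal{F}^{0}(1) \in F^{>0}$, so it respects the filtrations; as $0$ is a Maurer--Cartan element of the source $A_{\infty}$-algebra, Lemma \ref{pushforward satisfies Maurer-Cartan equation} yields that $\mathcal{F}_{*} 0 = \mathcal{F}^{0}(1) = b_{lin}$ is a Maurer--Cartan element of the target.

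Finally, I would push $b_{lin}$ forward along the filtration-respecting $A_{\infty}$-homotopy equivalence $\mathcal{S}$ \eqref{A-infinity homomorphism from the linearized Legendrian complex to the wrapped Floer complex}, which is defined by counting inhomogeneous pseudoholomorphic strips of positive energy and hence increases the action filtration. Thus $b = \mathcal{S}_{*} b_{lin} = \sum_{k} \mathcal{S}^{k}(b_{lin}, \cdots, b_{lin})$ is a finite sum lying in $F^{>0} CW^{*}(L'; H_{U})$, and by Lemma \ref{pushforward satisfies Maurer-Cartan equation} again it satisfies $\sum_{k \ge 1} m^{k}(b, \cdots, b) = 0$. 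For canonicity one runs the usual parametrized cobordism argument: any two admissible choices of almost complex structures (subject to Assumption \ref{regularity assumption on almost complex structures}), Morse data, and auxiliary perturbations are joined by a path, the associated parametrized moduli spaces produce an $A_{\infty}$-homotopy of curved homomorphisms, and under such a homotopy the pushforward of $0$ changes by a gauge equivalence of Maurer--Cartan elements; composing with the continuation equivalence on $CW^{*}$ shows $b$ is well-defined up to gauge equivalence, which is all the canonicity needed for the deformation data $B$ of Theorem \ref{generalized Viterbo functor}.

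The main obstacle is analytic rather than algebraic: to set up the moduli spaces $\mathcal{N}_{1, m, l}(\gamma'; \cdots)_{I}$ and their one-parameter families so that the zero-dimensional ones are compact oriented manifolds and the one-dimensional ones have the expected boundary, within the concrete (non-virtual) framework of Section \ref{section: linearized Legendrian contact homology}. This requires ruling out multiply covered capping planes and half-disks of negative index --- the role of Assumption \ref{regularity assumption on almost complex structures} --- and controlling the SFT breakings along Reeb orbits and chords of $\partial U$; and it requires the action estimate of the first paragraph to be quantitative enough that only finitely many Reeb chords $\gamma'$ contribute, which in turn needs the primitive of $L^{c}$ to take only finitely many values on the components of $l'$ and these values to be compared with the discrete action spectrum near $\partial U$.
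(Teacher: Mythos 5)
Your proposal follows essentially the same route as the paper: define $b_{lin}$ by the weighted count of rigid curves in the moduli spaces $\mathcal{N}_{1,m,l}(\gamma';\cdots)_{I}$, establish finiteness and positivity of filtration from the action--energy bound, identify $b_{lin}$ with $\mathcal{F}^{0}(1) = \mathcal{F}_{*}0$ so that Lemma \ref{pushforward satisfies Maurer-Cartan equation} gives the Maurer--Cartan equation, and then push forward along the filtration-non-decreasing homotopy equivalence $\mathcal{S}$, with canonicity up to gauge equivalence. This is the paper's argument in the same order and with the same key lemmas, so no further comparison is needed.
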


	The proof will be given after we define $c$ in the next several subsections. \par

\subsection{Disks outside the sub-domain}
	Let $x'$ be a time-one Hamiltonian chord from $L$ to itself, which is located on the boundary of $L'_{0}$, connecting one connected component $l'_{i}$ to another $l'_{j}$, such that the values of the primitive $f_{L'}$ satisfies $c_{j} > c_{i}$. 
Recall that there is a natural one-to-one correspondence between the set of non-constant time-one Hamiltonian chords from $L'$ to itself which are contained in the cylindrical end $\partial U \times [1, +\infty)$, and the set of Reeb chords from $l' = \partial L'$ to itself. 
Let $\gamma' = \gamma'_{x'}$ be the Reeb chord corresponding to $x'$.
Instead of counting inhomogeneous pseudoholomorphic disks asymptotic to $x$, we shall attempt to count proper pseudoholomorphic curves in $W$ asymptotic to $\gamma$ at $-\infty$. \par
	Let $S$ be the following domain in the complex plane:
\begin{equation}
S = (-\infty, 0] \times [0, 1] \cup \{z \in \mathbb{C}: Re(z) \ge 0, |z - \frac{i}{2}| \le \frac{1}{2}\},
\end{equation}
which is conformally equivalent to a disk with one boundary puncture equipped with a negative strip-like end. 
There is a unique complex structure on $S$ up to biholomorphism, and the automorphism group of this complex structure is $PAff(2, \mathbb{R}) \subset PSL(2, \mathbb{R})$, the subgroup of automorphisms of the upper-half plane fixing $\infty$. \par
	Let $J_{W}$ be a compatible almost complex structure on $W$, which is of contact type in the cylindrical ends $E_{-}$ and $E_{+}$ of $W$. Consider a $J_{W}$-holomorphic map
\begin{equation}
u: S \to W
\end{equation}
with boundary condition $u(\partial S) \in L^{c}$,
and asymptotic to $\gamma'$ at $-\infty$.
Note the map $u: S \to W$ has only one asymptotic Reeb chord at $-\infty$, so we may assume without loss of generality that $u^{-1}(E_{+}) = \varnothing$. 
In the negative cylindrical end $E_{-}$, $u$ is of the form
\begin{equation*}
u|_{u^{-1}(E_{-})} = (a_{-}, f_{-}),
\end{equation*}
where $a_{-}: u^{-1}(E_{-}) \to \mathbb{R}_{-}$ and $f_{-}: u^{-1}(E_{-}) \to V_{-}$.
In addition, $u$ is required to have finite Hofer energy, $E(u) < + \infty$.
Recall that the Hofer energy of $u$ is defined as the sum:
\begin{equation}
E(u) = E_{\omega}(u) + E_{\alpha}(u)
\end{equation}
where
\begin{equation}
E_{\omega}(u) = \int_{u^{-1}(W_{0})} u^{*}d\lambda_{W_{0}} 
+ \int_{u^{-1}(E_{-})} f_{-}^{*}d\alpha',
\end{equation}
\begin{equation}
E_{\alpha}(u) = 
\sup_{\phi_{-} \in \mathcal{C}_{-}} \phi_{-} da_{-} \wedge f_{-}^{*}\alpha'.
\end{equation}
Here $\mathcal{C}_{-} = \{\phi \in C^{\infty}(\mathbb{R}_{-}, [0, 1]): \int_{\mathbb{R}_{-}} \phi(x)dx = 1\}$. 
\par
	One important property in establishing compactness results in symplectic field theory is that the Hofer energy $E(u)$ can be uniformly bounded by the action of the asymptotic Reeb chord $\gamma$. In general we also need the homology class of $u$ to bound the Hofer energy, but that is also bounded by the action of $\gamma$ as the symplectic form is globally exact. This is proved in \cite{BEHWZ} in the case of pseudoholomorphic maps from close Riemann surfaces with punctures, but it is straightforward to generalize the result to maps from bordered Riemann surfaces with boundary conditions given by exact Lagrangian cobordisms. \par

\begin{lemma}\label{a priori energy bound}
	There exists a positive constant $C$ depending only on the Liouville form $\lambda_{W_{0}}$ and the almost complex structure $J_{W}$, such that
\begin{equation}
E(u) \le C \mathcal{A}(\gamma).
\end{equation}
\end{lemma}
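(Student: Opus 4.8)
The plan is to establish the a priori energy bound by relating the Hofer energy of $u$ to the symplectic action of the single asymptotic Reeb chord $\gamma'$, using exactness of the symplectic form on $W$ together with the fact that $L^c$ is an \emph{exact} Lagrangian cobordism. First I would recall the standard fact, due to \cite{BEHWZ} (see also the bordered version needed here), that for any admissible almost complex structure $J_W$ of contact type in the cylindrical ends, the two pieces $E_\omega(u)$ and $E_\alpha(u)$ are controlled separately. The key input is that the restriction of $u^*d\lambda_{W_0}$ is pointwise nonnegative (since $J_W$ is $\omega$-compatible), so $E_\omega(u) = \int_S u^*\omega \geq 0$, and by Stokes' theorem this integral equals a boundary term plus an asymptotic term. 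Because $L^c$ carries a primitive $f_{L^c}$ for $\lambda_W|_{L^c}$, the contribution of $\partial S$ to $\int_S u^*d\lambda_W$ telescopes into a difference of values of the primitive, which is uniformly bounded (the primitive is locally constant near the Legendrian ends), and the asymptotic contribution at the negative puncture is exactly $-\int \gamma'^*\alpha' = -\ell(\gamma')$ up to sign conventions, i.e. governed by $\mathcal{A}(\gamma')$.

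Next I would handle $E_\alpha(u)$. Since $u^{-1}(E_+) = \varnothing$, the only cylindrical-end contribution is over the negative end $E_- = V_- \times \mathbb{R}_-$, where $u = (a_-, f_-)$ and the almost complex structure is translation-invariant. The standard computation in symplectic field theory shows that for $\phi_- \in \mathcal{C}_-$, the integral $\int \phi_-\, da_- \wedge f_-^*\alpha'$ is bounded by the total $d\alpha'$-area swept, which by the asymptotic behavior at $\gamma'$ and the monotonicity of $a_-$ along the cylindrical end is again bounded by $\mathcal{A}(\gamma')$ times a constant depending only on the contact form and $J_W$. Combining the two bounds gives $E(u) = E_\omega(u) + E_\alpha(u) \leq C\,\mathcal{A}(\gamma')$ with $C$ depending only on $\lambda_{W_0}$ and $J_W$, as claimed. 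The fact that $W$ has a positive cylindrical end but $u$ avoids it entirely is what makes this clean; otherwise one would need a corresponding term for $E_+$.

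The main obstacle, I expect, is the boundary term: one must verify carefully that the primitive $f_{L^c}$ of the exact Lagrangian cobordism $L^c$ is genuinely uniformly bounded on the boundary arc $u(\partial S)$, even though $L^c$ is noncompact and its primitive is only locally constant (not globally constant) near the negative Legendrian end $l'$. Here the relevant point is that $f_{L^c}$ takes finitely many values $c_1, \dots, c_N$ on the components $l'_i$ of the negative Legendrian end and is bounded on the compact cobordism piece $L^c_0$; since $\partial S$ is connected and $u$ asymptotically converges to the single chord $\gamma'$ between two such components, the telescoped difference is a difference of two such finitely-many values plus a term controlled by oscillation over the compact part. A secondary technical point is to rule out the contribution of $d(u^*H)$-type terms — but since $u$ is honestly $J_W$-holomorphic (no Hamiltonian perturbation in this SFT setup), no such terms appear, and $E_\omega(u)$ is literally $\int_S u^*\omega$, simplifying matters. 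I would present the argument by first reducing to the closed-string estimate of \cite{BEHWZ} applied to the double (or by a direct Stokes argument on $S$), then quoting the translation-invariant estimate for $E_\alpha$ over $E_-$, and finally assembling the constant.
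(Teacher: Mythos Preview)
Your proposal is correct and follows essentially the same approach the paper indicates. The paper does not give a proof of this lemma at all: it simply remarks (in the paragraph immediately preceding the statement) that the bound is proved in \cite{BEHWZ} for punctured closed Riemann surfaces and that the extension to bordered surfaces with exact Lagrangian cobordism boundary conditions is straightforward. Your sketch is precisely that straightforward extension---Stokes' theorem on $S$ using the primitive $f_{L^c}$ to control the boundary contribution, plus the standard translation-invariant estimate for $E_\alpha$ over the negative end---so you are supplying the details the paper omits rather than taking a different route.

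One small remark: you phrase the boundary-primitive contribution as producing a constant that depends on the values $c_1,\dots,c_N$ of $f_{L^c}$ on the components of $l'$. In the paper's conventions the action $\mathcal{A}(\gamma')$ already incorporates the difference of primitive values at the endpoints of $\gamma'$ (this is why ``there is no absolute value on the right hand side,'' as the paper emphasizes immediately after the lemma), so those terms are absorbed into $\mathcal{A}(\gamma')$ rather than into $C$. With that bookkeeping adjustment, $C$ genuinely depends only on $\lambda_{W_0}$ and $J_W$ as stated.
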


	It is important to note that there is no absolute value on the right hand size of the above inequality.
This implies that only when $\mathcal{A}(\gamma') > 0$, there can be a non-trivial pseudoholomorphic disk which asymptotically converges to $\gamma'$ at $-\infty$.
On the other hand, for any Reeb chord $\gamma'$ between connected components of $l'$ of positive action, any pseudoholomorphic disk with one negative puncture asymptotic to $\gamma'$ is non-constant, and therefore stable. \par

	Let  $\mathcal{N}_{1}(\gamma')$ be the moduli space of the maps $u:S \to W$ asymptotic to $\gamma$ at $-\infty$ discussed above. 
This is a special case of the more general moduli space of pseudoholomorphic curves in $W$ capped in $U$,
\begin{equation}
\mathcal{N}_{1, m, l}(\gamma; \gamma'_{1}, \cdots, \gamma'_{m}; \sigma'_{1}, \cdots, \sigma'_{l})_{I}
\end{equation}
as in \eqref{capped cobordism curves of type I}, but without positive punctures.

\subsection{Moduli space of pseudoholomorphic buildings}
	
To extract invariants from this moduli space, we need a suitable compactification. It is well-known in symplectic field theory, \cite{Eliashberg-Givental-Hofer}, \cite{BEHWZ}, such compactification is the moduli space of pseudoholomorphic buildings.  \par
	Consider a symplectic building
\begin{equation}
W^{(n)} = \underbrace{\partial U \times (-\infty, +\infty) \coprod \cdots \coprod \partial U \times (-\infty, +\infty)}_{n \text{ times}} \coprod W,
\end{equation}
where $W^{(0)} = W$. The $i$-th copy of $\partial U \times (-\infty, +\infty)$ is called the $i$-th sub-level, and $W$ is called the base-level. And set
\begin{equation}
W^{\infty} = \cup_{n=0}^{\infty} W^{n}.
\end{equation} \par

\begin{definition}
	A pseudoholomorphic building $(\vec{\Sigma}, \vec{u})$ of sub-level $n$ is a pseudoholomorphic curve in $W^{n}$, which can be written as a $(n+1)$-tuple 
\begin{equation*}
((\Sigma_{-n}, u_{-n}), \cdots, (\Sigma_{-}, u_{-1}), (\Sigma_{0}, u_{0})),
\end{equation*}
where $u_{-i}: \Sigma_{-i} \to \partial U \times (-\infty, +\infty)$ is a pseudoholomorphic curve in the $i$-th level $\partial U \times (-\infty, +\infty)$, and $u_{0}$ is a connected pseudoholomorphic curve in $W$ with only negative ends and no positive ends. 
Moreover, the limiting data at the positive ends of $u_{-i}$ match with those at the negative ends of $u_{-i+1}$.
\end{definition}

	We explain in more detail the matching condition of the limiting data. First, this means that the positive ends of $\Sigma_{-i}$ must be in one-to-one correspondence with the negative ends of $\Sigma_{-i+1}$. Suppose that $\Sigma_{-i}$ has $p_{-i}^{\pm}$ positive/negative strip-like ends and $q_{-i}^{\pm}$ positive/negative cylindrical ends. Thus $p_{-i}^{+} = p_{-i+1}^{-}, q_{-i}^{+} = q_{-i+1}^{-}$. Denote by
\begin{equation*}
\epsilon_{-i; j}^{s, +}: [0, +\infty) \times [0, 1] \to \Sigma_{-i}
\end{equation*}
the $j$-th positive strip-like end of $\Sigma_{-i}$, and
\begin{equation*}
\epsilon_{-i+1; j}^{s, -}: (-\infty, 0] \times [0, 1] \to \Sigma_{-i+1},
\end{equation*}
the corresponding negative strip-like end of $\Sigma_{-i+1}$. Similarly,
\begin{equation*}
\epsilon_{-i; j}^{c, +}: [0, +\infty) \times S^{1} \to \Sigma_{-i}
\end{equation*}
and
\begin{equation*}
\epsilon_{-i+1; j}^{c, -}: (-\infty, 0] \times S^{1} \to \Sigma_{-i}
\end{equation*}
the cylindrical ends. Second, the asymptotic convergence conditions of $u_{-i}$ and $u_{-i+1}$ over the corresponding ends agree. That is, suppose $u_{-i}$ asymptotically converges at $+\infty$ over the strip-like end $\epsilon_{-i; j}^{s, +}$ to a Reeb chord $\gamma_{-i; j}^{+}$ located on $\partial U \times \{+\infty\}$ in the $i$-th sub-level, and $u_{-i+1}$ asymptotically converges at $-\infty$ over the strip-like end $\epsilon_{-i+1; j}^{s, -}$ to a Reeb chord $\gamma_{-i+1; j}^{-}$ located on $\partial U \times \{-\infty\}$ in the $(i-1)$-th sub-level. We require that $\gamma_{-i; j}^{+} = \gamma_{-i+1; j}^{-}$. \par
	Given a pseudoholomorphic building $(\vec{\Sigma}, \vec{u})$, we may perform the topological compactification $\bar{\Sigma}$ of the domain as follows. Compactify each domain $\Sigma_{-i}$ by adding boundary intervals and boundary circles at infinity over the strip-like ends and respectively the cylindrical ends - this is also called the real blow-up. Glue $\bar{\Sigma}_{-i}$ to $\bar{\Sigma}_{-i+1}$ for all $i$ along these intervals and circles at infinity, according to the matching conditions described above. The resulting space $\bar{\Sigma}$ is a bordered Riemann surface, possibly disconnected. We call this the topological compactification of the domains $\{\Sigma_{-i}\}$ in various levels. \par

\begin{definition}
	A pseudoholomorphic building is said to be connected, if the topological compactification $\bar{\Sigma}$ of the domains in various levels is a connected bordered Riemann surface. Note that the domains in a single level might not be connected. \par
	A connected pseudoholomorphic building is said to be of genus zero, if the topological compactification $\bar{\Sigma}$ of the domains in various levels is a connected bordered Riemann surface of genus zero.
\end{definition}

	Let $\mathcal{N}^{(n)}(\gamma)$ be the moduli space of connected genus-zero  pseudoholomorphic buildings of sub-level $n$, with asymptotic convergence condition $\gamma$ at the negative end in the last sub-level, i.e. at the negative end of $u_{-n}$. Set
\begin{equation}\label{moduli space of pseudoholomorphic building}
\bar{\mathcal{N}}(\gamma) = \bigcup_{n=0}^{\infty} \mathcal{M}^{(n)}(\gamma).
\end{equation} \par

	Lemma \ref{a priori energy bound} implies that for a fixed $\gamma$, any sequence of proper pseudoholomorphic disks asymptotic to $\gamma$ at $-\infty$ can only break finitely many times, and the number of possible breakings (= number of levels of pseudoholomorphic buildings) is uniformly bounded.
Thus for each $\gamma$, there is $n(\gamma)$ such that for $n > n(\gamma)$, the component $\mathcal{N}^{(n)}(\gamma)$ is empty. 
Moreover, under Assumption \ref{regularity assumption on almost complex structures}, 
it suffices to consider pseudoholomorphic buildings of sub-level $1$, i.e. of level $2$.
These moduli spaces can be represented in the form of the union of the following products:
\begin{equation}\label{moduli space of pseudoholomorphic buildings with sub-level 1 as a product}
\mathcal{N}_{1, m, l}(\gamma; \gamma'_{1}, \cdots, \gamma'_{m}; \sigma'_{1}, \cdots, \sigma'_{l})_{I} \times \prod_{i \in I^{c}} \mathcal{M}(\gamma'_{i}) \times \mathcal{M}_{I}(\{\gamma_{i}\}_{i \in I}) \times \prod_{j=1}^{m} \mathcal{M}(\sigma'_{j}).
\end{equation}
This is the reason why we only consider the moduli spaces pseudoholomorphic curves in $W$ capped in $U$,
\[
\mathcal{N}_{1, m, l}(\gamma; \gamma'_{1}, \cdots, \gamma'_{m}; \sigma'_{1}, \cdots, \sigma'_{l})_{I},
\]
but not moduli spaces of pseudoholomorphic buildings of more levels.

\subsection{The Maurer-Cartan element as a weighted count}
	We define the weighted count of elements in the moduli spaces
\[
\mathcal{N}_{1, m, l}(\gamma'; \gamma'_{1}, \cdots, \gamma'_{m}; \sigma'_{1}, \cdots, \sigma'_{l})_{I}
\]
as follows.
Let 
\begin{equation}
n(\gamma'; \gamma'_{1}, \cdots, \gamma'_{m}; \sigma'_{1}, \cdots, \sigma'_{l})_{I} \in \mathbb{K}.
\end{equation}
be the count of rigid elements in the above moduli space.
Define a weighted count
\begin{equation}\label{weighted count of disks with no input of type I}
n(\gamma'; \gamma'_{1}, \cdots, \gamma'_{m}; \sigma'_{1}, \cdots, \sigma'_{l})_{I}  e(\{\gamma_{i}\}_{i \in I}) (\prod_{i \in I^{c}} e(\gamma_{i}))
 \frac{e(\sigma_{1})}{\kappa(\sigma_{1})} \cdots \frac{e(\sigma_{l})}{\kappa(\sigma_{l})}. 
\end{equation}
The total weighted count
\begin{equation}
n(\gamma'; \gamma'_{1}, \cdots, \gamma'_{m}; \sigma'_{1}, \cdots, \sigma'_{l})
\end{equation}
is the sum \eqref{weighted count of disks with no input of type I} over all possible types $I$.

\begin{definition}
	Define
\begin{equation}\label{the Maurer-Cartan element in linearized Legendrian complex}
b_{lin} = \sum_{\substack{\deg(\gamma')=1\\ 0< \mathcal{A}(\gamma') \le c_{j} - c_{i} > 0}} n(\gamma'; \gamma'_{1}, \cdots, \gamma'_{m}; \sigma'_{1}, \cdots, \sigma'_{l}) \gamma'.
\end{equation}
This is a finite sum for degree and action reasons. Thus $b_{lin} \in LC^{*}_{lin}(l', L'; \alpha', J_{U})$.
\end{definition}

Before we show that $b_{lin}$ is a Maurer-Cartan element, we must check that it also lives in the positive part of the filtration.
We shall use a filtration argument for this purpose, in a way similar to that in wrapped Floer cohomology:
	
\begin{lemma}\label{b_lin lies in positive filtration}
	The element $b_{lin}$ lies in the positive part of the filtration on $LC_{lin}^{*}(l'; L'; \alpha', J_{U})$,
\begin{equation*}
b_{lin} \in F^{>0} LC_{lin}^{*}(l'; L'; \alpha', J_{U})
\end{equation*}
In fact, there exists $\epsilon > 0$, which depends only on the background geometry, such that
\begin{equation*}
b_{lin} \in F^{>\epsilon} LC_{lin}^{*}(l'; L'; \alpha', J_{U}).
\end{equation*}
\end{lemma}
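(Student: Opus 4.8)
The plan is to first note that the containment $b_{lin}\in F^{>0}LC^{*}_{lin}(l',L';\alpha',J_U)$ is essentially built into the definition, and then to upgrade it to the uniform bound $F^{>\epsilon}$ by feeding the a priori energy estimate into a monotonicity argument. Recall that the filtration on $LC^{*}_{lin}(l',L';\alpha',J_U)$ assigns to a Reeb chord generator $\gamma'$ its action $\mathcal{A}(\gamma')$, and that by \eqref{the Maurer-Cartan element in linearized Legendrian complex} the element $b_{lin}$ is a finite $\mathbb{K}$-linear combination of generators $\gamma'$, each satisfying $\mathcal{A}(\gamma')>0$; hence $b_{lin}\in F^{>0}$ immediately. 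The real content is the existence of a single $\epsilon>0$, depending only on the background geometry, with $\mathcal{A}(\gamma')\ge\epsilon$ for every $\gamma'$ that occurs with nonzero coefficient.

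To obtain this, I would fix such a $\gamma'$ and choose one of the rigid curves counted by $n(\gamma';\gamma'_1,\dots,\gamma'_m;\sigma'_1,\dots,\sigma'_l)$. Its distinguished component is a non-constant $J_W$-holomorphic map $u:S\to W$ with boundary on the exact Lagrangian cobordism $L^{c}$ and a single non-capped negative puncture asymptotic to $\gamma'$; the remaining punctures are capped off in $U$ and do not affect this asymptote. By Lemma \ref{a priori energy bound}, $E(u)\le C\,\mathcal{A}(\gamma')$ with $C$ depending only on $\lambda_{W_0}$ and $J_W$. On the other hand, since $u$ is non-constant and stable, the monotonicity lemma for pseudoholomorphic curves with Lagrangian boundary — applied in a small ball around an interior point of the image, or in a half-ball centered at a boundary point — yields $E_\omega(u)\ge\hbar$ for a constant $\hbar>0$ that is uniform because $W_0$ is compact and $J_W$ is translation invariant on the cylindrical ends, so $(W,\omega,J_W,L^{c})$ has uniformly bounded geometry, while the a priori energy bound simultaneously prevents $u$ from penetrating arbitrarily far into the negative end $E_-=\partial U\times\mathbb{R}_-$. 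Combining the two estimates gives $\mathcal{A}(\gamma')\ge E(u)/C\ge\hbar/C=:\epsilon>0$, which is the assertion.

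The main obstacle I expect is making the energy lower bound genuinely uniform in this non-compact setting: a priori a curve could carry very little $\omega$-area while living almost entirely in the symplectization end $E_-$, where trivial strips over Reeb chords have vanishing $d\alpha'$-area. I would resolve this either by the combination just described (the a priori bound caps the penetration depth into $E_-$, and $\gamma'$ is a genuine Reeb chord between distinct components of $l'$, hence of length at least the minimal chord length $\ell_0>0$ guaranteed by non-degeneracy of $\alpha'$ on the compact Legendrian $l'$), or equivalently by an SFT compactness argument: a sequence of such curves with $E(u_n)\to 0$ would converge to a holomorphic building all of whose components are constant, contradicting the nontrivial negative asymptote $\gamma'$. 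As a cruder fallback that still produces a valid (if less canonical) $\epsilon$, one can simply use non-degeneracy of $\alpha'$ to see that the set of Reeb chords with $0<\mathcal{A}(\gamma')\le\max_{i,j}(c_j-c_i)$ is finite, and take $\epsilon$ to be the minimum of their actions.
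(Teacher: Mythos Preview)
Your proposal is correct and follows essentially the same approach as the paper: both argue that any curve contributing to $b_{lin}$ is non-constant, hence has energy bounded below by a uniform $\epsilon>0$ depending only on the background geometry, and then convert this to an action lower bound on $\gamma'$ via the a priori estimate of Lemma~\ref{a priori energy bound}. The paper's proof is considerably terser---it simply asserts the uniform energy lower bound without spelling out monotonicity or addressing the non-compactness of $W$---so your more careful discussion of the potential escape into $E_{-}$ and the fallback via discreteness of the Reeb spectrum is a genuine improvement in rigor, not a different route.
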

\begin{proof}
	Any proper pseudoholomorphic disk with a negative puncture and asymptotic to $\gamma'$ at the puncture is non-constant, and therefore has to achieve some positive energy. 
There is a uniform lower bound $\epsilon > 0$ for such non-constant pseudoholomorphic curves, which depend only on Liouville form $\lambda_{U}$, its restriction to the contact form $\alpha'$ and the almost complex structure $J_{U}$.
Thus the action of $\mathcal{A}(\gamma')$ is positive.
\end{proof}

	Next we show that $b_{lin}$ is a Maurer-Cartan element.
Consider the sequence of maps
\begin{equation}\label{curved version of linearized cobordism homomorphism}
\mathcal{F}^{k}: LC_{lin}^{*}(l, L; \alpha, J_{M})^{\otimes k} \to LC_{lin}^{*}(l', L'; \alpha', J_{U})
\end{equation}
defined in \eqref{linearized cobordism A-infinity homomorphism}, by counting elements in the moduli spaces \eqref{capped cobordism curves of type I}.
Including a term $k=0$, we see that the element $b_{lin}$ can be identified with the zeroth order term
\begin{equation}
\mathcal{F}^{0}(1) = b_{lin},
\end{equation}
such that the sequence of maps $\{\mathcal{F}^{k}\}_{k=0}^{\infty}$ defines a curved $A_{\infty}$-homomorphism from $LC_{lin}^{*}(l, L; \alpha, J_{M})^{\otimes k}$ to $LC_{lin}^{*}(l', L'; \alpha', J_{U})$,
thought of as curved $A_{\infty}$-algebras with vanishing curvature.
And $b_{lin}$ is the pushforward of the zero Maurer-Cartan element of $LC_{lin}^{*}(l, L; \alpha, J_{M})^{\otimes k}$ to $LC_{lin}^{*}(l', L'; \alpha', J_{U})$,
\begin{equation}\label{b_lin as pushforward of 0}
\mathcal{F}_{*}0 = \mathcal{F}^{0}(1) = b_{lin}.
\end{equation}

\begin{proposition}
	The element $b_{lin}$ satisfies the Maurer-Cartan equation
\begin{equation}
\sum_{k=1}^{\infty} \mu_{-}^{k}(b_{lin}, \cdots, b_{lin}) = 0.
\end{equation}
\end{proposition}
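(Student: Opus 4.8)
The plan is to deduce the Maurer--Cartan equation from the general algebraic machinery of Section~\ref{section: pushforward}, using the identification \eqref{b_lin as pushforward of 0} of $b_{lin}$ with the pushforward of the zero Maurer--Cartan element under the cobordism homomorphism. The point is that $\{\mathcal{F}^{k}\}_{k\ge 0}$, with $\mathcal{F}^{0}(1)=b_{lin}$ and higher terms \eqref{linearized cobordism A-infinity homomorphism}, is a \emph{curved} $A_{\infty}$-homomorphism $LC^{*}_{lin}(l,L;\alpha,J_{M})\to LC^{*}_{lin}(l',L';\alpha',J_{U})$ between the linearized Legendrian $A_{\infty}$-algebras, both of which are themselves uncurved (Proposition~\ref{A-infinity algebra on linearized Legendrian complex}: any curve in a symplectization with only negative ends is trivial, so $\mu_{+}^{0}=\mu_{-}^{0}=0$, and in particular $0$ is a Maurer--Cartan element of the source). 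The $k=0$ component of the curved $A_{\infty}$-functor equation \eqref{curved A-infinity functor equation} then reads $\sum_{s\ge 1}\mu_{-}^{s}(b_{lin},\cdots,b_{lin})=\mathcal{F}^{1}(\mu_{+}^{0}(1))=0$, which is exactly the asserted identity (equivalently $\sum_{k\ge 0}\mu_{-}^{k}(b_{lin},\cdots,b_{lin})=0$ since $\mu_{-}^{0}=0$). Formally, this is Lemma~\ref{pushforward satisfies Maurer-Cartan equation} applied to $\mathcal{F}_{*}0=b_{lin}$.

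So the real content is to verify that the extended sequence $\{\mathcal{F}^{k}\}_{k\ge 0}$ genuinely satisfies the curved $A_{\infty}$-functor equations once the zeroth term $\mathcal{F}^{0}(1)=b_{lin}$ is included. This is the same boundary-strata analysis as in the proof of Proposition~\ref{A-infinity homomorphism on linearized Legendrian complex}, now carried out for the moduli spaces with no input at the positive end of $W$, i.e. the spaces $\mathcal{N}_{1,m,l}(\gamma';\gamma'_{1},\cdots,\gamma'_{m};\sigma'_{1},\cdots,\sigma'_{l})_{I}$ of \eqref{capped cobordism curves of type I} with $k=0$ that define $b_{lin}$. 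Concretely I would study the compactified $1$-dimensional moduli spaces $\bar{\mathcal{N}}_{1,m,l}(\gamma'^{0};\gamma'_{1},\cdots;\sigma'_{1},\cdots)$ with $\deg(\gamma'^{0})=2$. Since there are no positive punctures, no breaking occurs at the positive end of $W$ and no Morse--Bott level sets arise; the a priori energy bound Lemma~\ref{a priori energy bound} caps the number of symplectization levels, Assumption~\ref{regularity assumption on almost complex structures} excludes multiply covered and negative-index configurations, and exactness of $L^{c}$, $L'$ excludes disk and sphere bubbling. Hence the codimension-one boundary consists precisely of two-level buildings whose main level in $W$ is a disjoint union of $k$ rigid curves of the type counting $b_{lin}$, glued along their distinguished negative ends to a single rigid $\mu_{-}^{k}$-curve in $\partial U\times\mathbb{R}$ capped in $U$. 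Summing the signed count over the boundary of the compact $1$-manifold gives $\sum_{k\ge 1}\mu_{-}^{k}(b_{lin},\cdots,b_{lin})=0$.

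For convergence I would invoke Lemma~\ref{b_lin lies in positive filtration}, which places $b_{lin}\in F^{>\epsilon}LC^{*}_{lin}(l',L';\alpha',J_{U})$, together with the fact that the contact-action filtration on $LC^{*}_{lin}(l',L';\alpha',J_{U})$ is discrete and bounded above, exactly as for the action filtration on wrapped Floer complexes in Section~3.2 (the action spectrum is discrete, and the action of a Reeb chord on a level hypersurface of $\partial U$ tends to $-\infty$). Then the finiteness statement underlying \eqref{deformed A-infinity algebra structure} applies, so $\sum_{k\ge 1}\mu_{-}^{k}(b_{lin},\cdots,b_{lin})$ is a finite sum, the equation is well-posed, and the boundary-count argument above applies verbatim; equivalently all of this is subsumed by Lemma~\ref{pushforward satisfies Maurer-Cartan equation}.

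I expect the main obstacle to be the compactness of the new $k=0$ moduli spaces $\mathcal{N}_{1,m,l}(\gamma';\cdots)_{I}$ and the precise description of the codimension-one boundary of their $1$-dimensional versions: these curves live in the cobordism $W$ and escape into the negative cylindrical end, so no maximum principle is available and one must organize the possible pseudoholomorphic buildings purely through the SFT energy bound and the regularity assumptions, ruling out the configurations that would not fit into the schematic $\sum\mu_{-}^{k}(b_{lin},\cdots,b_{lin})$. Once the boundary is pinned down, the remaining algebra is routine sign bookkeeping identical to that in Propositions~\ref{A-infinity algebra on linearized Legendrian complex} and \ref{A-infinity homomorphism on linearized Legendrian complex}.
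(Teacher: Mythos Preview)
Your proposal is correct and follows essentially the same approach as the paper: both invoke the identification $b_{lin}=\mathcal{F}_{*}0$ from \eqref{b_lin as pushforward of 0} and then apply Lemma~\ref{pushforward satisfies Maurer-Cartan equation}, with convergence handled by Lemma~\ref{b_lin lies in positive filtration} together with the upper bound on the action filtration. The paper's proof is two sentences long and simply asserts that $\{\mathcal{F}^{k}\}_{k\ge 0}$ is a curved $A_{\infty}$-homomorphism (this is stated in the text immediately preceding the proposition), whereas your second paragraph sketches the actual boundary-strata verification of the $k=0$ functor equation via the one-dimensional moduli spaces $\bar{\mathcal{N}}_{1,m,l}(\gamma'^{0};\cdots)$; this is useful elaboration of what the paper leaves implicit, not a different route.
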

\begin{proof}
The statement that $b_{lin}$ satisfies the Maurer-Cartan equation follows from \eqref{b_lin as pushforward of 0}.
The Maurer-Cartan equation converges for $b_{lin}$ because of Lemma \eqref{b_lin lies in positive filtration}, and the fact that the action filtration on the linearized Legendrian complex is bounded above, similar to the wrapped Floer complex.
\end{proof}

\subsection{The pushforward Maurer-Cartan element}

	The last step is to send the element $b_{lin} \in LC_{*}^{lin}(l'; L'; \alpha', J_{U})$ to the desired Maurer-Cartan element $b \in CW^{*}(L'; H_{U})$. 
This uses the $A_{\infty}$-homomorphism \eqref{A-infinity homomorphism from the linearized Legendrian complex to the wrapped Floer complex} (noting that only having the cochain map \eqref{the map from the linearized Legendrian contact homology to wrapped Floer cohomology} is not sufficient for this purpose). \par
	In general, an arbitrary $A_{\infty}$-homomorphism cannot push forward Maurer-Cartan elements, because of the convergence issue.
To ensure that the $A_{\infty}$-homomorphism \eqref{A-infinity homomorphism from the linearized Legendrian complex to the wrapped Floer complex} can push forward a Maurer-Cartan element, it suffices to note the following: \par

\begin{lemma}\label{homotopy equivalence preserves filtration}
	The $A_{\infty}$-homomorphism \eqref{A-infinity homomorphism from the linearized Legendrian complex to the wrapped Floer complex} is filtration non-decreasing.
\end{lemma}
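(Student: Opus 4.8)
The plan is to show that the $A_\infty$-homomorphism $\mathcal{S}$ of \eqref{A-infinity homomorphism from the linearized Legendrian complex to the wrapped Floer complex} does not decrease the two action filtrations: the contact-action filtration on $LC^*_{lin}(l, L; \alpha, J)$ (generated by the action of Reeb chords and the ``action'' of critical points, which can be taken to be $0$ or a small positive perturbation) and the symplectic-action filtration on $CW^*(L; H)$. Concretely, I would prove that for every generator tuple the multilinear map $\mathcal{S}^k$ satisfies
\begin{equation}
\mathcal{S}^k\big(F^{\ge \lambda_k} \otimes \cdots \otimes F^{\ge \lambda_1}\big) \subset F^{\ge \lambda_1 + \cdots + \lambda_k} CW^*(L; H),
\end{equation}
with the convention that critical points lie in $F^{\ge 0}$ (or $F^{>0}$ after a small generic perturbation of the Morse function, which is the version actually needed to push forward $b_{lin}$).

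First I would recall the defining moduli spaces of $\mathcal{S}^k$, namely $\mathcal{M}_{L, k}(x_-; \{p_+^a\}_{a \in A}, \{\gamma_+^a\}_{a \in A^c})$: these are inhomogeneous pseudoholomorphic disks with a domain-dependent Hamiltonian $H_\chi(s,p) = \chi(s) H(p)$ that equals $H$ near the negative puncture $z_-^0$ and vanishes near all positive punctures and marked points, with some positive punctures asymptotic to Reeb chords and some marked points attached to negative gradient half-trajectories of $H|_L$ ending at critical points $p_+^a$. The key is the action–energy inequality: because $\chi$ is non-increasing and $d(H_\chi) = \chi(s) dH + H(p) \chi'(s)\, ds$, the contribution of the $\chi'$ term to the topological energy has a definite sign (this is the standard monotonicity of continuation-type maps, and is exactly what is exploited in \cite{Bourgeois-Oancea1} and in the proof of Lemma \ref{the plus chain map is an isomorphism}). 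For the pure-Reeb inputs, the Reeb chord $\gamma_+^a$ at a puncture on a level hypersurface $\partial M \times \{r\}$ corresponds to an $H$-chord of action $-r^2 + (\text{primitive terms})$, matching exactly the contact action $2r$ of the chord up to the convention-fixed normalization used in Lemma \ref{the plus chain map is an isomorphism}; I would just track this identification and note it is action-preserving on the nose. For the marked-point/critical-point inputs, a negative gradient half-trajectory contributes a non-positive change to $H|_L$ along $v_a$, and since $H|_L$ is arranged so its critical values are small (bounded by a constant that can be made arbitrarily small, as in the action estimates of Section 3), the critical points sit in $F^{\ge -\epsilon}$ and in fact in $F^{>0}$ after the generic perturbation; I would absorb the small constants into the choice of $H|_L$. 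Assembling these sign estimates over the (connected, genus-zero) domain and summing the boundary primitive contributions via Stokes gives the displayed filtration inequality for $\mathcal{S}^k$, $k \ge 1$.

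With the filtration estimate for $\mathcal{S} = \{\mathcal{S}^k\}_{k\ge 1}$ in hand, the lemma follows, and as an immediate corollary the pushforward $b = \mathcal{S}_* b_{lin} = \sum_{k\ge 1} \mathcal{S}^k(b_{lin}, \ldots, b_{lin})$ converges in the uncompleted wrapped Floer complex: by Lemma \ref{b_lin lies in positive filtration} we have $b_{lin} \in F^{>\epsilon}$ for a uniform $\epsilon > 0$, so $\mathcal{S}^k(b_{lin}, \ldots, b_{lin}) \in F^{\ge k\epsilon}$, and since the action filtration on $CW^*(L'; H_U)$ is bounded above (by the discussion of action-filtered sub-complexes in Section 3), all but finitely many terms vanish. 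Then Lemma \ref{pushforward satisfies Maurer-Cartan equation}, applied to the (curved, with vanishing curvature) $A_\infty$-homomorphism $\mathcal{S}$ and the Maurer-Cartan element $b_{lin}$, shows $b$ satisfies the Maurer-Cartan equation, completing Proposition \ref{bounding cochain is independent of choices}.

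The main obstacle I anticipate is not the sign bookkeeping itself but making the critical-point side of the filtration genuinely \emph{strictly} positive. The linearized Legendrian complex contains the Morse sub-complex $CM^*(H|_L)$ whose generators naively sit at filtration level roughly $0$, and $b_{lin}$ is \emph{a priori} only supported on Reeb-chord generators of positive action — but its pushforward $b$ can, and in the example of Section 6 does, acquire critical-point (identity-type) components through the higher terms $\mathcal{S}^k$. One must therefore be careful that no term of $\mathcal{S}^k(b_{lin}, \ldots, b_{lin})$ lands in filtration level $\le 0$ in $CW^*(L'; H_U)$; this is where the strict monotonicity from the $\chi'$-term and the arbitrarily-small-critical-values arrangement of $H|_{L'}$ must be combined. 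I would handle it exactly as in the action estimates surrounding Lemma \ref{small parameter implies coming from U}: choose the primitives and the interior values of $H_U$ so that \emph{every} interior chord and critical point has action in $(0, \delta)$ with $\delta$ as small as desired, and then the positive energy of the non-constant curves in $\mathcal{M}_{L', k}$ forces strict positivity of the output action. This is routine once set up correctly, but it is the step most prone to a convention error, so it deserves the most care.
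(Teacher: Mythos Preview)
Your proposal is correct and follows the same approach as the paper: both argue that the defining moduli spaces for $\mathcal{S}^k$ use a domain-dependent Hamiltonian interpolating from $H$ at the negative puncture to $0$ at the positive punctures, so the standard action--energy inequality forces the output action to dominate the sum of the input actions. The paper's proof is a two-sentence sketch of precisely this; you have simply filled in the bookkeeping (the $\chi'$-term sign, the treatment of Morse-critical-point inputs, the primitive conventions).

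One minor comment on scope: the second and third paragraphs of your proposal---the convergence of the pushforward and the Maurer--Cartan equation for $b$---are not part of this lemma. In the paper these are the content of the immediately following Corollary \ref{push-forward is well-defined} and the final lemma of the section, both of which are deduced from the present lemma together with Lemma \ref{b_lin lies in positive filtration} and the bounded-above property of the wrapped filtration. Your discussion of the ``main obstacle'' (strict positivity at the Morse level) is likewise relevant to $b \in F^{>\epsilon}$ rather than to the non-decreasing statement itself; for the lemma as stated you only need non-strict monotonicity, which the energy inequality gives without any delicate arrangement of interior critical values.
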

\begin{proof}
	This is because the $A_{\infty}$-homomorphism \eqref{A-infinity homomorphism from the linearized Legendrian complex to the wrapped Floer complex} is defined by counting certain parametrized inhomogeneos pseudoholomorphic disks in the moduli spaces \eqref{moduli space of disks for the homotopy equivalence} and \eqref{moduli space defining A-infinity homomorphism from linearized Legendrian complex to wrapped Floer complex}.
The specific choice of families of Hamiltonians with respect to which these pseudoholomorphic disks are defined implies that the action of the output Hamiltonian chord is not smaller than the sum of the action of the input Reeb chords or critical points.
\end{proof}

	Since the filtration on the wrapped Floer cochain complex is bounded above, this immediately implies that

\begin{corollary}\label{push-forward is well-defined}
	Let $b_{lin} \in LC_{lin}^{*}(l'; L'; \alpha', J_{U})$ be as before. Then there exists $N > 0$ such that
\begin{equation}
\mathcal{S}^{k}(b_{lin}, \cdots, b_{lin}) = 0, \forall k \ge N.
\end{equation}
\end{corollary}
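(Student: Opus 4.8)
The plan is to deduce the finiteness directly from the two filtration statements just established. First I would invoke Lemma \ref{b_lin lies in positive filtration}: there is a constant $\epsilon > 0$, depending only on the background geometry, such that
\[
b_{lin} \in F^{>\epsilon} LC_{lin}^{*}(l'; L'; \alpha', J_{U}).
\]
Then I would use Lemma \ref{homotopy equivalence preserves filtration}, interpreted in the precise form that the $A_{\infty}$-homomorphism $\mathcal{S}$ respects the filtrations in the sense of Section \ref{section:algebraic preliminaries}, i.e. each structure map satisfies
\[
\mathcal{S}^{k}\big(F^{\ge \lambda_{k}} LC_{lin}^{*} \otimes \cdots \otimes F^{\ge \lambda_{1}} LC_{lin}^{*}\big) \subset F^{\ge \lambda_{1} + \cdots + \lambda_{k}} CW^{*}(L'; H_{U}).
\]
Feeding in $k$ copies of $b_{lin}$ and using $\epsilon$ as the common lower bound gives $\mathcal{S}^{k}(b_{lin}, \cdots, b_{lin}) \in F^{\ge k\epsilon} CW^{*}(L'; H_{U})$.

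To conclude, I would recall that the action filtration on the wrapped Floer complex is bounded above: there is $C_{0} > 0$ with $F^{\ge C} CW^{*}(L'; H_{U}) = 0$ for all $C \ge C_{0}$. Choosing any integer $N$ with $N\epsilon \ge C_{0}$ then forces $\mathcal{S}^{k}(b_{lin}, \cdots, b_{lin}) = 0$ for every $k \ge N$, which is the assertion of the corollary. As an immediate consequence, the pushforward
\[
\mathcal{S}_{*}b_{lin} = \sum_{k=0}^{\infty} \mathcal{S}^{k}(b_{lin}, \cdots, b_{lin})
\]
is a finite sum and hence a well-defined element of $CW^{*}(L'; H_{U}) = \hom_{\mathcal{W}(U)}(L', L')$; by Lemma \ref{pushforward satisfies Maurer-Cartan equation} it is then a Maurer-Cartan element, since $0$ is one for $LC_{lin}^{*}(l, L; \alpha, J_{M})$ --- although spelling out that last point properly belongs to the proof of Proposition \ref{bounding cochain is independent of choices} rather than here.

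Since every ingredient is already available, I do not expect a genuine obstacle. The only delicate point is making sure that the phrase ``filtration non-decreasing'' in Lemma \ref{homotopy equivalence preserves filtration} is used in its additive form (the filtration level of the output is at least the \emph{sum} of the input levels), not merely in a monotone-in-each-slot form; this additivity is exactly what the action--energy identity yields for the parametrized inhomogeneous pseudoholomorphic disks in the moduli spaces \eqref{moduli space of disks for the homotopy equivalence} and \eqref{moduli space defining A-infinity homomorphism from linearized Legendrian complex to wrapped Floer complex} defining $\mathcal{S}^{k}$, as in the proof of that lemma. I would also remark that the identical argument applies to the homotopy inverse $\mathcal{T}$ and to the curved homomorphism $\mathcal{F}$, so that pushforwards of Maurer-Cartan elements are always well-defined in the situations needed in Sections \ref{section: extending the Viterbo functor} and \ref{section: construction of the bounding cochain}.
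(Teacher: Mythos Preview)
Your proposal is correct and is essentially the same argument as the paper's, just phrased in the algebraic filtration language of Section~\ref{section:algebraic preliminaries} rather than directly in terms of actions of Reeb chords and emptiness of the moduli spaces \eqref{moduli space of disks for the homotopy equivalence}. Your caveat about interpreting ``filtration non-decreasing'' additively is exactly right and matches how the paper uses the action--energy identity in the proof of Lemma~\ref{homotopy equivalence preserves filtration}.
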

\begin{proof}
The term
\[
\mathcal{S}^{k}(b_{lin}, \cdots, b_{lin})
\]
is by definition the sum of counts of parametrized inhomogeneos pseudoholomorphic disks in the moduli spaces \eqref{moduli space of disks for the homotopy equivalence},
 with $k$ positive punctures asymptotic to those Reeb chords $\gamma'$ that appear in the definition of $b_{lin}$, \eqref{the Maurer-Cartan element in linearized Legendrian complex}. 
Since each of such Reeb chords has to have action at least $\epsilon$, the output Hamiltonian chord $x_{-}$ has action not less than $k \epsilon$ if there is such a pseudoholomorphic curve.
However, there is a uniform upper bound for the action of Hamiltonian chords, which is a contradiction.
This implies that for $k$ sufficiently large, the moduli spaces \eqref{moduli space of disks for the homotopy equivalence} with positive asymptotes being these Reeb chords $\gamma'$ are empty.
\end{proof}

	To get the desired Maurer-Cartan element $b$ in the wrapped Floer complex, we shall use the $A_{\infty}$-homomorphism \eqref{A-infinity homomorphism from the linearized Legendrian complex to the wrapped Floer complex} to push forward $b_{lin}$ to the wrapped Floer complex $CW^{*}(L'; H_{U})$.
By the definition of pushforward, an explicit formula for $b$ is
\begin{equation}
b = \sum_{k=1}^{\infty} \mathcal{S}^{k}(b_{lin}, \cdots, b_{lin}).
\end{equation} 
By Corollary \ref{push-forward is well-defined}, this sum is finite and therefore a well-defined element in $CW^{*}(L'; H_{U})$. \par

\begin{lemma}
	$b \in F^{>\epsilon} CW^{*}(L'; H_{U})$ for some uniform constant $\epsilon > 0$, and satisfies the Maurer-Cartan equation
\begin{equation*}
\sum_{k=1}^{\infty} m^{k}(b, \cdots, b) = 0.
\end{equation*}
\end{lemma}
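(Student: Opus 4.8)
The plan is to deduce both assertions from the single fact that, by construction, $b = \mathcal{S}_{*} b_{lin}$ is the pushforward of the Maurer-Cartan element $b_{lin}$ under the $A_{\infty}$-homotopy equivalence $\mathcal{S}$ of \eqref{A-infinity homomorphism from the linearized Legendrian complex to the wrapped Floer complex}, combined with the filtration estimates already established. No new geometry is needed: the analytic input is entirely contained in Lemma \ref{a priori energy bound}, Lemma \ref{b_lin lies in positive filtration} and Lemma \ref{homotopy equivalence preserves filtration}, and the rest is the formal algebra of pushforwards of Maurer-Cartan elements from section \ref{section:algebraic preliminaries}.

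First I would establish the filtration statement. By Lemma \ref{b_lin lies in positive filtration} there is a uniform $\epsilon > 0$, depending only on the background geometry, with $b_{lin} \in F^{>\epsilon} LC^{*}_{lin}(l', L'; \alpha', J_{U})$. By Lemma \ref{homotopy equivalence preserves filtration} the $A_{\infty}$-homomorphism $\mathcal{S}$ is filtration non-decreasing, i.e.\ $\mathcal{S}^{k}(F^{\ge \lambda_{k}} \otimes \cdots \otimes F^{\ge \lambda_{1}}) \subset F^{\ge \lambda_{1} + \cdots + \lambda_{k}}$; since $\mathcal{S}$ is an honest (non-curved) $A_{\infty}$-homomorphism we have $\mathcal{S}^{0} = 0$, so $\mathcal{S}$ respects the filtrations in the sense of section \ref{section:algebraic preliminaries} (the degenerate $k=0$ clause holding trivially since $0 \in F^{>0}$). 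Hence each term $\mathcal{S}^{k}(b_{lin}, \cdots, b_{lin})$ lies in $F^{>k\epsilon} CW^{*}(L'; H_{U}) \subset F^{>\epsilon} CW^{*}(L'; H_{U})$ for every $k \ge 1$, and by Corollary \ref{push-forward is well-defined} the sum $b = \sum_{k \ge 1} \mathcal{S}^{k}(b_{lin}, \cdots, b_{lin})$ is finite, so $b \in F^{>\epsilon} CW^{*}(L'; H_{U})$.

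Next I would verify the Maurer-Cartan equation. Viewing the $A_{\infty}$-algebras $LC^{*}_{lin}(l', L'; \alpha', J_{U})$ and $CW^{*}(L'; H_{U})$ as one-object curved $A_{\infty}$-categories with vanishing curvature, and $\mathcal{S}$ as a curved $A_{\infty}$-functor between them with $\mathcal{S}^{0} = 0$, the hypotheses of Lemma \ref{pushforward satisfies Maurer-Cartan equation} are met: $\mathcal{S}$ respects the filtrations, and both filtrations are discrete and bounded above (for $CW^{*}$ this is the action filtration of section \ref{section:algebraic preliminaries}'s geometric instance, and for $LC^{*}_{lin}$ this was noted in section \ref{section: construction of the bounding cochain}). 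Since $b_{lin}$ is a Maurer-Cartan element for $\mu^{k}_{-}$, Lemma \ref{pushforward satisfies Maurer-Cartan equation} then gives that $b = \mathcal{S}_{*} b_{lin}$ is a Maurer-Cartan element for the $m^{k}$, that is $\sum_{k=1}^{\infty} m^{k}(b, \cdots, b) = 0$; the sum converges because $b \in F^{>\epsilon}$ while $m^{k}$ increases the filtration and $F$ is bounded above, so only finitely many summands are nonzero, exactly as in the finiteness lemma of section \ref{section: Maurer-Cartan elements}.

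I do not expect a serious obstacle here; the only point requiring a little care is the bookkeeping that confirms $\mathcal{S}$ satisfies the precise filtration hypothesis of Lemma \ref{pushforward satisfies Maurer-Cartan equation}, which amounts to combining Lemma \ref{homotopy equivalence preserves filtration} with the convention $\mathcal{S}^{0} = 0$. All the substantive work has been done in the preceding subsections, so this lemma is essentially an assembly of those results.
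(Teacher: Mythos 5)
Your proposal is correct and follows exactly the paper's own route: the filtration statement is deduced from Lemma \ref{b_lin lies in positive filtration} together with Lemma \ref{homotopy equivalence preserves filtration}, and the Maurer--Cartan equation from the pushforward lemma of section \ref{section: pushforward}. Your write-up merely spells out the bookkeeping (finiteness of the sum via Corollary \ref{push-forward is well-defined}, the $\mathcal{S}^{0}=0$ convention) that the paper leaves implicit.
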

\begin{proof}
	The statement that $b \in F^{>\epsilon} CW^{*}(L'; H_{U})$ follows from the fact that Lemma \ref{b_lin lies in positive filtration} and Lemma \ref{homotopy equivalence preserves filtration}. \par
	That $b$ satisfies the Maurer-Cartan equation follows from Lemma \ref{push forward bounding cochains}. \par
\end{proof}

	Since $\mathcal{S}$ is an $A_{\infty}$-homotopy equivalence, the gauge equivalence class of $b$ is uniquely determined by the gauge equivalence class of $b_{lin}$, which is therefore independent of all choices made.
This finishes the proof of Proposition \ref{bounding cochain is independent of choices}, and thus the construction of the Maurer-Cartan element is complete. \par
	In addition, we define 
\begin{equation}\label{definition of the zeroth term of the Viterbo map}
R^{0}(1) = b \in CW^{*}(L'; H_{U}).
\end{equation}
Using this Maurer-Cartan element $b$, we may deform the $A_{\infty}$-structure on $CW^{*}(L'; H_{U})$. 
Applying this to every object $L'$ which come from the restriction of an object $L$ of $\mathcal{W}(M)$, we get a deformation $B$ of $\mathcal{W}(U)$.
After deforming the wrapped Fukaya category $\mathcal{W}(U)$ by $B$, we may further deform Viterbo restriction functor to an $A_{\infty}$-functor $R_{B}$, by
\begin{equation}
R_{B} = \mathcal{S}_{B} \circ R \circ \mathcal{T}_{0}.
\end{equation}

\section{Equivalence of the two functors}\label{section: equivalence of functors}

\subsection{Statement of results}
	The geometric behavior of the restriction functor $\Theta_{\Gamma}$ inspires us to ask the question whether that agrees with the Viterbo restriction functor.
In \cite{Gao2}, we proved that on the sub-category $\mathcal{B}_{0}(M)$, whose objects are exact cylindrical Lagrangian submanifolds that are invariant under the Liouville flow inside $M_{0} \setminus int (U_{0})$, the linear term of $\Theta_{\Gamma}$ is chain homotopic to the linear term of the Viterbo restriction functor $R$.
As already mentioned there, this can be upgraded in a straightforward way to the statement that the two functors are homotopic. \par

\begin{theorem} \label{graph correspondence functor is the Viterbo functor}
	The restriction of $A_{\infty}$-functor $\Theta_{\Gamma}$ to the full sub-category $\mathcal{B}_{0}(M)$,
\begin{equation*}
\Theta_{\Gamma}: \mathcal{B}_{0}(M) \to \mathcal{W}(U).
\end{equation*}
is $A_{\infty}$-homotopic to the restriction of the Viterbo restriction functor on the full-sub-category
\begin{equation*}
R: \mathcal{B}_{0}(M) \to \mathcal{W}(U).
\end{equation*}
\end{theorem}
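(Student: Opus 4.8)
The plan is to compare the two $A_{\infty}$-functors on $\mathcal{B}_{0}(M)$ by unfolding the quilted configurations that define $\Theta_{\Gamma}$ into ordinary parametrized disks, exhibiting them as one end of a one-parameter family of moduli spaces whose other end computes the Viterbo functor $R$, and reading the $A_{\infty}$-homotopy off the codimension-one boundary of that family.

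First I would record the simplifications available on $\mathcal{B}_{0}(M)$. For $L\in\mathcal{B}_{0}(M)$ the piece $L\cap(M_{0}\setminus \mathrm{int}(U_{0}))$ is a union of trivial cylinders over the Legendrian boundary $l'=\partial L'$, so the geometric composition $L\circ\Gamma$ is literally the restriction $L'$; in particular the exact Lagrangian isotopy $L'_{t}$ and the continuation element $\lambda_{L}$ of subsection \ref{boundary of L'}--style $Y$-diagram corrections are trivial, the bounding cochain for $L\circ\Gamma$ vanishes, and both $\Theta_{\Gamma}$ and $R$ restrict to $A_{\infty}$-functors $\mathcal{B}_{0}(M)\to\mathcal{W}(U)$ with the \emph{same} map on objects $L\mapsto L'$. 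Thus it suffices to produce, for each cyclically ordered tuple $L_{0},\dots,L_{k}$ in $\mathcal{B}_{0}(M)$, maps $H^{k}\colon CW^{*}(L_{k-1},L_{k})\otimes\cdots\otimes CW^{*}(L_{0},L_{1})\to CW^{*}(L'_{0},L'_{k})$ of degree $-k$ satisfying the $A_{\infty}$-homotopy equation between $\{\Theta_{\Gamma}^{k}\}$ and $\{R^{k}\}$; the linear term $H^{1}$ is precisely the chain homotopy already constructed in \cite{Gao2}.

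Second, the geometric content. A quilted strip, or for higher terms a quilted disk appearing in the $Y$-diagram for $\Phi_{\Gamma}$ and the geometric composition map \eqref{geometric composition map}, carries a seam labelled by $\Gamma=\mathrm{graph}(i)$; the seam condition identifies the two patches via the map $i\colon U\to M$, which is exactly the map implementing the Liouville rescaling of the sub-domain used to build $R$. Pulling back the $U$-patch by $i$ and gluing it to the $M$-patch unfolds the quilt into a single disk mapping to $M$ with a \emph{moving} Lagrangian boundary condition interpolating between $L_{j}$ away from the sub-domain and the rescaled restriction $L_{j}^{\rho}$ inside the shrunk sub-domain, which is precisely the boundary condition of the moduli spaces $\mathcal{P}^{\rho}_{k+1}(x';x_{1},\dots,x_{k})$ of \eqref{moduli space of continuation disks with moving boundary conditions}; the width $\delta$ of the narrow quilt patch plays the role of the rescaling parameter $\rho$. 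I would therefore, for each small $\rho$ in the defining sequence $\rho_{n}\to0^{+}$, introduce a moduli space $\bar{\mathcal{Q}}^{\rho}_{k+1}(x';x_{1},\dots,x_{k})$ of parametrized inhomogeneous pseudoholomorphic disks whose domains vary over an auxiliary interval $[0,1]$: at parameter $1$ the Floer datum is the one defining $\mathcal{P}^{\rho}_{k+1}$, while at parameter $0$ the domain degenerates into a $\Gamma$-seamed quilted configuration which, together with the capping data of the $Y$-diagram and the identifications $\lambda_{l}\circ\mathfrak{y}_{l}=\mathrm{id}$, computes $\Theta_{\Gamma}^{k}$. Counting rigid elements of $\bar{\mathcal{Q}}^{\rho}_{k+1}$ defines $H^{k}_{\rho}$, and passing to the limit $\rho\to0^{+}$ along $\rho_{n}$ exactly as in subsections 3.5--3.7 yields $H^{k}$. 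The codimension-one boundary of the one-dimensional $\bar{\mathcal{Q}}^{\rho}_{k+1}$ consists of: the two parameter endpoints, contributing $R^{k}$ and $\Theta_{\Gamma}^{k}$; interior breakings off a positive end, contributing $H^{k-j+1}(\dots,m^{j}_{\mathcal{W}(M)},\dots)$; breakings off the negative end in $U$ glued with lower quilted/unquilted pieces, contributing $m^{j}_{\mathcal{W}(U)}(\dots,\Theta_{\Gamma}^{i'},\dots,H^{i},\dots,R^{i''},\dots)$; plus the breakings already analysed in the $A_{\infty}$-functor equations for $R$ and for $\Theta_{\Gamma}$. Reading off this boundary gives the $A_{\infty}$-homotopy identity.

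The main obstacle is the analysis of the quilt degeneration as $\delta\to0$: in general this is the strip-shrinking and figure-eight bubbling phenomenon, controlled by the removal-of-singularity and figure-eight analysis in \cite{Gao2}. For a \emph{graph} correspondence this degeneration is tame—figure-eight bubbles are constant because $\Gamma$ is a graph—so the limit is a clean unquilted disk; but one still must (i) set up the interpolating Floer data consistently with the universal and conformally consistent choices on $\bar{\mathcal{S}}$ and with the cutoff functions $\chi_{w,S,\rho}$ of subsection \ref{section: a model for multiplihedra} and the one following it, (ii) take the homotopy parameter and the limit $\rho\to0^{+}$ in a compatible order, reusing the action estimate of Lemma \ref{small parameter implies coming from U} so that all chords occurring on the $U$-side genuinely come from $U$ and the hybrid moduli spaces are compact, and (iii) verify regularity of the one-dimensional $\bar{\mathcal{Q}}^{\rho}_{k+1}$ so that the signed counts, via orientation lines, are well-defined. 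None of this requires ideas beyond combining the transversality--compactness packages already in place for $R$ (subsections 3.5--3.8), for $\Theta_{\Gamma}$ (section 5), and for the linear case in \cite{Gao2}; the higher $H^{k}$ are the $A_{\infty}$-completion of the homotopy from there.
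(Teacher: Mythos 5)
Your central geometric idea---unfolding the quilt along the seam labelled by $\Gamma=\mathrm{graph}(i)$ via the map $i$ into a single disk in $M$ with moving Lagrangian boundary conditions of the type defining $\mathcal{P}^{\rho}_{k+1}(x';x_{1},\dots,x_{k})$---is exactly the mechanism the paper uses, and your observations about the simplifications on $\mathcal{B}_{0}(M)$ (vanishing Maurer-Cartan elements, $L\circ\Gamma=L'$ on the nose, both functors landing in $\mathcal{W}(U)$) also match. But the paper organizes the comparison differently and more economically: it first replaces $\Theta_{\Gamma}$ by an auxiliary functor $\Pi_{\Gamma}$ that \emph{is} defined by a direct count of quilted maps (with cyclic elements $e_{L}$ inserted at the two positive quilted ends), proves $\Pi_{\Gamma}\simeq\Theta_{\Gamma}$ by composing with the Yoneda embedding and invoking the representability of $\Phi_{\Gamma}$, and then shows that for compatible choices of Floer data the zero-dimensional moduli spaces $\mathcal{T}_{k}(x_{1},\dots,x_{k};x')$ and $\mathcal{P}_{k+1}(x';x_{1},\dots,x_{k})$ are in bijection (Proposition \ref{bijective correspondence between the uncompactified moduli spaces}), so that $R^{k}=\Pi_{\Gamma}^{k}$ identically. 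No interpolating family and no homotopy operator $H^{k}$ are needed at that stage; the only homotopy in the whole argument is the one between $\Pi_{\Gamma}$ and $\Theta_{\Gamma}$.

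The genuine gap in your proposal is the claim that the parameter-$0$ end of your family $\bar{\mathcal{Q}}^{\rho}_{k+1}$ ``computes $\Theta_{\Gamma}^{k}$.'' The functor $\Theta_{\Gamma}$ of \eqref{functor to the immersed wrapped Fukaya category} is not given by a curve count: it is obtained by representing the module-valued functor $\Phi_{\Gamma}$ and choosing a quasi-inverse $\lambda_{l}$ of the Yoneda embedding, and $\lambda_{l}\circ\mathfrak{y}_{l}=\mathrm{id}$ holds only up to homotopy on representable modules. Making ``the quilted end computes $\Theta_{\Gamma}^{k}$'' precise requires exactly the intermediate step the paper supplies (the comparison of $\mathfrak{y}_{l}\circ\Pi_{\Gamma}$ with $\Phi_{\Gamma}$ via the geometric composition map \eqref{geometric composition map} and homological perturbation); without it, your boundary identity relates $R$ to an unnamed curve-counting functor rather than to $\Theta_{\Gamma}$. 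Relatedly, your appeal to strip-shrinking and figure-eight bubbling as $\delta\to0$ imports analysis that is neither needed nor used here: in the paper the quilted map and the unfolded disk are the \emph{same} solution for a suitably chosen Floer datum---the seam condition $u^{k}(z)=i\circ v^{k}(z)$ lets one glue the two patches at finite width, rescale by $\chi_{w,Y^{k},\rho}$, and correct by the implicit function theorem as in \eqref{the glued map}---so no degeneration of the domain occurs. The figure-eight analysis enters the paper only later, in identifying the two Maurer-Cartan elements outside $\mathcal{B}(M)$.
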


	The proof can be generalized from \cite{Gao2}, following a similar strategy.
We shall split the proof into several steps. \par

	We can extend this result to the larger sub-category $\mathcal{B}(M)$, using the arguments in subsection \ref{factorization result}.

\begin{theorem}\label{graph correspondence functor is the Viterbo functor on the sub-category}
	There is an auto-equivalence $\mathcal{G}$ of $\mathcal{W}(U)$, such that the restriction of $\Theta_{\Gamma}$ to $\mathcal{B}(M)$ is homotopic to the Viterbo restriction functor.
\end{theorem}
\begin{proof}
	Assuming Theorem \ref{graph correspondence functor is the Viterbo functor}, this follows from Proposition \ref{factorization result}.
\end{proof}

	More generally, consider the extension of the Viterbo restriction functor $R$ by the deformation $B$ introduced in section \ref{section: extending the Viterbo functor} and section \ref{section: construction of the bounding cochain}. 
We have the following result. \par

\begin{theorem} 
	The functor
\begin{equation*}
\Theta_{\Gamma}: \mathcal{W}(M) \to \mathcal{W}_{im}(U)
\end{equation*}
associated to the graph correspondence $\Gamma$ is homotopic to the generalized Viterbo restriction functor
\begin{equation*}
R_{B}: \mathcal{W}(M) \to \mathcal{W}(U; B).
\end{equation*}
\end{theorem}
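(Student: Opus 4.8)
The plan is to reduce the statement to the two results already established: Theorem~\ref{graph correspondence functor is the Viterbo functor on the sub-category}, which identifies $\Theta_\Gamma$ with the Viterbo restriction functor $R$ on $\mathcal B(M)$ (up to an auto-equivalence $\mathcal G$ of $\mathcal W(U)$), and Theorem~\ref{Viterbo restriction homomorphism homotopic to linearized cobordism homomorphism}, which identifies $R$ with the linearized cobordism homomorphism $\mathcal F$ after transporting along the $A_\infty$-homotopy equivalence $\mathcal S$. The key bridge is that all of $\Theta_\Gamma$, $R_B$, and the curved homomorphism $\mathcal F$ (including its $\mathcal F^0$-term $b_{lin}$) are manufactured from the \emph{same} count of pseudoholomorphic curves in the cobordism $W$ with boundary on $L^c$, capped in $U$; only the choice of which auxiliary deformation one records differs. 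Concretely, on the object level $\Theta_\Gamma(L)=(L\circ\Gamma,b_{L,\Gamma})$, and on the sub-category $\mathcal B(M)$ we have $b_{L,\Gamma}=0$; for general $L$ the Maurer--Cartan element $b_{L,\Gamma}$ associated to the geometric composition by Proposition~\ref{functor associated to Lagrangian correspondence} should be shown to be gauge-equivalent to $b=\mathcal S_* b_{lin}$, the Maurer--Cartan element of Theorem~\ref{generalized Viterbo functor}, via the factorization isomorphism $\lambda_L\in CW^*(L\circ\Gamma,L')$ of Proposition~\ref{factorization result}.

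First I would set up the object-level comparison. By Proposition~\ref{factorization result} the continuation element $\lambda_L$ gives an isomorphism between $L\circ\Gamma$ and $L'$ in $\mathcal W(U)$; I would show this isomorphism pushes $b_{L,\Gamma}$ to $b$, using that both are the pushforward of $0$ under curved $A_\infty$-homomorphisms built from the identical moduli spaces of curves outside $U_0$ (the curves of \eqref{capped cobordism curves of type I} with $k=0$, i.e.\ $\mathcal N_{1,m,l}(\gamma';\gamma'_1,\dots;\sigma'_1,\dots)_I$), once one accounts for the compactly-supported Lagrangian isotopy $L'_t$ from $L\circ\Gamma$ to $L'$. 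The key point is that the quilted-strip count defining $b_{L,\Gamma}$ in the product $M^-\times U$ and the SFT count defining $b_{lin}$ in $W$ are related by a neck-stretching/geometric composition argument of the same flavor as in \cite{Gao2} and in the proof of Theorem~\ref{graph correspondence functor is the Viterbo functor}: stretching the product along $\partial U\times U$ degenerates quilted strips into configurations that live in the cobordism $W$ capped in $U$. Having matched the Maurer--Cartan data object-wise, the target categories $\mathcal W(U;B)$ and the image of $\Theta_\Gamma$ coincide as full sub-categories of $\mathcal W_{im}(U)$, so both functors land in the same $A_\infty$-category.

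Next I would upgrade to a homotopy of $A_\infty$-functors. The strategy mirrors the proof of Theorem~\ref{Viterbo restriction homomorphism homotopic to linearized cobordism homomorphism}: compare the two functors after composing with the auto-equivalence $\mathcal T_B,\mathcal S_B$ framework of Section~\ref{section:algebraic preliminaries}, so that it suffices to produce an $A_\infty$-homotopy between the \emph{curved} homomorphisms before deformation. Both $\Theta_\Gamma$ (via its module-valued avatar $\Phi_\Gamma$ and the geometric composition map $gc$ of \eqref{geometric composition map}) and $R_B$ (via $\mathcal S_B\circ R\circ\mathcal T_0$, with $R$ the curved homomorphism and $R^0(1)=b$) are assembled from parametrized inhomogeneous pseudoholomorphic curves with moving Lagrangian boundary conditions interpolating between $L$-type and $L'$-type boundary data. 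I would introduce a master moduli space of such curves — one real dimension higher — whose codimension-one boundary strata recover, on one side, the configurations counted by $\Theta_\Gamma$ (quilted strips glued to capping disks/planes) and, on the other, those counted by $R$ composed with $\mathcal S$ (continuation disks with moving boundary glued to the strips of \eqref{moduli space of disks for the homotopy equivalence}); counting rigid elements of the master moduli space yields the desired $A_\infty$-homotopy operator $H^k$, with the remaining boundary strata producing the $m^k(\dots,H^j,\dots)$ and $H^{k-j+1}(\dots,m^j,\dots)$ terms of the homotopy equation, exactly as in the displayed $A_\infty$-homotopy equation at the end of the proof of Theorem~\ref{Viterbo restriction homomorphism homotopic to linearized cobordism homomorphism}.

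The main obstacle I anticipate is the bookkeeping of the two distinct auxiliary structures simultaneously: $\Theta_\Gamma$ carries the deformation via insertions of the Maurer--Cartan element $b_{L,\Gamma}$ into quilted-strip counts in the \emph{product} $M^-\times U$, whereas $R_B$ carries it via insertions of $b$ into Floer data in $U$ that arise from the $\rho\to 0^+$ degeneration of rescaled domains; showing these two deformation schemes agree requires simultaneously running the neck-stretching along $\partial U$ (to pass from the product to the cobordism picture) and the $\rho\to 0^+$ limit (to pass from quadratic Hamiltonians on $M^\rho$ to the wrapped data on $U$), and controlling the compactness of the interpolating moduli spaces when curves can escape into the negative cylindrical end $E_-$ of $W$ — precisely the place where no maximum principle holds. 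The a priori SFT energy bound of Lemma~\ref{a priori energy bound}, together with Assumption~\ref{regularity assumption on almost complex structures} ruling out bad multiply-covered breakings, and the filtration arguments of Lemma~\ref{homotopy equivalence preserves filtration} and Corollary~\ref{push-forward is well-defined}, are exactly the inputs that make this control possible, so I would lean on those rather than attempt a direct transversality analysis of the full stretched moduli space. The other, more routine, obstacle is verifying that the auto-equivalence $\mathcal G$ of Theorem~\ref{graph correspondence functor is the Viterbo functor on the sub-category} extends compatibly over the deformed categories, which should follow formally from the uniqueness (up to gauge equivalence) of pushforward Maurer--Cartan elements under $A_\infty$-homotopy equivalences.
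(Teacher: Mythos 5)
Your high-level architecture (identify the two Maurer--Cartan elements object-wise, then upgrade to a homotopy of functors) matches the paper's, but there is a genuine gap in the first step, and the second step takes a route the paper deliberately avoids.

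The gap is in your identification of the Maurer--Cartan elements. You assert that $b_{L,\Gamma}$ and $b=\mathcal S_* b_{lin}$ are ``both the pushforward of $0$ under curved $A_\infty$-homomorphisms built from the identical moduli spaces of curves outside $U_0$.'' This is not true as stated: $b_{L,\Gamma}$ is \emph{not} defined as a pushforward or as a curve count at all --- it is defined purely algebraically, as the unique solution of the cyclic-element equation \eqref{cyclic element equation}, $\sum_k n^k(b,\dots,b;e_L)=0$, in the quilted Floer theory of the product $M^-\times U$. The entire content of subsection \ref{section: identifying Maurer-Cartan elements} is to supply the missing geometric realization: one first shows (via strip-shrinking, $\delta\to 0$, in the sense of Wehrheim--Woodward and Bottman--Wehrheim) that $b_{L,\Gamma}$ is computed by a count of \emph{figure eight bubbles}; one then glues the two patches of each figure eight bubble along the graph $\Gamma$ into a single disk in $M$ with one negative puncture; and only then does neck-stretching along $\partial U$ convert that disk into a pseudoholomorphic building in $\bar{\mathcal N}(\gamma')$, i.e.\ into the curves counted in $b_{lin}$ (under Assumption \ref{regularity assumption on almost complex structures} to control the levels). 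Your proposal skips the figure-eight step entirely and starts from the conclusion; without it the claim that the two deformations come from ``the same count'' is unjustified, and the factorization element $\lambda_L$ of Proposition \ref{factorization result} cannot by itself transport an algebraically defined solution of \eqref{cyclic element equation} to an SFT count.

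On the functor level your route also diverges from the paper's in a way worth flagging. You propose a master moduli space one dimension higher interpolating between quilted maps in $M^-\times U$ and continuation disks with moving boundary in $M$, producing a homotopy operator $H^k$. The paper instead proves (Proposition \ref{bijective correspondence between the uncompactified moduli spaces}, via the gluing of subsection \ref{section: glue quilted map along the graph}) that for suitable Floer data the two moduli spaces are literally in bijection, so that $R^k=\Pi_\Gamma^k$ on the nose and no homotopy operator is needed; the only genuine homotopy is between $\Pi_\Gamma$ and $\Theta_\Gamma$, which is handled through the Yoneda embedding. Your interpolation would have to pass continuously from quilted to unquilted curves, which is exactly the strip-shrinking degeneration where figure eight bubbles appear and standard Gromov compactness fails --- the analytic difficulty the paper's direct gluing is designed to circumvent. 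Your approach is not obviously unworkable, but it re-imports the hardest analysis into the step where the paper needs none.
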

\begin{proof}
The proof follows a similar strategy, once we identify the two kinds of Maurer-Cartan elements.
This will be discussed in subsection \ref{section: identifying Maurer-Cartan elements}.
\end{proof}
	In more detailed terms, the statement of the above theorem says that for every $L \in Ob \mathcal{W}(M)$, the geometric composition $(L \circ \Gamma, b')$ is isomorphic in $\mathcal{W}(U)$ to the pair $(L', b)$ up to auto-equivalence,
where $L'$ is the completion of the restriction of $L$, and $b$ is the Maurer-Cartan element introduced in section \ref{section: extending the Viterbo functor} and constructed in section \ref{section: construction of the bounding cochain},
and that after identifying these objects, the functors $R_{B}$ and $\Theta_{\Gamma}$ are homotopic as $A_{\infty}$-functors. \par

\subsection{A different realization of the correspondence functor}
	As the first step, we replace the functor $\Theta_{\mathcal{L}}$ \eqref{functor to the immersed wrapped Fukaya category} by another one
\begin{equation}
\Pi_{\mathcal{L}}: \mathcal{W}(M) \to \mathcal{W}_{im}(N),
\end{equation}
whose definition is more straightforward than that of $\Theta_{\mathcal{L}}$,
in the sense that the its definition is by count of certain inhomogeneous pseudoholomorphic maps which immediately produces functors to $\mathcal{W}_{im}(N)$ instead of going to the category of modules.
However, the construction of $\Pi_{\mathcal{L}}$ and the proof of the fact that $\Pi_{\mathcal{L}}$ is homotopic to $\Theta_{\mathcal{L}}$ 
both rely on the proof of representability of the module-valued functor $\Phi_{\mathcal{L}}$ \eqref{module-valued functor} by $\Theta_{\mathcal{L}}$,
and in particular the Maurer-Cartan element $b$ \eqref{Maurar-Cartan element under geometric composition} for $L \circ \mathcal{L}$ associated to the triple $(L, \mathcal{L}, L \circ \mathcal{L})$, 
which is the one such that $\Theta_{\mathcal{L}}(L) = (L \circ \mathcal{L}, b)$ on objects. \par

	The construction is in \cite{Gao2}, which we briefly recall here. 
It is defined using certain moduli spaces of inhomogeneous pseudoholomorphic quilted maps.
	First we describe the domain of such a quilted map, 
which is a quilted surface consisting of two patches $(S_{0}^{k}, S_{1}^{k})$, 
where $S_{0}^{k}$ is a disk with $k+2$ boundary punctures $z_{0}^{1, +}, z_{0}^{1}, \cdots, z_{0}^{k}, z_{0}^{2, +}$ arranged in counterclockwise order on the boundary, 
and $S_{1}^{k}$ is a disk with $3$ boundary punctures $z_{1}^{1, +}, z_{1}^{0}, z_{1}^{2, +}$ arranged in clockwise order on the boundary. 
These two patches are seamed along the boundary component of $S_{0}^{k}$ between $z_{0}^{1, +}$ and $z_{0}^{2, +}$, and the boundary component of $S_{1}^{k}$ between $z_{1}^{1, +}$ and $z_{1}^{2, +}$.
The punctures $z_{0}^{j, +}, z_{1}^{j, +}$ with the same index $j$ together form a quilted puncture, 
which is a positive quilted puncture.
The strip-like ends near $z_{0}^{j, +}$ and $z_{1}^{j, +}$ together form a positive quilted end.
The punctures $z_{0}^{1}, \cdots, z_{0}^{k}$ on $S^{k}_{0}$ are positive punctures, near each of which we choose a positive strip-like end
\[
\epsilon_{0}^{k}: [0, +\infty) \times [0, 1] \to S^{k}_{0}.
\]
The puncture $z_{1}^{0}$ on $S^{k}_{1}$ is a negative puncture, near which we choose a negative strip-like end
\[
\epsilon_{1}^{0}: (-\infty, 0] \times [0, 1] \to S^{k}_{1}.
\]
The two quilted ends are positive ends. \par

\begin{remark}
	These kinds of quilted surfaces are different from the ones that are counted to define the bimodule structure maps on the quilted wrapped Floer cochain space $CW^{*}(L, \mathcal{L}, L')$.
The former counts inhomogeneous pseudoholomorphic quilted maps as above, with $k$ positive punctures on $S^{k}_{0}$ and one negative puncture on $S^{k}_{1}$, and two positive quilted punctures.
The latter counts inhomogeneous pseudoholomorphic quilted strips with additional positive punctures on both patches, 
but one quilted end is positive and the other is negative. 
\end{remark}

	Note that here the quilted surface $(S^{k}_{0}, S^{k}_{1})$ has two quilted ends, but is not a quilted strip because both ends are positive.
When defining inhomogeneous pseudoholomorphic quilted maps from such quilted surfaces, we want to think of the domains as elements in a compactified space of domains similar to the multiplihedra whose smooth parts are
\[
\mathcal{S}^{k+1} = \mathbb{R}_{+} \times \mathcal{R}^{k+1}.
\]
That is, we want to choose Floer data for these quilted maps, 
which also depend on an additional parameter $w \in \mathbb{R}_{+}$, in a way that the choices extended over a suitable compactification of the space of pairs $(w, [S^{k}_{0}, S^{k}_{1}])$.

	For this purpose, we would like to have an alternative description of the domain of such a quilted surface.
There is a canonical conformal transformation taking $S^{k}_{0}$ to the region $Y^{k}_{0}$, taking $S^{k}_{1}$ to the region $Y^{k}_{1}$ in the complex plane as shown in Figure \ref{the changed domain}.
For each $w \in \mathbb{R}_{+}$, define two regions $Y^{k}_{0} = Y^{k}_{0}(w) \subset \mathbb{R}^{2}$ and $Y^{k}_{1} = Y^{k}_{1}(w)$,
where $Y^{k}_{1}(w)$ is the region in Figure \ref{the changed domain} with a negative strip end, such that the seam boundary is a line segment in the line $Re(z) = \log w$,
and $Y^{k}_{0}(w)$ is the other region with $k$ positive strip ends with the seam boundary being the same line segment.
Of course, for every $w, w'$, if we forget the seam, the underlying regions are the same $Y^{k}(w) = Y^{k}(w')$, 
which are both conformally equivalent to a disk with $k+1$ boundary points. 
Even, each component $Y^{k}_{i}(w)$ is conformally equivalent to $Y^{k}_{i}(w')$.
But we want to keep this additional parameter $w$ when considering inhomogeneous pseudoholomorphic maps when $k \ge 2$,
to remember that we choose Floer data also depending on $w$. \par

To write down the inhomogeneous Cauchy-Riemann equations for a quilted map $(u^{k}, v^{k})$ with domain $(S^{k}_{0}, S^{k}_{1})$,
we need to choose a Floer datum, 
consisting of one-forms $(\alpha_{S^{k}_{0}}, \alpha_{S^{k}_{1}})$, 
(domain-dependent) families of Hamiltonians $(H_{S^{k}_{0}}, H_{S^{k}_{1}})$,
families of cylindrical almost complex structures $(J_{S^{k}_{0}}, J_{S^{k}_{1}})$,
as well as suitable rescaling functions.
We make these choices to depend smoothly on $(w, (S^{k}_{0}, S^{k}_{1}))$. \par

	The Lagrangian boundary conditions for $(u^{k}, v^{k})$ are as follows:
\begin{enumerate}[label=(\roman*)]

\item the boundary component of $S_{0}^{k}$ between $z_{0}^{1, +}$ and $z_{0}^{1}$ is mapped to $L_{0}$;

\item the boundary component of $S_{0}^{k}$ between $z_{0}^{j-1}$ and $z_{0}^{j}$ is mapped to $L_{j}$, for $j = 1, \cdots, k-1$;

\item the boundary component of $S_{0}^{k}$ between $z_{0}^{k}$ and $z_{0}^{2, +}$ is mapped to $L_{k}$;

\item the boundary component $\partial_{1} S^{k}_{1}$ of $S_{1}^{k}$ between $z_{1}^{1, +}$ and $z_{1}^{0}$ is mapped to the image of $L_{0} \circ \mathcal{L}$;

\item the boundary component $\partial_{2} S^{k}_{1}$ of $S_{1}^{k}$ between $z_{1}^{0}$ and $z_{1}^{2, +}$ is mapped to the image of $L_{k} \circ \mathcal{L}$;

\item the seamed boundary component is mapped to the Lagrangian correspondence $\mathcal{L}$.

\end{enumerate}
The asymptotic condition at the puncture $z_{0}^{j}$ is a generator $x_{j}$ for the wrapped Floer cochain space $CW^{*}(L_{j-1}, L_{j})$, and that at the puncture $z_{1}^{0}$ is a generator $x'$ for $CW^{*}(L_{0} \circ \mathcal{L}, L_{k} \circ \mathcal{L})$. 
The asymptotic condition over the two quilted ends are given by generalized chords $(x^{+}_{1}, y^{+}_{1})$ for $(L_{0}, \mathcal{L}, L_{0} \circ \mathcal{L})$ and $(x^{+}_{2}, y^{+}_{2})$ for $(L_{k}, \mathcal{L}, L_{k} \circ \mathcal{L})$.
For a picture, see Figure \ref{fig: the quilted map defining the restriction functor from the viewpoint of correspondence functor} for the special case $\mathcal{L} = \Gamma$.  \par
	
	There are also some other conditions that the inhomogeneous pseudoholomorphic quilted map $(u^{k}, v^{k})$ satisfies, because the geometric compositions are in general exact Lagrangian immersions. 
First, as the second patch is a map to $N$ with boundary conditions given by immersed Lagrangian submanifolds, there is a label $\alpha$ indicating whether $v^{k}$ asymptotically converges at the puncture $z_{1}^{0}$ to a self-intersection point if $L_{0} \circ \mathcal{L} = L_{k} \circ \mathcal{L}$. 
Also, if we compactify the two patches of the quilted surface by filling in the punctures $z_{0}^{1, -}, z_{0}^{1}, \cdots, z_{0}^{k}, z_{0}^{2, -}$ and $z_{1}^{1, -}, z_{1}^{0}, z_{1}^{2, -}$ and glue the quilted map by capping half disks for the asymptotic chords at those punctures, we may extend the maps $u^{k}, v^{k}$ to get a pair of continuous maps $(\bar{u}^{k}, \bar{v}^{k})$, both of which have domain being a half-disk. 
Then there is a homology class of this pair of maps, denoted by $(\beta_{0}, \beta_{1})$.
Since the geometric compositions $L_{i} \circ \mathcal{L}$ are exact cylindrical Lagrangian immersions, these homology classes are completely determined by the action of chords or the values of the primitives at the preimages of the self-intersection points of $L_{i} \circ \mathcal{L}$.
We refer the reader to \cite{Akaho-Joyce} for such details in immersed Lagrangian Floer theory, and also \cite{Gao2} for the simplified setup adapted to the wrapped Floer theory for exact cylindrical Lagrangian immersions. \par

	Now we add more more punctures to the boundary components of the second patch $S^{k}_{1}$.
There are two boundary components, one between $z_{1}^{1, -}$ and $z_{1}^{0}$, denoted by $\partial_{1} S^{k}_{1}$, 
and the other between $z_{1}^{0}$ and $z_{1}^{2, -}$, denoted by $\partial_{2} S^{k}_{1}$.
We add $l_{1}$ punctures to $\partial_{1} S^{k}_{1}$, and $l_{2}$ punctures to $\partial_{2} S^{k}_{1}$, 
which are all positive punctures near which positive strip-like ends have been chosen.
Adding these punctures further breaks the component $\partial_{i} S^{k}_{1}, i = 1, 2$ into $l_{i}+1$ components.
The new domain is denoted by $S^{k; l_{1}, l_{2}}_{1}$.
Consider a new quilted map obtained by replacing $v^{k}: S^{k}_{1} \to N$ by some new map $v^{k; l_{1}, l_{2}}: S^{k; l_{1}, l_{2}} \to N$,
such that the $l_{1}+1$ components obtained by breaking $\partial_{1} S^{k}_{1}$ are mapped to $L_{0} \circ \mathcal{L}$,
and the other $l_{2}+1$ components are mapped to $L_{k} \circ \mathcal{L}$. \par

We define asymptotic conditions near these extra punctures by requiring that the $v^{k; l_{1}, l_{2}}$ asymptotically converge to some chords $x'_{0, 1}, \cdots, x'_{0, l_{1}}$ of $L_{0} \circ \mathcal{L}$ and $x'_{k, 1}, \cdots, x'_{k, l_{2}}$ of $L_{k} \circ \mathcal{L}$.
For simplicity, denote such asymptotic conditions by 
\begin{equation}
\vec{x}'_{0} = (x'_{0, 1}, \cdots, x'_{0, l_{1}}),
\end{equation}
and
\begin{equation}
\vec{x}'_{k} = (x'_{k, 1}, \cdots, x'_{k, l_{2}}).
\end{equation} \par

	We denote by
\begin{equation}
\mathcal{T}_{k; l_{1}, l_{2}}(\alpha, \beta_{0}, \beta_{1}; x_{1}, \cdots, x_{k}; (x^{+}_{1}, y^{+}_{1}), (x^{+}_{2}, y^{+}_{2}); x'; \vec{x}'_{0}, \vec{x}'_{k})
\end{equation}
the moduli space of such inhomogeneous pseudoholomorphic quilted maps.
This has a natural Gromov compactification
\begin{equation}
\bar{\mathcal{T}}_{k; l_{1}, l_{2}}(\alpha, \beta_{0}, \beta_{1}; x_{1}, \cdots, x_{k}; (x^{+}_{1}, y^{+}_{1}), (x^{+}_{2}, y^{+}_{2}); x'; \vec{x}'_{0}, \vec{x}'_{k})
\end{equation}
obtained by adding broken quilted maps of various kinds.
When the virtual dimension is zero, we may count such quilted maps (with signs) in the setup of wrapped Floer theory for exact cylindrical Lagrangian immersions in \cite{Gao2} to define a map
\begin{equation}
\begin{split}
\Pi_{\mathcal{L}}^{k; l_{1}, l_{2}, +, +}: & CW^{*}(L_{k-1}, L_{k}) \otimes \cdots \otimes CW^{*}(L_{0}, L_{1}) \otimes CW^{*}(L_{0}, \mathcal{L}, L_{0} \circ \mathcal{L}) \otimes CW^{*}(L_{k}, \mathcal{L}, L_{k} \circ \mathcal{L}) \\
& \otimes CW^{*}(L_{0} \circ \mathcal{L})^{\otimes l_{1}} \otimes CW^{*}(L_{k} \circ \mathcal{L})^{\otimes l_{2}} \to CW^{*}(L_{0} \circ \mathcal{L}, L_{k} \circ \mathcal{L}),
\end{split}
\end{equation}
where $x_{k}, \cdots, x_{1}, (x^{+}_{1}, y^{+}_{1}), (x^{+}_{2}, y^{+}_{2}, \vec{x}'_{0}, \vec{x}'_{k}$ are the inputs and $x'$ is the output. \par

	To get the desired $A_{\infty}$-functor $\Pi_{\mathcal{L}}^{k}$, we need to insert special inputs to these maps.
First we consider a special element in the quilted wrapped Floer cochain space $CW^{*}(L, \mathcal{L}, L \circ \mathcal{L})$.
Notice that there is a natural bijective correspondence between generators for $CW^{*}(L, \mathcal{L}, L \circ \mathcal{L})$ and generators for $CW^{*}(L \circ \mathcal{L})$,
in view that the fiber product of the product Lagrangian immersion $L \times (L \circ \mathcal{L})$ with $\mathcal{L}$
\[
\mathcal{L} \times_{\id_{L} \times \iota} (L \times (L \circ \mathcal{L}))
\]
 is canonically identified with the self fiber-product
\[
(L \circ \mathcal{L}) \times_{\iota} (L \circ \mathcal{L}) = \{(q_{1}, q_{2}) \in (L \circ \mathcal{L}) \times (L \circ \mathcal{L}): \iota(q_{1}) = \iota(q_{2})\},
\]
where $\iota: L \circ \mathcal{L} \to N$ is the immersion.
The latter cochain space $CW^{*}(L \circ \mathcal{L})$ has a special element, given by the the unique minimum of the chosen Morse function on $L \circ \mathcal{L}$,
identified with the main component of the self fiber-product $(L \circ \mathcal{L}) \times_{\iota} (L \circ \mathcal{L})$.
For non-constant Hamiltonian chords contained in the cylindrical end of $N$ (where $\iota: L \circ \mathcal{N}$ is a discrete covering of a Lagrangian embedding), 
we see that a generalized chord for $(L, \mathcal{L}, L \circ \mathcal{L})$ corresponds to a unique chord for $L \circ \mathcal{L}$,
by interpreting generalized chords as points of the fiber product for perturbed Lagrangians, and chords as intersection points of a Lagrangian with its Hamiltonian perturbation.
Under the bijective correspondence, this element of $CW^{*}(L \circ \mathcal{L})$ corresponds to a distinguished generator of $CW^{*}(L, \mathcal{L}, L \circ \mathcal{L})$, which is this special element 
\[
e_{L} \in CW^{*}(L, \mathcal{L}, L \circ \mathcal{L}).
\]
The element $e_{L}$ is a called {\it cyclic element} in the sense of \cite{Fukaya2},
which is adapted to $A_{\infty}$-categories over $\mathbb{Z}$ or $\mathbb{K}$ with filtrations in \cite{Gao2}.
It has the following algebraic consequence: there exists a canonical and unique Maurer-Cartan element $b = b_{L, \mathcal{L}} \in CW^{*}(L \circ \mathcal{L})$ for $L \circ \mathcal{L}$, 
which lives in the positive part of the filtration on the wrapped Floer cochain space,
 such that the following equation holds:
\begin{equation}\label{cyclic element equation}
\sum_{k=0}^{\infty} n^{k}(b, \cdots b; e_{L}) = 0,
\end{equation}
where
\begin{equation}\label{curved module structure on quilted Floer complex}
n^{k}: CW^{*}(L \circ \mathcal{L})^{\otimes k} \otimes CW^{*}(L, \mathcal{L}, L \circ \mathcal{L}) \to CW^{*}(L, \mathcal{L}, L \circ \mathcal{L}), \hspace{0.5cm} k = 0, 1, 2, \cdots
\end{equation}
are the curved $A_{\infty}$-module structure maps of $CW^{*}(L, \mathcal{L}, L \circ \mathcal{L})$, 
as a left-module over the curved $A_{\infty}$-algebra $CW^{*}(L \circ \mathcal{L})$. \par

Let $e_{0} \in CW^{*}(L_{0}, \mathcal{L}, L_{0} \circ \mathcal{L})$ and $e_{k} \in CW^{*}(L_{k}, \mathcal{L}, L_{k} \circ \mathcal{L})$ be the cyclic elements,
and $b_{0} \in CW^{*}(L_{0} \circ \mathcal{L})$ and $b_{k} \in CW^{*}(L_{k} \circ \mathcal{L})$ be the Maurer-Cartan elements determined by them.
Define a sequence of maps
\begin{equation}\label{A-infinity functor Pi}
\Pi^{k}_{\mathcal{L}}: CW^{*}(L_{k-1}, L_{k}) \otimes \cdots \otimes CW^{*}(L_{0}, L_{1}) \to CW^{*}(L_{0} \circ \mathcal{L}, L_{k} \circ \mathcal{L}) = CW^{*}((L_{0} \circ \mathcal{L}, b_{0}), (L_{k} \circ \mathcal{L}, b_{k})),
\end{equation}
by the formula
\begin{equation}
\Pi^{k}_{\mathcal{L}}(x_{k}, \cdots, x_{1})
=  \sum_{l_{1}, l_{2} \ge 0} \Pi_{\mathcal{L}}^{k; l_{1}, l_{2}, +, +}(x_{k}, \cdots, x_{1}; e_{0}, e_{k}; \underbrace{b_{0}, \cdots, b_{0}}_{l_{1} \text{ times }}, \underbrace{b_{k}, \cdots, b_{k}}_{l_{2} \text{ times }})
\end{equation}
This is a finite sum because the Maurer-Cartan elements live in the positive part of the filtration of the wrapped Floer cochain spaces,
and the filtrations on the wrapped Floer cochain spaces are bounded above. \par

Let us briefly explain why this defines an $A_{\infty}$-functor. 
We consider the first order map
\[
\Pi^{1}_{\mathcal{L}}: CW^{*}(L_{0}, L_{1}) \to CW^{*}((L_{0} \circ \mathcal{L}, b_{0}), (L_{1} \circ \mathcal{L}, b_{1})),
\]
and we want to prove that it is a chain map with respect to the $(b_{0}, b_{1})$-deformed differential on $CW^{*}(L_{0} \circ \mathcal{L}, L_{1} \circ \mathcal{L})$.
The deformed differential is obtained by inserting the Maurer-Cartan elements $b_{0}, b_{1}$ into the curved $A_{\infty}$-bimodule structure maps
\[
n^{k_{1}, k_{2}}: CW^{*}(L_{0} \circ \mathcal{L})^{\otimes k_{0}} CW^{*}(L_{0} \circ \mathcal{L}, L_{1} \circ \mathcal{L}) \otimes CW^{*}(L_{1} \circ \mathcal{L})^{\otimes k_{1}} \to CW^{*}(L_{0} \circ \mathcal{L}, L_{1} \circ \mathcal{L}),
\]
in the following way
\begin{equation}\label{deformed bimodule structure map}
d_{b_{0}, b_{1}}(x') = \sum_{k_{1}, k_{2} \ge 0} n^{k_{0}, k_{1}}(\underbrace{b_{0}, \cdots, b_{0}}_{k_{1} \text{ times }}, x', \underbrace{b_{1}, \cdots, b_{1}}_{k_{2} \text{ times }}).
\end{equation}
The fact that $d_{b_{0}, b_{1}}^{2} = 0$ is the key observation due to \cite{FOOO1} to define Floer cohomology in general case.
To prove that $\Pi^{1}_{\mathcal{L}}$ is a chain map, we consider possible degenerations of a one-dimensional family of such quilted maps in the moduli space $\mathcal{T}_{1; l_{1}, l_{2}}(\alpha, \beta_{0}, \beta_{1}; x; (x^{+}_{1}, y^{+}_{1}); x'; \vec{x}'_{0}, \vec{x}'_{k})$.
There are six possible types of degenerations:
\begin{enumerate}[label=(\roman*)]

\item Inhomogeneous pseudoholomorphic strips escaping to the end of $z_{0}^{1}$ where the original quilted maps asymptotically converge to $x$. 
These contribute to the Floer differential in $CW^{*}(L_{0}, L_{1})$

\item Inhomogeneous pseudoholomorphic strips escaping to the end of $z_{1}^{0}$, 
also taking away $k_{1}$ punctures from the $l_{1}$ extra punctures, and $k_{2}$ punctures from the $l_{2}$ extra punctures to the new disk component. 
These contribute to the $(k_{1}, k_{2})$-th curved bimodule structure map $n^{k_{1}, k_{2}}$ on $CW^{*}(L_{0} \circ \mathcal{L}, L_{1} \circ \mathcal{L})$

\item Inhomogeneous pseudoholomorphic disks breaking on the boundary component $\partial_{1}S^{1}_{1}$, taking some punctures from the $l_{1}$ extra punctures to the new disk component.
These contribute to the curved $A_{\infty}$-structure maps on $CW^{*}(L_{0} \circ \mathcal{L})$

\item Inhomogeneous pseudoholomorphic disks breaking on the boundary component $\partial_{2}S^{1}_{1}$, taking some punctures from the $l_{2}$ extra punctures to the new disk component.
These contribute to the curved $A_{\infty}$-structure maps on $CW^{*}(L_{k} \circ \mathcal{L})$

\item Inhomogeneous pseudoholomorphic quilted strips escaping to the first quilted end, 
taking away $k_{1}$ punctures from the $l_{1}$ extra punctures to the new disk component.
These contribute to the curved module maps $n^{l_{1}}$  \eqref{curved module structure on quilted Floer complex} on the quilted wrapped Floer cochain space $CW^{*}(L_{0}, \mathcal{L}, L_{0} \circ \mathcal{L})$.

\item Inhomogeneous pseudoholomorphic quilted strips escaping to the second quilted end, 
taking away $k_{2}$ punctures from the $l_{2}$ extra punctures to the new disk component.
These contribute to the curved module maps $n^{l_{2}}$  \eqref{curved module structure on quilted Floer complex} on the quilted wrapped Floer cochain space $CW^{*}(L_{k}, \mathcal{L}, L_{k} \circ \mathcal{L})$.

\end{enumerate}
For (ii), if we insert $b_{0}$ and $b_{1}$ into $n^{k_{1}, k_{2}}$, that is exactly the $(b_{0}, b_{1})$-deformed differential \eqref{deformed bimodule structure map}. \\
For (iii), if we insert $b_{0}$ to the $k_{1}$ inputs, the sum of these contributions is
\[
\sum_{k_{1} \ge 0} m^{k_{1}}(b_{0}, \cdots, b_{0}) = 0,
\]
because $b_{0}$ is a Maurer-Cartan element. The same for (iv). \\
For (v), if we insert $b_{0}$ to the $k_{1}$ inputs, and insert $e_{0}$ to the input of the quilted wrapped Floer cochain space, the sum of these contributions is
\[
\sum_{k_{1} \ge 0} n^{k_{1}}(b_{0}, \cdots, b_{0}; e_{0}) = 0,
\]
because $e_{0}$ is a cyclic element and $b_{0}$ is the unique solution to the equation \eqref{cyclic element equation}. 
The same for (vi). \par

The same kind of argument carries over to higher order terms. 
But we also need to explain how there can be a type of degenerations that contributes to a term of the form
\[
m^{s}_{\mathcal{W}_{im}(N)}(\Pi_{\mathcal{L}}^{k_{s}}(\cdots), \cdots, \Pi_{\mathcal{L}}^{k_{1}}(\cdots)).
\]
For $k \ge 2$, there is an extra moduli parameter $w \in \mathbb{R}_{+}$ in addition to the moduli of quilted surfaces.
There are some additional types of broken inhomogeneous pseudoholomorphic quilted maps added to the boundary of the compactification 
\[
\bar{\mathcal{T}}_{k; l_{1}, l_{2}}(\alpha, \beta_{0}, \beta_{1}; x_{1}, \cdots, x_{k}; (x^{+}_{1}, y^{+}_{1}), (x^{+}_{2}, y^{+}_{2}); x'; \vec{x}'_{0}, \vec{x}'_{k}),
\]
which appear as $w \to 0$ or as $w \to +\infty$.
As $w \to 0$, the limit of a sequence of maps is a broken quilted map, consisting of a usual pseudoholomorphic disk in $M$,
and a quilted map with one positive strip-like end and one negative strip-like end.
As $w \to +\infty$, the limit of a sequence of maps is a broken quilted map, consisting of several quilted maps of the same type,
and a usual inhomogeneous pseudoholomorphic disk in $N$.

For more details, we refer to \cite{Gao2}. \par

\begin{proposition}
	The $A_{\infty}$-functor
\begin{equation*}
\Pi_{\mathcal{L}}: \mathcal{W}(M) \to \mathcal{W}_{im}(N)
\end{equation*}
is homotopic to $\Theta_{\mathcal{L}}$.
\end{proposition}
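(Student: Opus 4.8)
The plan is to compare the two functors after composing with the left Yoneda embedding $\mathfrak{y}_{l}: \mathcal{W}_{im}(N) \to \mathcal{W}_{im}(N)^{l-mod}$. Since $\mathfrak{y}_{l}$ is cohomologically full and faithful, a homotopy equivalence of the composites $\mathfrak{y}_{l} \circ \Pi_{\mathcal{L}}$ and $\mathfrak{y}_{l} \circ \Theta_{\mathcal{L}}$ as $A_{\infty}$-functors $\mathcal{W}(M) \to \mathcal{W}_{im}(N)^{l-mod}$ will imply that $\Pi_{\mathcal{L}}$ and $\Theta_{\mathcal{L}}$ are themselves homotopic. By construction $\Theta_{\mathcal{L}}$ is homotopic to $\lambda_{l} \circ \Phi_{\mathcal{L}}$, so $\mathfrak{y}_{l} \circ \Theta_{\mathcal{L}}$ is homotopic to $\Phi_{\mathcal{L}}$; the relevant homotopy equivalence is the one induced by the geometric composition map \eqref{geometric composition map}, which the construction recalled in section \ref{section: construction of functors} already provides in a form functorial in $\mathcal{W}(M)$. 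Thus the problem reduces to producing a functorial module homotopy equivalence between $\mathfrak{y}_{l} \circ \Pi_{\mathcal{L}}$ and $\Phi_{\mathcal{L}}$.

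To build this equivalence I would introduce an interpolating moduli space of inhomogeneous pseudoholomorphic quilted maps whose domains deform between the quilted surfaces $(S^{k}_{0}, S^{k}_{1})$ used to define $\Pi^{k}_{\mathcal{L}}$ and the configurations obtained by gluing a quilted strip — of the kind counted to define the bimodule structure maps on $CW^{*}(L, \mathcal{L}, L')$, and hence the operations of $\Phi_{\mathcal{L}}$ — to a $Y$-diagram of the type from \cite{Lekili-Lipyanskiy} defining the geometric composition map $gc$. The mechanism is that the two positive quilted ends of $(S^{k}_{0}, S^{k}_{1})$ play asymmetric roles once the distinguished cyclic elements $e_{0}, e_{k}$ and the Maurer-Cartan elements $b_{0}, b_{k}$ are inserted as in \eqref{A-infinity functor Pi}: one end records, after these insertions, exactly the data of the left $\mathcal{W}_{im}(N)$-module $\Phi_{\mathcal{L}}(L_{0})$ together with its image under $gc$, while the other end is where the output lives, i.e. where the Yoneda module $\mathfrak{y}_{l}(L_{k} \circ \mathcal{L}, b_{k})$ is evaluated. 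Degenerating the extra modulus $w \in \mathbb{R}_{+}$ of the quilted domain as $w \to 0$ and $w \to +\infty$ — the same degeneration already analyzed at the end of the construction of $\Pi_{\mathcal{L}}$ — exhibits the structure maps of $\Phi_{\mathcal{L}}$ and the maps $gc^{k,l}$ on one family of boundary strata, and the structure maps of $\mathfrak{y}_{l} \circ \Pi_{\mathcal{L}}$ on the other; counting rigid points of the one-dimensional interpolating moduli spaces, with the usual insertions of $e_{0}, e_{k}, b_{0}, b_{k}$, yields the components of a pre-homotopy.

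The remaining boundary strata are those of the standard types already encountered when proving $\Pi_{\mathcal{L}}$ is an $A_{\infty}$-functor: bubbling on the boundary components mapped to $L_{i} \circ \mathcal{L}$, quilted strips escaping to a quilted end carrying a cyclic element, and breaking at the interior punctures asymptotic to the $x_{j}$. The first two cancel in pairs by the Maurer-Cartan equations for $b_{0}, b_{k}$ and the cyclic-element equation \eqref{cyclic element equation} that defines them, exactly as in the verification that $\Pi_{\mathcal{L}}$ is a functor; the third assembles into the $A_{\infty}$-homotopy equation relating $\mathfrak{y}_{l} \circ \Pi_{\mathcal{L}}$ and $\Phi_{\mathcal{L}}$. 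Combining the resulting module homotopy equivalence with the one coming from $gc$ and invoking full faithfulness of $\mathfrak{y}_{l}$ gives the claimed homotopy $\Pi_{\mathcal{L}} \simeq \Theta_{\mathcal{L}}$.

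The hard part will be the compactness and transversality analysis of the interpolating moduli space: one has to enumerate all ways the seam, the modulus $w$, and the various strip-like and quilted ends can simultaneously degenerate, and to check — in the setup of wrapped Floer theory for exact cylindrical Lagrangian immersions developed in \cite{Gao2}, so with the bookkeeping of homology classes $(\beta_{0}, \beta_{1})$ and self-intersection labels $\alpha$ — that every stratum not contributing to the homotopy equation is cancelled by the Maurer-Cartan and cyclic-element relations. A secondary technical point is to confirm that the bijection between generalized chords for $(L, \mathcal{L}, L \circ \mathcal{L})$ and chords for $L \circ \mathcal{L}$, via the fiber-product description recalled above, is respected throughout the interpolation, so that the identification of asymptotic data on the two ends stays consistent.
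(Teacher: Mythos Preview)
Your overall strategy matches the paper's: compose both functors with the Yoneda embedding $\mathfrak{y}_{l}$, identify $\mathfrak{y}_{l}\circ\Theta_{\mathcal{L}}$ with $\Phi_{\mathcal{L}}$ via the geometric composition map, and then argue that $\mathfrak{y}_{l}\circ\Pi_{\mathcal{L}}$ is also homotopic to $\Phi_{\mathcal{L}}$. The difference lies in how this last step is carried out. You propose to build an explicit interpolating family of quilted moduli spaces and analyze all its boundary strata to produce the $A_{\infty}$-homotopy directly. The paper instead takes a shortcut: it cites a result (Proposition~7.29 of \cite{Gao2}) establishing that the first order chain map of $\mathfrak{y}_{l}\circ\Pi_{\mathcal{L}}$ is chain homotopic to $\Phi_{\mathcal{L}}^{1}$, and then invokes the homological perturbation lemma to upgrade this to a homotopy of $A_{\infty}$-functors.

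Your approach is more self-contained and geometrically explicit, and is in the spirit of how the paper handles related comparisons elsewhere (e.g.\ the $A_{\infty}$-homotopy in Theorem~\ref{Viterbo restriction homomorphism homotopic to linearized cobordism homomorphism}); it buys you an actual formula for the homotopy in terms of curve counts, at the cost of the full compactness/transversality analysis you flag. The paper's approach is much shorter but leans on an external reference for the linear term and on an abstract algebraic lemma for the extension; note that the applicability of the homological perturbation lemma here is not spelled out and is not automatic for arbitrary pairs of $A_{\infty}$-functors with homotopic linear parts, so some additional structure (e.g.\ that the homotopy at the linear level is itself geometric and extends, or that one is comparing to a representable module) is implicitly being used. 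One minor imprecision in your outline: the interpolation parameter for the homotopy should be a \emph{new} modulus on top of the $w\in\mathbb{R}_{+}$ already present in the definition of $\Pi_{\mathcal{L}}$, rather than $w$ itself; the $w$-degenerations you describe are what make $\Pi_{\mathcal{L}}$ an $A_{\infty}$-functor, not what realizes the homotopy to $\Phi_{\mathcal{L}}$.
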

\begin{proof}
	The proof is based on the proof of the fact that $\Phi_{\mathcal{L}}$ is representable. The idea is to compose these two $A_{\infty}$-functors with the Yoneda embedding
\begin{equation*}
\mathfrak{y}_{l}: \mathcal{W}_{im}(N) \to \mathcal{W}_{im}(N)^{l-mod},
\end{equation*}
and compare the two module-valued functors $\mathfrak{y}_{l} \circ \Pi_{\mathcal{L}}$ and $\mathfrak{y}_{l} \circ \Theta_{\mathcal{L}}$. 
The second one is canonically homotopic to $\Phi_{\mathcal{L}}$.

For the first one, the first order chain map is studied in Proposition 7.29 of \cite{Gao2}, which is shown to be chain homotopic to the first order chain map $\Phi_{\mathcal{L}}^{1}$ of the functor $\Phi_{\mathcal{L}}$.
It follows from homological perturbation lemma that these two $A_{\infty}$-functors are homotopic.
\end{proof}

	We want to point out that this definition is different from the deformation of a curved $A_{\infty}$-functor \eqref{general formula for deformation of the curved A-infinity functor},
given concretely by the formula \eqref{deformation of the curved A-infinity functor}.
There we insert the Maurer-Cartan elements from the source category in the formulas of the curved $A_{\infty}$-functor.
For $\Pi_{\mathcal{L}}$, the Maurer-Cartan elements from the source category are $0$, 
but we insert Maurer-Cartan elements from the target category to a sequence of operations $\Pi_{\mathcal{L}}^{k; l_{1}, l_{2}, +, +}$ defined using the moduli spaces
\[
\mathcal{T}_{k; l_{1}, l_{2}}(\alpha, \beta_{0}, \beta_{1}; x_{1}, \cdots, x_{k}; (x^{+}_{1}, y^{+}_{1}), (x^{+}_{2}, y^{+}_{2}); x'; \vec{x}'_{0}, \vec{x}'_{k}).
\]
These operations in general do not form a curved $A_{\infty}$-functor, but satisfy other algebraic relations,
which reduce to the $A_{\infty}$-functor equations with insertion of the Maurer-Cartan elements and cyclic elements. \par

The reason is that we have implicitly use the homotopy \eqref{geometric composition map} between $\Phi_{\mathcal{L}}$ and $\mathfrak{y}_{l} \circ \Theta_{\mathcal{L}}$,
whose definition involves deformations of various curved $A_{\infty}$-structures by Maurer-Cartan elements from the target category.
For example, there is the deformation of the Yoneda functor for the curved $A_{\infty}$-category
\[
\mathfrak{y}_{l}: \mathcal{W}_{im}^{pre}(N) \to \mathcal{W}_{im}^{pre}(N)^{l-mod}
\]
to an $A_{\infty}$-functor
\[
\mathfrak{y}_{l}: \mathcal{W}_{im}(N) \to \mathcal{W}_{im}(N)^{l-mod}
\]
by Maurer-Cartan elements.
Also, there is the deformation of the curved $A_{\infty}$-module structure on $CW^{*}(L, \mathcal{L}, L')$ over the curved $A_{\infty}$-algebra $CW^{*}(L')$,
to an $A_{\infty}$-module structure on $CW^{*}(L, \mathcal{L}, (L', b'))$ over the $b'$-deformed $A_{\infty}$-algebra $CW^{*}(L', b')$.
In addition, the module-valued functor $\Phi_{\mathcal{L}}$ is induced by the deformation on the module structures, as described in the formula \eqref{deformed module valued functor}. \par

	Now let us focus on the specific case where $M = M, N = U, \mathcal{L} = \Gamma$. 
We first consider the full sub-category $\mathcal{B}_{0}(M)$, such that for each $L \in \ob \mathcal{B}_{0}(M)$, the geometric composition $L \circ \Gamma$ is the completion $L'$ of the restriction of $L$ to the subdomain. 
For the $A_{\infty}$-functor $\Pi_{\Gamma}$, if we restrict it to $\mathcal{B}_{0}(M)$, we may simplify the geometric data of elements in the moduli spaces 
\[
\mathcal{T}_{k; l_{1}, l_{2}}(\alpha, \beta_{0}, \beta_{1}; x_{1}, \cdots, x_{k}; (x^{+}_{1}, y^{+}_{1}), (x^{+}_{2}, y^{+}_{2}); x'; \vec{x}'_{0}, \vec{x}'_{k})
\]
as follows.
Restricted to the sub-category $\mathcal{B}_{0}(M)$, $\Theta_{\Gamma}$ in fact takes values in the full sub-category $\mathcal{W}(U)$,
so that all the boundary conditions for the quilted inhomogeneous pseudoholomorphic maps are embedded Lagrangian submanifolds. 
In this case, $\alpha$ is an empty condition, and $\beta_{0}, \beta_{1}$ are uniquely determined by the action of the input and output chords,
and there are no self-intersection points, because all the Lagrangians are embedded and exact. 
More importantly, the Maurer-Cartan elements are zero, because each cyclic element $e_{L}$ for $L \in \ob \mathcal{B}_{0}(M)$ satisfies
\[
n^{0}(e_{L}) = 0,
\]
because $L \circ \Gamma$ is precisely $L'$ the completion of the restriction of $L$ to $U_{0}$,
and 
This implies that $b = 0$ is a solution (hence the unique solution) to the equation \eqref{cyclic element equation}.
\par
	For the above reasons, we may only consider those moduli spaces where $l_{1} = l_{2} = 0$, 
and with asymptotic conditions over the quilted ends being the generalized chords defining the cyclic elements $e_{L_{0}}, e_{L_{k}}$.
The resulting moduli space can be simply written as
\begin{equation}\label{the moduli space of quilted maps defining the graph correspondence functor Pi}
\mathcal{T}_{k}(x_{1}, \cdots, x_{k}; x'),
\end{equation} 
and its compactification
\[
\bar{\mathcal{T}}_{k}(x_{1}, \cdots, x_{k}; x').
\]
Since all the Lagrangian submanifolds appearing as boundary conditions for the quilted maps in this moduli space are embedded and exact,
we may indeed use classical transversality argument to prove that such moduli spaces are smooth manifolds when the dimension are $0$ and $1$. \par

\begin{lemma}\label{vanishing of zeroth term}
	Suppose Assumption \ref{strong exactness assumption} holds.
Then if $k=0$, the moduli space $\mathcal{T}_{0}(x')$ is empty.
\end{lemma}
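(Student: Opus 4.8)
The plan is to run an action (energy) estimate to rule out the existence of any quilted map with no input punctures. The key point is that the moduli space $\mathcal{T}_{0}(x')$ consists of quilted maps $(u^{0},v^{0})$ with $k=0$, which have domain a quilted surface with two positive quilted ends (asymptotic to the cyclic-element generalized chords for $(L_{0},\Gamma,L_{0}\circ\Gamma)$ and $(L_{k},\Gamma,L_{k}\circ\Gamma)$, which here coincide since $L_{0}=L_{k}$ when $k=0$), and one negative end on the second patch asymptotic to the output chord $x'$ for $CW^{*}(L_{0}\circ\Gamma,L_{0}\circ\Gamma)=CW^{*}(L',L')$. Note that when $k=0$ the Lagrangian correspondence picture collapses: the relevant geometric composition $L\circ\Gamma=L'$ is exactly the restriction, and both quilted ends carry the distinguished chord $e_{L}$ realized by the minimum of the Morse function on $L'$. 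Under Assumption \ref{strong exactness assumption} the primitive $f_{L'}$ is globally constant near $\partial U$, so the action of any Hamiltonian chord on $\partial U$ contains no positive contribution from the primitive terms.

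First I would compute the topological energy of such a quilted map via the action--energy identity adapted to quilted strips (this is the quilted analogue of \eqref{action-energy identity}), which expresses $E^{top}$ as the action of the asymptotic chords at the negative end minus the actions at the positive quilted ends, with appropriate signs; concretely $E^{top}(u^{0},v^{0}) = \mathcal{A}(x') - \mathcal{A}(e_{L_{0}}) - \mathcal{A}(e_{L_{k}})$ up to the standard rescaling factors. Next I would invoke the two facts that (i) $e_{L_{0}}=e_{L_{k}}$ is the minimum of the Morse function on $L'$, whose action can be arranged to be a small positive number by choosing the primitive carefully (as in the discussion preceding Lemma \ref{small parameter implies coming from U}), and (ii) Gromov compactness forces $E^{top}\ge 0$, in fact $E^{geo}\ge 0$ with equality only for constant maps. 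Since there is no input puncture, the domain is already stable only if the map is non-constant, so a non-constant such map must carry strictly positive geometric energy. Combining with the action bound and Assumption \ref{strong exactness assumption} (which eliminates any chord of positive action on $\partial U$ between components of $l'$, by the argument in Lemma \ref{strong exactness implies non-existence of disks with a single output}), I would conclude that the required action balance cannot be satisfied: the output $x'$ would need action at least $2\mathcal{A}(e_{L})>0$, but any such $x'$ appearing with a non-constant quilted map forces a contradiction once we account for the moving Lagrangian boundary conditions and the fact that the non-strongly-exact mechanism producing $\mathcal{F}^{0}$ is precisely switched off.

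The main obstacle I anticipate is bookkeeping the correct sign conventions and rescaling factors in the action--energy identity for \emph{quilted} surfaces with two positive quilted ends — this is not a standard shape (most quilted-strip identities have one positive and one negative quilted end), so I would have to verify the Stokes' theorem computation directly on the domain $(S^{0}_{0},S^{0}_{1})$, being careful about the seam contribution along the Lagrangian correspondence $\Gamma$ (which is exact, so contributes a boundary term controlled by the primitive of $\Gamma$) and about the capping homology classes $\beta_{0},\beta_{1}$. A secondary subtlety is confirming that under Assumption \ref{strong exactness assumption} the only generalized chord realizing $e_{L}$ over a quilted end is the one supported in the compact part where $L\circ\Gamma$ agrees with $L'$, so that its action is genuinely the small Morse-theoretic value rather than something involving a boundary Reeb chord; this follows from the action estimates on chords for $\Gamma$ already carried out in the previous section, but needs to be cited precisely. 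Once these two points are pinned down, the emptiness of $\mathcal{T}_{0}(x')$ is immediate: either the quilted map is constant — impossible because the two positive quilted ends carry chords while there is no input to absorb, so the domain cannot degenerate to a point — or it is non-constant with positive energy, contradicting the action balance.
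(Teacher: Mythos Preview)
Your proposal is correct and follows essentially the same action/energy argument as the paper: a non-constant quilted map forces the action of the output $x'$ to exceed a uniform positive lower bound $\epsilon$, while under the strong exactness assumption one can choose the chain model for $CW^{*}(L')$ so that every chord has action below $\epsilon/2$, yielding the contradiction. The paper states this more directly and does not carry out the quilted Stokes computation you flag as an obstacle (it simply asserts the energy--action inequality); note also that there are no moving Lagrangian boundary conditions in $\mathcal{T}_{0}(x')$ --- those appear only in the $\mathcal{P}$-moduli spaces for the Viterbo functor --- so that phrase in your closing step should be dropped.
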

\begin{proof}
	An element of $\mathcal{T}_{0}(x')$, if existed, would be represented by an inhomogeneous pseudoholomorphic quilted map $(u^{0}, v^{0})$ with asymptotic conditions given by $e_{L}$ over the two quilted ends, and $x'$ over a negative end on the second patch $S^{0}_{1}$ of the domain.
For the map to be non-constant, it has to achieve some positive energy uniformly bounded below by some $\epsilon > 0$ that only depends on the geometry of $M, L, U, L'$ and the almost complex structures, but not on each individual curve.
Thus action of $x'$ is at least $\epsilon$.
On the other hand, under Assumption \ref{strong exactness assumption}, there cannot exist chords of positive action between connected components of the boundary $l' = \partial L'$, 
so that we can choose a chain model for $CW^{*}(L')$ by careful choice of a primitive to make all the chords have action smaller than $\frac{1}{2} \epsilon$. 
This is a contradiction.
\end{proof}

Still suppose the Lagrangians $L_{0}, \cdots, L_{k}$ are in the sub-category $\mathcal{B}_{0}(M)$.
Then the codimension-one boundary strata of the moduli space $\mathcal{T}_{k}(x_{1}, \cdots, x_{k}; x')$ consist of a union of product moduli spaces of the following form:
\begin{equation} \label{boundary strata of the moduli space defining the restriction functor associated to the graph correspondence}
\begin{split}
& \partial \bar{\mathcal{T}}_{k}(x_{1}, \cdots, x_{k}; x')\\
& \cong \coprod \bar{\mathcal{T}}_{k}(x_{1}, \cdots, x_{i, new}, \cdots, x_{k}; x') \times \bar{\mathcal{M}}(x_{i, new}, x_{i})\\
& \cup \coprod \bar{\mathcal{T}}_{k_{1}}(x_{1}, \cdots, x_{i}, x_{new}, x_{i+k_{2}+1}, \cdots, x_{k}; x') \times \bar{\mathcal{M}}_{k_{2}+1}(x_{new}, x_{i+1}, \cdots, x_{i+k_{2}})\\
& \cup \coprod_{k_{1} + \cdots +k_{s} = k} \bar{\mathcal{T}}_{k_{1}}(x_{1}, \cdots, x_{k_{1}}; x'_{1}) \times \cdots \times \bar{\mathcal{T}}_{k_{s}}(x_{k_{s-1}+1}, \cdots, x_{k}; x'_{s}) \times \bar{\mathcal{M}}_{s+1}(x', x'_{1}, \cdots, x'_{s}).
\end{split}
\end{equation}
It follows that maps $\{\Pi_{\Gamma}^{k}\}$ satisfy the $A_{\infty}$-functor equations and therefore define an $A_{\infty}$-functor
\begin{equation}
	\Pi_{\Gamma}: \mathcal{B}_{0}(M) \to \mathcal{W}(U).
\end{equation}

\subsection{Constructing a parametrized map from a quilted map}\label{section: glue quilted map along the graph}
	The second step is to turn any inhomogeneous pseudoholomorphic quilted map $(u^{k}, v^{k})$ representing an element in the moduli space $\bar{\mathcal{T}}_{k}(x_{1}, \cdots, x_{k}; x')$ (or rather only smooth elements) to an inhomogeneous pseudoholomorphic map with moving boundary conditions (more specifically called a continuation disk with moving boundary conditions) that is used to define the Viterbo restriction functor. That is, we shall prove the following proposition. \par

\begin{proposition} \label{bijective correspondence between the uncompactified moduli spaces}
	Suppose the virtual dimension of $\mathcal{T}_{k}(x_{1}, \cdots, x_{k}; x')$ is zero. Also, suppose that we have chosne Floer data generically so that the compactified moduli spaces $\bar{\mathcal{T}}_{k}(x_{1}, \cdots, x_{k}; x')$ and $\bar{\mathcal{P}}_{k+1}(x'; x_{1}, \cdots, x_{k})$ are regular in all strata. 
Then, it is possible to make a specific choice of Floer data among generic choices (keeping transversality for the relevant moduli spaces), 
such that there is a natural bijective correspondence between the uncompactified moduli space $\mathcal{T}_{k}(x_{1}, \cdots, x_{k}; x')$ of inhomogeneous pseudoholomorphic quilted maps, and the uncompactied moduli space $\mathcal{P}_{k+1}(x', x_{1}, \cdots, x_{k})$ of continuation disks with moving boundary conditions.
\end{proposition}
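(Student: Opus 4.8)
The plan is to realize the correspondence by \emph{folding a quilted map along the graph}. Since $\Gamma$ is the graph of the map $i\colon U\to M$, the seam condition $(u^{k}(\zeta),v^{k}(\zeta'))\in\Gamma$ for a quilted map $(u^{k},v^{k})$ is exactly the identity $u^{k}(\zeta)=i(v^{k}(\zeta'))$ along the seam. First I would set up the domain correspondence. Using the model $(Y^{k}_{0}(w),Y^{k}_{1}(w))$ in which the seam is the line segment $\{\operatorname{Re}z=\log w\}$, gluing the two patches along that segment produces a single region $Y^{k}(w)$ conformally equivalent to a $(k+1)$-punctured disk, and the assignment $(w,(S^{k}_{0},S^{k}_{1}))\mapsto(w,Y^{k}(w))$ identifies the parameter space of quilted domains, together with its $\mathbb{R}_{+}$-factor, with the space $\mathcal{S}^{k+1}=\mathbb{R}_{+}\times\mathcal{R}^{k+1}$ of §\ref{section: a model for multiplihedra}. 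The two former quilted punctures become the boundary points where the seam segment meets $\partial Y^{k}(w)$; these are \emph{not} punctures of $Y^{k}(w)$, because the asymptotic conditions there are the cyclic elements $e_{L_{0}},e_{L_{k}}$ (geometrically the minimum of the Morse function on $L_{i}\circ\Gamma$, equivalently the diagonal component of the self fiber product), whose quilted ends are removable. This is precisely why the quilted surfaces were given their stated shape in §8.1.

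Next I would choose the universal Floer data for the Viterbo moduli spaces $\bar{\mathcal{P}}_{k+1}$ \emph{along this identification}: on the part of $Y^{k}(w)$ coming from $Y^{k}_{0}(w)$ the data restrict to the Floer datum of the first patch, so that $\chi_{w,S,\rho}\equiv 1$ there and the equation is the $(H_{M},J_{M})$-equation with boundary on the $L_{j}$; on the part coming from $Y^{k}_{1}(w)$ the data restrict to the transport via $i\circ\psi^{\rho}_{U}$ of the Floer datum of the second patch, so that $\chi_{w,S,\rho}\equiv\rho$ there and the equation is the rescaled $(H^{\rho},J^{\rho})$-equation with boundary on $L^{\rho}_{j}=i\circ\psi^{\rho}_{U}(L'_{j})$; and the transition of $\chi_{w,S,\rho}$ from $\rho$ to $1$ is confined to a thin collar of the seam segment. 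The rescaling $\psi^{\rho}_{U}$ is the one of \eqref{pullback Hamiltonian flow}--\eqref{Hamiltonian on the rescaled domain} intertwining $H_{U}$ with $\rho H_{U}\circ\psi^{1/\rho}_{U}$, which trivializes as $\rho\to 0^{+}$; in that regime the inner region of a Viterbo map is simply $U$ with its own data read through $i^{-1}$. Granting such a choice, folding sends $(w,(S^{k}_{0},S^{k}_{1}),u^{k},v^{k})$ to $(w,Y^{k}(w),\tilde u)$ with $\tilde u=u^{k}$ on $Y^{k}_{0}(w)$ and $\tilde u=i\circ\psi^{\rho}_{U}\circ v^{k}$ on $Y^{k}_{1}(w)$; the two pieces agree along the seam by the seam condition, $\tilde u$ solves \eqref{inhomogeneous Cauchy-Riemann equation for higher order maps of the Viterbo restriction map} with the moving boundary condition \eqref{moving Lagrangian boundary condition for the inhomogeneous pseudoholomorphic map defining the Viterbo map}, and the asymptotics match the prescribed ones ($i\circ\psi^{\rho}_{U}\circ x'$ at the negative end, $x_{1},\dots,x_{k}$ at the positive ends). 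This defines the map $\mathcal{T}_{k}(x_{1},\dots,x_{k};x')\to\mathcal{P}^{\rho}_{k+1}(x';x_{1},\dots,x_{k})$, compatibly with the inclusions $F^{\ge 1-\frac{1}{\rho}}$, hence a map to the limit $\mathcal{P}_{k+1}(x',x_{1},\dots,x_{k})$.

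Injectivity is immediate: restricting $\tilde u$ to the two halves of $Y^{k}(w)$ and applying $\psi^{1/\rho}_{U}\circ i^{-1}$ on the inner half recovers $(u^{k},v^{k})$ uniquely. For surjectivity I would take an arbitrary Viterbo solution, cut along the locus where $\chi_{w,S,\rho}$ leaves $\{\rho,1\}$ (which, by the choice of data, lies in the collar of the seam), note that the outer piece is automatically an $(H_{M},J_{M})$-holomorphic disk with boundary on the $L_{j}$, and that the inner piece, transported by $\psi^{1/\rho}_{U}\circ i^{-1}$, is an $(H_{U},J_{U})$-holomorphic map into $U$ with boundary on the $L'_{j}$ and seam on $\Gamma$; here I would invoke the action estimates of Lemma \ref{small parameter implies coming from U} and its multi-input analogue to guarantee, for $\rho$ small, that the inner piece genuinely comes from $U$ and that $\tilde u$ does not stray into regions where the cutting is ambiguous. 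Finally, since the linearized operator of the Viterbo equation on $Y^{k}(w)$ is obtained by gluing the linearized operators of the two patch equations along the seam, folding is compatible with the deformation theory, so choosing the (compatible) data generically on both sides makes both zero-dimensional moduli spaces transversally cut out and the bijection respects the counts.

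The main obstacle, and the step I would spend the most care on, is the simultaneous construction of the Floer data: one needs a single universal, conformally consistent family over $\bar{\mathcal{S}}=\bigcup_{k}\bar{\mathcal{S}}^{k+1}$ that is admissible for the Viterbo moduli spaces, pulls back under folding to an admissible universal family for the compactified quilted moduli spaces $\bar{\mathcal{T}}_{k}$ -- which forces the multiplihedron strata \eqref{type I boundary}--\eqref{type II boundary} to match the degeneration strata \eqref{boundary strata of the moduli space defining the restriction functor associated to the graph correspondence} of the quilted moduli -- and correctly encodes, near the seam, both the rescaling $\psi^{\rho}_{U}$ and the cyclic-element asymptotics at the two collapsed quilted ends, all while staying generic enough for transversality. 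This is carried out by induction on $k$ and on the strata of $\bar{\mathcal{S}}^{k+1}$, extending the data from lower strata using the quilted and Viterbo data already fixed there; the $\rho\to 0^{+}$ limit is then taken exactly as in the construction of the Viterbo functor, so that the bijection with the uncompactified $\mathcal{P}_{k+1}$ follows from the bijections with the $\mathcal{P}^{\rho}_{k+1}$ for small $\rho$.
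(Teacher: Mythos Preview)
Your folding idea is exactly the paper's starting point, but there is a genuine analytic gap in the way you carry it out. You write $\tilde u = u^{k}$ on $Y^{k}_{0}(w)$ and $\tilde u = i\circ\psi^{\rho}_{U}\circ v^{k}$ on $Y^{k}_{1}(w)$, with constant $\rho$, and claim these agree along the seam by the seam condition. They do not: the seam condition is $u^{k}(z)=i(v^{k}(z))$, whereas your formula on the inner patch gives $i\circ\psi^{\rho}_{U}\circ v^{k}(z)$, and these differ unless $\rho=1$. To get a continuous map one is forced, as the paper does, to use the \emph{variable} rescaling $i\circ\psi_{U}^{\chi_{w,Y^{k},\rho}(z)}\circ v^{k}(z)$, with $\chi_{w,Y^{k},\rho}\equiv 1$ near the seam and $\chi_{w,Y^{k},\rho}\equiv\rho$ far into $Y^{k}_{1}$. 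But then differentiating this composite produces an extra term proportional to $d\chi_{w,Y^{k},\rho}$, and the resulting map $\bar w^{k}$ \emph{fails} to satisfy the inhomogeneous Cauchy--Riemann equation \eqref{inhomogeneous Cauchy-Riemann equation for higher order maps of the Viterbo restriction map} in the transition region. No choice of Floer data on $Y^{k}$ can make this go away: the Viterbo side genuinely needs the interpolating function to vary, while the quilted side has no such variation built in.

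The paper's resolution of this is the step you are missing. It treats the folded map $\bar w^{k}$ only as an \emph{approximate} solution, records explicitly the two sources of failure (the equation is only imposed on the interiors of the patches, and the $d\chi$-term), and then invokes an implicit function theorem in a suitable $W^{k,p}$-space to find the unique genuine solution $w^{k}$ closest to $\bar w^{k}$. It is this perturbation lemma (stated and proved for $k=1$ in \cite{Gao2}, Lemma 8.21, and asserted to go through verbatim here because the problem is local) that furnishes the map $\mathcal{T}_{k}\to\mathcal{P}_{k+1}$, and its uniqueness clause is what gives the bijection. Your injectivity and surjectivity arguments by cutting and un-rescaling have the same defect in reverse: cutting a Viterbo solution along the collar does not produce honest solutions to the patch equations. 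You should replace the claim that $\tilde u$ solves the equation with the approximate-solution-plus-IFT argument; the rest of your outline (domain identification via $\mathcal{S}^{k+1}$, removability of the quilted ends at the cyclic elements, matching of boundary strata) is in line with the paper.
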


	To prove this, we first recall the picture of such a quilted map in Figure \ref{fig: the quilted map defining the restriction functor from the viewpoint of correspondence functor}.

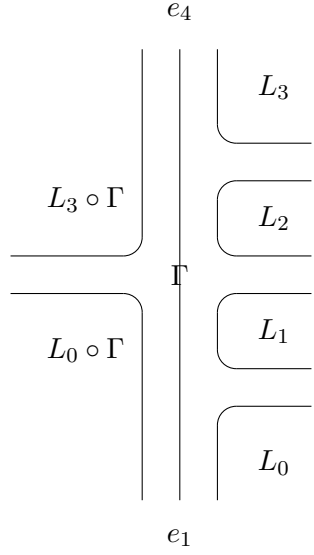
\begin{figure}
\begin{tikzpicture}

	\draw (2.75, 3) -- (2.75, 2);
	\draw (2.75, 2) arc (180:270:0.25cm);
	\draw (4, 1.75) -- (3, 1.75);
	
	\draw (4, 1.25) -- (3, 1.25);
	\draw (3, 1.25) arc (90:180:0.25cm);
	\draw (2.75, 1) -- (2.75, 0.5);
	\draw (2.75, 0.5) arc (180:270:0.25cm);
	\draw (3, 0.25) -- (4, 0.25);
	
	\draw (4, -0.25) -- (3, -0.25);
	\draw (3, -0.25) arc (90:180:0.25cm);
	\draw (2.75, -0.5) -- (2.75, -1);
	\draw (2.75, -1) arc (180:270:0.25cm);
	\draw (3, -1.25) -- (4, -1.25);
	
	\draw (4, -1.75) -- (3, -1.75);
	\draw (3, -1.75) arc (90:180:0.25cm);
	\draw (2.75, -2) -- (2.75, -3);
	
	\draw (2.25, 3) -- (2.25, -3);
	
	\draw (1.75, 3) -- (1.75, 0.5);
	\draw (1.75, 0.5) arc (360:270:0.25cm);
	\draw (1.5, 0.25) -- (0, 0.25);
	
	\draw (1.75, -3) -- (1.75, -0.5);
	\draw (1.75, -0.5) arc (0:90:0.25cm);
	\draw (1.5, -0.25) -- (0, -0.25);

	\draw (3.5, 2.5) node {$L_{3}$};
	\draw (3.5, 0.75) node {$L_{2}$};
	\draw (3.5, -0.75) node {$L_{1}$};
	\draw (3.5, -2.5) node {$L_{0}$};
	
	\draw (2.25, 0) node {$\Gamma$};
	
	\draw (1, 1) node {$L_{3} \circ \Gamma$};
	\draw (1, -1) node {$L_{0} \circ \Gamma$};
	
	\draw (2.25, 3.5) node {$e_{4}$};
	\draw (2.25, -3.5) node {$e_{1}$};

\end{tikzpicture}
\centering
\caption{the quilted map defining the graph correspondence functor $\Pi_{\Gamma}$}\label{fig: the quilted map defining the restriction functor from the viewpoint of correspondence functor}
\end{figure}

	Note that over the two quilted strip-like ends, the quilted map $(u^{k}, v^{k})$ asymptotically converges to the cyclic element $e_{L} \in CW^{*}(L, \Gamma, L \circ \Gamma)$. 
In this case, $L \circ \Gamma$ is embedded, and the Maurer-Cartan element $b=b_{L} = 0$, 
so that $(L \circ \Gamma, 0)$ is an object of $\mathcal{W}_{im}(U)$.
The wrapped Floer cochain space $CW^{*}(L \circ \Gamma)$ is defined using a Morse-Bott chain model, which is a graded free $\mathbb{K}$-module generated by critical points of an auxiliary Morse function $f$ on $L \circ \Gamma$, 
as well as non-constant time-one $H_{U}$-chords from $L \circ \Gamma$ to itself that are contained in the cylindrical end $\partial U \times [1, +\infty)$. 
The unit is given by the minimum of $f$. 
The image of this minimum is contained in the interior part $U_{0}$ of $U$, where the Morse-Bott Hamiltonian $H_{U}$ vanishes. \par
	Consider the Morse-Bott chain model for the wrapped Floer cochain space $CW^{*}(L)$ defined by a Hamiltonian which is zero inside $M_{0}$, depends on the radial coordinate $r$ on $\partial M \times [1, +\infty)$, and is quadratic for $r \ge 1 + \epsilon$ for some $\epsilon > 0$. 
When considering inhomogeneous pseudoholomorphic maps, we do not need to perturb Hamiltonians, and can just perturb almost complex structures in a domain-dependent way to achieve transversality.
The same is true when considering inhomogeneous pseudoholomorphic quilted maps.
To be precise, let us choose domain-dependent families of Hamiltonians $(H_{S_{0}^{k}}, H_{S_{1}^{k}})$ for every quilted surface $(S_{0}^{k}, S_{1}^{k})$ which depend smoothly on $(w, [S^{k}_{0}, S^{k}_{1}])$ where $w \in \mathbb{R}_{+}$,
with the following properties
\begin{enumerate}[label=(\roman*)]

\item The Hamiltonian vector field of $H_{S_{0}^{k}}$ vanishes near $z_{0}^{1, +}$ and $z_{0}^{2, +}$;

\item The Hamiltonian vector field of $H_{S_{1}^{k}}$ vanishes near $z_{1}^{1, +}$ and $z_{1}^{2, +}$;

\end{enumerate}
Properties (i) and (ii) make sense because each quilted map $(u^{k}, v^{k})$ is required to asymptotically converge to the cyclic element $e_{L}$ near the quilted strip-like end, 
whose image is contained in the compact domain $M_{0} \times U_{0}$ in the product manifold $M^{-} \times N$, 
where both of the chosen Hamiltonians $H_{M}$ and $H_{U}$ have vanishing Hamiltonian vector field. 
If we require as usual that the families of Hamiltonians $H_{S_{0}^{k}}$ and $H_{S_{1}^{k}}$ agree with suitable rescalings of $H_{M}$ and $H_{U}$ near these quilted ends,
then this automatically implies that the families vanish near the ends.
The moduli space $\mathcal{T}_{k}(x_{1}, \cdots, x_{k}; x')$  consists of quilted maps $(u^{k}, v^{k})$ satisfying the inhomogenous Cauchy-Riemann equations with respect to the families $(H_{S_{0}^{k}}, H_{S_{1}^{k}})$ as described above. \par
	For such choices of Floer data for $(w, (S^{k}_{0}, S^{k}_{1}))$, and given an inhomogeneous pseudoholomorphic quilted map $(u^{k}, v^{k})$ as above,
we shall construct an inhomogeneous pseudoholomorphic map to $M$ satisfying a parametrized inhomogeneous Cauchy-Riemann equation with moving Lagrangian boundary condition, via the following gluing-type construction. 
As the Hamiltonians $(H_{S_{0}^{k}}, H_{S_{1}^{k}})$ vanishes near the quilted ends, the asymptotic conditions over the quilted ends have image being constant in $(M, U)$. 

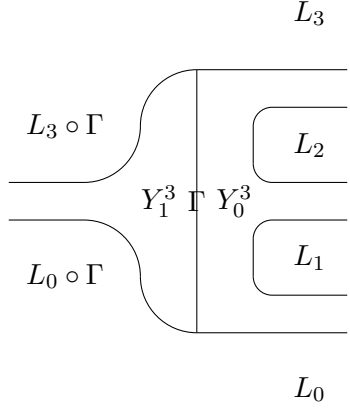
\begin{figure}
\begin{tikzpicture}

	\draw (2, 1.75) -- (4, 1.75);
	\draw (2, 1.75) arc (90:180:0.75cm);
	\draw (1.25, 1) arc (360:270:0.75cm);
	\draw (-0.5, 0.25) -- (0.5, 0.25);
	
	\draw (3, 1.25) -- (4, 1.25);
	\draw (3, 1.25) arc (90:180:0.25cm);
	\draw (2.75, 1) -- (2.75, 0.5);
	\draw (2.75, 0.5) arc (180:270:0.25cm);
	\draw (3, 0.25) -- (4, 0.25);
	
	\draw (3, -0.25) -- (4, -0.25);
	\draw (3, -0.25) arc (90:180:0.25cm);
	\draw (2.75, -0.5) -- (2.75, -1);
	\draw (2.75, -1) arc (180:270:0.25cm);
	\draw (3, -1.25) -- (4, -1.25);
	
	\draw (2, -1.75) -- (4, -1.75);
	\draw (2, -1.75) arc (270:180:0.75cm);
	\draw (1.25, -1) arc (0:90:0.75cm);
	\draw (-0.5, -0.25) -- (0.5, -0.25);
	
	\draw (2, 1.75) -- (2, -1.75);

	\draw (3.5, 2.5) node {$L_{3}$};
	\draw (3.5, 0.75) node {$L_{2}$};
	\draw (3.5, -0.75) node {$L_{1}$};
	\draw (3.5, -2.5) node {$L_{0}$};
	
	\draw (2, 0) node {$\Gamma$};
	
	\draw (0.25, 1) node {$L_{3} \circ \Gamma$};
	\draw (0.25, -1) node {$L_{0} \circ \Gamma$};

	\draw (2.5, 0) node {$Y_{0}^{3}$};
	\draw (1.5, 0) node {$Y_{1}^{3}$};

\end{tikzpicture}
\centering
\caption{The domains $Y_{0}^{k}, Y_{1}^{k}$ after conformal transformation} \label{the changed domain}
\end{figure}

	Let $\rho \in (0, 1]$ be sufficiently small such that the chord $x'$ in $U$ is contained in the sub-level set $U_{0} \cup \partial U \times [1, \frac{1}{\rho}]$.
From the given maps $u^{k}: S^{k}_{0} \to M$ and $v^{k}: S^{k}_{1} \to U$, 
we want to 'glue' them together in a suitable way to obtain a map from $Y^{k}$ to $M$ satisfying an equation of the form \eqref{inhomogeneous Cauchy-Riemann equation for higher order maps of the Viterbo restriction map},
where $Y^{k}$ is the region in the complex plane shown in Figure \ref{the changed domain} by forgetting the seam line,
which is conformally equivalent to a disk with $k+1$ boundary punctures with chosen strip-like ends.
After applying conformal transformations, we may regard $u^{k}$ as a map $u^{k}: Y^{k}_{0} \to M$ and $v^{k}$ as $v^{k}: Y^{k}_{1} \to U$. \par

First we need a family of smooth rescaling functions 
 \begin{equation}\label{rescaling function for Y}
 \chi_{w, Y^{k}, \rho}: Y^{k} \to [\rho, 1].
 \end{equation}
depending on $w \in \mathbb{R}_{+}$ and the underlying domain $Y^{k}$,
which can be defined in a way similar to \eqref{rescaling cutoff function}, 
and the generalization to the family of functions \eqref{rescaling cutoff function for general surface depending on a parameter w} on a disk with several boundary punctures.
Fix a choice of the smooth function $\chi_{\rho}$ \eqref{rescaling cutoff function}, and suppose that it increases from $\rho$ to $1$ in some interval $[s_{0}, s_{1}]$ for some fixed $s_{0}, s_{1}$.
Think of $Y^{k}$ as a region in $\mathbb{R}^{2}$, with an additional choice of a seam line segment $Re(z) = \log w$ dividing the region into two pieces $Y^{k}_{0} = Y^{k}_{0}(w)$ and $Y^{k}_{1} = Y^{k}_{1}(w)$, as show in Figure \ref{the changed domain}.
Define a function 
\begin{equation}
\chi_{w, Y^{k}, \rho}: Y^{k} \to [\rho, 1].
\end{equation}
 by
 \begin{equation}
 \chi_{w, Y^{k}, \rho}(z) = \chi_{\rho}(s + s_{1} + 1 - \log w)
 \end{equation}
 where $s = Re(z)$ is the real part of the complex coordinate $z$ on $\mathbb{R}^{2}$.
 The function $\chi_{w, Y^{k}, \rho}$ is equal to $\rho$ when $s = Re(z) \le s_{0} - s_{1} + \log w - 1$, and equals $1$ when $s = Re(z) \ge \log w - 1$.
This family of functions satisfies the desired conditions for a function $\chi_{w, S, \rho}$ \eqref{rescaling cutoff function for general surface depending on a parameter w}, where $S = Y^{k}$.
In particular,  if $w \to 0$, the region where the function $\chi_{w, Y^{k}, \rho}$ changes from $\rho$ to $1$ escapes to the $0$-th strip-like end of $Y^{k}$.
 And if $w \to +\infty$, the region where the function $\chi_{w, Y^{k}, \rho}$ changes from $\rho$ to $1$ escape to the positive strip-like ends of $Y^{k}$.

 Using this function, we define a map from $Y^{k}$ to $M$, depending on $(w, S^{k}_{0}, S^{k}_{1})$ and the quilted map $(u^{k}, v^{k})$, by
 \begin{equation}\label{the glued map}
 \bar{w}^{k} = 
 \begin{cases}
 u^{k}, & \text{ if } z \in Y^{k}_{0}(w), \\
 i \circ \psi_{U}^{\chi_{w, Y^{k}, \rho}} \circ v^{k}, & \text{ if } z \in Y^{k}_{1}(w).
 \end{cases}
 \end{equation}
 The first component $u^{k} = \psi_{M}^{\chi_{w, Y^{k}, \rho}} \circ u^{k}$
 Since the matching condition $\Gamma$ for the quilt is the graph of $i: U \to M$, we have that 
\[
u^{k}(z) = i \circ v^{k}(z) \text{ if } z \text{ lies on the seam}.
\]
 The map $\bar{w}^{k}$ defined above is clearly continuous as  $\chi_{w, Y^{k}, \rho} = 1$ near the seam $Re(z) = \log w$,
 and is smooth by the smoothness of the map $i$. \par

To write down the inhomogeneous Cauchy-Riemann equation, we also need a one-form $\alpha_{Y^{k}}$ on $Y^{k}$, 
a family of Hamiltonians parametrized by points of $Y^{k}$,
and a family of almost complex structures on parametrized by points of $Y^{k}$. \par

First, given the one-forms $\alpha_{S^{k}_{0}}$ on $S^{k}_{0}$ and $\alpha_{S^{k}_{1}}$ on $S^{k}_{1}$, we have one-forms $\alpha_{Y^{k}_{0}}$ on $Y^{k}_{0}$ and $\alpha_{Y^{k}_{1}}$ on $Y^{k}_{1}$.
We construct a one-form $\alpha_{Y^{k}}$ on $Y^{k}$ as follows.
\begin{equation}
\alpha_{Y^{k}, z} =
\begin{cases}
\alpha_{Y^{k}_{0}, z}, & \text{ if } z \in Y^{k}_{0}, \\
\alpha_{Y^{k}_{1}, z}, & \text{ if } z \in Y^{k}_{1}.
\end{cases}
\end{equation}

Next, define a family of Hamiltonians $H_{Y^{k}}^{\rho}$ as follows.
The family $H_{S^{k}_{0}}$ of Hamiltonians on $M$ defines a family $H_{Y^{k}_{0}}$ of Hamiltonians on $M$ under the natural conformal transformation identifying $S^{k}_{0}$ with $Y^{k}_{0}$,
and the family $H_{S^{k}_{1}}$ of Hamiltonians on $U$ defines a family $H_{Y^{k}_{1}}$ of Hamiltonians on $U$ in a similar way.
The choices of these families also depend smoothly on $w$.
For each fixed $z \in Y^{k}_{1} = Y^{k}_{1}$, we get a Hamiltonian $H_{Y^{k}_{1}, z}$ on $U$, 
and use it to define a Hamiltonian on $M$ in the same way as \eqref{Hamiltonian on the rescaled domain} using the rescaling factor $\chi_{w, Y^{k}, \rho}(z)$, that is
\begin{equation}
H_{Y^{k}_{1}, z}^{\chi_{w, Y^{k}, \rho}(z)} =
\begin{cases}
\chi_{w, Y^{k}, \rho}(z) (\psi^{\frac{1}{\chi_{w, Y^{k}, \rho}(z)}})^{*} H_{Y^{k}_{1}}(z), & \text{ on } \psi_{M}^{\chi_{w, Y^{k}, \rho}(z)}(U_{0}), \\
H_{1, \chi_{w, Y^{k}, \rho}(z)}, & \text{ on the collar neighborhood } \partial U \times (\chi_{Y^{k}, \rho}(z), 1), \\
H_{Y^{k}_{0}, \pi(z)} + \frac{1}{\chi_{w, Y^{k}, \rho}(z)}, & \text{ outside } U_{0}.
\end{cases}
\end{equation}
Here $\pi(z)$ is the horizontal projection from $Y^{k}_{1} \subset \mathbb{R}^{2}$ to the seam.
Now we define
\begin{equation}
H_{w, Y^{k}, z}^{\rho} = H_{Y^{k}, z}^{\chi_{w, Y^{k}_{1}, \rho}(z)} = 
\begin{cases}
H_{Y^{k}_{0}, z} = H_{Y^{k}_{0}, z}^{\chi_{w, Y^{k}, \rho}(z)}, & \text{ if } z \in Y^{k}_{0},\\
H_{Y^{k}_{1}, z}^{\chi_{w, Y^{k}, \rho}(z)}, & \text{ if } z \in Y^{k}_{1}.
\end{cases}
\end{equation}

We can also construct a family of almost complex structures $J_{Y^{k}}^{\rho}$ in a similar way, using the function $\chi_{w, Y^{k}, \rho}$ \eqref{rescaling function for Y}.
Recall that for $\rho \in (0, 1]$, the almost complex structure $J^{\rho}$ satisfies
\begin{enumerate}[label=(\roman*)]

\item $J^{1} = J_{M}$;

\item There is a $\rho_{0} > 0$ such that for $\rho \le \rho_{0}$, $J^{\rho}|_{\psi_{M}^{\rho}(U_{0})} = (\psi_{M}^{\rho})_{*}J_{U}|_{\psi_{M}^{\rho}(U_{0})}$, 
and $J^{\rho}$ is of contact type in a neighborhood of $\partial U \times \{\rho\} \subset \partial U \times [\rho, 1]$.

\end{enumerate}
For each $z$, we get a number $\chi_{w, Y^{k}, \rho}(z) \in [\rho, 1]$, and we can construct an almost complex structure $J^{\chi_{w, Y^{k}, \rho}(z)}_{w, Y^{k}, z}$ in a similar way. 
For $z \in Y^{k}_{0} = Y^{k}_{0}(w)$, the number $\chi_{w, Y^{k}, \rho}(z)  = 1$, so the family is just $J_{Y^{k}_{0}, z}$.
\par

However, the map $\bar{w}^{k}$ \eqref{the glued map} might not satisfy the desired inhomogeneous Cauchy-Riemann equation, 
because
\begin{enumerate}[label=(\roman*)]

\item the inhomogeneous Cauchy-Riemann equations for the quilted map $(u^{k}, v^{k})$ only hold in the interior of the patches $S_{0}^{k}$ and $S_{1}^{k}$ of the quilted surface;

\item The map $\bar{w}^{k}$ defined in \eqref{the glued map} depends on the function $\chi_{w, Y^{k}, \rho}$, so that the differential $d \bar{w}^{k}$ has an extra term contributed from the differential of $\chi_{w, Y^{k}, \rho}$

\end{enumerate}
For the first problem, since the map $i$ respects the inhomogeneous Cauchy-Riemann equations in $M$ and $U$, 
we see that the Cauchy-Riemann equation can be extended over the seam.
For the second problem, we show that there is a way of perturbing such a map to a map that satisfies the desired Cauchy-Riemann equation:

\begin{lemma}
	Given any $\bar{w}^{k}$ obtained from a quilted map $(u^{k}, v^{k})$ as above, there is a unique map $w^{k}$ closest to $\bar{w}^{k}$ in some $W^{k, p}$-space, which satisfies the following inhomogeneous Cauchy-Riemann equation:
\begin{equation}
(d w^{k} - X_{H_{w, Y^{k}, z}^{\rho}}(w^{k}) \otimes \alpha_{Y^{k}}) + J_{Z^{k}} \circ (d w^{k} - X_{H_{w, Y^{k}, z}^{\rho}}(w^{k}) \otimes \alpha_{Y^{k}}) \circ j = 0.
\end{equation}
\end{lemma}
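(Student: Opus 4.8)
The plan is to solve for $w^k$ by a standard implicit function theorem / Newton--Picard argument, treating $\bar{w}^k$ as an approximate solution whose failure to be holomorphic is supported in a small region and is quantitatively small. First I would make precise in what sense $\bar{w}^k$ is an approximate solution: writing $\bar{\partial}_{\rho}(u) = (du - X_{H^{\rho}_{w,Y^k,z}}(u)\otimes\alpha_{Y^k}) + J^{\rho}_{Y^k}\circ(du - X_{H^{\rho}_{w,Y^k,z}}(u)\otimes\alpha_{Y^k})\circ j$ for the relevant nonlinear operator, I would compute $\bar{\partial}_{\rho}(\bar{w}^k)$ and observe that on $Y^k_0(w)$ it vanishes identically (since there $u^k$ solves its equation and $\chi_{w,Y^k,\rho}\equiv 1$), and on the part of $Y^k_1(w)$ where $\chi_{w,Y^k,\rho}\equiv\rho$ is constant it also vanishes (since $i\circ\psi_U^{\rho}\circ v^k$ solves the rescaled equation \eqref{rescaled equation}, using that $i$ and $\psi_U^{\rho}$ intertwine the Floer data as in \eqref{rescaled homomorphic strip}). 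The only region where $\bar{\partial}_{\rho}(\bar{w}^k)\neq 0$ is the transition strip where $d\chi_{w,Y^k,\rho}\neq 0$; there the error is a pointwise-controlled multiple of $d\chi_{w,Y^k,\rho}$ times bounded geometric quantities (the $w$-derivative of the rescaling data and of $i$), hence its $L^p$-norm is bounded by a constant depending only on the background geometry and the width of the transition region.

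Next I would set up the linearized operator $D_{\bar{w}^k}$ at $\bar{w}^k$ acting on sections of $(\bar{w}^k)^*TM$ with the appropriate moving Lagrangian boundary conditions \eqref{moving Lagrangian boundary condition for the inhomogeneous pseudoholomorphic map defining the Viterbo map} and weighted Sobolev norms adapted to the strip-like ends, and observe that it is uniformly invertible (right-invertible, with uniformly bounded right inverse) for the generic choice of Floer data making the moduli spaces $\mathcal{P}_{k+1}(x';x_1,\dots,x_k)$ and $\mathcal{T}_k(x_1,\dots,x_k;x')$ regular. This uniformity is where the genericity hypothesis of Proposition \ref{bijective correspondence between the uncompactified moduli spaces} is used; for $\bar{w}^k$ built from a regular quilted map the linearized quilted operator is surjective, and the gluing/deformation identifies it (up to uniformly small error) with $D_{\bar{w}^k}$. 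Then a quantitative implicit function theorem --- e.g. the Newton iteration scheme in the form used in \cite{FOOO1} or \cite{Seidel} --- produces, provided the error $\|\bar{\partial}_{\rho}(\bar{w}^k)\|_{L^p}$ is smaller than the quadratic-estimate threshold (which we arrange by taking the transition region of $\chi_{w,Y^k,\rho}$ appropriately, or equivalently choosing $\rho$ and the Floer data in a compatible way), a unique genuine solution $w^k$ in a small $W^{k,p}$-ball around $\bar{w}^k$. Uniqueness of the nearest solution is built into the contraction-mapping setup.

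The two points flagged in the statement --- that the quilted equations only hold on the open patches, and that $d\bar{w}^k$ carries an extra $d\chi_{w,Y^k,\rho}$ term --- are handled respectively by the observation that $i:U\to M$ is an exact symplectic embedding intertwining the almost complex structures and Hamiltonian data (so the Cauchy--Riemann equation extends smoothly across the seam, as already noted before the lemma in the excerpt), and by absorbing the $d\chi_{w,Y^k,\rho}$-term into the error $\bar{\partial}_{\rho}(\bar{w}^k)$ which we just bounded. I would also need to check that the boundary condition $w^k(z)\in\psi_M^{\lambda_{Y^k}(z)}L_j^{\chi_{w,Y^k,\rho}(z)}$ is exactly matched by $\bar{w}^k$: on $Y^k_0$ this is the condition for $u^k$, and on $Y^k_1$ the boundary of $v^k$ lies on $L_j'$ (resp.\ $L_j'\circ\Gamma$), whose image under $i\circ\psi_U^{\chi}$ is exactly $L_j^{\chi}$ by the definition of $L^{\rho}$ and of the map $i$ --- so the Newton iteration can be run inside the space of maps satisfying the boundary condition, using a right inverse that preserves those boundary conditions.

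The main obstacle I expect is the \emph{uniformity} of the right inverse of the linearized operator as the quilted map $(u^k,v^k)$ and the parameter $w$ vary, together with the uniform quadratic estimate: one must ensure the implicit function theorem applies with constants independent of the particular element of $\mathcal{T}_k(x_1,\dots,x_k;x')$, so that the construction is genuinely a bijection of moduli spaces and not merely a local statement near one map. This requires a compactness input (the moduli space $\bar{\mathcal{T}}_k$ is compact in dimension zero, and all relevant maps have energy bounded by the action-energy identity \eqref{action-energy identity}) to reduce to finitely many maps, after which uniformity is automatic; but verifying that the transition-region error and the right-inverse bound can be simultaneously controlled --- i.e.\ that one can choose the width of the $\chi_{w,Y^k,\rho}$-transition small relative to the injectivity radius of $D_{\bar{w}^k}$ uniformly --- is the technical heart of the argument. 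The reverse map (from $\mathcal{P}_{k+1}$ back to $\mathcal{T}_k$) is constructed symmetrically, by un-gluing along the seam using $i^{-1}$ on the region where the parametrized disk lies in $\psi_M^{\chi}(U_0)$, and the two constructions are mutually inverse by the uniqueness clause; I would remark on this only briefly since it is formally identical.
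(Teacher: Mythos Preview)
Your approach is correct and is precisely what the paper intends: the paper's own proof is two sentences saying ``this is a standard argument by the implicit function theorem'' together with a reference to Lemma 8.21 of \cite{Gao2} for the case $k=1$, noting that the problem is local. You have written out in detail the scheme the paper only gestures at---the error computation supported on the $d\chi_{w,Y^k,\rho}$ transition region, the linearized operator, the Newton--Picard iteration, and the boundary-condition check---so there is nothing to correct; your discussion of uniformity and the reverse ungluing in fact goes beyond what is needed for the lemma itself and belongs rather to the surrounding Proposition \ref{bijective correspondence between the uncompactified moduli spaces}.
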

\begin{proof}
	This is a standard argument by the implicit function theorem. A proof in the case $k=1$ is given in \cite{Gao2}, Lemma 8.21.
Since this is an analytic problem that is local (studying the tangent space and applying the exponential map), the same argument goes through.
\end{proof}

	Thus the proof of Proposition \ref{bijective correspondence between the uncompactified moduli spaces} is complete.
Since the virtual dimension is zero, and the moduli spaces are regular for our generic choice of Floer data, they are both finite sets of points.
So Proposition \ref{bijective correspondence between the uncompactified moduli spaces} just says that there is a bijection between them.
Of course, if the virtual dimension is not one, the proposition is meaningless as both are manifolds of the same positive dimension. \par

	As a consequence, the counts of rigid elements in these two kinds of moduli spaces are the same, 
provided we choose Floer data that are used to define the Viterbo restriction functor $R$ to coincide with the Floer data obtained from the above process.
Thus, we have
\begin{equation}
R^{k} = \Pi_{\Gamma}^{k}.
\end{equation}
Also, by looking at the boundary strata of the one-dimensional moduli spaces as described in \eqref{boundary strata of the moduli space defining the restriction functor associated to the graph correspondence}, we may prove that the multilinear maps $R^{k}$ satisfy the same equations as $\Pi_{\Gamma}^{k}$ do.
Therefore the terms of these two $A_{\infty}$-functors agree, so that $R$ is homotopic to $\Theta_{\Gamma}$ when restricted to $\mathcal{B}_{0}(M)$.
Now the proof of Theorem \ref{graph correspondence functor is the Viterbo functor} is complete. \par

\subsection{Identifying the Maurer-Cartan elements}\label{section: identifying Maurer-Cartan elements}
	Going beyond the objects in $\mathcal{B}(M)$, we must understand a way to identify the two kinds of Maurer-Cartan elements - one that appears from the theory of Lagrangian correspondence, and the other defined in section \ref{section: construction of the bounding cochain}. 
By Proposition \ref{factorization result}, we may still assume that $L' = L \circ \Gamma$.
\par
	The Maurer-Cartan element that appears in Lagrangian correspondence is obtained as the unique solution $b$ to the equation \eqref{cyclic element equation}:
\[
\sum_{k=0}^{\infty} n^{k}(e; b, \cdots, b) = 0.
\]
To understand $b$ geometrically, we observe that in fact $b$ can be interpreted as a count of {\it figure eight bubbles}, 
which appear in the argument of \cite{Wehrheim-Woodward4} to prove isomorphism of quilted Floer cohomology under geometric compositions of Lagrangian correspondence,
and was further studied by \cite{Bottman-Wehrheim} to interpret their algebraic meanings as contributing to Maurer-Cartan elements.
	Consider inhomogeneous pseudoholomorphic quilted strips $(u, v)$ that are counted in the definition of the (curved) $A_{\infty}$-bimodule structure on $CW^{*}(L, \mathcal{L}, L')$.
Specifically, we look at the case where there are no additional punctures on the boundary components of the domains, i.e. quilted Floer trajectories that define the quilted Floer differential.
These are pairs of maps
\begin{equation}
u: \mathbb{R} \times [-1, 0] \to M, \hspace{0.5cm} v: \mathbb{R} \times [0, 1] \to N,
\end{equation}
satisfying the inhomogenoeus Cauchy-Riemann equation on each component,
and the Lagrangian boundary condition
\[
u(s, -1) \in L, \hspace{0.5cm} v(s, 1) \in L'
\]
and the seam condition
\[
(u(s, 0), v(s, 0)) \in \mathcal{L},
\]
with asymptotic conditions over quilted ends being generalized chords $(x_{0}, y_{0})$ and $(x_{1}, y_{1})$.
The moduli space is denoted by 
\[
\mathcal{M}((x_{0}, y_{0}), (x_{1}, y_{1})).
\]
Now let the width of the first patch vary. Namely, we consider for each $\delta \in (0, 1]$, inhomogeneous pseudoholomorphic quilted strips
\begin{equation}
u_{\delta}: \mathbb{R} \times [-\delta, 0] \to M, \hspace{0.5cm} v_{\delta}: \mathbb{R} \times [0, 1] \to N
\end{equation}
satisfying similar conditions.
The moduli space is denoted by 
\begin{equation}
\mathcal{M}^{\delta}((x_{0}, y_{0}), (x_{1}, y_{1})).
\end{equation}
Of course, for each fixed $\delta \in (0, 1]$, $\mathcal{M}^{\delta}((x_{0}, y_{0}), (x_{1}, y_{1}))$ is just a copy of $\mathcal{M}((x_{0}, y_{0}), (x_{1}, y_{1}))$.
In the degenerate case $\delta = 0$, we have a pair of maps
\begin{equation}
u_{0}: \mathbb{R} \to M, \hspace{0.5cm} v_{0}: \mathbb{R} \times [0, 1] \to N
\end{equation}
where $v$ is still inhomogeneous pseudoholomorphic, 
and the Lagrangian boundary and seam conditions are
\begin{equation}
u(s) \in L, \hspace{0.5cm} v(s, 1) \in L', \hspace{0.5cm} (u(s), v(s, 0)) \in \mathcal{L}.
\end{equation}
This condition implies that $u$ maps $\mathbb{R}$ to the geometric composition $L \circ \mathcal{L}$,
so that the pair $(u, v)$ is the same as an inhomogeneous pseudoholomorphic quilted strip in $N$ with boundaries mapped to $L \circ \mathcal{L}$ and $L'$.
Denote this moduli space by
\[
\mathcal{M}^{0}((x_{0}, y_{0}), (x_{1}, y_{1})),
\]
which is naturally identified with the moduli space of an inhomogeneous pseudoholomorphic strip in $N$ with boundary condition $L \circ \mathcal{L}$ and $L'$.
Indeed, they are isomorphic as moduli spaces, not just sets, in the sense that the inhomogeneous Cauchy-Riemann equations are the same.
This is because for $(u, v) \in \mathcal{M}^{0}((x_{0}, y_{0}), (x_{1}, y_{1}))$, there is no inhomogeneous Cauchy-Riemann equation for $u: \mathbb{R} \to M$, but only the inhomogeneous Cauchy-Riemann equation for the component $v$. \par

Consider the parametrized version of these moduli spaces:
\begin{equation}\label{moduli space of shrinking quilted strips}
\mathcal{M}^{sh}((x_{0}, y_{0}), (x_{1}, y_{1})) = \{ (\delta, (u, v)): \delta \in (0, 1], u \in \mathcal{M}^{\delta}((x_{0}, y_{0}), (x_{1}, y_{1})) \}.
\end{equation}
To compactify it, it is natural to add the component corresponding to $\delta = 0$. \par

But as noted in \cite{Wehrheim-Woodward4}, there are other configurations appearing in the limit of a sequence of quilted maps when their widths $\delta_{i}$ of the first patch go to zero.
These are so-called figure eight bubbles, which in our case are pairs of maps
\begin{equation}
u_{\infty}: S_{\infty}^{0} \to M, \hspace{0.5cm} v_{\infty}: S_{\infty}^{1} \to N.
\end{equation}
Here $S_{\infty}^{0}$ is a disk with two negative boundary punctures, with chosen negative strip-like ends, 
and $S_{\infty}^{1}$ is a disk with one negative boundary puncture, with a chosen negative strip-like end, i.e. $S_{\infty}^{1} \cong \bar{H}$ the lower half-plane,
such that $S_{\infty}^{0}$ and $S_{\infty}^{1}$ are glued along the seam formed by identifying one boundary component of $S_{\infty}^{0}$ with the boundary of $S_{\infty}^{1}$ by a diffeomorphism (see Figure \ref{fig:gluing figure eight}).
These appear when the energy of a sequence of quilted maps $(\delta_{i}, (u_{i}, v_{i}))$ is concentrated at a sequence of points on the first patch $u_{i}: \mathbb{R} \times [-\delta_{i}, 0] \to M$.
If we rescale to make the norm of the gradient uniformly bounded, we would have to choose reparametrizations of the domains so that the width of the rescaled strip $\tilde{u}_{i}$ is $1$,
and the with of the rescaled strip $\tilde{v}_{i}$ is a sequence of positive numbers, depending on the norm of the gradients of the maps, which approaches $+\infty$ as $i \to \infty$.  \par

{\it A priori}, the output asymptotic condition of a figure eight bubble is a generalized chord $(x_{1}, y, x_{2})$ for $(L, \mathcal{L}, \mathcal{L}^{T}, L)$,
where $\mathcal{L}^{T} \subset N^{-} \times M$ is the transpose of $\mathcal{L} \subset M^{-} \times N$.
But that can naturally identified with a generator of $CW^{*}(L \circ \mathcal{L})$, which is not a non-constant chord near infinity.
To see this, recall that a generalized chord a triple $(x_{1}, y, x_{2})$ such that each component satisfies the Hamilton's equation, and these components satisfy
\[
x_{1}(0) \in L, (x_{1}(1), y(0))) \in \mathcal{L}, (y(1), x_{2}(0)) \in \mathcal{L}, x_{2}(1) \in L,
\]
Under the assumption that $\mathcal{L} \to N$ is proper (see the assumption of Proposition \ref{functor associated to Lagrangian correspondence},
the geometric composition $L \circ \mathcal{L}$  is equivalent to the Hamiltonian perturbed version $L \circ_{\phi_{H_{M}}} \mathcal{L}$, as Lagrangian immersions in $N$.
This implies that the generalized chord $(x_{1}, y, x_{2})$ determines a unique chord of $L \circ \mathcal{L}$, denoted by $\tilde{y}$.
Since there are no inputs, the action of the output must be positive,
which implies that this cannot be a chord that is too far close to infinity in the cylindrical end $\partial N \times [1, +\infty)$.
 \par

Denote by 
\begin{equation}
\mathcal{M}_{\infty}(x_{1}, y, x_{2})
\end{equation}
 the moduli space of figure eight bubbles of finite energy with the given asymptotic condition.
We want to add these figure eight bubbles to compactify the moduli space \eqref{moduli space of shrinking quilted strips}.
It is illustrated in \cite{Bottman-Wehrheim} that this type of gluing is a codimension-zero gluing.
That is, we can glue an arbitrary number of figure eight bubbles to an element of an inhomogeneous pseudoholomorphic strip $\delta = 0$ simultaneously.
This type of gluing does not happen near the given quilted ends, but rather near the seam of the quilt.
Let $p_{1}, \cdots, p_{l}$ be $l$ points on $\mathbb{R} \times \{0\}$, which is the seam of the degenerate quilted strip with $\delta = 0$.
Given a gluing parameter $T \in (T_{0}, \infty]$, we thicken the first patch by some positive width $\delta' = \delta(T)$ determined by $T$, 
remove a thickened neighborhood of each point $p_{i}$ and cut off the quilted end near that point by length $T$,
cut off the quilted end of the domain of a figure eight bubble $(u_{\infty}^{i}, v_{\infty}^{i})$,
and glue them together as indicated in Figure \ref{fig:gluing figure eight}.
The width $\delta' = \delta'(T)$ is uniquely determined by $T$, so that for each figure eight bubble we have to have the same gluing parameter for the gluing to possibly happen.
The pattern of this type of gluing is quite analogues to that for gluing disks in the context of an $A_{\infty}$-functor or a curved $A_{\infty}$-functor. \par

Define an element $b \in CW^{*}(L \circ \mathcal{L})$ by counting elements in the moduli spaces $\mathcal{M}_{\infty}(x_{1}, y, x_{2})$ of figure eight bubbles, 
and let the output $(x_{1}, y, x_{2})$ be identified with the unique generator of $L \circ \mathcal{L}$.
With the gluing described, it is immediate that this element satisfies the equation \eqref{cyclic element equation}
\[
n^{k}(e_{L}; b, \cdots, b) = 0.
\]
That is to say, the cyclic element $e_{L}$ is closed under the $b$-deformed differential.
Thus this element $b$ is the same as the Maurer-Cartan element found by the functor $\Theta_{\mathcal{L}}$ by solving the equation \eqref{cyclic element equation} uniquely for $b$. \par

\begin{remark}
This counting problem in principle relies on the existence of virtual fundamental chains on such moduli spaces,
which can be obtained either from the theory of polyfolds \cite{Hofer-Wysocki-Zehnder1}, \cite{Hofer-Wysocki-Zehnder2}, \cite{Hofer-Wysocki-Zehnder3}, the theory Kuranishi structures \cite{FOOO1}, \cite{FOOO2}, \cite{FOOO3}, \cite{FOOO4}, and the theory of implicit atlases \cite{Pardon1}, \cite{Pardon2}. 
Without going too much into that analytic detail, we can still deduce the desired results on the level of the underlying curves,
and on algebraic structures.
\end{remark}

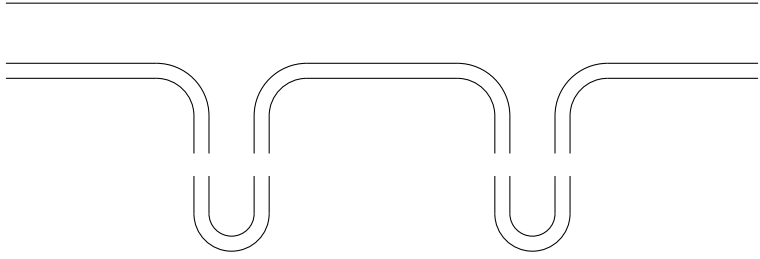
\begin{figure}
\begin{tikzpicture}

	\draw (-3, 2) -- (7, 2);
	\draw (-3, 1.2) -- (-1, 1.2);
	\draw (1, 1.2) -- (3, 1.2);
	\draw (-3, 1) -- (-1, 1);
	\draw (1, 1) -- (3, 1);
	
	\draw (-1, 1) arc (90:0:0.5cm);
	\draw (-0.5, 0.5) -- (-0.5, 0);
	
	\draw (-1, 1.2) arc (90:0:0.7cm);
	\draw (-0.3, 0.5) -- (-0.3, 0);
	
	\draw (1, 1) arc (90:180:0.5cm);
	\draw (0.5, 0.5) -- (0.5, 0);
	
	\draw (1, 1.2) arc (90:180:0.7cm);
	\draw (0.3, 0.5) -- (0.3, 0);
	
	\draw (-0.5, -0.3) -- (-0.5, -0.8);
	\draw (0.5, -0.3) -- (0.5, -0.8);
	\draw (-0.3, -0.3) -- (-0.3, -0.8);
	\draw (0.3, -0.3) -- (0.3, -0.8);
	
	\draw (-0.5, -0.8) arc (180:360:0.5cm);
	\draw (-0.3, -0.8) arc (180:360:0.3cm);
	
	\draw (3, 1) arc (90:0:0.5cm);
	\draw (3.5, 0.5) -- (3.5, 0);
	
	\draw (3, 1.2) arc (90:0:0.7cm);
	\draw (3.7, 0.5) -- (3.7, 0);
	
	\draw (5, 1) arc (90:180:0.5cm);
	\draw (4.5, 0.5) -- (4.5, 0);
	
	\draw (5, 1.2) arc (90:180:0.7cm);
	\draw (4.3, 0.5) -- (4.3, 0);
	
	\draw (3.5, -0.8) arc (180:360:0.5cm);
	\draw (3.7, -0.8) arc (180:360:0.3cm);
	
	\draw (3.5, -0.3) -- (3.5, -0.8);
	\draw (4.5, -0.3) -- (4.5, -0.8);
	\draw (3.7, -0.3) -- (3.7, -0.8);
	\draw (4.3, -0.3) -- (4.3, -0.8);
	
	\draw (5, 1) -- (7, 1);
	\draw (5, 1.2) -- (7, 1.2);

\end{tikzpicture}
\centering
\caption{The gluing of figure eight bubbles} \label{fig:gluing figure eight}
\end{figure}

	Let us now return to the special case $\mathcal{L} = \Gamma$. 
Assume that $L \circ \Gamma = L'$, which is embedded, but Assumption \ref{strong exactness assumption} does no longer apply.
We would like to identify the Maurer-Cartan element in the graph correspondence functor, which can be identified with the count of figure eight bubbles,
with the Maurer-Cartan element constructed for the deformation of the Viterbo restriction functor $R$.
However, the way we define $R^{0}(1)$ is slightly different: we use results from sections \ref{section: linearized Legendrian contact homology}, \ref{section: extending the Viterbo functor} and \ref{section: construction of the bounding cochain}  to define $R^{0}$ in the setup of linearized Legendrian homology. 
Namely, we consider the extension of the linearized cobordism homomorphism $\mathcal{F}$ to a curved $A_{\infty}$-homomorphism \eqref{curved version of linearized cobordism homomorphism},
take its zeroth order term $\mathcal{F}^{0}(1) = b_{lin}$, and push it forward to $b = R^{0}(1)$.
But we can still run a similar geometric argument combining the construction of the homotopy equivalence $\mathcal{S}$ and the construction of the Maurer-Cartan element $b$ by counting figure eight bubbles as discussed above. \par

Let $(x_{1}, y, x_{2})$ be the asymptotic generalized chord of $(u_{\infty}, v_{\infty})$ over the quilted end.
Since in this case $L \circ \Gamma = L'$ is embedded and $\Gamma$ is the graph of $i: U \to M$, 
this uniquely determines a chord $\tilde{y}$ from $L'$ to itself,
which we will describe in Lemma \ref{the chord corresponding to the generalized chord}.
Since there is the only output, by the energy-action relation, we have

\begin{lemma}
	The action of the generalized chord $(x_{1}, y, x_{2})$ is positive, 
\[
\mathcal{A}((x_{1}, y, x_{2})) > 0.
\]
Indeed, there exists $\epsilon > 0$ that only depends on the geometry of the Liouville manifolds, Lagrangian submanifolds, Hamiltonians and almost complex structures, but not on each quilted map,
such that 
\[
\mathcal{A}((x_{1}, y, x_{2})) \ge \epsilon.
\]
\end{lemma}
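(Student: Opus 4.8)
The plan is to establish the positivity of the action of the output generalized chord $(x_1, y, x_2)$ of a figure eight bubble by combining the energy-action relation with a lower-bound (monotonicity) argument for non-constant pseudoholomorphic curves. First I would recall that a figure eight bubble $(u_\infty, v_\infty)$ has no incoming asymptotics — only the single outgoing generalized chord $(x_1, y, x_2)$ — so the action-energy identity (the analogue of \eqref{action-energy identity} for quilted maps with seam on $\Gamma$, as developed in \cite{Gao2}) reads
\begin{equation*}
E^{top}(u_\infty, v_\infty) = \mathcal{A}((x_1, y, x_2)),
\end{equation*}
possibly up to the convention that the topological and geometric energies agree since the one-forms involved are closed near the ends. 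Thus it suffices to show that the left-hand side is bounded below by a positive constant $\epsilon$ independent of the particular bubble.

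The key step is then to show that every figure eight bubble is non-constant and achieves a definite minimum amount of energy. Non-constancy follows from the structure of the domain: $S^0_\infty$ is a disk with two negative strip-like ends and $S^1_\infty$ a half-plane with one negative end, glued along a seam mapped to $\Gamma$; a constant such configuration would be unstable/excluded, and more concretely, the very reason a figure eight bubble arises is that energy concentrates at a sequence of points along the seam, so after rescaling the bubble carries a positive fixed fraction of energy in the limit. To get a \emph{uniform} lower bound, I would invoke the standard monotonicity inequality for $J_W$-holomorphic (and $J_M$-, $J_U$-holomorphic) curves: there is $\hbar > 0$, depending only on the geometry of $M$, $U$, the Lagrangians $L$, $L'$, $\mathcal L = \Gamma$, the Hamiltonians $H_M, H_U$ and the almost complex structures, such that any non-constant pseudoholomorphic curve (with Lagrangian/seam boundary) passing through a point and escaping a fixed-size ball has energy at least $\hbar$. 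Since a figure eight bubble has non-trivial image, its energy is $\ge \hbar =: \epsilon$, and then $\mathcal{A}((x_1, y, x_2)) = E^{top} \ge \epsilon > 0$. This is precisely analogous to Lemma \ref{a priori energy bound} combined with Lemma \ref{b_lin lies in positive filtration} in the linearized Legendrian setting, and to the energy lower bound used in Lemma \ref{vanishing of zeroth term}.

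I would then record that $\epsilon$ can be chosen uniformly: the monotonicity constant depends only on bounds on the geometry of the ambient Liouville manifolds, the (cylindrical, hence controlled) Lagrangian boundary conditions, and the (admissible, cylindrical) Hamiltonians and almost complex structures — none of which varies over the moduli space of figure eight bubbles asymptotic to a fixed $(x_1, y, x_2)$, and in fact one can take the infimum over the finitely many possible asymptotic chords of bounded action. A minor point to address is the quilted seam: one must confirm that the monotonicity estimate is valid for curves with a seam condition along $\Gamma$, which holds because $\Gamma$ is an honest exact cylindrical Lagrangian submanifold of $M^- \times U$ (as recorded in the excerpt), so the quilted map is, near the seam, an ordinary pseudoholomorphic map into the product with Lagrangian boundary on $\Gamma$, and the usual local isoperimetric/monotonicity arguments apply.

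The main obstacle I anticipate is making the uniform energy bound genuinely independent of the bubble in the presence of the Hamiltonian perturbation term: the inhomogeneous term $X_{H}\otimes \alpha$ contributes to $E^{top}$ versus $E^{geo}$, and near the quilted end the Hamiltonian has been arranged to vanish (the cyclic element $e_L$ has image in the compact region where $X_{H_M}, X_{H_U}$ vanish), so one must check the estimate does not degenerate there; away from the ends the usual a priori $C^0$-control and the sub-closedness of $\alpha$ keep the correction terms bounded. Once this is handled, the positivity $\mathcal{A}((x_1, y, x_2)) \ge \epsilon$ follows formally, and this is exactly the input needed to guarantee that the count of figure eight bubbles lands in $F^{>0} CW^*(L \circ \Gamma)$ and hence defines an honest Maurer-Cartan element, matching $b$ from section \ref{section: construction of the bounding cochain}.
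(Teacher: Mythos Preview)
Your proposal is correct and follows essentially the same approach as the paper. In fact, the paper gives no formal proof of this lemma at all: the entire justification is the sentence preceding the statement, ``Since there is the only output, by the energy-action relation, we have,'' together with the implicit appeal to a uniform positive lower bound on the energy of non-constant curves (exactly the mechanism used in Lemma~\ref{b_lin lies in positive filtration} and Lemma~\ref{vanishing of zeroth term}, which you correctly identify as analogous). Your write-up simply makes explicit the steps the paper leaves to the reader: the action-energy identity for quilted maps with a single output, non-constancy of figure eight bubbles, and the standard monotonicity/minimum-energy bound depending only on the background geometry.
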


Such a chord $\tilde{y}$ is not an 'interior' chord coming from small perturbations of constant chords in a compact set inside $U_{0}$.
It is a chord on $\partial U \times \{r_{0}\}$ for some $r_{0} \in [1 - \epsilon, +\infty)$ that is small (for large values of $r_{0}$, the action of a chord would be negative),
which corresponds to a Reeb chord of $l'$ with small length equal to $h'(r_{0})$, where $h = h(r)$ is the restriction of the Hamiltonian on $\partial U \times (1 - \delta, +\infty)$,
which depends only on the radial coordinate $r$ and is $r^{2}$ for $r$ away from $1$, say $r \ge 1 + \delta$.
That is, $r_{0}$ is some number such that $h'(r_{0})$ is equal to the length of a short Reeb chord of $l'$ with positive action.
Without loss of generality, we may choose $h$ to be small enough in $C^{1}$-norm in the collar neighborhood $\partial U \times (1 - \delta, 1]$, 
such that all chord corresponding to Reeb chords appear in $\partial U \times (1, +\infty)$.
In particular, we have $r_{0} > 1$. \par

To see what $\tilde{y}$ is in terms of $x_{1}, y, x_{2}$, we shall have a more detailed description of the geometry of the map $i$. 
First we note that $r_{0}$ is close to $1$, so that the Liouville flow $\psi^{r}_{M}$ on $M$ from $r = 1$ and $r = r_{0}$ does not have critical locus and expands $U_{0} \subset M_{0}$ by attaching a trivial cylinder over $\partial U$.
That is, \par

\begin{lemma}
For this $r_{0}$ which is close to $1$, the image of $\partial U \times [1, r_{0}]$ is a trivial cylinder over $\partial U_{0}$ embedded in $M_{0}$.
The restriction of $i$ on $U_{0} \cup \partial \times [1, r_{0}]$ is an isomorphism of Liouville domains onto its image.
\end{lemma}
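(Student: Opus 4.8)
The final statement to prove is the following: for a value $r_0 > 1$ close to $1$ (determined by the length of a short Reeb chord of $l'$ of positive action), the image $i(\partial U \times [1, r_0])$ is a trivial cylinder over $\partial U$ embedded in $M_0$, and the restriction of $i$ to $U_0 \cup \partial U \times [1, r_0]$ is an isomorphism of Liouville domains onto its image.

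\smallskip

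The plan is to unwind the definition of the map $i \colon U \to M$ and use the fact that the completion of a Liouville domain is obtained by attaching a trivial cylinder on which the Liouville flow acts by translation in the radial coordinate. First I would recall that, by construction, $i$ restricted to $U_0$ is the identity inclusion $U_0 \subset M_0$, and on the cylindrical end $i(y, r) = \psi_M^r(y)$ where $y \in \partial U$ is viewed as a point of $M_0$ via $\partial U \subset U_0 \subset M_0$. The first step is then to observe that, since $\partial U$ is a compact hypersurface contained in the open manifold $M_0$, there is an $\epsilon_0 > 0$ such that the Liouville flow $\psi_M^r$ is defined and has no critical points along $\partial U$ for all $r \in [1, 1 + \epsilon_0]$; equivalently, the Liouville vector field $Z_M$ is nowhere vanishing in a neighborhood of $\partial U$ in $M_0$. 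This follows because $\partial U$ is itself a contact-type hypersurface (it is the boundary of the Liouville sub-domain $U_0$), so $Z_M$ is transverse to $\partial U$, hence nonzero there; by compactness of $\partial U$ this persists on a collar of uniform size. The second step is to note that we have already arranged (in the paragraph preceding the statement) that $h$ is $C^1$-small enough on the collar $\partial U \times (1 - \delta, 1]$ that every Hamiltonian chord corresponding to a Reeb chord of $l'$ of positive action lies in $\partial U \times (1, +\infty)$, and that $r_0$ — defined by $h'(r_0)$ equals the length of such a short Reeb chord — is as close to $1$ as we wish. In particular we may take $r_0 \le 1 + \epsilon_0$, so the flow $\psi_M^r$ behaves nicely on $[1, r_0]$.

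\smallskip

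The third step is the geometric identification: for $r_0 \le 1 + \epsilon_0$ the map $(y, r) \mapsto \psi_M^r(y)$, $y \in \partial U$, $r \in [1, r_0]$, is an embedding of $\partial U \times [1, r_0]$ into $M_0$ because the flow lines of $Z_M$ through distinct points of $\partial U$ are disjoint (the flow is free near a contact-type hypersurface) and the radial parameter is injective along each flow line (as $Z_M \ne 0$). Its image is precisely the portion of $M_0$ swept out by flowing $\partial U$ forward for log-time up to $\log r_0$, which is a ``collar on the outside'' of $\partial U$ inside $U_0$ — here one must be careful that, since $U_0 \subset M_0$ and $\partial U$ is an interior hypersurface of $M_0$, flowing \emph{forward} by $Z_M$ pushes $\partial U$ \emph{into} $M_0 \setminus \mathrm{int}(U_0)$; so the image $i(\partial U \times [1, r_0])$ is a trivial half-cylinder sitting in $M_0 \setminus \mathrm{int}(U_0)$ attached along $\partial U = \partial U_0$. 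This is exactly a trivial cylinder over $\partial U$ in the sense claimed. Finally, the fourth step is to check that on $U_0 \cup \partial U \times [1, r_0]$ the map $i$ pulls back $\lambda_M$ to the Liouville form on the domain: on $U_0$ this is the identity, so immediate; on $\partial U \times [1, r_0]$ we use the standard fact that the Liouville flow sends $\lambda_M|_{\partial U}$ to $r \cdot \lambda_M|_{\partial U}$ along the trajectory parametrized by radial coordinate $r$, which matches the Liouville form $r\, \alpha'$ (equivalently, after the usual $\log$-reparametrization, $e^s \alpha'$) on the attached cylinder — this is precisely how the completion's Liouville structure is defined, and $i$ was designed to intertwine the two. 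Gluing the two pieces along $\partial U$, where both descriptions agree (Liouville form $= \alpha'$ at $r = 1$), gives that $i$ restricted to $U_0 \cup \partial U \times [1, r_0]$ is a diffeomorphism onto its image intertwining the Liouville forms, i.e.\ an isomorphism of Liouville domains onto its image.

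\smallskip

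I expect the only genuinely delicate point to be verifying that $r_0$ can indeed be taken within the ``good collar'' $[1, 1 + \epsilon_0]$ where the Liouville flow on $M$ has no critical locus — but this is guaranteed by the freedom, already invoked in the text, to choose $h$ with $C^1$-norm as small as desired in $\partial U \times (1-\delta, 1]$, which forces $h'(r_0)$ (the length of a short positive-action Reeb chord, a fixed quantity depending only on $l'$ and $\alpha'$) to be attained at an $r_0$ arbitrarily close to $1$. Everything else is a routine consequence of the local structure of contact-type hypersurfaces and the definition of $i$; no global properties of $M$ or of the critical locus of $Z_M$ beyond this collar are needed, which is the whole point of working with $r_0$ near $1$.
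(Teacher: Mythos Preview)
Your proposal is correct and follows the same approach as the paper. In fact, the paper does not provide a formal proof for this lemma at all: the entire justification is the sentence immediately preceding it, namely ``First we note that $r_{0}$ is close to $1$, so that the Liouville flow $\psi^{r}_{M}$ on $M$ from $r = 1$ and $r = r_{0}$ does not have critical locus and expands $U_{0} \subset M_{0}$ by attaching a trivial cylinder over $\partial U$.'' Your argument is precisely the detailed unpacking of this observation---transversality of $Z_M$ to the contact-type hypersurface $\partial U$, compactness giving a uniform collar, the freedom in choosing $h$ to force $r_0$ into that collar, and the routine verification that $i$ intertwines Liouville forms---so there is nothing to compare beyond noting that you have supplied the details the paper leaves implicit.
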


Compose $v_{\infty}$ with $i: U \to M$, and we obtain a quilted map $(u_{\infty}, i \circ v_{\infty})$ in $M^{-} \times M$.
Since $\Gamma$ is the graph of $i: U \to M$, the seam condition is just the condition given by the diagonal of $M$: 
\[
u_{\infty}(z) = i \circ v(z), \text{ if } z \text{ lies on the seam}.
\]
This condition makes the components of quilted map gluable along the seam, giving rise to a map
\[
w_{\infty}: S_{1} \to M,
\]
where $S_{1}$ is a disk with one negative puncture, with a chosen negative strip-like end.
This construction is slightly different from the one in section \ref{section: glue quilted map along the graph},
in the following points:
\begin{enumerate}[label=(\roman*)]

\item The quilted surfaces are not the same.

\item There is no moduli parameter $w \in \mathbb{R}_{+}$.
 
 \item there is a one-dimensional group of automorphisms of the domain acting freely on the moduli space,
 which makes the map well-defined only up to automorphism.  (The group of automorphisms of a disk with one puncture $S_{1}$ is two dimensional, but we are only considering the subgroup of elements that preserve the chosen family of Floer datum parametrized by points of the domain $S_{1}$.)

 \item The Lagrangian boundary condition is not rescaled by some conformal factor $\chi_{\rho}$ varying  in $[\rho, 1]$.

\end{enumerate}
The reason that we do not rescale is because the moduli space of such curves is not well-behaved under compactification,
and we do not want to set up the curve count problem in this way.
For the moment, we simply study the curves to see what the asymptotic conditions are, but will not try to compactify the moduli space.
To count these curves, we will stretch the neck and construct a corresponding moduli space of proper holomorphic disks in $W$ with negative asymptotic Reeb chord given by the Reeb chord corresponding to $\tilde{y}$. \par

Going back to the generalized chord $(x_{1}, y, x_{2})$, the condition for the generalized chord implies that the paths $x_{1}, i \circ y, x_{2}$ in $M$ satisfy
\begin{equation}
x_{1}(0) \in L, \hspace{0.5cm} x_{1}(1) = i \circ y(0) \in M, \hspace{0.5cm} i \circ y(1) = x_{2}(0) \in M, 
\end{equation}
The second and the third conditions are free.
Thus, the concatenation of $x_{1}, i \circ y, x_{2}$ is a path in $M$ with endpoints on $L$.

\begin{lemma}\label{the chord corresponding to the generalized chord}
There is a natural choice of Hamiltonian $H_{M, 1}$ such that the concatenation $x_{2} * (i \circ y) * x_{1}$ is a Hamiltonian chord for $H_{M, 1}$ with endpoints on $L$. \par
The inverse image
\begin{equation}\label{eqn: the chord corresponding to the generalized chord}
\tilde{y} = i^{-1}( x_{2} * (i \circ y) * x_{1}).
\end{equation}
is a Hamiltonian chord of $H_{U}$ with endpoints on $L'$.
\end{lemma}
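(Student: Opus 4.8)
The plan is to unwind the definitions of the three paths $x_1, y, x_2$ together with the geometry of the map $i:U\to M$, and to exhibit the concatenation as a single Hamiltonian trajectory. First I would recall that $(x_1, y, x_2)$ is a generalized chord for the tuple $(L, \Gamma, \Gamma^T, L)$, so $x_1$ is an $X_{H_M}$-trajectory with $x_1(0)\in L$, $y$ is an $X_{H_U}$-trajectory, $x_2$ is an $X_{H_M}$-trajectory with $x_2(1)\in L$, and the seam conditions read $(x_1(1), y(0))\in\Gamma$, $(y(1), x_2(0))\in\Gamma$, which since $\Gamma$ is the graph of $i$ become $x_1(1)=i(y(0))$ and $x_2(0)=i(y(1))$. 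Thus the concatenation $x_2*(i\circ y)*x_1$ is well-defined as a continuous path in $M$ with both endpoints on $L$.

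Next I would produce the Hamiltonian $H_{M,1}$ making this concatenation a genuine Hamiltonian chord. The point is the compatibility of the three flows under $i$: since the image of $\partial U\times[1, r_0]$ is a trivial cylinder over $\partial U_0$ embedded in $M_0$ on which $i$ is an isomorphism of Liouville domains (the previous lemma), the pushforward $i_*(X_{H_U})$ agrees with the Hamiltonian vector field of $H_U\circ i^{-1}$ on the relevant region. I would then glue the time intervals: on the first subinterval use $H_M$, on the middle subinterval use the function agreeing with $H_U\circ i^{-1}$ (extended arbitrarily, say by $0$, outside the cylinder), on the last subinterval use $H_M$ again, and rescale to a single unit-time parametrization. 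The resulting piecewise-defined time-dependent Hamiltonian $H_{M,1}$ on $M$ has the property that its time-one flow carries the concatenation to itself, so $x_2*(i\circ y)*x_1$ is an $H_{M,1}$-chord with endpoints on $L$. Here I would invoke the hypothesis that $\mathcal{L}\to N$ is proper (so $L\circ\Gamma = L'$ is embedded, and the geometric composition is equivalent to its Hamiltonian-perturbed version), which guarantees that the middle portion stays inside the region where $i$ is invertible.

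Finally, to obtain $\tilde{y}$ I would apply $i^{-1}$ to the whole concatenation. Since the concatenation lies in the image of $i$ (the portion $i\circ y$ tautologically, and the portions $x_1, x_2$ because, by the energy–action bound of the preceding lemma, $r_0$ is close to $1$ so the short Reeb chord lives where $i$ is an isomorphism), $\tilde{y} := i^{-1}(x_2*(i\circ y)*x_1)$ is a well-defined path in $U$. Its endpoints lie on $i^{-1}(L\cap\mathrm{im}\,i) = L'$, and pulling back the Hamiltonian vector field along $i^{-1}$ converts the $H_{M,1}$-trajectory equation into the $H_U$-trajectory equation on the middle portion and into the pullback equations on the end portions; by construction of $H_{M,1}$ these patch together to say that $\tilde{y}$ is an $H_U$-chord from $L'$ to $L'$.

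The main obstacle will be verifying the smoothness of the concatenation at the two break points $x_1(1)$ and $y(1)$ after the $i^{-1}$-pullback, and checking that the piecewise Hamiltonian $H_{M,1}$ can be chosen genuinely smooth (or that the chord is smooth despite the piecewise data) — this is where one uses that near $\partial U$ the Hamiltonians depend only on the radial coordinate and that $i$ intertwines the radial coordinates up to the explicit factor, so the velocity vectors match across the seams. I expect this to be a short but slightly delicate computation in the collar coordinates $\partial U\times[1,\infty)$, entirely parallel to the matching-of-asymptotics arguments already used in section \ref{section: glue quilted map along the graph}, so I would present it by reduction to that local model rather than spelling out every derivative.
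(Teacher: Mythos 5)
Your overall route is the paper's: use the seam conditions of the graph $\Gamma$ to see that $x_{2} * (i \circ y) * x_{1}$ is a well-defined path in $M$ with endpoints on $L$, and use the fact that $i$ restricts to an isomorphism of Liouville domains on $U_{0} \cup \partial U \times [1, r_{0}]$ (the preceding lemma) both to transport Hamiltonians and to invert. Where you diverge is in the construction of $H_{M,1}$, and the divergence creates a gap at the last step. You build $H_{M,1}$ as a \emph{time-dependent, piecewise} Hamiltonian equal to $H_{M}$ on the outer subintervals and to $H_{U} \circ i^{-1}$ only on the middle one. Pulling such a Hamiltonian back by $i$ yields $H_{U}$ only on the middle subinterval and $i^{*}H_{M}$ on the outer ones, so what you obtain is a chord of a time-dependent Hamiltonian, not a time-one chord of $H_{U}$ as the lemma asserts. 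Your closing sentence, that ``these patch together to say that $\tilde{y}$ is an $H_{U}$-chord,'' does not follow from your construction without an additional argument identifying $i^{*}H_{M}$ with $H_{U}$ (up to the conformal rescaling) on the collar $\partial U \times [1, r_{0}]$ where the chord lives.

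The paper avoids this by making the opposite choice: $H_{M,1}$ is defined to be the pushforward $H_{U} \circ i^{-1}$ on \emph{all} of $i(U_{0} \cup \partial U \times [1, r_{0}])$, extended arbitrarily to the rest of $M$. Then $i^{*}H_{M,1} = H_{U}$ on the nose on the region containing the whole concatenation, so the pullback of Hamilton's equation is literally the $H_{U}$-equation and $\tilde{y}$ is an $H_{U}$-chord; the endpoints land on $L'$ because they lie in $L \cap i(U_{0, r_{0}})$. (The price, which the paper pays silently, is that one must then check the outer pieces $x_{1}, x_{2}$ satisfy Hamilton's equation for this $H_{M,1}$ rather than for $H_{M}$ --- this is where one uses that the Floer data are chosen so that the Hamiltonian vector fields vanish, or agree, on the relevant compact region near the quilted ends.) If you adopt the paper's single-Hamiltonian definition of $H_{M,1}$, the rest of your argument --- well-definedness of $i^{-1}$ on the concatenation and the identification of the endpoints with points of $L'$ --- goes through as you wrote it.
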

\begin{proof}
	Since $i$ maps $\partial U \times [1, r_{0}]$ to a trivial cylinder, we can extend the restriction of $H_{U}$ on $\partial U \times [1, r_{0}]$ to a Hamiltonian defined $i(U_{0} \cup \partial U \times [1, r_{0}]$,
and further extend it to a function on $M$.
Then $x_{2} * (i \circ y) * x_{1}$ satisfies the Hamilton's equation with respect to the Hamiltonian vector field of this Hamiltonian,
and has endpoints on $L$.

Since the restriction of $i$ on $U_{0} \cup \partial U \times [1, r_{0}]$ is an isomorphism, 
the inverse image $i^{-1}( x_{2} * (i \circ y) * x_{1})$ is well-defined as a map $\tilde{y}: [0, 1] \to U$, and satisfies the Hamilton's equation with respect to $H_{U}$.
Since the endpoints of $x_{2} * (i \circ y) * x_{1}$ are in the image $i(U_{0} \cup \partial U \times [1, r_{0}])$, they must be on $L'$.
\end{proof}

The chord $\tilde{y}$ \eqref{eqn: the chord corresponding to the generalized chord} is the desired generator of $CW^{*}(L')$ determined by the generalized chord $(x_{1}, y, x_{2})$ of a figure eight bubble. \par

Next we study some basic properties about the glued map $w_{\infty}$.
For notational convenience, we denote
\begin{equation}
U_{0, r_{0}} = U_{0} \cup \partial U \times [1, r_{0}],
\end{equation}
and $i(U_{0, r_{0}}) \subset M_{0}$ its image under $i$, which is isomorphic to $U_{0, r_{0}}$.

\begin{lemma}
	The image of $w_{\infty}$ is not completely contained in the compact domain $i(U_{0, r_{0}}) \subset M_{0} \subset M$.
\end{lemma}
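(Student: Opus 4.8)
The plan is to argue by contradiction: assume $\mathrm{im}(w_{\infty}) \subseteq i(U_{0,r_0})$ and derive a forbidden pseudoholomorphic object. Recall from the preceding lemma that $i|_{U_{0,r_0}} : U_{0,r_0} \to i(U_{0,r_0})$ is an isomorphism of Liouville domains which intertwines $H_U$ (extended over the collar $\partial U \times [1,r_0]$) with $H_{M,1}$, and a contact-type almost complex structure on $U$ near $\partial U \times \{r_0\}$ with the one chosen on $M$. Under the standing hypothesis $L\circ\Gamma=L'$, the part of $\partial w_{\infty}$ landing in $i(U_{0,r_0})$ maps into $L$, and its $i^{-1}$-image lies in $L'$ near $U_0$; where it enters the collar one uses that $r_0$ is close to $1$ and that the Reeb chord underlying $\tilde y$ is short, so only a thin collar is involved and $L$ may be taken cylindrical there. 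Thus $v' := i^{-1}\circ w_{\infty}$ is a well-defined inhomogeneous pseudoholomorphic map from the once-punctured disk $S_1$ into $U$, with image contained in $U_{0,r_0}$, boundary on $L'$, no positive punctures, and one negative strip-like end asymptotic to the chord $\tilde y$ produced by Lemma~\ref{the chord corresponding to the generalized chord}.

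Next I would apply the maximum principle to $v'$. Since $\tilde y$ lies on the level hypersurface $\partial U \times \{r_0\}$ with $r_0 > 1$, the map $v'$ necessarily enters the cylindrical region $\partial U \times (1,r_0]$, where $J_U$ is of contact type and $L'$ is invariant under the Liouville flow. Hence the radial coordinate $r\circ v'$ obeys a maximum principle on the portion of the domain mapping into this region --- no interior local maximum and no boundary local maximum along $L'$ --- so that $\sup_{S_1}(r\circ v') = r_0$, attained only asymptotically at the puncture. Invoking the asymptotic analysis of pseudoholomorphic half-strips over Reeb chords (as in the proof of Lemma~\ref{the plus chain map is an isomorphism}, following Bourgeois--Oancea), the combination $r\circ v' \le r_0$ and $r\circ v' \to r_0$ at the puncture forces $v'$, near the puncture, to be contained in the hypersurface $\partial U \times \{r_0\}$, hence (by the classification of $J_U$-holomorphic curves tangent to a contact hypersurface, together with unique continuation) to coincide globally with the trivial strip over the Reeb chord $\gamma'$ corresponding to $\tilde y$. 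This is incompatible with $w_{\infty}$ being built from a nonconstant figure-eight bubble, which carries energy bounded below by the uniform constant $\epsilon > 0$ of the positivity lemma immediately above, since the action--energy identity gives $E^{top}(v') = \mathcal{A}(\tilde y) = \mathcal{A}((x_1,y,x_2)) \ge \epsilon$ while the trivial strip carries no $d\lambda_U$-energy; equivalently, the trivial strip cannot have the once-punctured disk $S_1$, with its nontrivial boundary arc on $L'$, as domain. Either way we obtain a contradiction, so $\mathrm{im}(w_{\infty})$ meets $M_0 \setminus i(U_{0,r_0})$.

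The main obstacle is the maximum-principle step, for two reasons. First, one must justify working with the boundary-transported data on $U$ even though $L$ is not assumed Liouville-invariant near $\partial U$ in the non-strongly-exact case; this is exactly where the choice of $r_0$ close to $1$ (so that $i(\partial U \times [1,r_0])$ is a trivial Liouville cylinder, by the preceding lemma) and the shortness of the relevant Reeb chord are used, together with a careful arrangement of the Floer data compatible with the transversality requirements of Section~\ref{section: glue quilted map along the graph}. Second, one must make the trivial-strip rigidity precise in the present inhomogeneous setting, i.e.\ verify that the perturbation by $X_{H_U}$ does not destroy the conclusion of the asymptotic analysis. In the model example of the previous section the statement is already transparent --- $L'$ is disconnected and $\tilde y$ joins two of its components, which no connected boundary arc of $v'$ could do --- and the argument above is the adaptation handling the general situation in which $L'$ may be connected.
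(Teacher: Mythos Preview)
Your approach differs substantially from the paper's. The paper fixes an auxiliary boundary marked point on $S_1$ to kill the one-parameter automorphism group, and observes that if $\mathrm{im}(w_\infty)\subset i(U_{0,r_0})$ then attaching a gradient half-line of $H_U|_{L'}$ at that marked point produces an element of the moduli space defining the PSS map $CM^*(L',H_U|_{L'}) \to CF^*(L',H_U)$. The image of PSS is contained in the subcomplex generated by interior chords (small perturbations of constant chords inside $U_0$), whereas $\tilde y$ lies on a level hypersurface $\partial U\times\{r_0\}$ and corresponds to a genuine Reeb chord of $l'$; contradiction. No maximum principle or trivial-strip rigidity is invoked.

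Your maximum-principle route has a real gap at precisely the step you flag. The claim that $r\circ v'\le r_0$ together with $r\circ v'\to r_0$ at the negative puncture forces $v'$ to be the trivial strip does not follow from the analysis in Lemma~\ref{the plus chain map is an isomorphism}: there the domain is a bi-infinite strip with one positive and one negative end, the Hamiltonian interpolates between $H$ and $0$, and uniqueness comes from matching actions at \emph{both} ends; here you have a once-punctured disk with only a negative end and no such balancing. Indeed, the paper's very next lemma performs the ODE analysis of the radial component near the negative puncture and shows that $a(s)$ satisfies $da/ds = h'(a) + h'(r_0) > 0$ with $a(-\infty)=r_0$, so that $a(s) > r_0$ for $s$ sufficiently negative. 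Under your hypothesis $r\le r_0$ this gives an immediate contradiction --- there is no such solution at all, trivial or otherwise --- so the correct payoff of the local analysis is nonexistence, not rigidity. Had you invoked that ODE directly your argument would close; as written, the ``forces $v'$ to be the trivial strip'' step is unjustified (and the trivial strip, having domain $\mathbb{R}\times[0,1]$, is not a map from $S_1$ anyway).
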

\begin{proof}
	We may put an extra auxiliary boundary marked point on $S_{1}$ to kill the automorphisms of the domain,
and fix the position of the marked point.
If we think of $S_{1}$ as the unit disk $D$ minus the boundary point $-1$, we can fix an auxiliary marked point $1 \in \partial D$,
then we impose the extracondition $w_{\infty}(1) \in L$, which is a free condition.
Since the Hamiltonian $H_{U}$ is a $C^{2}$-small perturbation of the zero function inside $U_{0} \setminus \partial U \times (1- \delta, 1]$,
if the image of $w_{\infty}$ is completely contained in $i(U_{0, r_{0}})$, 
we may attach a gradient half-line to the point $w_{\infty}(1)$ to obtain a pseudoholomorphic map in the moduli space for the PSS-map
\begin{equation}
PSS: CM^{*}(L, H_{U}|_{L}) \to CF^{*}(L', H_{U}).
\end{equation}
The image of this map is in the sub-complex generated by 'interior' chords; in particular, $\tilde{y}$ is not in the image.
This is a contradiction.
\end{proof}

In fact, we can prove a more refined result:

\begin{lemma}
	For a point $z \in S_{1}$ sufficiently close to the negative puncture, the image $w_{\infty}(z)$ is outside $i(U_{0, r_{0}})$.
\end{lemma}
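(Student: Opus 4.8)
The plan is to reduce the statement to an asymptotic analysis of $w_{\infty}$ near its negative puncture. By the standard exponential convergence at a negative strip-like end, $w_{\infty}(s,\cdot)$ converges uniformly, with all derivatives and at an exponential rate, as $s\to-\infty$ to the Hamiltonian chord $x_{2}*(i\circ y)*x_{1}$, which by Lemma \ref{the chord corresponding to the generalized chord} equals $i\circ\tilde{y}$ with $\tilde{y}$ a chord of $H_{U}$ supported on $\partial U\times\{r_{0}\}$ for an $r_{0}>1$ close to $1$. In particular the limit chord lies on the hypersurface $i(\partial U\times\{r_{0}\})$, which is the boundary of $i(U_{0,r_{0}})$ inside $M$; so the content of the lemma is that $w_{\infty}$ approaches this chord strictly from the side $\{r>r_{0}\}$. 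Since $r_{0}$ is close to $1$ and the Liouville flow $\psi_{M}$ has no critical locus between levels $1$ and $r_{0}+\epsilon$ for some small $\epsilon>0$, the map $i$ restricts to an isomorphism of Liouville domains from $U_{0,r_{0}+\epsilon}$ onto its image (a mild enlargement of the preceding lemma about $U_{0,r_{0}}$). Consequently, on a neighborhood of $i(\partial U\times\{r_{0}\})$ one may use the radial coordinate $r=r\circ i^{-1}$ and transport the asymptotic analysis of $w_{\infty}$ to the cylindrical end of $U$ via $i^{-1}$, where the cylindrical structure, the quadratic Hamiltonian $H_{U}=h(r)$, and the contact-type almost complex structure take their standard form; one first checks that the Floer datum carried by $w_{\infty}$ near the puncture, obtained from that of $v_{\infty}$ by the gluing construction of section \ref{section: glue quilted map along the graph} composed with $i$, is of this standard radial form near the puncture, which is forced by the requirement that the families of Hamiltonians and almost complex structures agree with rescalings of $H_{U},J_{U}$ near the quilted ends.

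With this reduction in place, I would argue as follows. Write $w_{\infty}(s,t)=\exp_{c(t)}(\zeta(s,t))$ near the puncture, where $c=i\circ\tilde{y}$ and, as $s\to-\infty$, $\zeta(s,t)=e^{\lambda s}v(t)+o(e^{\lambda s})$ with $\lambda>0$ and $v$ an eigenfunction of the asymptotic operator of the non-degenerate chord $c$. The key claim is that the radial component $\langle v(t),\partial_{r}\rangle$ is strictly positive, so that $r\circ w_{\infty}(s,t)-r_{0}=e^{\lambda s}\langle v(t),\partial_{r}\rangle+o(e^{\lambda s})>0$ for $s\ll 0$. This should follow from the strict convexity $h''>0$ of the radial Hamiltonian together with the sign of the negative puncture: the radial direction is a non-degenerate direction of the asymptotic operator whose eigenvalue has a fixed sign, and for a chord of positive action, hence of low level $r_{0}$, at a negative puncture this forces the curve onto the side of larger $r$. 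As a robustness check, and to cover any degeneration of the leading radial coefficient, I would run a maximum-principle argument in parallel: $r\circ w_{\infty}$ has the no-interior-maximum property on the part of the domain mapping into the cylindrical collar (using $h$ convex and $J$ of contact type), as well as on boundary arcs mapped to the conical Lagrangian $L'$; combined with the preceding lemma, which guarantees that $w_{\infty}$ does leave $i(U_{0,r_{0}})$ somewhere, this excludes $w_{\infty}$ remaining inside $\{r\le r_{0}\}$ on an entire punctured neighborhood of the puncture.

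The main obstacle is the sign computation for the radial component of the leading eigenfunction $v$, that is, confirming that for a negative puncture at a positive-action chord the curve must bend toward larger $r$; this is where the real work lies. The surrounding points are routine by comparison: arranging the Floer datum near the puncture to be of standard radial form is dictated by the consistency conditions on the choices of Floer data for the quilted maps, and extending the domain on which $i$ is a Liouville embedding from $U_{0,r_{0}}$ to $U_{0,r_{0}+\epsilon}$ is immediate because $r_{0}$ is taken close to $1$, well away from the critical locus of $Z_{M}$. Once the sign is established, the conclusion is immediate: there is $s_{0}\ll 0$ with $r\circ w_{\infty}(s,t)>r_{0}$ for all $s\le s_{0}$, i.e.\ $w_{\infty}$ maps the corresponding neighborhood of the negative puncture into $M\setminus i(U_{0,r_{0}})$.
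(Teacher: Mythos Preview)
Your plan is in the right spirit---a local asymptotic analysis near the negative puncture in cylindrical coordinates---but you leave the decisive step unresolved: you explicitly flag the sign of the radial component of the leading eigenfunction as ``where the real work lies'' and do not carry it out. The maximum-principle backup you sketch does not close this gap either, since the issue is precisely the one-sided approach to the boundary level $\{r=r_0\}$, not whether the curve leaves $i(U_{0,r_0})$ somewhere.

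The paper's argument resolves exactly this sign question, but by a more concrete route that bypasses the eigenfunction formalism entirely. Near the puncture, write $w_\infty$ in coordinates $(a,F)$ on the collar $\partial U\times(1-\delta,r_1)$, and use that $J$ is of contact type to decouple the radial/Reeb pair $(a,b)$ from the contact-hyperplane component. The inhomogeneous Cauchy--Riemann equation then restricts to a two-dimensional system for $(a,b)$ of the same form as \eqref{separation of variables in the Cauchy-Riemann equation}; the separation-of-variables argument from that earlier lemma gives $a=a(s)$, $b=b(t)$, with $b$ linear of slope $h'(r_0)$, and $a$ satisfying the scalar ODE $da/ds = h'(a)+h'(r_0)$. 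Since $h'>0$ and $\lim_{s\to-\infty}a(s)=r_0$, this forces $a'(s)>0$ near $-\infty$, hence $a(s)>r_0$ there. This explicit ODE computation is what replaces your abstract sign claim; once you see that the radial/Reeb subsystem decouples, the convexity $h'>0$ does all the work, and no eigenfunction analysis or maximum principle is needed.
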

\begin{proof}
	This follows from a local analysis of the inhomogeneous Cauchy-Riemann equation.
Let $V$ be small neighborhood of the puncture, identified with a portion $(-\infty, -T] \times [0, 1]$ of the negative strip-like end near the puncture.
We choose this neighborhood small enough such that $w_{\infty}(V)$ is contained in some collar neighborhood $i(\partial U \times (1 - \delta, r_{1})$ for some $r_{1} > r_{0}$.
If we restrict $w_{\infty}$ to $V$, we get a map
\begin{equation}
w: (-\infty, -T] \times [0, 1] \to i(\partial U \times (1 - \delta, r_{1}),
\end{equation}
so that $w$ can be written in components
\begin{equation}
w(s, t) = (a(s, t), F(s, t)),
\end{equation}
where $F$ is the map to $\partial U$.
Locally, further decompose $F$ into the Reeb direction and the contact hyperplane distribution on the contact manifold $\partial U$,
and write
\begin{equation}
F(s, t) = (b(s, t), f(s, t)),
\end{equation}
where $b(s, t) \in \mathbb{R}$ is the component in the Reeb direction. \par

The inhomogeneous Cauchy-Riemann equation implies that $(a, b)$ satisfies
\begin{equation}
\begin{cases}
\partial_{s} a - \partial_{t}b - h'(a) = 0, \\
\partial_{t} a + \partial_{s}b = 0.
\end{cases}
\end{equation}
Here $h'(a) > 0$.
Using a similar method to solving equation \eqref{separation of variables in the Cauchy-Riemann equation}, we find that $a = a(s)$ depends only on $s$, and $b = b(t)$ depends only on $t$.
Then the first equation implies further that $b$ is a linear function of $t$ with slope equal to $h'(r_{0}) > 0$, and $a = a(s)$ satisfies an ordinary differential equation
\begin{equation}
\frac{da}{ds} = h'(a) + h'(r_{0}).
\end{equation}
Thus the function $a$ has positive derivative when $s$ is near $-\infty$.
Since $\lim\limits_{s \to -\infty} a(s) = r_{0}$, it follows that for $s$ sufficiently negative, $a(s) > r_{0}$.
\end{proof}

We remark that this estimate is local, unlike the case of a trivial strip over a Reeb chord:
we need to choose a strip-like end parametrization of the neighborhood of the negative puncture, 
and we need a splitting of the map $F$ to $\partial U$ into Reeb direction and contact hyperplane distribution, which is only valid locally.
Globally, the map $w_{\infty}$ might still have non-empty intersection with $U_{0, r_{0}}$ away from the puncture.
This is why the moduli space of such maps is not well-behaved. \par

However, we may perform a neck stretching construction near $\partial U$, as in \cite{Bourgeois-Oancea},
 to obtain a pseudoholomorphic building in the moduli space $\bar{\mathcal{N}}(\gamma')$ \eqref{moduli space of pseudoholomorphic building}, from each of such a map $w_{\infty}$, 
 where $\gamma'$ is the Reeb chord of $l'$ corresponding to $y'$.
Under Assumption \ref{regularity assumption on almost complex structures}, 
the moduli space of such pseudoholomorphic buildings is the union of the products
\[
\mathcal{N}_{1, m, l}(\gamma; \gamma'_{1}, \cdots, \gamma'_{m}; \sigma'_{1}, \cdots, \sigma'_{l})_{I} \times \prod_{i \in I^{c}} \mathcal{M}(\gamma'_{i}) \times \mathcal{M}_{I}(\{\gamma_{i}\}_{i \in I}) \times \prod_{j=1}^{m} \mathcal{M}(\sigma'_{j}).
\]
given in \eqref{moduli space of pseudoholomorphic buildings with sub-level 1 as a product}.
This establishes the equivalence of the Maurer-Cartan element from figure eight bubbles and the element $b_{lin} \in LC^{*}_{lin}(l', L'; \alpha')$, defined in section \ref{section: construction of the bounding cochain}.

\bibliographystyle{alpha}
\bibliography{Viterbo}

\end{document}